\title{Gluing of Fourier-Mukai Partners in a Triangular Spectrum and Birational Geometry}
\author{Daigo Ito}
\renewcommand\footnotemark{}
\thanks{2020 {\em Mathematics Subject Classification.} 14F08 (primary) 14E30, 18G80, 18M05 (secondary)}\thanks{{\em Key words and phrases.} Balmer spectrum, birational geometry, DK hypothesis, Matsui spectrum, perfect derived category, tensor triangulated category, triangulated category}
\date{}
\newcommand{\address}[1]{\gdef\@address{#1}}
\newcommand{\email}[1]{\gdef\@email{\url{#1}}}
\newcommand{\website}[1]{\gdef\@website{\url{#1}}}
\newcommand{\@endstuff}{\par\vspace{\baselineskip}\noindent\small
\begin{tabular}{@{}l}\scshape{Daigo Ito} \\ \scshape\@address\\\textrm{E-mail address:} \@email \\\textrm{Website:} \@website\end{tabular}}
\address{Department of Mathematics, University of California, Berkeley, Evans Hall 935, CA 94720-3840}
\email{daigoi@berkeley.edu}
\website{https://daigoi.github.io/}
\DeclareMathOperator{\agtt}{{agtt}}
\DeclareMathOperator{\aut}{Aut}
\DeclareMathOperator{\auteq}{Auteq}
\DeclareMathOperator{\End}{End}
\DeclareMathOperator{\FM}{FM}
\DeclareMathOperator{\fmspec}{Spec^{\mathsf{FM}}}
\DeclareMathOperator{\gtt}{{gtt}}
\let \hom \relax
\DeclareMathOperator{\hom}{Hom}
\let \im \relax
\DeclareMathOperator{\im}{Im}
\let \ker \relax
\DeclareMathOperator{\ker}{Ker}
\DeclareMathOperator{\ob}{Ob}
\DeclareMathOperator{\perf}{\mathsf{Perf}}
\DeclareMathOperator{\Perf}{\ecal{P}erf}
\DeclareMathOperator{\pfm}{pFM}
\DeclareMathOperator{\pic}{Pic}
\DeclareMathOperator{\pgtt}{pgtt}
\DeclareMathOperator{\rank}{rank}
\DeclareMathOperator{\Ser}{Ser}
\DeclareMathOperator{\sgtt}{{Sgtt}}
\DeclareMathOperator{\spc}{Spc}
\DeclareMathOperator{\spec}{Spec}
\DeclareMathOperator{\stab}{Stab}
\let \sl \relax 
\DeclareMathOperator{\sl}{SL}
\let \sp \relax
\DeclareMathOperator{\sp}{Sp}
\DeclareMathOperator{\supp}{Supp}
\DeclareMathOperator{\Th}{Th}
\newcommand {\bb}{\mathbb}
\renewcommand {\cal}{\mathcal}
\newcommand{\ecal}{\mathscr}
\renewcommand {\epsilon}{\varepsilon}
\newcommand {\bra}[1]{\langle{#1}\rangle}
\renewcommand {\l}{\left}
\renewcommand {\r}{\right}
\newcommand*{\DashedArrow}[1][]{\mathbin{\tikz [baseline=-0.25ex,-latex, dashed,#1] \draw [#1] (0pt,0.5ex) -- (1.3em,0.5ex);}}
\newcommand {\ratmap}{\DashedArrow[->,densely dashed    ]} 
\newcommand {\emp}{\emptyset}
\newcommand {\inj}{\hookrightarrow}
\newcommand {\inv}{^{-1}}
\newcommand {\iso}{\cong}
\newcommand {\surj}{\twoheadrightarrow}
\newcommand {\tens}{\otimes}
\newsavebox{\pullbacks}
\sbox\pullbacks{%
\begin{tikzpicture}%
\draw (0,0) -- (1ex,0ex);%
\draw (1ex,0ex) -- (1ex,1ex);%
\end{tikzpicture}}
\newcommand{\ag}{\mathsf{AG}}
\newcommand{\cat}{\mathsf{Cat}}
\newcommand{\coh}{\mathsf{coh}}
\newcommand{\D}{\mathsf{D}}
\newcommand{\dg}{\mathsf{dg}}
\newcommand {\fg}{\mathsf{fg}}
\newcommand{\fix}{\mathsf{fix}}
\newcommand{\fm}{\mathsf{FM}}
\newcommand{\sg}{\mathsf{SG}}
\newcommand{\h}{{\mathrm{H}}}
\newcommand{\ho}{\mathsf{Ho}}
\newcommand {\id}{{\rm id}}
\let \mod \relax
\newcommand {\mod}{\mathsf{Mod}}
\newcommand{\pg}{\mathsf{PG}}
\newcommand{\ser}{\mathsf{Ser}}
\let \sf \relax 
\newcommand{\sf}{\mathsf}
\let \top \relax
\newcommand {\top}{\mathsf{Top}}
\newcommand{\vect}{\mathsf{Vect}}
\newcommand{\injar}{\ar@{^(->}} 
\newcommand{\prarrow}[2]{\ar@<0.5ex>[r]^-{#1} \ar@<-0.5ex>[r]_-{#2}}
\newcommand{\plarrow}[2]{\ar@<0.5ex>[l]^-{#1} \ar@<-0.5ex>[l]_-{#2}}
\newcommand{\pdarrow}[2]{\ar@<0.5ex>[d]^-{#1} \ar@<-0.5ex>[d]_-{#2}}
\newcommand{\puarrow}[2]{\ar@<0.5ex>[u]^-{#1} \ar@<-0.5ex>[u]_-{#2}}
\theoremstyle{plain}
\newtheorem{theorems}{Theorem}[section] 
\newtheorem{claims}{Claim}[theorems]
\newtheorem{conjectures}[theorems]{Conjecture}
\newtheorem{corollaries}[theorems]{Corollary}
\newtheorem{lemmas}[theorems]{Lemma}
\newtheorem{penmdef}[claims]{Definition}
\theoremstyle{definition}
\newtheorem{constructions}[theorems]{Construction}
\newtheorem{definitions}[theorems]{Definition} 
\newtheorem{axioms}[theorems]{Axiom} 
\newtheorem{examples}[theorems]{Example}     
\newtheorem{notations}[theorems]{Notation}         
\newtheorem{question}[theorems]{Question}
\newtheorem{obss}[theorems]{Observation}
\newtheorem{penmlem}[claims]{Lemma}
\newtheorem{penmthm}[claims]{Theorem}
\newtheorem{penmcor}[claims]{Corollary}
\newtheorem{penmeg}[claims]{Example} 
\newtheorem{inclaims}[claims]{Claim}
\theoremstyle{remark}
\newtheorem{remarks}[theorems]{Remark}
\newtheorem{penmrem}[claims]{Remark}
\newtheoremstyle{indented}
  {1pt}
  {1pt}
  {\addtolength{\@totalleftmargin}{1.5em}
   \addtolength{\linewidth}{-1.5em}
   \parshape 1 1.5em \linewidth}
  {}
  {\bfseries}
  {.}
  {.5em}
  {}
\theoremstyle{indented}
\newtheorem{pinddef}[claims]{Definition} 
\newtheorem{pindlem}[claims]{Lemma}
\newtheorem{pindthm}[claims]{Theorem}
\newtheorem{pindcor}[claims]{Corollary}
\newtheorem{pindeg}[claims]{Example} 
\newtheorem{pindrem}[claims]{Remark} 
\newtheorem{pindq}[claims]{Question} 
\newtheorem{pindc}[claims]{Conjecture} 
\newtheorem{pindclaim}[claims]{Claim}
\newenvironment{theorem}
{
	\pushQED{\qed}\begin{theorems}}
	{\popQED\end{theorems}}
\newenvironment{notation}
{
	\pushQED{\qed}\begin{notations}}
	{\popQED\end{notations}}
\newenvironment{conjecture}
{
	\pushQED{\qed}\begin{conjectures}}
	{\popQED\end{conjectures}}
\newenvironment{corollary}
{
	\pushQED{\qed}\begin{corollaries}}
	{\popQED\end{corollaries}}
\newenvironment{definition}
{
	\pushQED{\qed}\begin{definitions}}
	{\popQED\end{definitions}}
\newenvironment{lemma}
{
	\pushQED{\qed}\begin{lemmas}}
	{\popQED\end{lemmas}}
\newenvironment{remark}
{
	\pushQED{\qed}\begin{remarks}}
	{\popQED\end{remarks}}
\newenvironment{example}
{
	\pushQED{\qed}\begin{examples}}
	{\popQED\end{examples}}
\newenvironment{construction}
{
	\pushQED{\qed}\begin{constructions}}
	{\popQED\end{constructions}} 
\newenvironment{obs}
{
	\pushQED{\qed}\begin{obss}}
	{\popQED\end{obss}}
\newenvironment{indq}
{
	\pushQED{\qed}\begin{pindq}}
	{\popQED\end{pindq}} 
 \newenvironment{indc}
{
	\pushQED{\qed}\begin{pindc}}
	{\popQED\end{pindc}}
\newenvironment{enmlem}
{
	\pushQED{\qed} \begin{penmlem}}
	{\popQED\end{penmlem}}
\newenvironment{enmthm}
{
	\pushQED{\qed}\begin{penmthm}}
	{\popQED\end{penmthm}}
\newenvironment{inproof}
{\begin{proof}  }
	{\end{proof}}
\def\showanswers{1}
\newcommand{\hide}[1]{
\ifnum\showanswers=1

\fi

\ifnum\showanswers=0

\fi
}
\begin{document}
\maketitle
\begin{abstract}
    Balmer defined the tensor triangulated spectrum $\operatorname{Spec}_\otimes \mathcal T$ of a tensor triangulated category $(\mathcal T,\otimes)$ and showed that for a variety $X$, we have the reconstruction $X \cong \operatorname{Spec}_{\otimes_{\mathscr O_X}^\mathbb L}\operatorname{Perf} X$. In the absence of the tensor structure, Matsui recently introduced the triangular spectrum $\operatorname{Spec}_\vartriangle \mathcal T$ of a triangulated category $\mathcal T$ and showed that there exists a topological embedding $X \cong \operatorname{Spec}_{\otimes_{\mathscr O_X}^\mathbb L}\operatorname{Perf} X \subset \operatorname{Spec}_\vartriangle \operatorname{Perf} X$. In this paper, we construct a scheme $\operatorname{Spec}^\mathsf{FM} \mathcal T \subset \operatorname{Spec}_\vartriangle \mathcal T$, called the Fourier-Mukai (FM) locus, by gathering all varieties $X$ satisfying $\operatorname{Perf} X \simeq \mathcal T$. Those varieties are called FM partners of $\mathcal T$ and embedded into $\operatorname{Spec}_\vartriangle \mathcal T$ as tensor triangulated spectra. We present a variety of examples illustrating how geometric and birational properties of FM partners are reflected in the way their tensor triangulated spectra are glued in the FM locus. Finally, we compare the FM locus with other loci within the triangular spectrum admitting categorical characterizations, and in particular, make a precise conjecture about the relation of the FM locus with the Serre invariant locus.
\end{abstract}


\setcounter{tocdepth}{2}
\tableofcontents 

\section{Introduction}
\subsection*{Background}
When we think of the derived category $\perf X$ of perfect complexes on a smooth variety $X$, viewed as a triangulated category, as an algebraic invariant of the variety, it is natural to ask how much information $\perf X$ knows about $X$. In \cite{bondal_orlov_2001}, Bondal-Orlov showed that if $X$ is a smooth projective variety with ample (anti-) canonical bundle, then the triangulated category $\perf X$ knows everything about $X$, i.e., we can reconstruct $X$ solely from the triangulated category structure of $\perf X$. On the other hand, we cannot expect this type of complete reconstruction can be generalized too much since there are non-isomorphic varieties whose derived categories are triangulated equivalent. Nevertheless, in \cite{Balmer_2005} and \cite{Balmer_2010}, Balmer showed that if we in addition remember a monoidal structure on $\perf X$ given by the usual derived tensor product $\tens_{\ecal O_X}^\bb L$, then the pair $(\perf X, \tens_{\ecal O_X}^\bb L)$ can reconstruct $X$ when $X$ is a quasi-compact and quasi-separated scheme. In the proof, Balmer defined a ringed space $\spec_\tens \cal T$, called the tensor-triangular spectrum (\textbf{tt-spectrum} for short), for a triangulated category $\cal T$ with a compatible tensor product $\tens$, called a \textbf{tt-structure}. Indeed, when $X$ is a quasi-compact quasi-separated scheme, he showed that we have $X = \spec_{\tens_{\ecal O_X}^\bb L}\perf X$ as ringed spaces. We can reinterpret Balmer's work as saying that reconstruction of a variety $X$ from its derived category $\perf X$ can be done by identifying and specifying a "geometric" tt-structure on $\perf X$ and moreover saying that classification of smooth projective varieties with derived categories triangulated equivalent to $\cal T$, called \textbf{Fourier-Mukai partners} of $\cal T$, can be done by classifying "geometric" tt-structures on $\perf X$. Thus, to pursue those studies, we want to make sense of a notion of ”geometric” tt-structures and comparison among them. In this paper, we will first naively set “geometric” tt-structures to be tt-structures that are equivalent to $(\perf X, \tens_{\ecal O_X}^\bb L)$ for some smooth projective variety $X$ and later we will give some other attempts to define “geometric” tt-structures more categorically. 

For the comparison of tt-structures, we will use a ringed space $\spec_\vartriangle \cal T$ recently defined by Matsui \cite{Matsui_2021} only using the triangulated category structure of $\cal T$, which is called the \textbf{triangular spectrum} of $\cal T$. One crucial fact about this space shown by Matsui in the same paper is that given a fairly reasonable tt-structure $\tens$ on $\cal T$, we know that there is a morphism $$\spec_\tens \cal T \subset  \spec_\vartriangle \cal T$$ of ringed spaces that is a topological embedding on the underlying topological spaces. In particular, if a triangulated category $\cal T$ is triangulated equivalent to $\perf X$ for some variety $X$, then $X$ can be viewed as a subspace of $\spec_\vartriangle \cal T$ by
\[
X \iso \spec_{\tens_{\ecal O_X}^\bb L} \perf X \subset \spec_\vartriangle \cal T. 
\]
Letting $\FM \cal T$ denote the set of isomorphism classes of Fourier-Mukai partners of $\cal T$, we can rephrase this result as saying that $\spec_\vartriangle \cal T$ contains any variety $X\in \FM \cal T$ as a tt-spectrum corresponding to $(\perf X, \tens_{\ecal O_X}^\bb L)$ although $\spec_\vartriangle \cal T$  may contain multiple copies of the same Fourier-Mukai partners and those Fourier-Mukai partners can possibly intersect each other in $\spec_\vartriangle \cal T$. Here are some examples illustrating how several tt-structures on $\cal T$ interact with each other geometrically via corresponding tt-spectra in the triangular spectrum:  
\begin{example} Let $\cal T$ be a triangulated category with $X \in \FM \cal T$. 
    \begin{enumerate}
        \item If $X$ is an elliptic curve, then the triangular spectrum $\spec_\vartriangle \cal T$ is a disjoint union of infinitely many copies of $X$. (See Lemma \ref{elliptic key}.) 
        \item If $X$ is an abelian variety (resp. a surface) and $X'$ is a non-isomorphic Fourier-Mukai partner of $X$, then their copies in $\spec_\vartriangle \cal T$ do not intersect with each other. (See Corollary \ref{abel FM disjoint} (resp. Corollary \ref{surface FM disjoint}).)
        \item If $X$ is a surface containing a $(-2)$-curve, then $\spec_\vartriangle \cal T$ contains at least two copies of $X$ which intersect along the complement of the $(-2)$-curve. (See Corollary \ref{-2 twist}.)
        \item If $X$ is connected with $X' \in \FM \cal T$ via a standard/Mukai flop, then at least one pair of their copies in $\spec_\vartriangle \cal T$ intersect with each other along the complement of the flopped subvarieties. (See Example \ref{flop}.) \qedhere
    \end{enumerate}
\end{example}
\subsection*{Overview}
\subsubsection*{Fourier-Mukai loci}
Motivated by those observations, we will construct and study a subspace $\spec^\fm \cal T$ in the triangular spectrum $\spec_\vartriangle \cal T$, called the \textbf{Fourier-Mukai locus}, which is defined to be the union of all the copies of the Fourier-Mukai partners of $\cal T$ immersed into $\spec_\vartriangle \cal T$ as above. In particular, we will see one of the main results of this paper.
\begin{theorem}[Theorem \ref{scheme}]
     Let $\cal T$ be a triangulated category with $\FM \cal T \neq \emp$. Then, $\spec^{\fm}\cal T$ is naturally a smooth scheme locally of finite type, into which any $X \in \FM \cal T$ is immersed as an open subscheme. Moreover, the Krull dimension $\dim \spec^\fm \cal T$ equals the Krull dimension $\dim X$ for any $X \in \FM \cal T$. 
\end{theorem}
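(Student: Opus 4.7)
The plan is to construct $\spec^\fm \cal T$ by gluing individual FM partners. Set-theoretically, take $\spec^\fm \cal T := \bigcup_{X \in \FM \cal T} X \subset \spec_\vartriangle \cal T$, where each $X$ is identified with its image under the immersion $X \iso \spec_\tens \perf X \inj \spec_\vartriangle \cal T$, equipped with the subspace topology inherited from the triangular spectrum. As input, each FM partner is a smooth projective variety and any two FM partners share the same Krull dimension $d$; both are classical derived invariants for smooth projective varieties (e.g.\ via Hochschild cohomology and the Serre functor). Hence all the local pieces to be glued are smooth of the common dimension $d$, and the smoothness, local finite type, and dimension claims in the theorem will follow automatically once the gluing is set up.

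The heart of the proof is to show that for any $X, Y \in \FM \cal T$, the overlap $U_{X,Y} := X \cap Y \subset \spec_\vartriangle \cal T$ is open in each of $X$ and $Y$ and that the scheme structures induced from $X$ and from $Y$ on $U_{X,Y}$ agree. Both claims reduce to a single local assertion: at every $p \in U_{X,Y}$, the stalk $\ecal O_{X,p}$ arising from Balmer's reconstruction of the tt-structure on $\perf X$ coincides, under the immersion, with Matsui's categorically-defined stalk of $\ecal O_{\spec_\vartriangle \cal T}$ at $p$, and similarly for $\ecal O_{Y,p}$. Granting this, each of the immersions $X \inj \spec_\vartriangle \cal T$ and $Y \inj \spec_\vartriangle \cal T$ is a local isomorphism of locally ringed spaces at $p$, so there exist open neighborhoods $V, W \subset \spec_\vartriangle \cal T$ of $p$ with $V \subset X$ and $W \subset Y$; then $V \cap W$ is open in $\spec_\vartriangle \cal T$ and contained in $U_{X,Y}$, which simultaneously yields openness of $U_{X,Y}$ in both $X$ and $Y$ and the agreement of the two induced scheme structures.

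With the overlap data verified, the standard gluing lemma for schemes produces $\spec^\fm \cal T$ as the desired smooth scheme locally of finite type into which each FM partner embeds as an open subscheme, with $\dim \spec^\fm \cal T = d = \dim X$ by the common dimension of the pieces. The main obstacle is the stalk-matching claim at the heart of the second paragraph: one must show that at a prime $p$ coming from a point of $X$, Matsui's categorical stalk in $\spec_\vartriangle \cal T$ coincides with Balmer's tt-spectrum stalk from $\perf X$. I expect to handle this by unpacking both stalks as suitable endomorphism rings (of the unit object, or an appropriately chosen generator) in the Verdier quotient $\cal T / p$, and observing that the resulting ring depends only on $p$ viewed as a prime thick subcategory of $\cal T$, not on a chosen ambient tt-structure; this independence is exactly the compatibility of the Balmer and Matsui constructions that makes the two local ring theories agree where both are defined.
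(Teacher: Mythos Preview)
Your overall strategy---show the pairwise overlaps are open and carry compatible scheme structures, then glue---is the right one, and the dimension claim is handled correctly. But the mechanism you propose for the heart of the argument does not work.

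The gap is in the step ``stalk-matching $\Rightarrow$ local isomorphism of locally ringed spaces $\Rightarrow$ open neighborhoods $V,W \subset \spec_\vartriangle \cal T$.'' Agreement of stalks does not by itself make an immersion into an open immersion; for that you would need the image of $X$ to be open in $\spec_\vartriangle \cal T$ near $p$. But Matsui's result (Theorem~\ref{matsui}) only says $\cal S_X^\vartriangle$ is an immersion with \emph{locally closed} image, and that it is an open immersion of ringed spaces \emph{if} the image is already known to be open. So your argument is circular: you are invoking openness to prove openness. The same issue undermines the stalk-matching claim itself---the comparison of Matsui's sheaf with Balmer's is only known on open pieces, and there is no independent computation of the Matsui stalk as an endomorphism ring in $\cal T/p$ that would let you bypass this.

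The paper proceeds quite differently and never tries to compare with the ambient sheaf on $\spec_\vartriangle \cal T$. Instead it identifies the overlap geometrically: for two copies $\spec_{\tens_{X_1,\eta_1}}\cal T$ and $\spec_{\tens_{X_2,\eta_2}}\cal T$, Lemma~\ref{main intersection} (resting on \cite{HuyBook}*{Corollary 6.14} and C\u{a}ld\u{a}raru's moduli of simple objects, Construction~\ref{canonical open}) shows that the intersection is nonempty iff $\eta_2^{-1}\circ\eta_1$ is a birational Fourier--Mukai equivalence up to shift, and in that case the intersection is precisely the domain of definition of the associated birational map $\phi_{\ecal P}$. This immediately gives openness of the overlap in each piece (domains of definition are open) and supplies the gluing isomorphism (namely $\phi_{\ecal P}$ itself); the cocycle condition on triple overlaps follows because these birational maps are built from a fibre-product of canonical open immersions into the moduli space. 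In short, the overlap and the transition data come from birational geometry, not from the structure sheaf of the triangular spectrum.
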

Indeed, the claim is non-trivial since copies of tt-spectra in the triangular spectra a priori may not glue as ringed spaces. 
\begin{notation}
   With notations as above, let $\spec_{\tens,X}\cal T$ denote the open subscheme of $\fmspec \cal T$ given by the union of all  copies of $X$ in $\spec_\vartriangle \cal T$. 
\end{notation}
Now, given the scheme structures on $\fmspec \cal T$ and $\spec_{\tens,X} \cal T$, it is interesting to ask their properties as schemes. For brevity, given a property $\cal P$ of a scheme, we will say a scheme $X$ satisfies \textbf{tt-$\cal P$} if $\spec_{\tens,X} \perf X$ satisfies $\cal P$. In particular, we will focus on tt-irreducibility and tt-separatedness of schemes. Along the way, we will observe important symmetries of $\spec_{\tens,X} \cal T$ with respect to triangulated autoequivalences of $\cal T$ and in particular show that each connected component is isomorphic to each other (Theorem \ref{irreducible component}).

In this paper, we will exhibit some specific computations of Fourier-Mukai loci to indicate their relations with geometry. Here are some interesting examples:
\begin{example}
    Let $\cal T$ be a triangulated category with a smooth projective variety $X \in \FM \cal T$. 
    \begin{enumerate}
        \item If $X$ is a smooth projective variety with ample (anti-)canonical bundle, then $\spec^\fm\cal T \iso X$ essentially by the result of Bondal-Orlov. In particular, $X$ is tt-irreducible and tt-separated. (See Corollary \ref{fm ample}.)
        \item If $X$ is an elliptic curve, then $\spec^\fm \cal T$ is a disjoint union of infinitely many copies of $X$. In particular, $X$ is tt-separated, but not tt-irreducible. (See Lemma \ref{elliptic key}.)
        \item If $X$ is a simple abelian variety, then all of its copies in $\spec^\fm \cal T$ are disjoint. In particular, $X$ is tt-separated, but not tt-irreducible in general. (See Lemma \ref{simple} and Lemma \ref{abelian not tt-irreducible}.) 
        \item If $X$ is a toric variety, then any copies of $X,Y \in \FM \cal T$ in $\spec^\fm \cal T$ intersect with each other along open sets containing tori. In particular, $X$ is tt-irreducible. However, $X$ is in general not tt-separated. (See Corollary \ref{toric} and Remark \ref{toricrem}.) 
        \item If $X$ is a surface containing a $(-2)$-curve, then $X$ is not tt-separated. Moreover, $X$ is in general not tt-irreducible either. (See Example \ref{K3}.)
        \item If $X$ is a Calabi-Yau threefold, then each irreducible component of $\spec^\fm \cal T$ containing a copy of $X$ contains all the copies of smooth projective Calabi-Yau threefolds that are birationally equivalent to $X$.   Moreover, $X$ is neither tt-separated nor tt-irreducible in general. (See Example \ref{flop} and Remark \ref{complicated birational}.)\qedhere
    \end{enumerate}
\end{example}
In particular, Fourier-Mukai loci provide novel geometric intuitions to reinterpret behaviors of varieties in birational geometry. Moreover, in Section \ref{flops and K-equiv} we will observe relations of Fourier-Mukai loci with the DK hypothesis proposed by Kawamata \cite{Kawamata2002DEquivalenceAK}, which predicts that $K$-equivalent smooth projective varieties are Fourier-Mukai partners. In particular, we will observe that some properties of Fourier-Mukai loci should be a very strong evidence for the DK hypothesis to be true. More precisely, we propose the following version of the DK hypothesis:
\begin{conjecture}[Conjecture \ref{deep dk}]
    Let $X$ be a smooth projective variety. Then, every connected component of $\fmspec \perf X$ containing a copy of $X$ contains all smooth projective varieties that are $K$-equivalent to $X$ as open subschemes.
\end{conjecture}
Note that this version of the conjecture has a topological necessary condition to be true.
\subsubsection*{Categorical characterization}
Note that studies of Fourier-Mukai loci so far are a priori geometric by definition in the sense that we define "geometric" tt-structures as tt-structures that are equivalent to $(\perf X,\tens_{\ecal O_X}^\bb L)$. In particular, many computations in examples above turn out to be applications of results from birational geometry. Hence, it is natural to try to give categorical characterizations of Fourier-Mukai loci to potentially give backward applications of studies of derived categories to birational geometry. Thus, in the latter part of this paper, we will try to give more categorical characterizations of "geometric" tt-structures and Fourier-Mukai loci. In this paper, we will focus on the underlying topological spaces and for clarity let $\spc^\fm \cal T$ (resp. $\spc_\vartriangle \cal T$) denote the underlying topological space of the ringed space $\spec^\fm \cal T$ (resp. $\spec_\vartriangle \cal T$).  

First, we will discuss comparison of the Fourier-Mukai locus with the \textbf{Serre invariant locus} $\spc^\ser \cal T$ in a triangular spectrum. Here, we should note that the underlying set of a triangular spectrum consists of some special types of triangulated subcategories and thus we can talk about Serre functor invariance of the points in the triangular spectrum. By construction it will be straightforward to see
\[
 \spc^\fm \cal T \subset \spc^\ser \cal T \subset \spc_\vartriangle \cal T. 
\]
Note that $\spc^\ser \cal T$ is defined purely triangulated categorically and in \cite{HO22}, Hirano-Ouchi gave a beautiful reinterpretation of the Bondal-Orlov (topological) reconstruction by showing $\spc^\ser \perf X \iso X$ when $X$ is a smooth projective variety with ample (anti-)canonical bundle. In this paper, we will see the following: 
\begin{theorem}[Corollary \ref{FM ample}, Theorem \ref{curves}]
    Let $\cal T$ be a triangulated category with $X \in \FM \cal T$. Suppose $X$ is a smooth projective curve or a smooth projective variety with ample (anti-)canonical bundle. Then
    \[
\spc^\fm \cal T= \spc^\ser \cal T.\qedhere
\]
\end{theorem}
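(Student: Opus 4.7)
The plan is to establish the reverse inclusion $\sspec \cal T \subset \fmspec \cal T$, since the other containment is already given in the paper by construction. I would split the argument according to the two hypotheses on $X$, reducing each case to results either already stated in the paper or available in the literature.

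For the case where $X$ has ample (anti-)canonical bundle, I would invoke the Hirano-Ouchi theorem \cite{HO22} yielding $\sspec \perf X \iso X$, and combine it with the Bondal-Orlov reconstruction theorem, which forces $\FM \cal T = \{X\}$. In particular Corollary \ref{fm ample} identifies $\fmspec \cal T$ with the unique copy of $X$ inside $\spec_\vartriangle \cal T$. Chaining the inclusions $X \iso \fmspec \cal T \subset \sspec \cal T \iso X$ collapses them all to equalities.

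For a smooth projective curve $X$, I would split by genus $g$. The cases $g = 0$ (the projective line $\bb P^1$, with ample anti-canonical bundle) and $g \geq 2$ (canonical bundle ample) both reduce to the previous case. The only new case is $g = 1$, that is, an elliptic curve $E$. Here the subtlety is that although $E$ has no nontrivial Fourier-Mukai partner up to isomorphism, many nonconjugate tt-structures on $\perf E$ obtained by twisting with autoequivalences contribute distinct copies of $E$ to the triangular spectrum, so the identification $\fmspec \cal T = \sspec \cal T$ is not a formal consequence of $\FM \cal T$ being a singleton.

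To dispatch the elliptic case I would rely on Lemma \ref{elliptic key}, which asserts that $\spec_\vartriangle \perf E$ is itself a disjoint union of infinitely many copies of $E$, each of which is already the image of a Fourier-Mukai embedding. This forces the equality $\fmspec \cal T = \spec_\vartriangle \cal T$ directly, and then the sandwich $\fmspec \cal T \subset \sspec \cal T \subset \spec_\vartriangle \cal T$ collapses. The principal obstacle in the overall argument therefore sits in the elliptic case: one must guarantee that every point of the triangular spectrum is realized by some Fourier-Mukai copy of $E$ and not merely by an exotic Serre-invariant thick subcategory. Once Lemma \ref{elliptic key} is available this step is immediate, but the real content of the proof is concentrated in that lemma, which is where I would expect the heavy lifting to occur.
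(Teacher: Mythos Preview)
Your proposal is correct and follows essentially the same route as the paper: the ample (anti-)canonical case is handled by Hirano--Ouchi's computation of $\sspec$ together with Bondal--Orlov (this is exactly Lemma~\ref{FM ample}), the non-elliptic curve cases reduce to it by genus, and the elliptic case is deduced from Lemma~\ref{elliptic key}, which exhibits every component of $\spec_\vartriangle \perf E$ as a Fourier--Mukai copy of $E$. The only point to be slightly careful about is your ``sandwich'' in the ample case: the abstract chain $X \iso \fmspec \cal T \subset \sspec \cal T \iso X$ does not by itself force equality, but the Hirano--Ouchi isomorphism is realized precisely by the map $\cal S_X$, so the inclusion is literally the identity---which is how the paper phrases it.
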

Since the cases of elliptic curves and non-elliptic curves are two extremes in terms of structures of triangular spectra, we propose the following conjecture:
\begin{conjecture}[Conjecture \ref{main conjecture}]    
Let $\cal T$ be a triangulated category with $X \in \FM\cal T$. Then we have
    \[
    \spc^\fm \cal T = \spc^\ser \cal T. 
    \]
    In particular, $\dim \spc^\ser  \cal T = \dim \spc^\fm \cal T = \dim X$.
\end{conjecture}
If the conjecture holds, then it means Fourier-Mukai loci can be obtained more categorically by considering Serre invariant loci. Moreover, it is also noteworthy that we can similarly define the \textbf{proper Fourier-Mukai locus} $\spc^\sf{pFM} \cal T$ by taking smooth proper varieties into consideration. Indeed, proper Fourier-Mukai loci are more natural in a dg-categorical point of view and a priori give a better approximation to Serre invariant loci by noting the inclusions:  
\[
\spc^\fm \cal T\subset \spc^\sf{pFM} \cal T \subset \spc^\ser  \cal T. 
\]
See Remark \ref{proper dg} for more discussions. 

In the last section, we try to give less geometric characterizations of "geometric" tt-structures compared to the naive definition we have discussed. First, we will consider a tt-structure $\tens$ on $\cal T$ such that $\spec_{\tens} \cal T$ is a smooth projective variety (resp. a not necessarily irreducible smooth projective variety), called an \textbf{almost geometric tt-structure} (resp. a \textbf{pseudo-geometric tt-structure}). It turns out that both almost geometric tt-structures and pseudo-geometric tt-structures may not be geometric. Indeed, we can easily see some pseudo-geometric tt-structures are originated in representation theory, which actually gives some interesting descriptions of the whole triangular spectra in some cases (Example \ref{p1} (iii)). For another candidate, we note that given $\tens_{\ecal O_X}^\bb L$ for a smooth projective variety $X$, the Serre functor can be written as $- \tens_{\ecal O_X}^\bb L \omega_X[\dim  X]$ for the canonical bundle $\omega_X$, which is an "invertible" object in $(\perf X,\tens_{\ecal O_X}^\bb L)$. Thus, we define a \textbf{Serre-geometric tt-structure} to be a tt-structure so that the Serre functor can be written in such a form. By writing the union of tt-spectra corresponding to Serre-geometric tt-structures by $\spc^\sg \cal T$, we will easily see
\[
\spc^\fm  \cal T \subset \spc^\sf{pFM} \cal T  \subset \spc^\sg \cal T \subset \spc^\ser  \cal T. 
\]
In particular, $\spc^\sg \cal T$ is defined categorically yet a priori closer to $\spc^\fm  \cal T$ compared to the Serre invariant locus. Hence, we finish this paper by proposing a conjecture that is a priori weaker than the one above, yet more plausible from a dg-categorical point of view as a future research direction: 
\begin{conjecture}[Conjecture \ref{weaker}]
    Let $\cal T$ be a triangulated category with $\FM \cal T \neq \emp$. Then we have
    \[
    \spc^\sf{pFM} \cal T = \spc^\sg \cal T. \qedhere
    \]
\end{conjecture}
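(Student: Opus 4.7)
The inclusion $\pfmspec \cal T \subset \spec^\sg \cal T$ is immediate from the chain of inclusions already established, since for any smooth proper FM partner $X$ the Serre functor on $\perf X$ equals $- \tens_{\ecal O_X}^\bb L \omega_X[\dim X]$ and $\omega_X[\dim X]$ is tensor-invertible. Hence the substantive content of the conjecture lies in the reverse inclusion $\spec^\sg \cal T \subset \pfmspec \cal T$. Concretely, given a Serre-geometric tt-structure $\tens$ on $\cal T$ with Serre functor of the form $- \tens K$ for some $\tens$-invertible object $K$, the goal is to exhibit a smooth proper variety $X$ together with a tt-equivalence $(\cal T, \tens) \simeq (\perf X, \tens_{\ecal O_X}^\bb L)$.

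My plan is to take $X := \spec_\tens \cal T$ as candidate, endowed with Balmer's structure sheaf, and then prove in sequence: (a) $X$ is a Noetherian scheme; (b) $X$ is smooth and connected; (c) $X$ is proper; (d) the canonical comparison functor $\cal T \rar \perf X$ is a tt-equivalence. For (a), Hom-finiteness and Karoubianness of $\cal T$ inherited from the hypothesis $\FM \cal T \neq \emp$, together with Balmer's local description of $\ecal O_X$, should produce a finite affine cover. For (b), the rigid form $- \tens K$ of the Serre functor is the crucial input: locally $K$ must behave as a shifted line bundle, so $X$ is Gorenstein with invertible dualizing sheaf, and smoothness should follow by transferring smoothness from any known FM partner $Y \in \FM \cal T$ through the equivalence $\cal T \simeq \perf Y$. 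For (d), once $X$ is known to be smooth and proper, one invokes Orlov-type representability results to promote the comparison functor to a Fourier-Mukai equivalence matching $\tens$ with $\tens_{\ecal O_X}^\bb L$.

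The hard part will be step (c), extracting properness of $X$ from the abstract tt-data. Properness of a Balmer spectrum is not directly detected by the triangulated category alone; rather it is encoded in finiteness conditions on morphism spaces, which in turn are reflected precisely in Serre duality. A plausible strategy is therefore to exploit Serre duality on $\cal T$ coming from $\bb S = - \tens K$ to yield an analogue of Serre's coherent duality on $X$, from which properness of $X$ over the base field should be forced. A parallel subtlety is that $\tens$-invertibility of $K$ is a priori a weaker condition than $K$ being a genuine shifted line bundle on $X$, so one must pin down the class of $K$ in the Picard-like group of $\cal T$, likely by using the existing FM partner $Y$ to match the numerical invariants of $K$ against those of $\omega_Y[\dim Y]$. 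Passing through a dg-enhancement, as discussed in Remark \ref{proper dg}, may well be unavoidable in order to make Orlov-type theorems available and to close the loop on step (d); this is consistent with the paper's own suggestion that $\pfmspec \cal T$ is the more natural object from the dg-categorical perspective.
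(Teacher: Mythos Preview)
The statement you are attempting to prove is presented in the paper as an \emph{open conjecture}, not a theorem; the paper offers no proof, only motivation (``I will wrap up this paper with the following conjecture, which is weaker than the previous conjectures yet more plausible from a smooth and proper dg-categorical point of view''). So there is nothing to compare your proposal against, and your write-up should be understood as a proposed attack on an open problem rather than a reconstruction of a known argument.

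That said, your plan has a structural gap. You aim to show that every Serre-geometric tt-structure $\tens$ is itself proper-geometric, i.e.\ that there is a tt-equivalence $(\cal T,\tens)\simeq(\perf X,\tens_{\ecal O_X}^{\bb L})$ with $X$ smooth proper. This is strictly stronger than the conjecture, which only asks that each prime ideal of $\tens$ lie in $\spec_{\tens_{Y,\eta}}\cal T$ for \emph{some} proper Fourier--Mukai partner $Y$ (possibly varying with the point). Aiming for the stronger statement may simply be false even if the conjecture holds.

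More concretely, step (d) of your plan presupposes a ``canonical comparison functor'' $\cal T\to\perf(\spec_\tens\cal T)$; no such functor exists in general. Balmer's reconstruction gives an isomorphism of \emph{ringed spaces} $X\cong\spec_{\tens_{\ecal O_X}^{\bb L}}\perf X$, not a mechanism for recovering the category from the spectrum. Example~\ref{p1}(ii) exhibits a tt-structure $\boxtimes$ on $\perf(X\sqcup X)$ with $\spec_\boxtimes\perf(X\sqcup X)\cong X$ while $\perf(X\sqcup X)\not\simeq\perf X$; your outline offers no way to exclude analogous phenomena in the Serre-geometric setting. Similarly, in step (b) you propose to ``transfer smoothness'' from a known partner $Y\in\FM\cal T$, but the equivalence $\cal T\simeq\perf Y$ has no a priori compatibility with the given $\tens$, so nothing about the geometry of $\spec_\tens\cal T$ is forced by the geometry of $Y$. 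The honest status of your steps (b)--(d) is that each is itself an open problem of roughly the same difficulty as the conjecture.
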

\subsection*{Acknowledgement} \ 

The author is grateful to his advisor, David Nadler, for helpful discussions and continuous supports. Additionally, the author would like to thank Paul Balmer, Sira Gratz, Yuki Hirano, Hiroki Matsui and Genki Ouchi for their insightful suggestions and helpful comments.

\section{Preliminaries}\label{preliminaries}
Throughout this paper, we will work over an algebraically closed field $k$ of characteristic $0$. In particular, a scheme and an algebra are assumed to be over $k$ and a variety is assumed to be an integral and separated scheme of finite type over $k$.   
\begin{notation} For conventions on triangulated categories and (compatible) monoidal structures, we will mainly follow \cite{sandersthesis} except for part (ii).
    \begin{enumerate}
        \item A triangulated category is assumed to be $k$-linear and \textbf{essentially small} (i.e., having a set of isomorphism classes of objects) and functors/structures are $k$-linear.
        \item When we refer to a triangulated functor $\eta$, we will not fix a natural isomorphism $\eta(-[1])\iso \eta(-)[1]$ following the convention of \cite{HuyBook}. In particular, being a triangulated functor is a property of a $k$-linear functor between triangulated categories and thus a natural isomorphism of triangulated functors means a natural isomorphism of $k$-linear functors.  
        \item Subcategories are assumed to be full and \textbf{replete} (i.e., closed under isomorphism), which is automatic for triangulated subcategories.
        \item When we refer to a (symmetric) monoidal category, we will only write its underlying category $\cal C$, the binary functor $\tens$, and possibly the identity object $\bb 1_\cal C$ and we will omit the associator, left and right unitors, and braidings from notations.
        \item When we refer to a strong monoidal functor between monoidal categories $(\cal C,\tens_\cal C,\bb 1_\cal C)$ and $(\cal D,\tens_\cal D,\bb 1_\cal D)$, we will write its underlying functor $F:\cal C \to \cal D$ and possibly the isomorphism $\epsilon_F:\bb 1_\cal D \to F(\bb 1_\cal C)$, but we will omit the natural isomorphism $\mu_F:F(-)\tens_\cal D F(-) \to F(-\tens_\cal C -)$ from notations. \qedhere
    \end{enumerate}
      
\end{notation}
\begin{definition}\ 
\begin{enumerate}
    \item A \textbf{tt-category (tensor triangulated category)} is a triangulated category $\cal T$ with a $k$-linear symmetric monoidal structure $\tens:\cal T \times \cal T \to \cal T$ which is triangulated in each variable, called a \textbf{tt-structure}. 
    \item A strong symmetric monoidal functor (resp. a strong symmetric monoidal equivalence) between tt-categories is said to be a \textbf{tt-functor} (resp. a \textbf{tt-equivalence}) if its underlying functor is a triangulated functor. \qedhere
\end{enumerate}
\end{definition}
Although tt-categories are ubiquitous in many fields (cf. e.g. \cite{balmer2020guide}), we will primarily focus on ones coming from algebraic geometry and quiver representations. Before introducing them, let us fix some notations for the latter. For example, see \cite{Elagin_2022}.
\begin{construction}
Let $ \sf{Q} $ be a quiver and let $ \sf{Q} _0$ (resp. $ \sf{Q} _1$) denote the set of vertices (resp. arrows) of $ \sf{Q} $ together with the source and the target maps denoted by $s,t: \sf{Q} _1 \to  \sf{Q} _0$. Now let $R = k  \sf{Q} $ denote the path algebra of $ \sf{Q} $ and let $M$ be a right $R$-module. For a vertex $v \in  \sf{Q} _0$, let $M_v$ denote the subspace $Me_v \subset M$, where $e_v \in R$ denotes a path of length $0$ at $v$. Note we have a decomposition  
\[
M = \oplus_{v \in  \sf{Q} _0} M_v 
\]
by $k$-vector spaces and a quiver arrow $a$ from $u$ to $v$ induces a $k$-linear map $M_v \to M_u$. Therefore, giving a right $R$-module $M$ is equivalent to giving a collection $\{M_v\mid v \in  \sf{Q} _0\}$ of $k$-vector spaces and a collection $\{M_a:M_{s(a)} \to M_{t(a)} \mid a \in  \sf{Q} _1\}$ of $k$-linear maps and this data is called a \textbf{representation} of a quiver $ \sf{Q} $. Note we can similarly define maps of representations so that they are equivalent to homomorphisms of right $R$-modules. In particular, the category of right $k  \sf{Q} $-modules and the category of representations of a quiver $ \sf{Q} $ are equivalent. 

Now, for a vertex $v \in  \sf{Q} _0$, let $P_v$ denote the right  submodule $e_vR \subset R$. Then we clearly have 
\[
k { \sf{Q}}  = \oplus_{v\in { \sf{Q}} _0} P_v. 
\]
Therefore, $P_v$ are projective and indeed it can be shown that any indecomposable projective module is isomorphic to $P_v$ for a unique vertex $v \in  \sf{Q} _0$. Moreover, the Nakayama functor $\bb S$ (cf. Example \ref{serre functor examples}) sends each $P_v$ to an indecomposable injective module $I_v:=\bb S(P_v)$ and it can be shown that any indecomposable injective module is isomorphic to $I_v$ for a unique vertex $v \in  \sf{Q} _0$. Also, for $v \in  \sf{Q} _0$, let $S_v$ denote the corresponding simple right $R$-module $k$, on which the idempotent $e_v$ acts by the identity and any other path acts by zero. 

Furthermore, if $ \sf{Q} $ is finite and has no oriented cycles, then we can order $ {\mathsf{Q} _0} = \{1,\dots, n\}$ compatibly with arrows and then it is a standard result that $\bra{P_1,\dots,P_n}$ is a full and strong exceptional collection of $\perf R$ (cf. Example \ref{example of tt cateogry} (ii)) and $R \iso \End_R(\oplus_i P_i)$. From now on, we will assume a finite quiver with no oriented cycles is ordered. 

Finally, note that given two representations $M= (\{M_v\mid v \in  \sf{Q} _0\},\{M_a:M_{s(a)} \to M_{t(a)} \mid a \in  \sf{Q} _1\})$ and $M' = (\{M_v'\mid v \in  \sf{Q} _0\},\{M_a':M_{s(a)}' \to M_{t(a)}' \mid a \in  \sf{Q} _1\})$ of a quiver $\sf Q$ (or equivalently given two right $k\sf Q$-modules $M$ and $M'$), we can define a \textbf{quiver tensor product} $M\tens_\sf{rep} M'$ to be the vertex-wise and arrow-wise tensor product
\[
(\{M_v\tens_k M_v'\mid v \in  \sf{Q} _0\},\{M_a\tens_k M'_a:M_{s(a)}\tens_k M_{s(a)} \to M_{t(a)} \tens_k  M_{t(a)} \mid a \in  \sf{Q} _1\}).
\]
Note the quiver tensor product defines a symmetric monoidal structure on the category of right $k\sf Q$-modules. 
\end{construction}
Now, the following are main examples we will deal with:
\begin{example}\label{example of tt cateogry} \ 
    \begin{enumerate}
        \item Let $X$ be a noetherian scheme and let $\D(X)$ denote the derived category of $\ecal O_X$-modules. A complex $\ecal F$ of $\ecal O_X$-modules is said to be \textbf{perfect} if it is locally quasi-isomorphic to a bounded complex of locally free $\ecal O_X$-modules of finite rank. Define $\perf X \subset \D(X)$ to be the full subcategory on perfect complexes. Then, the derived tensor product $\tens_{\ecal O_X}^\bb L$ gives a tt-structure on $\perf X$. Recall when $X$ is a smooth variety, $\perf X$ is equivalent to the bounded derived category $\D^b\coh X$ of coherent $\ecal O_X$-modules.  
        \item Let $\sf{Q}$ be a finite quiver with no oriented cycle and let $\D(k\sf{Q})$ denote the derived category of right $k\sf Q$-modules. A complex of right $k\sf Q$-modules are said to be \textbf{perfect} if it is quasi-isomorphic to a bounded complex of finitely generated projective right $k \sf Q$-module. Let $\perf k \sf Q \subset \D(k\sf Q)$ denote the full subcategory on perfect complexes. Then, the derived tensor product $\tens_{\sf{rep}}^\bb L$ gives a tt-structure on $\perf k\sf Q$. Note that since $k\sf Q$ is finite-dimensional (and hence noetherian) and has finite global dimension, we see that $\perf k\sf Q$ is equivalent to the bounded derived category $\D^b \mod^\fg k\sf Q$ of finitely generated right $k\sf Q$-modules.\qedhere 
    \end{enumerate}
\end{example}

In particular, those triangulated categories admit Serre-functors.
\begin{definition}
    Let $\cal T$ be a triangulated category with finite dimensional Hom-spaces. A \textbf{Serre functor} $\bb S : \cal T \simeq \cal T$ is a triangulated autoequivalence such that for any $F,G \in \cal T$, there is a functorial isomorphism
        \[
        \hom_\cal T (F,G) \iso \hom_\cal T(B,\bb S(A))^*,
        \]
        where $(-)^*$ denotes the dual of a vector space over $k$. It is well-known that a Serre functor, if exists, is unique up to natural isomorphism and commutes with any $k$-linear autoequivalence of $\cal T$ (e.g. \cite{HuyBook}*{Lemma 1.30}).
\end{definition}
\begin{example}\label{serre functor examples} \ 
\begin{enumerate}
    \item  If $\cal T = \perf X$ for a smooth proper variety $X$, then we have $\bb S = -\tens_{\ecal O_X} \omega_X[\dim X]$, where $\omega_X$ denotes the canonical bundle of $X$. In particular, if $X$ has the trivial canonical bundle, then $\bb S$ is given by the shift functor $[\dim X]$.
    \item If $\cal T = \perf kQ$ for a finite quiver $\sf Q$ with no oriented cycle, then a Serre functor $\bb S$ is given by the Nakayama functor, which is the composition $\bb R \hom_{k\sf Q}(-,k\sf Q) \circ \bb R \hom_k(-,k)$. \qedhere
\end{enumerate}
\end{example}
Let us recall some facts about the following simplest case:
\begin{example}\label{a2}
    Let $\sf Q$ be the $\sf A_2$-quiver $1 \to 2$ and let $R = k \sf Q$. Then it is easy to see there are only three indecomposable representations:
    \begin{itemize}
        \item $(k \leftarrow 0) \iso e_1 R = P_1 = S_1$;
        \item $(0 \leftarrow k) \iso S_2 = I_2$;
        \item $(k \overset{\sim}{\leftarrow} k) \iso e_2R = P_2 = I_1$; 
    \end{itemize}
    and thus the Nakayama functor $\bb S$ acts by $\bb S(P_1) = I_1 = P_2$, $\bb S(P_2) = I_2 = S_2$, and $\bb S(S_2) = P_1[1]$, where the last equality follows since $\bb S$ is triangulated and we have a distinguished triangle $P_1 \to P_2 \to I_2 \to P_1[1]$.      
\end{example}
Now, we are going to introduce tt-spectra. 
\begin{construction}
    Let $\Th\cal T $ denote the lattice of \textbf{thick subcategories} (i.e., triangulated subcategories that are closed under direct summand) with respect to inclusions.  For a full subcategory $\cal E$ of $\cal T$, define
    \[
    V(\cal E): = \{\cal I \in \Th\cal T  \mid \cal I \cap \cal E = \emp \}. 
    \] 
    It is easy to see we can define a topology on $\Th\cal T $ by defining closed subsets to be subsets of the form $V(\cal E)$ for some full subcategory $\cal E$ of $\cal T$. Moreover, for $F \in \cal T$, let $\bra F$ denote the smallest thick subcategory of $\cal T$ containing $F$.   
\end{construction}
\begin{definition}[\cite{Balmer_2002}]
    Let $(\cal T,\tens)$ be a tt-category. A thick subcategory $\cal I$ of $\cal T$ is said to be an \textbf{ideal} if for any $F \in \cal I$ and $G \in \cal T$, we have $F \tens G \in \cal I$ and an ideal $\cal P$ is said to be a \textbf{prime ideal} if it is a proper subcategory of $\cal T$ and $F\tens G \in \cal P$ implies $F \in \cal P$ or $G \in \cal P$. Define the \textbf{tt-spectrum} $\spc_\tens \cal T \subset \Th\cal T $ of $(\cal T,\tens)$ to be the subspace of prime ideals of $(\cal T,\tens)$.
\end{definition}
Now, let us construct a structure sheaf on a tt-spectrum. 
\begin{construction}[\cite{Balmer_2005}]\label{str sheaf of tt}
    Let $(\cal T,\tens)$ be a tt-category with unit $\bb 1$. For an open subset $U \subset \spc_\tens \cal T$, set $\cal T^U:=\cap_{\cal P\in U} \cal P$ and let $\cal T(U)$ denote the tt-category
    \[
    \cal T(U):= (\cal T/\cal T^U)^\natural
    \]
    with unit $\bb 1_U$, where $(-)^\natural$ denotes the idempotent completion. Now, define a structure sheaf $\ecal O_{\cal T,\tens}$ on $\spc_\tens \cal T$ to be the sheafification of the presheaf $\ecal O_{\cal T,\tens}^\sf{pre}$ given by
    \[
    U \mapsto \End_{\cal T(U)}(\bb 1_U). 
    \]
    Here, we need to fix some models of localization and idempotent completion to ensure well-definedness of restriction ring homomorphisms (cf. \cite{Balmer_2002}*{\href{https://www.math.ucla.edu/~balmer/Pubfile/Reconstr.pdf}{Proposition 5.3}}). Let $\spec_\tens \cal T$ denote the ringed space $(\spc_\tens \cal T, \ecal O_{\cal T,\tens})$, which we again call the \textbf{tt-spectrum} of $(\cal T, \tens)$. Note that $\spec_\tens \cal T$ is canonically a ringed space over $k$, i.e., there is a canonical morphism $\spec_\tens \cal T \to \spec k$ of ringed spaces. 
\end{construction}
Let us make the following observation for future ease. 
\begin{lemma}\label{identical tt-spectrum}
    Let $\cal T$ be a triangulated category and let $\tens$ and $\tens'$ be tt-structures with units $\bb 1$ and $\bb 1'$, respectively. Suppose for any $X,Y \in \cal T$, $X\tens Y \iso X\tens' Y$. Then, $\spc_\tens\cal T = \spc_{\tens'} \cal T \subset \Th \cal T$. If we moreover have $\bb 1 = \bb 1'$, then the structure sheaves $\ecal O_{\cal T,\tens}$ and $\ecal O_{\cal T,\tens'}$ are the same (not just naturally isomorphic) as contravariant functors from the category of open subsets of $\spc_\tens\cal T = \spc_{\tens'} \cal T$ to the category of sets.
\end{lemma}
\begin{proof}
    Let $\cal I$ be a thick subcategory of $\cal T$. Since $\cal I$ is replete, it is a prime ideal with respect to $\tens$ if and only if it is with respect to $\tens'$, i.e., $\spc_\tens \cal T = \spc_{\tens'}\cal T$. Now, if we moreover have $\bb 1 = \bb 1'$, then for any open subspace $U \subset \spc_\tens \cal T = \spc_{\tens'} \cal T$, we have $\ecal O_{\cal T,\tens}^\sf{pre}(U) = \End_{\cal T(U)}(\bb 1_U) = \End_{\cal T(U)}(\bb 1_U')  = \ecal O_{\cal T,\tens'}^\sf{pre}(U)$ and since the restriction maps do not depend on the choices of tt-structure by construction, we are done.
\end{proof}
Now, tt-spectra are functorial in the following sense.
\begin{lemma}[\cite{Balmer_2005}*{\href{https://arxiv.org/pdf/math/0409360.pdf}{Proposition 3.6}}, \cite{gallauer2019tt}*{\href{https://arxiv.org/pdf/1708.00834.pdf}{Lemma A.1}}]\label{functoriality tt}
    Let $(\eta,\epsilon_\eta):(\cal T,{\tens_{\cal T}})\to (\cal T',\tens_{\cal T'})$ be a tt-functor. Then, we have a canonical morphism of ringed spaces over $k$: Over the underlying topological space, we have
    \[
    \spc_\tens(\eta):\spc_{\tens_{\cal T'}} \cal T' \to \spc_{\tens_{\cal T}} \cal T;\quad \cal P \mapsto \eta\inv(\cal P).
    \]
    A sheaf map on the structure sheaves is given as follows. For an open subset $U \subset \spec_{\tens_\cal T} \cal T$, define a $k$-algebra map
    \[
    \End_{\cal T(U)}(\bb 1_U) \to \End_{\cal T'(\spc_\tens(\eta)\inv(U))}(\bb 1_{\spc_\tens(\eta)\inv(U)});\quad f \mapsto \epsilon_\eta\inv \circ \eta(f)\circ \epsilon_\eta
    \]
    and we can sheafify to obtain a desired map. In particular, $\spec_\tens$ defines a functor from the category of tt-categories over $k$ to the category of ringed spaces over $k$. Moreover, if we restrict $\spec_\tens$ to the category of rigid tt-categories (cf. Definition \ref{trig cat terminology}), then $\spec_\tens$ factors through the category of locally ringed spaces over $k$ (with local morphisms). 
\end{lemma}
\begin{remark}\label{subtlty of natural iso}
    Let $(\eta,\epsilon_\eta)$ and $(\eta',\epsilon_{\eta'})$ be tt-functors $(\cal T,{\tens_{\cal T}})\to (\cal T',\tens_{\cal T'})$ and suppose the underlying functors $\eta$ and $\eta'$ are naturally isomorphic by $\alpha:\eta \simeq \eta'$. Then, $\spc_\tens(\eta)=\spc_\tens(\eta')$ since triangulated subcategories are replete, but $\spec_\tens(\eta)$ and $\spec_\tens(\eta')$ may a priori differ as morphisms of ringed spaces. Note if we moreover have $\alpha_\bb 1 \circ \epsilon_\eta = \epsilon_{\eta'}$ (e.g. if $\alpha$ is a monoidal natural isomorphism), then $\spec_\tens(\eta)$ and $\spec_\tens(\eta')$ clearly induce the same morphism of ringed spaces. 
\end{remark}
On the other hand, triangular spectra are defined as follows: 
\begin{definition}[\cite{Matsui_2021}]
    Let $\cal T$ be a triangulated category. A thick subcategory $\cal P$ of $\cal T$ is said to be \textbf{prime} if there exists the smallest element in the subposet $\{\cal Q \in \Th\cal T  \mid \cal P \subsetneq \cal Q\} \subset \Th \cal T$. Define the \textbf{triangular spectrum} $\spc_\vartriangle \cal T \subset \Th \cal T $ to be the subspace of prime thick categories of $\cal T$.
\end{definition}
To define a structure sheaf, let us recall the following notions:
\begin{definition}\label{center}
     Let $\cal T$ be a triangulated category. Define the \textbf{center} $Z(\cal T)$ of a triangulated category $\cal T$ to be the commutative ring of natural transformations $\alpha: \id_\cal T \to \id_\cal T$ with $\alpha[1] = [1] \alpha$, where the multiplication is defined by composition. We say an element $\alpha \in Z(\cal T)$ is \textbf{locally nilpotent} if $\alpha_M$ is a nilpotent element in the endomorphism ring $\End_\cal T(M)$ for each $M$. Let $Z(\cal T)_\sf{lnil}$ denote the ideal of locally nilpotent elements and write $Z(\cal T)_\sf{lrad}:= Z(\cal T)/Z(\cal T)_\sf{lnil}$. 
\end{definition}
\begin{construction}\label{str sheaf of t}
Let $\cal T$ be a triangulated category. Define the structure sheaf $\ecal O_{\cal T,\vartriangle}$ on $\spc_\vartriangle \cal T$ to be the sheafification of the presheaf $\ecal O_{\cal T,\vartriangle}^\sf {pre}$ given by
\[
U \mapsto Z(\cal T(U))_\sf{lred}. 
\]
Let $\spec_\vartriangle\cal T$ denote the ringed space $(\spc_\vartriangle \cal T, \ecal O_{\cal T,\vartriangle})$. 
\end{construction}
For triangular spectra, we cannot obtain full functoriality and some cares are needed. 
\begin{definition}
    A triangulated functor $\eta:\cal T \to \cal T'$ between triangulated categories $\cal T,\cal T'$ is said to be \textbf{dense} if for any $M' \in \cal T'$ there exist $M \in \cal T$ and $N'\in \cal T'$ such that $\eta(M)\iso M'\oplus N'$. For example, essentially surjective functors and the idempotent completion functor are dense. 
\end{definition}
\begin{lemma}[\cite{matsui2023triangular}*{\href{https://arxiv.org/pdf/2301.03168.pdf}{Proposition 4.2}}]\label{functoriality t}
    Let $\eta:\cal T \to \cal T'$ be a fully faithful dense triangulated functor and fix a quasi-inverse $\eta\inv: \im \eta \to \cal T$ so that $\eta\inv(\eta(X)) = X$ for $X \in \cal T$, where $\im \eta$ denotes the essential image of $\eta$. Then, we have a canonical morphism 
    \[
    \spc_\vartriangle(\eta):\spc_{\vartriangle} \cal T' \to \spc_\vartriangle \cal T;\quad \cal P \mapsto \eta\inv(\cal P),
    \]
    where a map on structure sheaves is given as follows: for an open subset $U \subset \spc_\vartriangle \cal T$, the ring homomorphism 
    \[
     Z(\cal T'(\spc_\vartriangle(\eta)\inv(U)))_\sf{lred} \to Z(\cal T(U))_\sf{lred};\quad \alpha \mapsto \eta\inv(\alpha_{\eta_{(-)}})
    \] 
    is an isomorphism and we take its inverse. 
\end{lemma}
To state relations of tt-spectra and triangular spectra, let us recall the following notions:
\begin{definition}\label{trig cat terminology}
    A tt-category $(\cal T,\tens)$ with unit $\bb 1$ is said to be:
    \begin{enumerate}
        \item \textbf{monogenic} (resp. \textbf{locally monogenic}) if $\bra{\bb 1} = \cal T$ (resp. if for any prime ideal $\cal P \in \spc_\tens \cal T$, there exists a quasi-compact open neighborhood $U \subset \spc_\tens \cal T$ such that $\bra{\bb 1_U} = \cal T(U)$);
        \item \textbf{rigid} if the functor $-\tens M$ admits a right adjoint $[M,-]$ for any object $M \in \cal T$ and if the canonical map $[M,\bb 1] \tens N \to [M,N]$ is an isomorphism for any object $M,N \in \cal T$.\qedhere
    \end{enumerate}
\end{definition}
\begin{lemma}[\cite{matsui2023triangular}*{\href{https://arxiv.org/pdf/2301.03168.pdf}{Theorem 1.2}}]\label{noetherian hypothesis}
    Let $(\cal T,\tens)$ be an idempotent complete, rigid, and locally monogenic tt-category and suppose $\spc_\tens \cal T$ is a noetherian topological space. Then, if a thick subcategory $\cal P$ is a prime ideal, then it is a prime thick subcategory. Moreover, the inclusion $\spc_\tens \cal T \subset \spc_\vartriangle \cal T$ is an embedding of topological spaces, i.e., it is a homeomorphism onto its image.   
\end{lemma}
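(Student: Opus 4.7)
The plan is to handle the set-theoretic inclusion and the topological claim in turn, relying throughout on Balmer's classification which, under the noetherian hypothesis on $\spec_\tens \cal T$, puts the radical thick $\tens$-ideals of $\cal T$ in inclusion-preserving bijection with the closed subsets of $\spec_\tens \cal T$ (since in a noetherian space every specialization-closed subset is closed). In particular, the poset of radical thick ideals satisfies the descending chain condition via the DCC on closed subsets of $\spec_\tens \cal T$. Note that both $\spec_\tens \cal T$ and $\spec_\vartriangle \cal T$ inherit their topology from $\Th \cal T$, so showing the inclusion is an immersion amounts to showing $\spec_\tens \cal T$ is locally closed in $\spec_\vartriangle \cal T$.

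For the first claim, given a prime ideal $\cal P$, the noetherian hypothesis produces a unique minimal radical thick $\tens$-ideal $\cal I_0$ strictly containing $\cal P$. I would aim to prove $\cal I_0$ is also the smallest thick (not necessarily $\tens$-closed) subcategory strictly containing $\cal P$, which immediately gives that $\cal P$ is a prime thick subcategory. Given any thick $\cal Q$ with $\cal P \subsetneq \cal Q$ and any $F \in \cal Q \setminus \cal P$, primality of $\cal P$ means no tensor product of objects outside $\cal P$ can lie in $\cal P$; combined with minimality of $\cal I_0$, this should force $\sqrt{\bra{\cal P \cup \{F\}}_\tens} = \cal I_0$. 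The crux is then to descend from the radical $\tens$-ideal generated by $\cal P \cup \{F\}$ to the thick subcategory $\bra{\cal P \cup \{F\}}$, which I would attempt by passing to the Verdier quotient $\cal T / \cal P$, where $\cal P$ collapses to zero and the minimality of $\cal I_0$ translates into a rigid "local" structure at the origin.

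For local closedness, I would use the noetherian decomposition $\spec_\tens \cal T = \overline{\{\cal P_1\}} \cup \cdots \cup \overline{\{\cal P_n\}}$ into finitely many irreducible components and consider their closures computed in the ambient $\spec_\vartriangle \cal T$. A prime thick subcategory lying in such an ambient closure but not in $\spec_\tens \cal T$ would be prime in the triangular sense but not a $\tens$-ideal; I would characterize the failure-of-being-an-ideal locus via an obstruction involving the $\tens$-action by a fixed object outside each generic $\cal P_i$, and argue that this locus is closed in $\spec_\vartriangle \cal T$ because it is cut out by conditions of the form $F \tens G \notin \cal R$ while $F, G \notin \cal R$. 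Removing this closed locus from the ambient closure then exhibits $\spec_\tens \cal T$ as open in its closure, hence locally closed.

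The main obstacle, I expect, lies in the upgrade from "smallest radical $\tens$-ideal above $\cal P$" to "smallest thick subcategory above $\cal P$" in the first claim. Since a thick subcategory is only required to be closed under cones, shifts, and direct summands and not under $\tens$, the thick hull of $\cal P \cup \{F\}$ could a priori be strictly smaller than the thick ideal it generates, and one must extract genuine collapsing between the two hulls. Primality of $\cal P$ is supposed to supply exactly the rigidity needed to force this collapse, and carefully isolating this mechanism is where I expect essential use of the noetherian hypothesis to enter.
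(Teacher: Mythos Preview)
The paper does not supply a proof of this lemma; it is quoted verbatim as a result of Matsui \cite{Matsui_2021}, so there is no in-paper argument against which to compare your sketch. I can, however, point to two places where your outline breaks down on its own terms.

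First, your opening premise is false: in a noetherian space it is \emph{not} true that every specialization-closed subset is closed (e.g.\ the set of closed points of $\spec \bb Z$). Balmer's classification therefore matches radical thick $\tens$-ideals with specialization-closed subsets, not closed ones, and the poset of radical ideals does \emph{not} inherit DCC from the noetherian hypothesis. Your mechanism for producing a unique minimal radical ideal $\cal I_0$ strictly above $\cal P$ thus fails as written. Such an $\cal I_0$ does exist, but for a different reason: noetherianness guarantees an object $F$ with $\supp F = \overline{\{\cal P\}}$, and one works with that $F$ rather than with an abstract chain condition.

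Second, and more seriously, you correctly isolate the crux---passing from ``smallest radical $\tens$-ideal above $\cal P$'' to ``smallest \emph{thick} subcategory above $\cal P$''---but then offer no mechanism to bridge it. Saying that primality ``supplies the rigidity needed to force this collapse'' is a restatement of the goal, not an argument: one must exhibit a specific object, independent of the thick $\cal Q \supsetneq \cal P$ chosen, that lies in every such $\cal Q$. Your local-closedness sketch has the same problem: the conditions ``$F \tens G \notin \cal R$ while $F,G \notin \cal R$'' are not of the form $Z(\cal E)$ defining closed sets in $\spec_\vartriangle \cal T$, so it is unclear why the failure locus you describe would be closed.
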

\begin{remark}
    The supposition of the lemma above is satisfied for $(\perf X, \tens_{\ecal O_X}^\bb L)$ for a noetherian scheme (cf. \cite{matsui2023triangular}*{\href{https://arxiv.org/pdf/2301.03168.pdf}{Corollary 3.4}}). Note this particular case, which is crucially used in this paper, is also directly shown in the published paper \cite{Matsui_2021}*{\href{https://arxiv.org/pdf/2102.11317.pdf}{Corollary 3.8 (1)}}. On the other hand, $(\perf k\sf Q,\tens_{\sf{rep}})$ is not rigid in general (e.g. \cite{sandersthesis}*{\href{https://escholarship.org/content/qt47p1p5v2/qt47p1p5v2.pdf}{Example 5.6.13}}), so we cannot apply the lemma.   
\end{remark}
Now, the following are some examples of computations of those spectra. 
\begin{theorem}[\cite{Balmer_2010}*{\href{https://www.math.ucla.edu/~balmer/Pubfile/TTG.pdf}{Theorem 54}}\label{balmer}]
    Let $X$ be a quasi-compact and quasi-separated scheme. Then, there is an isomorphism 
    \[
    \cal S_X:X \overset{\sim}{\to} \spec_{\tens_{\ecal O_X}^\bb L} \perf X 
    \]
    of ringed spaces, where the continuous map on the underlying topological spaces is given by
    \[
    x\mapsto \cal S_X(x):= \{ \ecal F \in \perf X \mid x \not \in \supp \ecal F\}.
    \]
    Here, for $\ecal F \in \perf X$ we define the \textbf{support} of $\ecal F$ to be
    \[
    \supp \ecal F :=\{x \in X \mid \text{$\ecal F_x \not \iso 0$ in $\perf \ecal O_{X,x}$}\}.\qedhere
    \]
\end{theorem}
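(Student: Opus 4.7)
The plan is to split the proof into a topological part and a structure-sheaf part, and in each part to reduce to the affine case by exploiting the compatibility of $\spec_\tens$ with open immersions. As a preliminary, verify that $\cal S_X(x)$ is a prime ideal: it is the kernel of the stalk-at-$x$ tt-functor $\perf X \to \perf \ecal O_{X,x}$, hence a thick tensor ideal, and primeness follows from the identity $\supp(\ecal F \tens^{\bb L} \ecal G) = \supp \ecal F \cap \supp \ecal G$ for perfect complexes, since $\ecal F \tens^{\bb L} \ecal G \in \cal S_X(x)$ then forces $x \notin \supp \ecal F$ or $x \notin \supp \ecal G$.

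For a qcqs open $U \subset X$, the plan is to invoke the Thomason--Neeman extension theorem to show that the restriction $\perf X \to \perf U$ is a Verdier quotient by the thick ideal $\cal K_{X\setminus U}$ of perfect complexes acyclic on $U$. This gives a natural identification of $\spec_\tens \perf U$ with the open subspace $\{\cal P \in \spec_\tens \perf X \mid \cal K_{X\setminus U} \subset \cal P\}$, and a routine check shows that $\cal S_X$ restricts to $\cal S_U$ under this identification. Choosing an affine cover of $X$, it therefore suffices to treat $X = \spec A$.

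For the affine topological part, I would apply the Hopkins--Neeman--Thomason classification: thick tensor ideals of $\perf A$ correspond bijectively to Thomason subsets of $\spec A$ via $\cal I \mapsto \bigcup_{\ecal F \in \cal I} \supp \ecal F$. A tensor ideal is prime if and only if the complement of the associated Thomason subset is an irreducible generalization-closed subset, hence of the form $\{\mathfrak q \subset \mathfrak p\}$ for a unique $\mathfrak p \in \spec A$. Unraveling identifies the prime ideal attached to $\mathfrak p$ with $\cal S_X(\mathfrak p)$, so $\cal S_X$ is a bijection. Continuity in both directions is straightforward from the definitions: a basic open $\{\cal P \mid \ecal F \notin \cal P\}$ of $\spec_\tens \perf X$ pulls back along $\cal S_X$ to $X \setminus \supp \ecal F$, an open of $X$.

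For the structure sheaf, recall that $\ecal O_{\spec_\tens \perf X}$ is the sheafification of $U \mapsto \End_{\perf X / \cal K_{X\setminus U}}(\ecal O_X)$; on a basic affine open $D(f) \subset \spec A$ the quotient identifies with $\perf A_f$ and the endomorphism ring becomes $A_f = \Gamma(D(f), \ecal O_X)$, giving the affine-case isomorphism of ringed spaces; gluing over an affine cover yields the global statement. The main technical hurdle is the pair of inputs from the Balmer--Thomason theory---the classification of thick tensor ideals and the description of restriction as a Verdier quotient by complexes acyclic on the complement---after which the remaining verifications reduce to naturality of both $\cal S$ and the endomorphism-sheaf construction under restriction to smaller open subsets.
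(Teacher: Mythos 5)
You are quoting this result from Balmer (\cite{Balmer_2010}, Theorem 54); the paper itself gives no proof, so your proposal has to be measured against the standard argument in the literature. Your overall architecture is the right one and matches it in outline: $\cal S_X(x)$ is prime because $\supp(\ecal F\tens_{\ecal O_X}^{\bb L}\ecal G)=\supp\ecal F\cap\supp\ecal G$, restriction to a quasi-compact open is (up to idempotent completion) a Verdier quotient by the complexes acyclic on that open, the classification of thick tensor ideals by Thomason subsets is the key input, and on a basic open $D(f)$ the endomorphism ring of the unit is $A_f$. However, there is a genuine gap at the heart of the affine topological step: the claimed equivalence ``a tensor ideal is prime if and only if the complement of its Thomason subset is an irreducible generalization-closed subset, hence of the form $\{\fr q\subseteq\fr p\}$'' is false in both directions. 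In $\spec k[x,y]$, the ideal of perfect complexes supported in the set of closed points corresponds to a Thomason subset whose complement (the primes of height at most $1$) is generalization-closed and irreducible, since every nonempty relatively open subset contains $(0)$; yet this ideal is not prime, because complexes supported on $V(x)$ and on $V(y)$ have tensor product supported at the origin. Conversely, for $A=k[x,y]/(xy)$ and $\fr p=(x,y)$, the ideal $\cal S_X(\fr p)$ is prime, but the complement $\{\fr q\subseteq\fr p\}=\{(x),(y),(x,y)\}$ is reducible. So irreducibility is neither necessary nor sufficient, and the conclusion ``hence of the form $\{\fr q\subseteq\fr p\}$ for a unique $\fr p$'' does not follow from it in any case.

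What this step must actually deliver is surjectivity of $\cal S_X$: given an abstract prime $\cal P$, produce a point $x$ with $x\in\supp\ecal F$ for every $\ecal F\notin\cal P$ and $x\notin\supp\ecal F$ for every $\ecal F\in\cal P$. In Balmer's proof this is a compactness argument, not a pointwise irreducibility criterion: the supports $\supp\ecal F$ are Thomason-closed, hence closed in the constructible (patch) topology of the spectral space $X$; the family $\{\supp\ecal F\mid\ecal F\notin\cal P\}$ has the finite intersection property because $\supp(\ecal F\tens_{\ecal O_X}^{\bb L}\ecal G)=\supp\ecal F\cap\supp\ecal G$ and supports detect zero objects, and one separates from each $\supp\ecal G$ with $\ecal G\in\cal P$ using the classification ($\supp\ecal F\subseteq\supp\ecal G$ would force $\ecal F$ into the radical ideal generated by $\ecal G$, hence into $\cal P$); compactness of the patch topology then yields the desired point, and injectivity follows from separating distinct points by Thomason subsets. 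You need to replace your irreducibility claim by this argument (or an equivalent lattice-theoretic one via Stone--Hochster duality); as written, the identification of $\spc$-points with points of $\spec A$ is unsupported, while the rest of your outline is sound.
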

\begin{theorem}[\cite{SirLiu13}*{\href{https://www.pure.ed.ac.uk/ws/portalfiles/portal/16965040/Recovering_Quivers_from_Derived_Quiver_Representations.pdf}{(2.1.5.1), (2.2.4.1)}}]\label{quiver tt}
    Let $ Q $ be a finite quiver without oriented cycles. Then,
    \[
    \spec_{\tens_{\sf{rep}}^\bb L} \perf k Q  \iso \bigsqcup_{v \in  Q _0} \spec k
    \]
    as ringed spaces, where each $\spec k$ corresponds to the thick subcategory $\bra{P_v}$ for $v \in  Q _0$. 
\end{theorem}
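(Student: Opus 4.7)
The plan is to exhibit a prime ideal $\cal K_v$ for each vertex $v \in  Q _0$ as the kernel of an evaluation tt-functor, show these exhaust all primes, and check the structure sheaf. First I would unpack the tensor $\tens_k^{\bb L}$: it is induced by the standard bialgebra structure on $k Q $ in which vertex idempotents and arrows are group-like, and so on representations it becomes the pointwise product $(M \tens N)_v = M_v \tens_k N_v$ with arrow action $(M \tens N)_a = M_a \tens N_a$. The tensor unit $\mathbf{1}$ is the representation with $k$ at every vertex and the identity along every arrow.

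For each $v \in  Q _0$, the evaluation functor $\ev_v \colon \perf k Q  \to \perf k$, $M \mapsto M_v$, is then a $k$-linear exact tt-functor. Its kernel $\cal K_v := \{M \mid M_v \iso 0 \text{ in } \perf k\}$ is automatically a thick ideal, proper (since $\mathbf{1}_v = k$), and prime (since $M_v \tens_k N_v \iso 0$ forces $M_v \iso 0$ or $N_v \iso 0$). The $\cal K_v$ are pairwise distinct: $S_v \notin \cal K_v$ but $S_v \in \cal K_w$ for every $w \neq v$.

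The central step is showing that every prime $\fr p$ equals some $\cal K_v$. Since $S_u \tens S_v = 0 \in \fr p$ whenever $u \neq v$, primeness gives at most one vertex $v$ with $S_v \notin \fr p$; the simples generate $\perf k Q $ as a thick subcategory (using the hereditary nature of $k Q $ so that every object is a direct sum of shifts of modules filtered by simples), so such a $v = v(\fr p)$ exists and is unique. For any $M \in \fr p$ the identity $M \tens S_v \iso S_v^{\oplus \dim M_v}$ together with thickness forces $\dim M_v = 0$, hence $\fr p \subseteq \cal K_v$; conversely, $\fr p$ contains each $S_w$ for $w \neq v$, and a filtration argument shows $\cal K_v$ is the thick ideal generated by these, giving $\cal K_v \subseteq \fr p$. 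Then $\supp(S_v) = \{\cal K_v\}$ yields the discrete topology, and the structure sheaf at each $\cal K_v$ is $\End$ of the unit in $\perf k Q  / \cal K_v \iso \perf k$, namely $k$, delivering the ringed-space isomorphism with $\bigsqcup_{v \in  Q _0} \spec k$.

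I expect the main obstacle to be the filtration and generation step: showing that $\cal K_v$ is the thick ideal generated by $\{S_w : w \neq v\}$, and that the simples generate $\perf k Q $ as a thick subcategory. This rests on the hereditarity of $k Q $ (so that every complex in $\perf k Q $ is a direct sum of shifted modules) together with the existence of composition series by simples in $\mod k Q $, with the caveat that one must carefully track extensions to ensure modules supported off $v$ really are generated by the $S_w$ with $w \neq v$ inside $\perf k Q $.
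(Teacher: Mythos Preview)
The paper does not supply its own proof of this statement; it is quoted from \cite{SirLiu13} without argument. Your approach is the standard one and is correct: the pointwise tensor makes each evaluation $\ev_v:\perf kQ\to\perf k$ a tt-functor, its kernel $\cal K_v$ is a proper prime ideal, the relations $S_u\tens S_v=0$ for $u\neq v$ force every prime to contain all but one simple, and the composition-series argument (using that $kQ$ is hereditary so every perfect complex is a sum of shifted modules) shows $\cal K_v$ is exactly the thick subcategory generated by $\{S_w:w\neq v\}$, hence contained in any prime omitting $S_v$. The generation step you flag as the main obstacle is genuinely the only point requiring care, and your outline handles it.

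One discrepancy is worth recording: the primes you produce are $\cal K_v=\bra{S_w:w\neq v}$, and these are \emph{not} in general the subcategories $\bra{P_v}$ named in the theorem as stated. Already for $A_2$ the unit is $\bb 1=P_2$, and $\bra{P_2}$ is not even a $\tens_k$-ideal (since $P_2\tens S_1\iso S_1\notin\bra{P_2}$); for $A_3$ one finds $\bra{P_1}=\bra{S_1}=\cal K_2\cap\cal K_3$, which is an ideal but not prime. So the clause identifying the points with $\bra{P_v}$ appears to be a slip in the statement; your description of the primes as the evaluation kernels is the correct one, and it matches the count $|Q_0|$ required for the ringed-space isomorphism.
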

\begin{theorem}[\cite{matsui2023triangular}*{\href{https://arxiv.org/pdf/2301.03168v1.pdf}{Theorem 1.2}}]\label{matsui}
    Let $X$ be a reduced projective scheme. Then, there is an open immersion morphism
    \[
    \cal S_X^\vartriangle: X \to \spec_\vartriangle \perf X 
    \]
    of ringed spaces, whose underlying continuous map agrees with the one in Theorem \ref{balmer}. In particular, it is an embedding of topological spaces with image $\spc_{\tens_{\ecal O_X}^\bb L} \perf X$. Moreover, it is an open immersion of ringed spaces if the image $\spc_{\tens_{\ecal O_X}^\bb L} \perf X$ is open in $\spc_\vartriangle \perf X$. 
\end{theorem}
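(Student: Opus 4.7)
The plan is to build $\cal S_X^\vartriangle$ as the composition of Balmer's ringed-space isomorphism from Theorem \ref{balmer} with the topological immersion provided by Lemma \ref{noetherian hypothesis}. First I would verify the hypothesis of the latter: since $X$ is a noetherian scheme and, by Theorem \ref{balmer}, $X \iso \spec_{\tens_{\ecal O_X}^\bb L}\perf X$ as ringed spaces, the tt-spectrum is automatically a noetherian topological space. Lemma \ref{noetherian hypothesis} then yields an immersion $\spec_{\tens_{\ecal O_X}^\bb L}\perf X \hookrightarrow \spec_\vartriangle \perf X$ of topological spaces, and composing with Balmer's homeomorphism gives the continuous map underlying $\cal S_X^\vartriangle$. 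The description $x \mapsto \{\ecal F \in \perf X \mid x \notin \supp \ecal F\}$ is then immediate from Theorem \ref{balmer}.

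Next I would upgrade this to a morphism of ringed spaces. After unpacking Matsui's definition of $\ecal O_{\spec_\vartriangle \perf X}$, whose sections on basic opens $\spec_\vartriangle \perf X \setminus Z(\cal E)$ are built from endomorphism-type rings associated with appropriate Verdier quotients, one would construct a sheaf map $(\cal S_X^\vartriangle)^{-1} \ecal O_{\spec_\vartriangle \perf X} \to \ecal O_X$ out of the canonical identifications $\End_{\perf U}(\ecal O_U) \iso \ecal O_X(U)$ and their compatibility under restriction. By design, restricting along the locally closed embedding of the tt-locus should recover the structure sheaf isomorphism in Theorem \ref{balmer}, since Balmer's sheaf is assembled from the same endomorphism rings of the unit.

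Granted these two steps, the last clause follows formally: when the image $\spec_{\tens_{\ecal O_X}^\bb L}\perf X$ is open in $\spec_\vartriangle \perf X$, the map $\cal S_X^\vartriangle$ is an open topological embedding, and the sheaf map above is already an isomorphism onto its image by the previous paragraph, so the composition is an open immersion of ringed spaces.

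The hard part will be the sheaf-level comparison: the triangular spectrum's structure sheaf is defined without reference to a tensor product, yet must be shown to restrict to Balmer's on the tt-locus. This requires matching two presheaves of endomorphism-type rings (Matsui's, built from Verdier quotients, and Balmer's, built from localizations of the unit) under the bijection between tt-primes and the corresponding prime thick subcategories supplied by Lemma \ref{noetherian hypothesis}. Once this presheaf-level identification is secured, both the morphism of ringed spaces and the open-immersion upgrade fall out formally.
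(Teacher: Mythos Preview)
The paper does not prove this theorem; it is quoted from \cite{matsui2023triangular} as a black box in the preliminaries, and the paper explicitly says (just before Theorem \ref{balmer}) that it will not give the constructions of the ringed space structures. So there is no proof in the paper to compare your proposal against.

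That said, your outline is the natural strategy and matches how such a result is actually established: the topological immersion is exactly the content of Lemma \ref{noetherian hypothesis} applied after Theorem \ref{balmer} identifies $X$ with the tt-spectrum, and the substantive work is the sheaf-level comparison you flag at the end. Be aware, though, that as written your proposal is a plan rather than a proof: you never actually carry out the comparison of Matsui's structure sheaf on $\spec_\vartriangle \perf X$ with Balmer's on the tt-locus, and the paper gives you neither definition to work with. To turn this into a real argument you would need to import both sheaf constructions and verify the identification on a basis of opens, which is precisely what \cite{matsui2023triangular} does.
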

Now, let us see some examples to get some sense of those definitions. We will come back to those examples later. 
\begin{example}\label{basic example} \ 
    \begin{enumerate}
        \item \cite{matsui2023triangular}*{\href{https://arxiv.org/pdf/2301.03168v1.pdf}{Corollary 4.9}} Let $X = \bb P^1_k$. Then we have
        \[
        \spec_\vartriangle \perf \bb P^1_k \iso \bb P^1_k \sqcup \bigsqcup_{i \in \bb Z} \spec k
        \]
        as ringed spaces, where the underlying topological space of each $\spec k$ corresponds to a prime thick subcategory of the form $\bra{\ecal O_{\bb P^1_k}(i)}$ for $i \in \bb Z$. 
        \item \cite{matsui2023triangular}*{\href{https://arxiv.org/pdf/2301.03168v1.pdf}{Corollary 4.10}} Let $E$ be an elliptic curve. Then we have
        \[
        \spec_\vartriangle \perf E \iso E \sqcup \bigsqcup _{(r,d) \in I} E_{r,d}
        \]
        as ringed spaces, where $I := \{(r,d) \in \bb Z \mid r> 0, \gcd(r,d) =1\}$ and $E_{r,d}$ is a copy of $E$ for each $(r,d) \in I$. 
        \item \cite{GrSt_2023}*{\href{https://arxiv.org/pdf/2205.13356v2.pdf}{Example 5.1.3}} Let $R = k\sf A_2$ be the path algebra of the $\sf A_2$-quiver. Then we have
        \[
        \spec_\vartriangle \perf R \iso  \spec k \sqcup \spec k \sqcup \spec k
        \]
        as ringed spaces, where the underlying topological space of each $\spec k$ corresponds to points $\bra{P_0}$, $\bra{P_1}$, $\bra{S_1}$, respectively. Here, the structure sheaf $\ecal O_{R,\vartriangle}$ of $\spec_\vartriangle \perf R$ can be computed as follows: First of all, since $\spec_\vartriangle \perf R$ is discrete, it suffice to find $\ecal O_{R,\vartriangle}(\bra{P_0})$, $\ecal O_{R,\vartriangle}(\bra{P_1})$, and $\ecal O_{R,\vartriangle}(\bra{S_1})$, which are isomorphic to the stalks of $\ecal O_{R,\vartriangle}$ at the corresponding points. On the other hand, we know
        \[
        \ecal O_{R,\vartriangle}^\sf{pre}(\bra{P_0}) = \ecal O_{R,\vartriangle}^\sf{pre}(\bra{P_1}) =\ecal O_{R,\vartriangle}^\sf{pre}(\bra{S_1})=Z(\perf k)_\sf{lrad} = k.
        \]
        Since stalks are preserved under sheafification, we obtain the desired structure sheaf $\ecal O_{R,\vartriangle}$. \qedhere
    
    \end{enumerate}
\end{example}
Let us also define some groups that will be useful in the following sections. 
\begin{notation}\label{auttens}\ 
\begin{enumerate}
    \item  Let $(\cal T,\tens)$ be a tt-category. Let $\aut (\spc_\vartriangle \cal T)$ (resp. $\aut (\spc_\tens \cal T)$) denote the group of automorphisms of $\spc_\vartriangle \cal T$ (resp. $\spc_\tens \cal T$) as a topological space and let $\aut (\spec_\vartriangle \cal T)$ (resp. $\aut (\spec_\tens \cal T)$) denote the group of automorphisms of $\spec_\vartriangle \cal T$ (resp. $\spec_\tens \cal T$) as a ringed space.     
    \item For a triangulated category $\cal T$, let $\auteq \cal T$ denote the group of natural isomorphism classes of triangulated autoequivalences of $\cal T$. 
    \item Let $(\cal T, \tens)$ be a tt-category. Let $\auteq(\cal T,\tens) \subset \auteq \cal T$ denote the subgroup of autoequivalences naturally isomorphic to ones that can be equipped with structures of tt-autoequivalences. \textbf{By abuse of notations, we will say elements in $\auteq(\cal T,\tens)$ are tt-autoequivalences of $(\cal T, \tens)$ unless doing so would lead to serious confusions.} Moreover, let $\pic(\cal T,\tens) \subset \auteq \cal T$ denote the submonoid of autoequivalences naturally isomorphic to $-\tens K$ for some $K \in \cal T$.  \qedhere
\end{enumerate}
\end{notation}
\begin{remark}
    For a tt-category $(\cal T,\tens)$, it is also reasonable and possibly more reasonable from a categorical viewpoint to set $\auteq(\cal T, \tens)$ to be the group of monoidal natural isomorphism classes of tt-autoequivalences of $(\cal T,\tens)$. The reason why we instead decided to use Notation \ref{auttens} is that when $\cal T \simeq \perf X$ for a variety $X$, which is the main object of focus in this paper, we have some good understandings of $\auteq \cal T$ (cf. Example \ref{ample}) and moreover in this case tt-geometry only sees natural isomorphism classes of the underlying functors of tt-equivalences (cf. Lemma \ref{key classical algebraic geometry}). 
\end{remark}
Let us see $\pic(\cal T,\tens)$ is indeed a group.
\begin{definition}[\cite{BaFa_2006}]
    Let $(\cal T, \tens)$ be a tt-category and let $\mathbb 1$ denote its unit. An object $F \in \cal T$ is said to be \textbf{$\tens$-invertible} if there exists $G \in \cal T$ such that $F \tens G \iso \bb 1$. 
\end{definition}
\begin{lemma}\label{pic}
    Let $(\cal T, \tens)$ be a tt-category and take $\eta \in \auteq \cal T$. Now, if $\eta = - \tens K \in \pic(\cal T,\tens)$ for some $K \in \cal T$, then $K$ is $\tens$-invertible. In particular, $\pic(\cal T,\tens)$ is a group isomorphic to the group of isomorphism classes of $\tens$-invertible objects with multiplication given by the tt-structure $\tens$.  
\end{lemma}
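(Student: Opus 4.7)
The plan is to first exhibit an explicit $\tens$-inverse of $K$ and then bootstrap this to construct the desired group isomorphism. For the first assertion, I would take a quasi-inverse $\eta^{-1}: \cal T \simeq \cal T$ and define $G := \eta^{-1}(\bb 1) \in \cal T$. Since $\eta$ is naturally isomorphic to $-\tens K$, we compute
\[
G \tens K \iso \eta(G) = \eta(\eta^{-1}(\bb 1)) \iso \bb 1,
\]
which shows $K$ is $\tens$-invertible with inverse (isomorphism class) $[G]$. The key point here is just that an autoequivalence admits a quasi-inverse and that being $\tens$-invertible is, by definition, witnessed by any object whose tensor with $K$ gives the unit.

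For the second assertion, the strategy is to define a map
\[
\Phi : \pic(\cal T, \tens) \to \{[K] \in \cal T / \iso \mid K \text{ is } \tens\text{-invertible}\}, \quad [-\tens K] \mapsto [K].
\]
I would check well-definedness by evaluating at the unit: if $-\tens K \iso -\tens K'$ as functors, then $K \iso \bb 1 \tens K \iso \bb 1 \tens K' \iso K'$. The first part of the lemma ensures the image lands in $\tens$-invertible objects, and the same unit-evaluation trick shows $\Phi$ is injective. Surjectivity follows because if $K$ is $\tens$-invertible with inverse $G$, then $-\tens K$ is triangulated (since the tt-structure is exact in each variable) with quasi-inverse $-\tens G$, hence lies in $\pic(\cal T, \tens)$. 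Multiplicativity follows from associativity and symmetry of $\tens$: the composite $(-\tens K) \circ (-\tens K') \iso -\tens (K \tens K')$ corresponds under $\Phi$ to $[K \tens K']$.

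The same computations simultaneously verify that $\pic(\cal T, \tens)$ is actually a group (rather than just a submonoid of $\auteq \cal T$): the above shows it is closed under composition and that every element has an inverse given by tensoring with a $\tens$-inverse. The most delicate step is the well-definedness of $\Phi$, which is immediate from evaluation at $\bb 1$ but implicitly uses that a natural isomorphism of functors $-\tens K \iso -\tens K'$ is compatible with the unitor of the symmetric monoidal structure; no genuine obstacle arises, and the whole argument is essentially a routine unwinding of definitions.
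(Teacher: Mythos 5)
Your proof is correct and follows essentially the same route as the paper: the key step, applying a quasi-inverse of $\eta$ to the unit $\bb 1$ to produce a $\tens$-inverse of $K$, is exactly the paper's argument, and the remaining verifications (well-definedness via evaluation at $\bb 1$, surjectivity, multiplicativity) are the routine unwinding the paper leaves implicit in its ``in particular.''
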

\begin{proof}
    Let $\xi$ denote a quasi-inverse of $\eta$. Then, it is easy to see $\xi(\bb 1)$ is an inverse of $K$, where $\bb 1$ denotes the unit of $(\cal T,\tens)$. 
\end{proof}
Moreover, we have the following justification for the notation. 
\begin{lemma}[\cite{BaFa_2006}*{\href{https://arxiv.org/pdf/math/0605094.pdf}{Proposition 6.4}}]
Let $X$ be a scheme. Then, there is a split short exact sequence:
\[
0 \to \pic X \to \pic (\perf X,\tens_{\ecal O_X}^\bb L) \to C(X;\bb Z) \to 0 
\]
of abelian groups, where $ C(X;\bb Z)$ denotes the abelian group of locally constant functions from $X$ to $\bb Z$. In particular, if $X$ is connected, then $\pic (\perf X,\tens_{\ecal O_X}^\bb L) \iso \pic (X) \times \bb Z[1]$.    
\end{lemma}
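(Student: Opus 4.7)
The plan is to produce the degree map $\sigma: \pic(\perf X, \tens_{\ecal O_X}^\bb L) \to C(X;\bb Z)$ by localizing, and to identify its kernel with $\pic X$. The first step is the \textbf{local classification}: for a local ring $(R,\m)$, any $\tens$-invertible $\ecal L \in \perf R$ is isomorphic to $R[n]$ for a unique $n \in \bb Z$. To prove this I would take a minimal free resolution $P^\bullet$ of $\ecal L$, which exists over a local ring, along with such a resolution $Q^\bullet$ of an inverse; then $P^\bullet \tens_R^\bb L Q^\bullet \iso R$, and reducing modulo $\m$ and comparing graded dimensions forces $P^\bullet$ to consist of a single rank-one free module placed in one degree.

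Given the local classification, for any $\tens^\bb L$-invertible $\ecal L \in \perf X$ and $x \in X$, define $\sigma_\ecal L(x) \in \bb Z$ to be the unique integer $n$ with $\ecal L_x \iso \ecal O_{X,x}[n]$. On a sufficiently small affine $\spec R \subset X$ on which $\ecal L$ is represented by a strict perfect complex, the minimal representative shows $\sigma_\ecal L$ is constant on $\spec R$, hence locally constant, so $\sigma_\ecal L \in C(X;\bb Z)$. The map $\sigma$ is a group homomorphism because tensor product of shifts adds exponents. The kernel consists of those $\ecal L$ that are pointwise concentrated in degree $0$ of rank one; such an $\ecal L$ is quasi-isomorphic to an honest line bundle, yielding a natural identification $\ker \sigma \iso \pic X$, with the inclusion $\pic X \hookrightarrow \pic(\perf X, \tens_{\ecal O_X}^\bb L)$ faithful because morphisms of line bundles in degree $0$ inside $\perf X$ coincide with classical morphisms of line bundles.

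For surjectivity with an explicit splitting, given a locally constant $f: X \to \bb Z$, decompose $X = \bigsqcup_i X_i$ into clopen pieces on which $f \equiv n_i$ and set $\ecal O_X[f] := \bigoplus_i (j_i)_*(\ecal O_{X_i}[n_i])$, where $j_i: X_i \hookrightarrow X$ denotes the inclusion. This complex is patently $\tens^\bb L$-invertible with $\sigma_{\ecal O_X[f]} = f$, and $f \mapsto \ecal O_X[f]$ is a homomorphism because $\ecal O_X[f+g] \iso \ecal O_X[f] \tens_{\ecal O_X}^\bb L \ecal O_X[g]$, which splits the short exact sequence. The connected case is then immediate: $C(X;\bb Z) \iso \bb Z$ with generator the constant function $1$ corresponding to $\ecal O_X[1]$, giving $\pic(\perf X, \tens_{\ecal O_X}^\bb L) \iso \pic X \times \bb Z[1]$. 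I expect the hardest step to be the local classification over a local ring, which contains the essential structural content; the gluing of the map $\sigma$, the identification of its kernel, and the construction of the splitting are then formal bookkeeping.
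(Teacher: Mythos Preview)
The paper does not supply its own proof of this lemma; it is stated with a citation to \cite{BaFa_2006}*{Proposition 6.4} and used as a black box. So there is no in-paper argument to compare against.

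That said, your proposal is a correct outline of the standard proof, and it is essentially the route taken in the cited source. The local classification over a local ring via minimal complexes is indeed the structural core, and your reduction modulo $\m$ argument is the right one. Two small points worth tightening: first, ``minimal free resolution'' should be ``minimal perfect complex'' (a bounded complex of finite free modules with differentials in $\m$), since $\ecal L$ is already a complex rather than a module; second, the local constancy of $\sigma_\ecal L$ is perhaps cleanest argued by noting that the nonvanishing loci of the cohomology sheaves $\cal H^i(\ecal L)$ are closed and, by the local classification, partition $X$, rather than appealing to a ``minimal representative'' on an affine patch (which does not exist in general over non-local rings). Neither point is a genuine gap.
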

Finally, let us consider some group actions on spectra. At the topological level, we have the following:
\begin{lemma}
    Let $(\cal T, \tens)$ be a tt-category. Then, we have a group action $\rho^\vartriangle: \auteq \cal T \to \aut(\spc_\vartriangle \cal T)$ given by 
    \[
    \rho^\vartriangle: \auteq \cal T \ni \tau \mapsto (\spc_\vartriangle(\tau)\inv:\cal P \mapsto \tau(\cal P)) \in  \aut(\spc_\vartriangle \cal T),
    \]
     which restricts to a group action $\rho^\tens_\top: \auteq(\cal T, \tens) \to \aut(\spc_\tens \cal T)$. Moreover, the restriction $\pic(\cal T,\tens) \to \aut (\spc_\tens \cal T)$ of $\rho^\vartriangle$ is the trivial action. 
\end{lemma}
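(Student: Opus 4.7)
The plan is to verify each of the three assertions in turn: first the bare topological action $\rho^\vartriangle$ on the triangular spectrum, then its refinement on the tt-spectrum to a ringed space action, and finally the triviality of the $\pic(\cal T,\tens)$-action on the underlying set.

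For $\rho^\vartriangle$, the key observation is that any $\eta \in \auteq \cal T$ induces a poset automorphism of $\Th \cal T$ whose inverse comes from a quasi-inverse of $\eta$: a triangulated autoequivalence carries thick subcategories to thick subcategories (exact triangles and direct summands are preserved), and it preserves inclusions once one works with replete subcategories as stipulated. Since the triangular primeness condition of Matsui is defined purely in terms of the poset structure of $\Th \cal T$ (existence of a unique smallest strictly larger thick subcategory), $\eta$ restricts to a bijection of $\spec_\vartriangle \cal T$. For the topology, I would check that for any full subcategory $\cal E$ one has $\eta(Z(\cal E)) = Z(\eta(\cal E))$, making the induced bijection a homeomorphism. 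Functoriality of the assignment $\eta \mapsto \rho^\vartriangle(\eta)$ is immediate from the formula, so we obtain a group action.

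For $\rho^\tens$, I would first show that a tt-autoequivalence $\eta \in \auteq(\cal T,\tens)$ sends (prime) ideals to (prime) ideals: if $\cal I$ is an ideal and $F \in \cal I$, $G \in \cal T$, then $G \iso \eta(\eta^{-1}(G))$ gives $\eta(F) \tens G \iso \eta(F \tens \eta^{-1}(G)) \in \eta(\cal I)$; the prime case is analogous. Combined with the first paragraph, this produces the set-theoretic and topological action, which agrees with the restriction of $\rho^\vartriangle$ by construction. To upgrade to a ringed space action, I would invoke the functoriality of Balmer's structure sheaf: the sheaf is built from endomorphism rings of objects and their localizations away from supports, and a tt-equivalence induces compatible $k$-algebra isomorphisms $\End_\cal T(F) \iso \End_\cal T(\eta(F))$ that assemble into an isomorphism of structure sheaves. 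The group axioms at the ringed space level are then formal.

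For the last claim, let $\eta = -\tens K$ with $K$ a $\tens$-invertible object provided by Lemma \ref{pic}, and fix an inverse $K^{-1}$. For any prime ideal $\cal P \in \spec_\tens \cal T$, the ideal property gives the implication $F \in \cal P \Rightarrow F \tens K \in \cal P$, and conversely $F \tens K \in \cal P \Rightarrow F \iso F \tens K \tens K^{-1} \in \cal P$. Thus $\eta(\cal P) = \cal P$ as a thick subcategory, so $\rho^\vartriangle(\eta)$ fixes every point of $\spec_\tens \cal T$, proving triviality on the underlying topological space. The main step requiring care is the ringed space upgrade in the second paragraph, where one must check that the natural isomorphisms coming from a tt-equivalence are compatible with the sheaf structure and not merely object-wise; this is guaranteed by the naturality already built into Balmer's construction, so the genuine content of the lemma is the set-theoretic and topological verification above.
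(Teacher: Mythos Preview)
Your proof is correct and follows essentially the same route as the paper's. The paper's argument is simply a compressed version of yours: it cites \cite{matsui2023triangular}*{Proposition 4.2} and \cite{Balmer_2005}*{Proposition 3.6} for the functoriality of the triangular and tt-spectra under equivalences (which you verify directly), notes repleteness for well-definedness on natural isomorphism classes in $\auteq\cal T$, and dismisses the $\pic(\cal T,\tens)$ claim as immediate from the definition of ideals, just as you do.
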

\begin{proof}
Note a triangulated equivalence (resp. a tt-equivalence) induces an isomorphism of triangular spectra (resp. tt-spectra) by Lemma \ref{functoriality t} (resp. by Lemma \ref{functoriality tt} and Remark \ref{subtlty of natural iso}). Now, the action is well-defined since triangulated subcategories are replete and hence the action does not depend on choices of representatives in $\auteq \cal T$. The last claim is trivial by the definition of ideals.     
\end{proof}
\begin{notation}\label{rho trig notation}
    Let $\cal T$ be a triangulated category and let $\tau \in \auteq \cal T$. By slight abuse of notation, let $\tau$ also denote the (restrictions of) isomorphism $\rho^\vartriangle:\spc_\vartriangle \cal T \to \spc_\vartriangle \cal T$, which maps $\cal P$ to $\tau(\cal P)$. 
\end{notation}
When we consider ringed space structures of tt-spectra, we should be more careful due to Remark \ref{subtlty of natural iso}. The following lemma from classical algebraic geometry (e.g. \cite{Har77}*{Proposition I.3.5}) ensures that we do not need to be too careful about this subtlety in this paper. 
\begin{lemma}\label{class alg result}
    Let $X$ and $Y$ be reduced scheme locally of finite type over an algebraically closed field $k$ and let $f,g:X\to Y$ be two morphisms of $k$-schemes. If $f$ and $g$ agree on the set of closed points, then $f = g$.  
\end{lemma} 
As a direct corollary, we get the following:
\begin{lemma}\label{key classical algebraic geometry}
    Let $(\eta_1,\epsilon_1),(\eta_2,\epsilon_2)$ be tt-equivalences $(\cal T, \tens_{\cal T}) \overset{\sim}{\to} (\cal T',\tens_{\cal T'})$ and suppose that $\spec_{\tens_\cal T}\cal T$ and $\spec_{\tens_{\cal T'}} \cal T'$ are isomorphic to reduced schemes locally of finite type over $k$. Then, if the morphisms
    \[
\spec_\tens(\eta_1),\spec_\tens(\eta_2):\spec_{\tens_\cal T}\cal T \to \spec_{\tens_{\cal T'}} \cal T'
    \]
     agree on closed points, then $\spec_\tens(\eta_1) = \spec_\tens(\eta_2)$ as morphisms of locally ringed spaces over $k$. In particular, if the underlying functors $\eta_1$ and $\eta_2$ are (not necessarily monoidally) naturally isomorphic, then we have $\spec_\tens(\eta_1) = \spec_\tens(\eta_2)$ as morphisms of locally ringed spaces.
\end{lemma}
\begin{proof}
    The first part directly follows from Lemma \ref{class alg result}. The latter part follows since if $\eta_1$ and $\eta_2$ are naturally isomorphic, then they send a prime $\tens$-ideal to the same prime $\tens$-ideal. 
\end{proof}
In particular, we have the following construction, which justifies the definition of $\auteq (\cal T, \tens)$ in this paper. 
\begin{construction}\label{action on locally ringed}
    Let $(\cal T, \tens)$ be a tt-category with $\spec_\tens \cal T$ being a reduced scheme locally of finite type over $k$. Let us construct a group action 
    \[
    \rho^\tens:\auteq(\cal T,\tens) \to \aut(\spec_\tens \cal T),
    \]
    which agrees with $\rho_\top^\tens:\auteq(\cal T,\tens) \to \aut(\spc_\tens \cal T)$ on the underlying topological space. Take $\eta \in \auteq(\cal T, \tens)$ and take a tt-autoequivalence $(\zeta, \epsilon_\zeta)$ with $\eta \iso \zeta$. Then, define $\rho^\tens(\eta) = \spec_\tens(\zeta)\inv$, which does not depend on the choice of $(\zeta,\epsilon_\zeta)$ by Lemma \ref{key classical algebraic geometry}. Again, by Lemma \ref{key classical algebraic geometry}, the map $\rho^\tens$ is a group homomorphism and in particular, for $\eta \in \auteq(\cal T,\tens)$, we have $\rho^\tens(\eta \inv) = \rho^\tens(\eta)\inv$. Clearly, $\rho^\tens(\eta)$ agrees with $\rho^\tens_\top(\eta)$ on the underlying topological space.   
\end{construction}
\begin{notation}
    Let $(\cal T, \tens)$ be a tt-category with $\spec_\tens \cal T$ being a reduced scheme locally of finite type over $k$ and take $\eta \in \auteq (\cal T,\tens)$. By slight abuse of notation, let $\eta$ also denote the isomorphism $\rho^\tens(\eta):\spec_\tens \cal T \to \spec_\tens \cal T$, which maps $\cal P$ to $\eta(\cal P)$. 
\end{notation}

\section{The tt-spectrum of a Fourier-Mukai partner and actions by autoequivalences}\label{tt-spectra of FM partners}
Since Lemma \ref{noetherian hypothesis} in particular tells us that for a smooth projective variety $X$, the corresponding triangular spectra $\spc_\vartriangle \perf X$ contains all the Fourier-Mukai partners of $X$ (see Notation \ref{fm defintion}) as subspaces given as tt-spectra of some tt-structure on $\perf X$ (not necessarily just $\tens_{\ecal O_X}^\bb L$!), it is natural to ask how those Fourier-Mukai partners interact with each other in $\spc_\vartriangle \perf X$. To approach the question, we should be careful that for a Fourier-Mukai partner $Y$ of $X$, there could be multiple tt-structures $\tens$ on $\perf X$ with $(\perf X,\tens) \simeq (\perf Y,\tens_{\ecal O_{Y}}^\bb L)$ and that the embedding of $Y \iso \spc_\tens \perf X$ as a subspace of $\spc_\vartriangle \perf X$ given by Lemma \ref{noetherian hypothesis} depends on a choice of such tt-structures even if the corresponding tt-spectra are all isomorphic to $Y$. In this section, we will focus on a locus in a triangular spectrum that corresponds to a different choices of tt-structures coming from a single Fourier-Mukai partner and in the next section we will consider tt-structures coming from all the Fourier-Mukai partners. First, let us introduce some notions.       
\begin{notation}\label{fm defintion}
    Let $\cal T$ be a triangulated category. Let $\FM \cal T $ denote the set of isomorphisms classes of smooth projective varieties whose derived categories are triangulated equivalence to $\cal T$, called \textbf{Fourier-Mukai partners} of $\cal T$. For a smooth projective variety $X$, let $\FM(X) : = \FM (\perf X)$ denote the isomorphism classed of Fourier-Mukai partners of $X$. Given a triangulated equivalence $\eta: \cal T \simeq \cal T'$ and a subgroup $G \subset \auteq \cal T$, write $G_\eta := \eta \cdot G \cdot \eta\inv \subset \auteq \cal T'$.
\end{notation}
First, let us make the following observation.
\begin{construction}\label{full data}
    Let $\cal T$ be a triangulated category with $X \in \FM \cal T $ and fix a triangulated equivalence $\eta:\perf X \simeq \cal T$ and a quasi-inverse $\zeta$ together with natural isomorphisms $\alpha:\zeta \circ \eta \iso \id_{\perf X}$ and $\beta:\eta \circ \zeta \iso \id_\cal T$. Let $\tens_{X,\eta,\zeta,\alpha,\beta}$ denote a tt-structure on $\cal T$ transported from the tt-category $(\perf X,\tens_{\ecal O_X}^\bb L)$ under $\eta$, $\zeta$, $\alpha$, and $\beta$. In particular, for objects $M,N \in \cal T$, define $M \tens_{X,\eta,\zeta,\alpha,\beta} N : = \eta(\zeta(M)\tens_{\ecal O_X}^\bb L \zeta(N))$, and let $\spec_{\tens_{X,\eta,\zeta,\alpha,\beta}}\cal T$ denote the corresponding tt-spectrum. Note that since we have a canonical tt-equivalence $(\perf X, \tens_{\ecal O_X}^\bb L) \simeq (\cal T,\tens_{X,\eta,\zeta,\alpha,\beta})$, we have a canonical isomorphisms $\spec_{\tens_{X,\eta,\zeta,\alpha,\beta}} \cal T \iso \spec_{\ecal O_X^\bb L}\perf X \iso X$. 
\end{construction}
The preceding construction indeed does not depend on several choices.
\begin{lemma}\label{not depending on choice}
    Let $\cal T$ be a triangulated category with $X \in \FM \cal T$ and take $(\eta,\zeta,\alpha,\beta)$ and $(\eta',\zeta',\alpha',\beta')$ as in Construction \ref{full data}. 
    \begin{enumerate}
        \item If $\eta$ and $\eta'$ are naturally isomorphic, then $$\spc_{\tens_{X,\eta,\zeta,\alpha,\beta}} \cal T = \spc_{\tens_{X,\eta',\zeta',\alpha',\beta'}} \cal T \subset \spc_\vartriangle \cal T,$$ which will be denoted by $\spc_{\tens_{X,\eta}} \cal T$. By abuse of notation, for (a natural isomorphism class represented by) $\eta:\perf X \simeq \cal T$, we will omit $\zeta,\alpha,\beta$ from notations and considerations and write $\tens_{X,\eta}$ instead of $\tens_{X,\eta,\zeta,\alpha,\beta}$ when we are only interested in topological structures.  
        \item If $(\eta,\zeta)$ and $(\eta',\zeta')$ are identical (not just naturally isomorphic), then 
        \[
        \spec_{\tens_{X,\eta,\zeta,\alpha,\beta}}\cal T  = \spec_{\tens_{X,\eta,\zeta,\alpha',\beta'}} \cal T,
        \]
        which will be denoted by $\spec_{\tens_{X,\eta,\zeta}} \cal T$. Here, the equality means they are identical, i.e., $\spc_{\tens_{X,\eta,\zeta,\alpha,\beta}} \cal T = \spc_{\tens_{X,\eta,\zeta,\alpha',\beta'}} \cal T \subset \spc _\vartriangle \cal T$ and their structure sheaves are the same functors. By abuse of notation, we will omit $\alpha,\beta$ from notations and considerations and write $\tens_{X,\eta,\zeta}$ instead of $\tens_{X,\eta,\zeta,\alpha,\beta}$.  
    \end{enumerate}
    In particular, we will omit natural isomorphisms $\alpha$ and $\beta$ from notations and considerations from now on. 
\end{lemma}
\begin{proof}
    Part (i) follows from Lemma \ref{identical tt-spectrum}. For part (ii), $\spc_{\tens_{X,\eta,\zeta,\alpha,\beta}} \cal T  = \spc_{\tens_{X,\eta,\zeta,\alpha',\beta'}} \cal T \subset \spc _\vartriangle \cal T$ by part (i) and the structure sheaves are identical again by Lemma
    \ref{identical tt-spectrum}. 
\end{proof}
\begin{remark}\label{remark on choice}
    It will turn out that even for the second case in the preceding lemma we only need to pay attention to natural isomorphism classes of equivalences $\perf X \simeq \cal T$ in most situations in the end (cf. Notation \ref{tt-spectra of FM}). 
\end{remark}
Now, the following is the main construction in this section.
\begin{construction}\label{specx} Let $\cal T$ be a triangulated category with $X \in \FM \cal T$. By slight abuse of notation, define the \textbf{{tt-spectrum} of a Fourier-Mukai partner} $X$ of $\cal T$ to be
  \[
  \spc_{\tens,X} \cal T := \bigcup_{\eta:\perf X \simeq \cal T} \spc_{\tens_{X,\eta}}\cal T  \subset \spc_\vartriangle \cal T,
  \]
  where the union is taken over all natural isomorphism classes of triangulated equivalences $\perf X \simeq \cal T$ noting Lemma \ref{not depending on choice} (i) and \textbf{the topology is given as the one generated by open subsets of each tt-spectrum $\spc_{\tens_{X,\eta}}\cal T$, which is a priori finer than the subspace topology}.\footnote{It turns out that these two topologies indeed agree (cf. \cite{ito2024new}*{\href{https://arxiv.org/pdf/2405.16776}{Proposition 4.2.}}).} Here, also note although each $\spc_{\tens,\eta} \cal T$ is isomorphic (to $X$), it can give a different subspace of $\spc_\vartriangle \cal T$ depending on a choice $\eta$ of natural isomorphism classes. 
  
  Now, fix a natural isomorphism class of triangulated equivalences $\eta:\perf X \simeq \cal T$. Note that another choice $\eta':\perf X\simeq \cal T$ of a natural isomorphism class bijectively corresponds to an element $\tau \in \auteq\cal T $ with $\eta' \iso \tau\circ \eta$ and $\tau$ induces a homeomorphism  $\spc_{\tens_{X,\eta}}\cal T  \iso \spc_{\tens_{X,\tau\circ \eta}}\cal T = \spc_{\tens_{X,\eta'}}\cal T $, where it may be true that $\spc_{\tens_{X,\eta}}\cal T  \neq \spc_{\tens_{X,\tau\circ \eta}}\cal T $ as subspaces of $\spc_\vartriangle \cal T$. Now, note we can also write
  \[
  \spc_{\tens,X} \cal T = \bigcup_{\tau \in \auteq\cal T } \spc_{\tens_{X,\tau \circ \eta}}\cal T  \subset \spc_\vartriangle \cal T.
  \]
  Note another choice $X' \in \FM\cal T $ with $X \iso X'$ will just amount to another choice $\eta'$ instead of $\eta$ in the formula right above and hence we have $\spc_{\tens,X}\cal T = \spc_{\tens,{X'}}\cal T$. Thus, $\spc_{\tens,X} \cal T$ does not depend on a choice of representatives $X$ of an isomorphism class in $\FM \cal T$. Moreover, note the action $\auteq \cal T  \to \aut(\spc_\vartriangle\cal T)$ restricts to 
  \[
  \rho^X_\top: \auteq \cal T  \to \aut(\spc_{\tens,X} \cal T).
  \]
  
  Geometrically, $\spc_{\tens,X}\cal T$ is a not necessarily finite nor disjoint union of copies of $X$ and the action of $\auteq \cal T$ isomorphically maps one copy of $X$ onto another copy or to itself. To observe symmetry with respect to restrictions of the action to a subgroup $G \subset \auteq \cal T $, let us define a \textbf{$G$-orbit} of $\eta:\perf X\simeq \cal T$ to be
  \[
  \spc_{\tens,X}^{G\cdot \eta} \cal T := \bigcup_{\tau \in G} \spc_{\tens_{X,\tau \circ \eta}}\cal T  = G\cdot \spc_{\tens_{X,\eta}}\cal T \subset \spc_{\tens,X} \cal T,
  \]
  which may depend on the initial choice $\spc_{\tens_{X,\eta}}\cal T$ of a copy of $X$ in $\spc_\vartriangle \cal T$, i.e., the choice of $\eta$.  
\end{construction}

\begin{remark} \ 
    \begin{enumerate}
        \item In the next section, we will see a tt-spectrum of a Fourier-Mukai partner has a natural scheme structure as a union of tt-spectra of tt-structures (Corollary \ref{tt-spectra of FM}), but in general, the union is not finite nor disjoint. 
        \item We cannot expect the correspondence $X \mapsto \spc_{\tens,X}\perf X$ defines a functor at least in a straightforward manner since a triangulated functor does not induce a reasonable group homomorphism between groups of autoequivalences in general. For example, it defines a functor if we consider the category of smooth projective varieties with morphisms defined to be ones inducing equivalences on derived categories.   \qedhere
    \end{enumerate} 
\end{remark}
The following is a natural question to which we will come back later several times:
\begin{question}\label{tte}
     Let $\cal T$ be a triangulated category with $X,Y \in \FM \cal T$. Then, is it true that $X \iso Y$ if and only if $$\spc_{\tens,X} \cal T = \spc_{\tens,Y} \cal T\subset \spc_\vartriangle \cal T?$$  
     For example, it is true if $\spc_{\tens,X} \cal T$ is a disjoint union of copies of $X$ and $\spc_{\tens,Y} \cal T$ is a disjoint union of copies of $Y$ (e.g. Example \ref{basic example} (ii) or Lemma \ref{simple}). See Remark \ref{tte ae}, Remark \ref{tt-seprated is tte} and Observation \ref{bg obs} for more discussions.
 \end{question}
In the rest of this section, we will observe symmetries of tt-spectra of Fourier-Mukai partners with respect to group actions. First, let us introduce some notions based on behaviors of $\spc^{G\cdot \eta}_{\tens,X} \cal T$.
 \begin{definition} \label{generator}
     Let $\cal T$ be a triangulated category with $X \in \FM\cal T $. We say a subgroup $G \subset \auteq \cal T$ is:
     \begin{enumerate}
         \item an \textbf{$\eta$-stabilizer} if $\spc^{G\cdot \eta}_{\tens,X} \cal T = \spc_{\tens_{X,\eta}}\cal T \iso X$ for a triangulated equivalence $\eta: \perf X \simeq \cal T$;
         \item a \textbf{universal stabilizer} of $X$ if $G$ is an $\eta$-stabilizer for any triangulated equivalence $\eta: \perf X \simeq \cal T$;
         \item an \textbf{$\eta$-generator} if $\spc^{G\cdot \eta}_{\tens,X}\cal T = \spc_{\tens,X}\cal T$ for a triangulated equivalence $\eta: \perf X \simeq \cal T$;
         \item a \textbf{universal generator} of $X$ if $G$ is an $\eta$-generator for any triangulated equivalence $\eta: \perf X \simeq \cal T$.
     \end{enumerate}
     An element of an $\eta$-stabilizer (resp. a universal stabilizer) will also be called an $\eta$-stabilizer (resp. a universal stabilizer). 
 \end{definition}
We can formulate maximum symmetries of tt-spectra of Fourier-Mukai partners as follows:  
 \begin{construction}\label{stabilizers}
     Let $\cal T$ be a triangulated category with $X \in \FM\cal T $. Let us define the following subgroups of $\auteq \cal T$, where $\rho^X_\top: \auteq \cal T\to \aut \spc_{\tens,X}\cal T$ denotes the natural action defined in Construction \ref{specx}.
     \begin{enumerate}
         \item For a triangulated equivalence $\eta: \perf X \simeq \cal T$, let 
         \[\stab(\cal T,\eta):= ({\rho^X_\top})\inv ( \{f\in \aut \spc_{\tens,X}\cal T\mid  f(\spc_{\tens_{X,\eta}}\cal T) = \spc_{\tens_{X,\eta}}\cal T\})\]
         denote the \textbf{maximum $\eta$-stabilizer}. 
         \item Let
         \[
         \stab(\cal T,X):= ({\rho^X_\top})\inv ( \{f\in \aut \spc_{\tens,X}\cal T\mid  f(\spc_{\tens_{X,\eta}}\cal T) = \spc_{\tens_{X,\eta}}\cal T \text{ for all $\eta:\perf X\simeq \cal T$}\})
         \]
         denote the \textbf{maximum universal stabilizer} of $X$.
         \item Let 
         \[
         \stab^\fix(\cal T,X) := ({\rho^X_\top})\inv (\{\id_{\spc_{\tens,{X}}\cal T}\})
         \]
         denote the stabilizer group on $\spc_{\tens,{X}}\cal T$.
     \end{enumerate}
     We clearly have the inclusions $\stab^\fix(\cal T,X) \subset \stab(\cal T,X) \subset \stab(\cal T,\eta)$. 
 \end{construction}
 \begin{remark}\label{tte ae}
 Note to give a positive answer to Question \ref{tte}, it suffices to show that $X$ and $Y$ are isomorphic if and only if $\stab(\cal T,X) = \stab(\cal T,Y)$ since if $\spc_{\tens,X} \cal T = \spc_{\tens,Y}\cal T$, then $\stab(\cal T,X) = \stab(\cal T,Y)$.  
 \end{remark}
Now, we have the following immediate consequences:
 \begin{lemma}
     Let $\cal T$ be a triangulated category with $X \in \FM\cal T $ and let $G\subset \auteq \cal T$ be a subgroup. 
     \begin{enumerate}
         \item The subgroup $\bb Z[1]\subset \stab^\fix(\cal T,X) \subset \auteq \cal T$ generated by shifts is a universal stabilizer of $X$ (for any $X \in \FM \cal T$). 
         \item If $G$ is an $\eta$-stabilizer (resp. a universal stabilizer of $X$), then so is any subgroup $H\subset G$. In particular, $G$ is an $\eta$-stabilizer (resp. a universal stabilizer of $X$) if and only if $G$ is a subgroup of $\stab(\cal T,\eta)$ (resp. $\stab(\cal T,X)$).    
         \item The entire group $\auteq \cal T$ is a universal generator of $X$.
         \item If $G$ is a universal generator of $X$  (resp. an $\eta$-generator), so is any supergroup $H \supset G$. \qedhere
     \end{enumerate}
 \end{lemma}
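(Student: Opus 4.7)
The plan is to unpack the definitions directly; the four parts are essentially bookkeeping consequences of Construction \ref{specx} and Construction \ref{stabilizers}, with only (i) requiring a genuine (though short) observation.

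For (i), the key point is that the shift functor is automatically a triangulated autoequivalence of any triangulated category and that every thick subcategory is by definition closed under shifts. Hence for any prime thick subcategory $\cal P \in \spec_\vartriangle \cal T$, we have $[1]\cdot \cal P = \cal P$ on the nose, so the action $\rho^\vartriangle$ restricted to $\bb Z[1]$ is trivial. Restricting further to $\spec_{\tens_X}\cal T$, this puts $\bb Z[1] \subset \stab^\fix(\cal T,X)$. Since $\stab^\fix(\cal T,X) \subset \stab(\cal T,X) \subset \stab(\cal T,\eta)$ for every $\eta : \perf X \simeq \cal T$, every element of $\bb Z[1]$ fixes each copy $\spec_{\tens_{X,\eta}}\cal T$ setwise, giving the universal stabilizer claim.

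For (ii), I would read off the claim directly from the definitions: $G$ is an $\eta$-stabilizer iff for every $\tau \in G$ the equality $\spec_{\tens_{X,\tau\circ\eta}}\cal T = \spec_{\tens_{X,\eta}}\cal T$ holds in $\spec_\vartriangle \cal T$, which is precisely the condition $\rho_X(G) \subset \{f \in \aut\spec_{\tens_X}\cal T \mid f(\spec_{\tens_{X,\eta}}\cal T) = \spec_{\tens_{X,\eta}}\cal T\}$, i.e.\ $G \subset \stab(\cal T,\eta)$. Since being contained in a subgroup is inherited by further subgroups, both the closure under passage to subgroups and the ``if and only if $G \subset \stab(\cal T,\eta)$'' statement are immediate. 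The universal version is identical, replacing a single $\eta$ by a quantification over all $\eta$.

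For (iii), I would just appeal to the defining formula of Construction \ref{specx},
\[
\spec_{\tens_X}\cal T \;=\; \bigcup_{\tau \in \auteq\cal T} \spec_{\tens_{X,\tau\circ\eta}}\cal T \;=\; \spec^{\auteq\cal T \cdot \eta}_{\tens_X}\cal T,
\]
valid for any $\eta: \perf X \simeq \cal T$, which is exactly the statement that $\auteq\cal T$ is a universal generator. For (iv), if $G \subset H$ then $G\cdot \eta \subset H\cdot \eta$ as sets of equivalences $\perf X \simeq \cal T$, hence $\spec^{G\cdot\eta}_{\tens_X}\cal T \subset \spec^{H\cdot\eta}_{\tens_X}\cal T \subset \spec_{\tens_X}\cal T$, and if the outer terms agree then so does the middle one; the universal case is the same argument applied to every $\eta$ simultaneously. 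The only step that is not purely formal is (i), but even there the ``obstacle'' is only to notice that the action is trivial because of the built-in closure of thick subcategories under shifts, so I do not expect any part of the proof to be difficult.
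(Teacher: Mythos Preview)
Your proposal is correct and matches the paper's approach: the paper states this lemma with the phrase ``we clearly have the following'' and gives no proof at all, treating each item as an immediate consequence of the definitions in Construction~\ref{specx} and Construction~\ref{stabilizers}. Your unpacking of the definitions is accurate and fills in exactly the routine verifications the paper leaves implicit.
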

 \begin{proof}
     Part (i) and part (iii) are obvious. For part (ii), if $G$ is an $\eta$-stabilizer and we have a subgroup $H \subset G$, then we have 
     \[
     \spc_{\tens,\eta} \cal T \subset \spc_{\tens,X}^{H \cdot \eta}\cal T \subset \spc_{\tens,X}^{G \cdot \eta}\cal T = \spc_{\tens,\eta} \cal T,
     \]
     so $H$ is an $\eta$-stabilizer. The rest of the claims follow similarly. 
 \end{proof}
 \begin{remark} \ 
 \begin{enumerate}
     \item A minimal $\eta$-generator, if exists, is a priori not unique. See Example \ref{ample} for a case when it is unique and see Remark \ref{minimal generator for elliptic curves?} for a case when we possibly have multiple minimal $\eta$-generators. See also Remark \ref{minimal generator candidate} for a possible strategy to find such a generator.
     \item Note $\eta$-stabilizers can be described purely in the language of tt-geometry since specifying a triangulated equivalence $\eta: \perf X \simeq \cal T$ is equivalent to specifying a tt-structure on $\cal T$ that is tt-equivalent to $(\perf X,\tens_{\ecal O_X}^\bb L)$. In other words, $\eta$-stabilizers describe "local" symmetries of the single tt-spectrum $\spc_{\tens_{X,\eta}}\cal T \subset \spc_{\tens,X}\cal T$. On the other hand, universal stabilizers describe global symmetries of the whole tt-spectrum $\spc_{\tens,X}\cal T$, which cannot be seen solely from tt-geometry. Indeed, for any triangulated category $\cal T$ with $X \in \fm\cal T $, there exists a nontrivial (simultaneous) universal stabilizer for any $X \in \FM \cal T$ given by a Serre functor (cf. Lemma \ref{universal recovery}).\qedhere
 \end{enumerate}    
 \end{remark}
 First, let us see some examples of $\eta$-stabilizers.  
 \begin{lemma}
     Let $\cal T$ be triangulated category with $X \in \FM\cal T $ and fix a triangulated equivalence $\eta:\perf X \simeq \cal T$. If $\tau \in \auteq(\cal T,\tens_{X,\eta})$ or $\tau \in \pic(\cal T,\tens_{X,\eta})$ (cf. Notation \ref{auttens}), then we have
     \[
     \spc_{\tens_{X,\eta}}\cal T = \tau(\spc_{\tens_{X,\eta}}\cal T) = \spc_{\tens_{X,\tau\circ \eta}} \cal T \subset \spc_\vartriangle \cal T
     \]
     (cf. Notation \ref{rho trig notation}). In particular, any subgroup $G \subset  \auteq(\cal T,\tens_{X,\eta}) \ltimes \pic(\cal T,\tens_{X,\eta})$ is an $\eta$-stabilizer.
 \end{lemma}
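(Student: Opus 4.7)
The plan is to reduce the lemma to showing that the autoequivalence $\tau$, viewed via the action $\rho^\vartriangle$ as a self-bijection of $\Th\cal T$, preserves the subset $\spec_{\tens_{X,\eta}}\cal T$ setwise. This reduction is immediate from the definition of $\tens_{X,\tau\circ\eta}$ as the transport of $\tens_{\ecal O_X}^{\bb L}$ along $\tau\circ\eta$: its prime ideals are exactly the images under $\tau\circ\eta$ of those of $(\perf X,\tens_{\ecal O_X}^{\bb L})$, so that $\spec_{\tens_{X,\tau\circ\eta}}\cal T=\tau\bigl(\spec_{\tens_{X,\eta}}\cal T\bigr)$ as subsets of $\Th\cal T$. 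Hence it suffices to verify that each of the two classes of $\tau$ in the statement preserves $\spec_{\tens_{X,\eta}}\cal T$ setwise.

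The case $\tau\in\auteq(\cal T,\tens_{X,\eta})$ is immediate from the previously recorded action $\rho^\tens:\auteq(\cal T,\tens)\to\aut(\spec_\tens\cal T)$, since a tt-autoequivalence of $(\cal T,\tens_{X,\eta})$ restricts to a self-automorphism of its tt-spectrum and in particular preserves the underlying set of prime ideals. For $\tau=-\tens_{X,\eta}K$ with $K$ a $\tens_{X,\eta}$-invertible object, I plan to argue directly that $\tau(\cal I)=\{M\tens_{X,\eta}K:M\in\cal I\}$ remains a prime ideal whenever $\cal I$ is: thickness and the ideal property follow from $-\tens_{X,\eta}K$ being an equivalence and from $\cal I$ absorbing arbitrary tensor products, while primality transfers via the equivalence $A\tens_{X,\eta}B\in\tau(\cal I)\iff A\tens_{X,\eta}(B\tens_{X,\eta}K^{-1})\in\cal I$, where the invertibility of $K$ is used to reconstitute $A$ or $B$ as an element of $\tau(\cal I)$ from the primality dichotomy for $\cal I$. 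Applying the same reasoning to $\tau^{-1}=-\tens_{X,\eta}K^{-1}$ then yields the setwise equality.

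The ``in particular'' clause will follow by a formal closure argument: the collection $\{\tau\in\auteq\cal T:\tau(\spec_{\tens_{X,\eta}}\cal T)=\spec_{\tens_{X,\eta}}\cal T\}$ is visibly a subgroup of $\auteq\cal T$, and by the two cases above it contains both $\auteq(\cal T,\tens_{X,\eta})$ and $\pic(\cal T,\tens_{X,\eta})$; hence it contains the semidirect product they generate, and any subgroup $G$ thereof is an $\eta$-stabilizer. The main obstacle I anticipate is the bookkeeping in the Pic case, because $\tau=-\tens_{X,\eta}K$ is generally \emph{not} a tt-autoequivalence---it fails to send the monoidal unit to itself---so the argument must rely purely on the $\tens_{X,\eta}$-invertibility of $K$ rather than on any symmetric monoidal compatibility of $\tau$ with $\tens_{X,\eta}$. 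Everything else is essentially formal.
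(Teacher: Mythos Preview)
Your proposal is correct and follows essentially the same approach as the paper's proof: reduce to showing $\tau$ preserves $\spec_{\tens_{X,\eta}}\cal T$ setwise, then handle the two cases separately. The only notable difference is that the paper records slightly sharper statements---for $\tau\in\auteq(\cal T,\tens_{X,\eta})$ it observes that the transported tt-structures $\tens_{X,\eta}$ and $\tens_{X,\tau\circ\eta}$ literally agree (so the spectra are equal as sets, not merely related by a bijection), and for $\tau\in\pic(\cal T,\tens_{X,\eta})$ it notes that tensoring with an invertible object fixes each ideal (since $\cal I\tens K\subset\cal I$ by the ideal property, and the reverse inclusion follows from invertibility), so $\tau$ induces the \emph{identity} on $\spec_{\tens_{X,\eta}}\cal T$, not just a permutation of prime ideals.
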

 \begin{proof}
     First, if $\tau \in \auteq(\cal T,\tens_{X,\eta})$ (and hence $\tau \inv \in \auteq (\cal T,\tens_{X,\eta})$), then for any $M,N \in \cal T$ we have $$M \tens_{X,\tau\circ\eta} N \iso \tau\circ\eta((\tau\circ\eta)\inv M \tens_{\ecal O_X^\bb L} (\tau\circ\eta)\inv N) \iso \tau(\tau\inv (M)\tens_{X,\eta}\tau\inv(N))\iso M \tens_{X,\eta} N$$ and hence $\spc_{\tens_{X,\eta}}\cal T = \spc_{\tens_{X,\tau\circ \eta}} \cal T$ by Lemma \ref{identical tt-spectrum}. If $\tau \in \pic (\cal T,\tens_{X,\eta})$, then it preserves (prime) ideals (as they are  replete) and hence it indeed induces the identity map $\spc_{\tens_{X,\eta}}\cal T = \spc_{\tens_{X,\tau\circ \eta}} \cal T$.
 \end{proof}
 There is a standard example of such a subgroup:
\begin{example}\label{standard autoeq}
Let $X$ be a smooth projective variety. First, recall that there are three standard types of autoequivalences of $\perf X$, namely, shifts, derived push-forwards by automorphisms of $X$ and usual tensor products with line bundles on $X$ (as line bundles are flat). An autoequivalence is said to be a \textbf{standard autoequivalence} if it is generated by those three types, or more precisely, if it is an element in the following subgroup:
 \[
 A(X) := \aut X \ltimes \pic X \times \bb Z[1] \subset \auteq (\perf X). 
 \]  
Here, we use semi-direct product since for any $f\in \aut(X)$ and $\ecal L \in \pic (X)$, we have $$\bb Rf_*\circ (\ecal L\tens_{\ecal O_X} -)\circ (\bb Rf_*)\inv \iso \bb R f_* (\ecal L\tens_{\ecal O_X} \bb Lf^*( -)) \iso \bb Rf_*\ecal L \tens _{\ecal O_X}- \in \pic (X)$$ by $(\bb R f_*)\inv \iso \bb Lf^*$ and the projection formula. Noting $A(X) \subset \auteq(\perf X,\tens_{\ecal O_X}^\bb L) \ltimes \pic(\perf X,\tens_{\ecal O_X}^\bb L)$, we in particular see that if there is a triangulated equivalence $\eta:\perf X \simeq \cal T$, then the corresponding subgroup $A(X)_\eta \subset \auteq(\cal T,\tens_{X,\eta}) \ltimes \pic(\cal T,\tens_{X,\eta})$ is an $\eta$-stabilizer, but generally not a universal stabilizer of $X$. 
\end{example}
When $\cal T$ is good enough, those constructions indeed give the maximum universal stabilizer of $X$. 
 \begin{corollary}
     Let $\cal T$ be a triangulated category with $X \in \FM\cal T $. Suppose that 
     \[\auteq \perf X  = \auteq(\perf X,\tens_{\ecal O_X}^\bb L) \ltimes \pic(\perf X,\tens_{\ecal O_X}^\bb L).\] 
     Then, $\auteq\cal T $ is a universal stabilizer (and hence the maximum universal stabilizer) of $X$, i.e., 
     \[
     \spc_{\tens,X} \cal T  \iso X. \qedhere
     \]
 \end{corollary}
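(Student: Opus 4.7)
The plan is to reduce the corollary to the preceding lemma by transporting the given decomposition of $\auteq \perf X$ along an arbitrary triangulated equivalence $\eta \colon \perf X \simeq \cal T$. Since being a universal stabilizer of $X$ means being an $\eta$-stabilizer for every $\eta$, I fix one such $\eta$ and aim to show $\auteq \cal T \subset \stab(\cal T, \eta)$; universality will follow because $\eta$ was arbitrary.

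Conjugation by $\eta$ is a group isomorphism $\auteq \perf X \iso \auteq \cal T$ sending $\tau \mapsto \eta \tau \eta\inv$. By the very definition of the transported tt-structure $\tens_{X,\eta}$, a $k$-linear autoequivalence of $\cal T$ is a tt-autoequivalence with respect to $\tens_{X,\eta}$ exactly when its $\eta$-conjugate is a tt-autoequivalence of $(\perf X, \tens_{\ecal O_X}^\bb L)$, so this isomorphism restricts to a bijection $\auteq(\perf X, \tens_{\ecal O_X}^\bb L) \iso \auteq(\cal T, \tens_{X,\eta})$. Likewise, $\eta$ carries $\tens_{\ecal O_X}^\bb L$-invertible objects to $\tens_{X,\eta}$-invertible objects, and Lemma \ref{pic} identifies $\pic(-,-)$ with these groups of invertible objects, whence the isomorphism also restricts to $\pic(\perf X, \tens_{\ecal O_X}^\bb L) \iso \pic(\cal T, \tens_{X,\eta})$. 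Consequently, the hypothesis translates verbatim to
\[
\auteq \cal T = \auteq(\cal T, \tens_{X,\eta}) \ltimes \pic(\cal T, \tens_{X,\eta}).
\]

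Applying the preceding lemma, every $\tau \in \auteq \cal T$ is an $\eta$-stabilizer, i.e., $\spec_{\tens_{X, \tau \circ \eta}} \cal T = \spec_{\tens_{X,\eta}} \cal T$ as subspaces of $\spec_\vartriangle \cal T$. Taking the union over $\tau \in \auteq \cal T$ in the defining formula of Construction \ref{specx} gives
\[
\spec_{\tens_X} \cal T = \spec_{\tens_{X,\eta}} \cal T \iso X,
\]
exhibiting $\auteq \cal T$ as an $\eta$-stabilizer. Since $\eta$ was arbitrary, $\auteq \cal T$ is a universal stabilizer of $X$, and maximality among universal stabilizers is automatic because no subgroup of $\auteq \cal T$ can be strictly larger. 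The only step that requires attention is the compatibility of the semidirect product factorization with transport along $\eta$; once this is noted, the argument is immediate from the preceding lemma, so I do not expect a genuine obstacle here.
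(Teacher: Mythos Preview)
Your proof is correct and follows essentially the same approach as the paper: transport the decomposition of $\auteq \perf X$ along an arbitrary $\eta$ using that $\eta$ is a tt-equivalence $(\perf X,\tens_{\ecal O_X}^\bb L)\simeq(\cal T,\tens_{X,\eta})$, then invoke the preceding lemma. You have merely spelled out in more detail why conjugation by $\eta$ carries $\auteq(\perf X,\tens_{\ecal O_X}^\bb L)$ and $\pic(\perf X,\tens_{\ecal O_X}^\bb L)$ onto their $\tens_{X,\eta}$-counterparts, which the paper compresses into a single sentence.
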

 \begin{proof}
      Since any triangulated equivalence $\eta:\perf X\simeq \cal T$ induces a tt-equivalence of $(\perf X,\tens_{\ecal O_X}^\bb L)$ and $(\cal T,\tens_{X,\eta})$, we have $\auteq\cal T  = \auteq(\cal T,\tens_{X,\eta}) \ltimes \pic(\cal T,\tens_{X,\eta})$ for any $\eta$.
 \end{proof}
 \begin{example}\label{ample}
     By a theorem of Bondal-Orlov (\cite{bondal_orlov_2001}*{\href{https://arxiv.org/pdf/alg-geom/9712029.pdf}{Theorem 3.1}}), we have $\auteq \perf X = A(X)$ for a smooth projective variety $X$ with ample (anti-)canonical bundle (e.g. Fano varieties), so in such a case, the supposition of the corollary holds and for any $\eta:\perf X \simeq \cal T$ we have
\[
\spc_{\tens,{X}} \cal T =  \spc_{\tens_{X,\eta}}\cal T  \iso X. 
\]
Moreover, in this case, $\FM\cal T$ is a singleton so there is no choice for $X \in \FM\cal T $. Also, the minimal $\eta$-generator is the trivial group. 
 \end{example}
One natural question is the following:
 \begin{question}
     Is there a weaker condition on $X$ that ensures $\auteq \cal T$ is a universal stabilizer?
 \end{question}
For any triangulated category $\cal T$ with $X\in \FM\cal T $, we have an interesting universal stabilizer of $X$.
\begin{notation}
    Let $\cal T$ be a triangulated category with a Serre functor. Define $\Ser \cal T \subset \auteq \cal T$ to be the subgroup generated by Serre functors. Note $\Ser \cal T$ is generated by a single isomorphism class by the uniqueness of Serre functors. 
\end{notation}
Essentially, by Example \ref{serre functor examples} (ii), we have the following:
\begin{lemma}\label{universal recovery}
    Let $\cal T$ be a triangulated category with $\FM\cal T  \neq \emp$. Then, for any $X \in \FM \cal T$, we have \[\Ser\cal T \subset \stab^\fix(\cal T,X) \subset \stab(\cal T, X). \qedhere\]  
\end{lemma}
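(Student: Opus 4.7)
The plan is to reduce the statement to the already established fact (the last sentence of the Lemma right before Section 3) that the restriction of $\rho^\vartriangle$ to $\pic(\cal T, \tens)$ acts trivially on the underlying topological space of $\spec_\tens \cal T$, together with the uniqueness of the Serre functor up to natural isomorphism.

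First, since $X\in \FM\cal T$ is a smooth projective variety, Example \ref{serre functor examples}(i) gives that the Serre functor of $\perf X$ is $\bb S_X \iso -\tens_{\ecal O_X}\omega_X[\dim X]$, which by definition is an element of $\pic(\perf X, \tens_{\ecal O_X}^\bb L)$. Fix any triangulated equivalence $\eta:\perf X \simeq \cal T$. Then the transported autoequivalence $\eta\circ \bb S_X \circ \eta^{-1}$ lies in $\pic(\cal T, \tens_{X,\eta})$ by construction of the transported tt-structure $\tens_{X,\eta}$, and is therefore a $\tens_{X,\eta}$-invertible shift of tensoring with some object, hence acts as the identity on the underlying topological space of $\spec_{\tens_{X,\eta}}\cal T$ by the Lemma cited above.

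The crucial remaining point is to identify this transported functor with the Serre functor $\bb S_\cal T$ of $\cal T$. Since $\eta$ is a $k$-linear triangulated equivalence, one checks directly from the defining functorial isomorphism that $\eta\circ \bb S_X \circ \eta^{-1}$ is a Serre functor of $\cal T$, and by the uniqueness of Serre functors it is naturally isomorphic to $\bb S_\cal T$. In particular, $\bb S_\cal T$ preserves every (prime) ideal of $(\cal T, \tens_{X,\eta})$, so it acts trivially on $\spec_{\tens_{X,\eta}}\cal T$. Since this holds for every choice $\eta:\perf X\simeq \cal T$ (equivalently, replacing $\eta$ by $\tau\circ \eta$ for arbitrary $\tau\in\auteq\cal T$), the generator of $\Ser\cal T$ acts as the identity on each copy $\spec_{\tens_{X,\tau\circ \eta}}\cal T$ and hence on their union $\spec_{\tens_X}\cal T$. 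This yields $\Ser\cal T \subset \stab^\fix(\cal T,X)$, and the inclusion $\stab^\fix(\cal T,X)\subset \stab(\cal T,X)$ is already recorded in Construction \ref{stabilizers}.

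The step to watch carefully is the naturality of the identification $\bb S_\cal T \iso \eta\circ \bb S_X \circ \eta^{-1}$: one must verify that this transported Serre functor is genuinely the $k$-linear Serre functor of $\cal T$ (independent of $\eta$), which is exactly the well-known uniqueness statement recalled in the Definition of Serre functors above. Everything else is then a formal consequence, so no genuine obstacle arises — the content is simply that on the geometric side the Serre functor is of Picard type, and this property is purely tt-categorical and survives transport along any triangulated equivalence.
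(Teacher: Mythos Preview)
Your proof is correct and follows essentially the same approach as the paper. The paper computes directly on points $\eta(\cal S_X(x))$ using $\bb S(\eta(\cal S_X(x))) = \eta(\omega_X \tens_{\ecal O_X}^\bb L \cal S_X(x)) = \eta(\cal S_X(x))$, while you phrase the same argument via the inclusion $\Ser\cal T \subset \pic(\cal T,\tens_{X,\eta})$ and the triviality of the $\pic$-action on tt-spectra; the paper explicitly notes in the Remark immediately following its proof that this is ``another way to think of the proof.''
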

\begin{proof}
Let $\bb S$ denote a Serre functor of $\cal T$ and take $X \in \FM\cal T$ and a triangulated equivalence $\eta:\perf X \simeq \cal T$. Note any point in $\spc_{\tens_{X,\eta}}\cal T$ is of the form $\eta(\cal S_X(x))$ for some (not necessarily closed) point $x \in X$. Now, by the uniqueness of Serre functors and the commutativity with equivalences, we have $\bb S (\eta (\cal S_X(x))) = \eta (\bb S (\cal S_X(x))) = \eta (\omega_X[\dim X] \tens_{\ecal O_X}^\bb L \cal S_X(x)) = \eta (\cal S_X(x))$. Thus, $\bb S \in \stab^\fix(\cal T,X)$ for any $X \in \FM \cal T$.
\end{proof}
\begin{remark}
    With the same notation as above, we can also see $\Ser\cal T  \subset \pic (\cal T,\tens_{X,\eta})$ for any $X \in \FM\cal T$ and any triangulated equivalence $\eta:\perf X \simeq \cal T$, which gives another way of viewing the proof. 
\end{remark}
\begin{obs}
    Let $\cal T$ be a triangulated category with $X \in \FM \cal T$ and $\eta: \perf X \simeq \cal T$. We have the following chain of inclusions:
    \begin{center}
        \begin{tikzcd}
{\bb Z[1] \times \Ser \cal T} \arrow[rr, hook] &  & { \stab^\fix(\cal T,X)} \arrow[rr, hook] &  & { \stab(\cal T,X)} \arrow[rd, hook]                                                  &                       \\
                                               &  &                                          &  &                                                                                      & { \stab(\cal T,\eta)} \\
                                               &  & A(X)_\eta \arrow[rr, hook]               &  & {\pic(\cal T,\tens_{X,\eta}) \ltimes \auteq (\cal T,\tens_{X,\eta})} \arrow[ru, hook] &                      
\end{tikzcd}
    \end{center}
    It is natural to ask if there is any inclusion that is indeed the equality. 
\end{obs}
We moreover have the following natural questions, which we will not focus on in this paper, but may be something interesting to study in the future. 
 \begin{question} \ 
     \begin{enumerate}
         \item How can we determine or characterize maximum stabilizers? This question seems to be related to positivity of the (anti-)canonical bundle of a smooth projective variety $X$. For example, the maximum universal stabilizer of $X$ is the whole group if $X$ has the ample (anti-)canonical bundle and philosophically it gets closer to $\Ser\cal T \times \bb Z[1]$ as the canonical bundle gets closer to the trivial bundle (cf. Corollary \ref{new tt-separated}). 
         \item We can ask the same question for minimal generators, if exist. Note minimal generators are not necessarily unique. Moreover, it is interesting to ask when they are finitely generated. See also Remark \ref{minimal generator for elliptic curves?} for some discussions in the case of elliptic curves.
     \end{enumerate}
 \end{question}
\section{Fourier-Mukai loci and birational equivalence}\label{Fourier-Mukai loci and birational equivalence}
\subsection{Construction}
In this subsection, we finally take the union of tt-spectra of all Fourier-Mukai partners and observe their interactions.  
\begin{definition}[]
    Let $\cal T$ be a triangulated category. The \textbf{Fourier-Mukai locus} of $\spc_\vartriangle \cal T$ is defined to be
    \[
    \spc^\fm  \cal T := \bigcup_{X \in \FM \cal T } \spc_{\tens,X} \cal T \subset \spc_\vartriangle \cal T 
    \]
    \textbf{equipped with the topology generated by open subsets of each tt-spectrum}.\footnote{Again, this topology is a priori finer than the subspace topology in $\spc_\vartriangle \cal T$, but these topologies in fact agree by \cite{ito2024new}*{\href{https://arxiv.org/pdf/2405.16776}{Proposition 4.2.}}. Nevertheless, the rest of this section and the next section does not depend on this fact. On the other hand, in Section \ref{Comparison with Serre invariant loci} and later, we consider the subspace topology on $\spc^\fm \cal T$, so to avoid confusions, readers can safely remember these two topologies agree.}
    A subgroup $G \subset \auteq \cal T$ is said to be a \textbf{universal stabilizer} (resp. a \textbf{universal generator}) of $\cal T$ if it is a universal stabilizer (resp. a universal generator) of $X$ for any $X \in \FM \cal T$. 
\end{definition}

\begin{example}
    Let $\cal T$ be a triangulated category with $\FM \cal T\neq \emp$. The subgroup $\Ser \cal T \times \bb Z[1] \subset \auteq \cal T$ is a universal stabilizer of $\cal T$. 
\end{example}
It is natural to ask how intersections of tt-spectra look in Fourier-Mukai loci. It indeed turns out that they are open and we can moreover show Fourier-Mukai loci have natural scheme structures. First, let us note the following notion and result from \cite{Voet_2020}.
\begin{definition}\label{birational FM partners}
    Let $X$ and $Y$ be smooth projective varieties. A triangulated equivalence $\Phi_{\ecal P}:\perf X \to \perf Y$ with Fourier-Mukai kernel $\ecal P \in \perf(X\times Y)$ is said to be \textbf{birational} if there is an open (dense) subset $U \subset X$ such that $\ecal P|_{U\times Y}$ is isomorphic to the structure sheaf of the graph of an open immersion, in which case $X$ and $Y$ are said to be \textbf{birational Fourier-Mukai partners}. Note in particular that birational Fourier-Mukai partners $X$ and $Y$ are \textbf{birationally equivalent}, i.e., there exists a birational map $X \ratmap Y$. 
\end{definition}

\begin{remark}
    Note that birationally equivalent Fourier-Mukai partners may not be birational Fourier-Mukai partners. See Observation \ref{bg obs}.  
\end{remark}

As pointed out in the proof of \cite{Voet_2020}*{\href{https://digital.lib.washington.edu/researchworks/bitstream/handle/1773/46506/Voet_washington_0250E_22022.pdf?sequence=1}{Proposition 4.1.6}}, we have the following as a corollary of \cite{HuyBook}*{Corollary 6.14}:
\begin{corollary}\label{improved h}
Let $X$ and $Y$ be a smooth projective variety with $\Phi:\perf X \simeq \perf Y$ and suppose there exist closed points $x_0 \in X$ and $y_0 \in Y$ such that $\Phi(k(x_0)) = k(y_0)$. Then, there exist an open neighborhood $U$ of $x_0$ and an open immersion $f: U \inj Y$ with $f(x_0) = y_0$ such that $\Phi (k(x)) \iso k(f(x))$ for all closed points. In particular, $\Phi$ is birational if and only if such closed points $x_0 \in X$ and $y_0 \in Y$ exist. Moreover, if $\Phi$ and $\Phi'$ are naturally isomorphic, then $\Phi$ is birational if and only if $\Phi'$ is birational. 
\end{corollary}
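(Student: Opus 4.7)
The plan is to deduce this from \cite{HuyBook}*{Corollary 6.14} by localizing its argument near the pair $(x_0, y_0)$. First, by Orlov's representability theorem, I would write $\Phi = \Phi_{\ecal P}$ for some kernel $\ecal P \in \perf(X \times Y)$. The hypothesis $\Phi(k(x_0)) \iso k(y_0)$ says that the derived restriction $\bb L i_{x_0}^* \ecal P$ along $i_{x_0}\colon \{x_0\} \times Y \inj X \times Y$ is a shift-free skyscraper of length one supported at $y_0$. By cohomology and base change together with semicontinuity of $\mathrm{Tor}$-fiber dimensions, this property is open in $x$: there exists an open neighborhood $U_0 \ni x_0$ such that for every closed $x \in U_0$ the derived fiber $\bb L i_x^* \ecal P$ is again a shift-free skyscraper $k(f(x))$ for a uniquely determined closed point $f(x) \in Y$, yielding a set-theoretic map $f\colon U_0 \to Y$ with $f(x_0) = y_0$ and $\Phi(k(x)) \iso k(f(x))$ for all closed $x \in U_0$.

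The main obstacle is upgrading $f$ to an open immersion of schemes. For this I would run the local form of the argument in the proof of Huybrechts's Corollary 6.14: after possibly shrinking $U_0$ to some $U$, the scheme-theoretic support of $\ecal P|_{U \times Y}$ is a closed subscheme $Z \subset U \times Y$ whose first projection is bijective onto $U$, and the constancy of the fibers $k(f(x))$ (together with reducedness at the generic point, coming from length one) forces $Z$ to be the graph $\Gamma_f$ of a \emph{morphism} $f\colon U \to Y$ with $\ecal P|_{U \times Y} \iso \ecal O_{\Gamma_f}$. The full faithfulness of $\Phi_{\ecal P}$ translated via adjunction, together with $\dim X = \dim Y$ (which holds since $\Phi$ is a derived equivalence of smooth projective varieties), then forces $f$ to be unramified and flat at $x_0$, hence \'etale, and consequently an open immersion after a further shrinking of $U$.

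The ``in particular'' statement then follows tautologically. The $(\Leftarrow)$ direction is immediate from the construction just described. Conversely, if $\Phi$ is birational in the sense of Definition \ref{birational FM partners}, witnessed by an open $U \subset X$ and an open immersion $f\colon U \inj Y$ with $\ecal P|_{U \times Y} \iso \ecal O_{\Gamma_f}$, then any closed point $x_0 \in U$ and $y_0 := f(x_0)$ satisfy $\Phi(k(x_0)) \iso k(y_0)$. The substantive content is thus concentrated in the openness and \'etaleness arguments of the second paragraph, which is exactly the local data extracted from \cite{HuyBook}.
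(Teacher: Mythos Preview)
Your proposal is correct and matches the paper's approach: the paper gives no self-contained argument but simply records this statement as a corollary of \cite{HuyBook}*{Corollary 6.14} (following an observation in the proof of \cite{Voet_2020}*{Proposition 4.1.6}), and your sketch is precisely the localization of Huybrechts's proof that justifies this citation. One small refinement worth making explicit: the passage from ``\'etale'' to ``open immersion'' is not a matter of further shrinking alone but requires injectivity of $f$ on $U$, which you obtain directly from full faithfulness (if $f(x_1)=f(x_2)$ then $\Phi(k(x_1))\iso\Phi(k(x_2))$ forces $x_1=x_2$).
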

For applications to our situations, let us make several observations. First of all, we note that an open immersion in Corollary \ref{improved h} can be canonically constructed.
\begin{construction}\label{canonical open}
Let $X$ and $Y$ be smooth projective varieties (or more generally Gorenstein projective varieties) and let $\Phi:\perf X \simeq \perf Y$ be a triangulated equivalence. In \cite{Cal17}, it was shown that if $\Phi$ is a birational equivalence, then $X$ and $Y$ are canonically birationally equivalent in the following sense, where we will only see a rough sketch of the arguments. First, let $\cal M_X$ denote Inaba's moduli space of simple objects of $\perf X$, which is an algebraic space (\cite{inaba2002toward}). Then there is a canonical open immersion $X \inj \cal M_X$ sending each point $x$ to the corresponding skyscraper sheaf $k(x)$ (\cite{Cal17}*{\href{https://johncalab.github.io/stuff/derbir-journal.pdf}{Corollary 3.2}}). Now, it is moreover shown that $\Phi$ induces a canonical open immersion $\cal M_X \inj \cal M_Y$ sending $k(x_0)$ to $k(y_0)$ if and only if $\Phi(k(x_0)) \iso k(y_0)$  (\cite{Cal17}*{\href{https://johncalab.github.io/stuff/derbir-journal.pdf}{Proposition 3.3}}).  Therefore, we obtain the following pull-back diagram of open immersions:
\DisableQuotes
\begin{center}
\begin{tikzcd}
U \arrow[rr, "\iota_Y", hook] \arrow[d, "\iota_X"', hook] &                          & Y \arrow[d, hook] \\
X \arrow[r, hook]     \arrow[rru, "\phi_\Phi", dashed]                                    & \cal M_X \arrow[r, hook] & \cal M_Y         
\end{tikzcd}
\end{center}
Now, if $\Phi$ is moreover birational, then the open subscheme $U$ is not empty and hence we canonically obtain a birational map
\[
\phi_\Phi:X \ratmap Y; \quad \iota_X(U) \overset{\iota_X\inv}{\to} U \overset{\iota_Y}{\to}\iota_Y(U).
\]
For this reason, $\phi_\Phi$ will be said to be the \textbf{birational map associated to $\Phi$} and the image $\iota_X(U)$ of $U$ in $X$ will be said to be the \textbf{(maximal) domain of definition} of $\Phi$. Note that closed points of the domain of definition of $\Phi$ consist precisely of closed points $x\in X$ with $\Phi(k(x))\iso k(y)$ for some $y \in Y$.   
\end{construction}
Note the following immediate corollary. 
\begin{lemma}\label{cocycle}
    Let $X,Y,Z$ be smooth projective varieties.
    \begin{enumerate}
        \item If birational equivalences $\Phi,\Phi':\perf X \simeq \perf Y$ are naturally isomorphic, then $\Phi$ and $\Phi'$ have the same domain of definition and moreover $\phi_\Phi = \phi_{\Phi'}$ as birational maps.
        \item If $\Phi:\perf X \simeq \perf Y$ and $\Psi:\perf Y \simeq \perf Z$ are birational equivalences, then $\phi_{\Psi} \circ \phi_{\Phi} = \phi_{\Psi \circ \Phi}$ as birational maps.  \qedhere
    \end{enumerate}
\end{lemma}
\begin{proof} \ 
\begin{enumerate}
    \item The first claim is clear. The second claim follows by Lemma \ref{class alg result}.
    \item Follows by part (i) and by comparing the following diagram with Construction \ref{canonical open}:
    \begin{center}
        \DisableQuotes
        \begin{tikzcd}
                                                                                              & Y \arrow[rd, hook] \arrow[ldd, "\phi_\Psi", dashed, near end]                    &                                  \\
X \arrow[r, hook] \arrow[ru, "\phi_\Phi", dashed] \arrow[d, "\phi_{\Psi\circ \Phi}"', dashed] & \cal{M}_X \arrow[r, "\Phi", hook] \arrow[rd, "\Psi \circ \Phi"', hook] & \cal M_Y \arrow[d, "\Psi", hook] \\
Z \arrow[rr, hook]                                                                            &                                                                        & \cal M_Z                        
\end{tikzcd}
    \end{center}\qedhere
\end{enumerate}
     
\end{proof}
\begin{remark} Use the same notation as in Construction \ref{canonical open}. 
\begin{enumerate}
    \item The maximal domain of definition of $\Phi$ does not necessarily agree with the maximal domain of definition of $\phi_\Phi$, i.e., the maximal open subset of $X$ to which $\phi_\Phi$ can be extended, which is well-defined by the reduced-to-separated lemma. For example, see Lemma \ref{birational spherical} and Example \ref{K3}. 
    \item We can construct the map more explicitly. Indeed, under the same notations as in Corollary \ref{improved h}, we can directly get an open immersion by setting a sheaf homomorphism to respect canonical ring isomorphisms 
    \[
    \Gamma(V,\ecal O_{V})\iso \End_{\ecal O_V}(\ecal O_V) \iso Z(\perf V)_{\sf{lred}} \overset{}{\iso} Z(\perf f(V))_{\sf{lred}}\iso \End_{\ecal O_{f(V)}}(\ecal O_{f(V)}) \iso \Gamma(f(V),\ecal O_{f(V)})
    \]
    for any open subset $V \subset U$ by \cite{rouquier_2010}*{\href{https://www.math.ucla.edu/~rouquier/papers/leeds.pdf}{Proposition 4.14}} and the proof of \cite{rouquier_2010}*{\href{https://www.math.ucla.edu/~rouquier/papers/leeds.pdf}{Theorem 4.19}} (cf. Definition \ref{center} for notations), where the middle isomorphism is given as in Lemma \ref{functoriality t}. By Lemma \ref{class alg result}, this map needs to glue to the map constructed in Construction \ref{canonical open}. \qedhere
\end{enumerate}
\end{remark}
Let us also note the following observations:
\begin{lemma}\cite{HO22}*{\href{https://arxiv.org/pdf/2112.13486v3.pdf}{Lemma 3.2, 3.3}}\label{intersec ref}
    Let $X$ and $Y$ be smooth varieties and suppose that we have a fully faithful functor $\Phi:\perf X \inj \perf Y$ that admits a right adjoint functor. Then, for $x\in X$ and $y \in Y$, we have $\Phi(\cal S_X(x)) = \cal S_Y(y)$ if and only if $\Phi(k(x)) \iso k(y)[l]$ for some $l \in \bb Z$. In particular, if $\Phi$ is a triangulated equivalence and $\supp \Phi(k(x)) = Y$ for any closed point $x \in X$, then we have
    \[
    \Phi(\spc_{\tens_{\ecal O_X}^\bb L} \perf X) \cap \spc_{\tens_{\ecal O_Y}^\bb L}\perf Y = \emp. \qedhere
    \]
\end{lemma}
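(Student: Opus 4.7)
The strategy is to rewrite each prime $\cal S_X(x)$ as the right orthogonal of the residue field $k(x)$ inside $\perf X$, and then to transfer this characterization between $X$ and $Y$ using $\Phi$ together with its right adjoint. On any smooth variety $X$, the standard Nakayama-for-perfect-complexes fact reads: for $\ecal F \in \perf X$ and a closed point $x$, one has $x \notin \supp \ecal F$ iff $\ecal F \tens^\bb L_{\ecal O_X} k(x) = 0$ iff $\bb R\hom_{\ecal O_X}(\ecal F, k(x)) = 0$, and local duality on the regular local ring $\ecal O_{X,x}$ gives the symmetric variant $\bb R\hom(k(x), \ecal F) = 0$. In particular, $\cal S_X(x) = k(x)^\perp \cap \perf X$, and the left orthogonal of $\cal S_X(x)$ inside $\perf X$ is the thick subcategory of perfect complexes with support contained in $\{x\}$. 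The analogous identities hold on $Y$.

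The easy direction is then a one-line calculation. Given $\Phi(k(x)) \iso k(y)[l]$, full faithfulness of $\Phi$ yields
\[
\bb R\hom(\Phi\ecal F, k(y)) \iso \bb R\hom(\Phi\ecal F, \Phi(k(x))[-l]) \iso \bb R\hom(\ecal F, k(x))[-l],
\]
so $\Phi(\ecal F) \in \cal S_Y(y)$ exactly when $\ecal F \in \cal S_X(x)$, which yields $\Phi(\cal S_X(x)) = \cal S_Y(y)$ under the assumption that $\Phi$ embeds the primes compatibly (and gives the set-theoretic equality outright when $\Phi$ is additionally essentially surjective).

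For the converse, I would transport the hypothesis through the right adjoint $\Phi^R$: the adjunction identity $\bb R\hom(\Phi\ecal F, k(y)) \iso \bb R\hom(\ecal F, \Phi^R k(y))$, together with the assumed equality of primes, forces $k(x)$ and $\Phi^R k(y)$ to share the same right orthogonal $\cal S_X(x)$ inside $\perf X$. Taking the opposite orthogonal then places $\Phi^R k(y)$ in the thick subcategory of perfect complexes supported at $\{x\}$. To upgrade this to the desired isomorphism $\Phi^R k(y) \iso k(x)[l]$, I would propagate the Bondal--Orlov point-object data from $k(y)$ (namely $\End^0 = k$, no negative self-$\ext$, and Serre-invariance up to a shift by $\dim Y$) to $\Phi^R k(y)$ via fully faithfulness of $\Phi$; on a smooth variety, these properties restricted to objects already supported at a single closed point force the claimed form. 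Applying $\Phi$ then gives $\Phi(k(x)) \iso k(y)[-l]$.

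The "in particular" statement is immediate: if $\Phi$ is an equivalence and $\supp \Phi(k(x)) = Y$ for every closed $x$, then $\Phi(k(x))$ is never of the form $k(y)[l]$, whose support is the single point $\{y\}$, so the iff rules out $\Phi(\cal S_X(x)) = \cal S_Y(y)$ for all pairs $(x,y)$; combining this with the fact that $\Phi$ induces a homeomorphism of triangular spectra gives $\Phi(\spec_{\tens_{\ecal O_X}^\bb L}\perf X) \cap \spec_{\tens_{\ecal O_Y}^\bb L}\perf Y = \emp$. The main obstacle is the backward direction: pure orthogonality arguments locate $\Phi^R k(y)$ only inside the thick subcategory of perfect complexes supported at $\{x\}$, and it takes the point-object classification to rule out longer Artinian thickenings such as $\ecal O_X/\fr m_x^2$ and conclude $\Phi^R k(y) \iso k(x)[l]$.
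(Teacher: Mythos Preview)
The paper does not supply its own proof of this lemma; it is a direct citation of \cite{HO22}*{Lemma~3.2, 3.3}, so there is no in-paper argument to compare against. I will therefore assess your proposal on its own merits.

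Your overall architecture is correct: the identification $\cal S_X(x) = {}^\perp k(x) = k(x)^\perp$ in $\perf X$ is the right starting point, the forward implication is a clean adjunction/full-faithfulness computation, and the ``in particular'' clause follows exactly as you say once the biconditional is in hand.

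There is, however, a genuine gap in your converse argument. You propose to ``propagate the Bondal--Orlov point-object data from $k(y)$ \ldots\ to $\Phi^R k(y)$ via fully faithfulness of $\Phi$''. But full faithfulness of $\Phi$ controls $\hom(\Phi A,\Phi B)$, not $\hom(\Phi^R C,\Phi^R D)$; the right adjoint $\Phi^R$ is not fully faithful in general, so there is no direct way to read off $\hom(\Phi^R k(y),\Phi^R k(y)[i])$ from $\hom(k(y),k(y)[i])$. Likewise, your final step ``applying $\Phi$ then gives $\Phi(k(x))\iso k(y)[-l]$'' presumes that the counit $\Phi\Phi^R k(y)\to k(y)$ is an isomorphism, i.e.\ that $k(y)$ lies in the essential image of $\Phi$, which is not part of the hypotheses.

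The repair is to analyse $\Phi(k(x))$ rather than $\Phi^R k(y)$. For any closed $y'\neq y$ one has $k(y')\in\cal S_Y(y)=\Phi(\cal S_X(x))$, so $k(y')\iso\Phi(\ecal F')$ with $x\notin\supp\ecal F'$; then
\[
\hom\bigl(\Phi(k(x)),k(y')[i]\bigr)\iso\hom\bigl(\Phi(k(x)),\Phi(\ecal F')[i]\bigr)\iso\hom\bigl(k(x),\ecal F'[i]\bigr)=0,
\]
whence $\supp\Phi(k(x))=\{y\}$. Now full faithfulness of $\Phi$ applies \emph{directly} to $\Phi(k(x))$ and gives $\hom^0=k$, $\hom^{<0}=0$. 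A perfect complex on a smooth variety supported at a single closed point with these self-$\ext$ groups is concentrated in one cohomological degree and has endomorphism ring $k$; over the regular local ring $\ecal O_{Y,y}$ a finite-length module with $\End=k$ is forced to be $k(y)$, giving $\Phi(k(x))\iso k(y)[l]$. This is the same point-object endgame you had in mind, simply run on the object where full faithfulness actually bites.
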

\begin{corollary}\label{flat kernel}
    Suppose the kernel $\ecal P \in \perf (X\times Y)$ of a Fourier-Mukai equivalence $\Phi_\ecal P: \perf X \simeq \perf Y$ is a locally free sheaf on $X \times Y$. Then,  
    \[
    \Phi(\spc_{\tens_{\ecal O_X}^\bb L} \perf X) \cap \spc_{\tens_{\ecal O_Y}^\bb L}\perf Y = \emp. \qedhere
    \]
\end{corollary}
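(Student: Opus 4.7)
The plan is to deduce this directly from Lemma \ref{intersec ref} by showing that under the local freeness hypothesis, the Fourier--Mukai transform of every closed-point skyscraper has full support on $Y$.

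First I would unwind the definition of $\Phi_{\ecal P}$. Writing $p_X, p_Y$ for the projections from $X \times Y$, we have
\[
\Phi_{\ecal P}(k(x)) \;=\; \R p_{Y*}\bigl(p_X^* k(x) \tens^{\bb L}_{\ecal O_{X \times Y}} \ecal P\bigr).
\]
Since $\ecal P$ is locally free on $X \times Y$, it is in particular flat, so the derived tensor product agrees with the ordinary tensor product and reduces to the restriction $\ecal P|_{\{x\} \times Y}$. The closed immersion $\{x\} \times Y \hookrightarrow X \times Y$ followed by $p_Y$ is an isomorphism onto $Y$, and restricting a locally free sheaf to a closed subscheme yields a locally free sheaf on that subscheme. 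Hence $\Phi_{\ecal P}(k(x))$ is (concentrated in degree zero and equal to) a locally free $\ecal O_Y$-module.

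Next I would argue this sheaf is nonzero. Since $X \times Y$ is integral (as $X$ and $Y$ are varieties), a locally free sheaf of rank $r$ has the same rank at every point; as $\Phi_{\ecal P}$ is an equivalence the kernel $\ecal P$ cannot be the zero sheaf, so its rank is positive, and therefore $\ecal P|_{\{x\} \times Y}$ is a nonzero vector bundle on $Y$. A nonzero locally free sheaf on an integral scheme has support equal to the entire scheme, so $\supp \Phi_{\ecal P}(k(x)) = Y$ for every closed point $x \in X$.

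With this established, the second half of Lemma \ref{intersec ref} applies verbatim to the equivalence $\Phi_{\ecal P}$ and yields
\[
\Phi_{\ecal P}\bigl(\spec_{\tens_{\ecal O_X}^{\bb L}} \perf X\bigr) \cap \spec_{\tens_{\ecal O_Y}^{\bb L}} \perf Y = \emp,
\]
as claimed. The whole argument is essentially routine once the flatness of $\ecal P$ is invoked; the only point requiring a brief justification is the nonvanishing of the restriction, which I would record explicitly since it is where the hypothesis that $\ecal P$ is a sheaf (rather than a complex) is actually used.
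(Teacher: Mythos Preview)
Your argument is correct and follows essentially the same route as the paper: compute $\Phi_{\ecal P}(k(x))$ as the restriction $\ecal P|_{\{x\}\times Y}$ using flatness of the locally free kernel, observe this is a (nonzero) locally free sheaf with full support $Y$, and then invoke Lemma \ref{intersec ref}. The only difference is that you spell out the nonvanishing step explicitly via integrality of $X\times Y$, whereas the paper leaves this implicit.
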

\begin{proof}
    Since $\ecal P$ is flat over $X$, we see that $\Phi_\ecal P k(x) = \ecal P|_{\{x\}\times Y}$ for any closed point $x \in X$ by \cite{HuyBook}*{Example 5.4}, which is locally free and hence $\supp \Phi_\ecal P(k(x)) = Y$.
\end{proof}
The following is the key lemma:
\begin{lemma}\label{main intersection}
     Let $\cal T$ be a triangulated category with $X_i \in \FM \cal T$ and $\eta_i:\perf X_i \simeq \cal T$ with quasi-inverses $\eta_i\inv$ for $i =1,2$. Set $\Phi = \eta_2\inv \circ \eta_1:\perf X_1 \simeq \perf X_2$. 
     \begin{enumerate}
         \item We have $U := \spc_{\tens_{X_1,\eta_1}} \cal T \cap \spc_{\tens_{X_2,\eta_2}} \cal T \neq \emp$ if and only if $\Phi$ is birational up to shift. Therefore, $X_1$ and $X_2$ are birational Fourier-Mukai partners if and only if $\spc_{\tens,{X_1}} \cal T \cap \spc_{\tens,{X_2}} \cal T \neq \emp$.
         \item Suppose $\Phi$ is a birational equivalence. Then, the maximal domain of definition of $\Phi$ is $\eta_1\inv(U)$. Therefore, the birational map $\phi_\Phi$ associated to $\Phi$ is an isomorphism 
         \[
         \phi_{\Phi}: \spec_{\tens_{\ecal O_{X_1}}^\bb L} \perf X_1 \supset \eta_1\inv(U) \overset{\iso}{\to} \eta_2\inv(U) \subset \spec_{\tens_{\ecal O_{X_2}}^\bb L} \perf X_2.
         \]
         Note we get the following canonical automorphism of $U$ (as a ringed space):
         \[
        \eta_2 \circ \phi_\Phi \circ\eta_1\inv : \spec_{\tens_{X_1,\eta_1,\eta_1\inv}} \cal T\supset  U \to \eta_1 \inv(U) \to \eta_2\inv(U) \to U \subset \spec_{\tens_{X_2,\eta_2,\eta_2\inv}} \cal T. \qedhere
         \]
     \end{enumerate} 
\end{lemma}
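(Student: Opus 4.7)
The plan is to translate the intersection condition in the triangular spectrum into a statement about skyscraper sheaves via Theorem \ref{balmer} and Lemma \ref{intersec ref}, and then invoke Corollary \ref{improved h} together with Construction \ref{canonical open}. For part (1), I will use Theorem \ref{balmer} to identify each point of $\spec_{\tens_{X_i,\eta_i}}\cal T$ with $\eta_i(\cal S_{X_i}(x_i))$ for a unique closed point $x_i \in X_i$, so that a point of $U$ corresponds to closed points $x_1 \in X_1, x_2 \in X_2$ with $\Phi_\ecal P(\cal S_{X_1}(x_1)) = \cal S_{X_2}(x_2)$. Since $\Phi_\ecal P$ is an equivalence and hence admits a right adjoint, Lemma \ref{intersec ref} reduces this to $\Phi_\ecal P(k(x_1)) \iso k(x_2)[l]$ for some $l \in \bb Z$. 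Setting $\Psi := [-l]\circ\Phi_\ecal P$, we obtain $\Psi(k(x_1)) \iso k(x_2)$, and Corollary \ref{improved h} then yields birationality of $\Psi$, i.e., $\Phi_\ecal P$ is birational up to shift; the converse reverses this chain. For the "therefore", I will note that $\spec_{\tens_{X_i}}\cal T = \bigcup_{\tau\in\auteq\cal T}\spec_{\tens_{X_i,\tau\circ\eta_i}}\cal T$, so non-emptiness of $\spec_{\tens_{X_1}}\cal T\cap\spec_{\tens_{X_2}}\cal T$ is equivalent to the existence of \emph{some} triangulated equivalence $\perf X_1 \simeq \perf X_2$ that is birational up to shift, which---after composing with an appropriate shift---is precisely the condition of being birational Fourier-Mukai partners.

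For part (2), assume $\Phi_\ecal P$ is birational with domain of definition $V \subset X_1$ and associated open immersion $f: V \inj X_2$ from Construction \ref{canonical open}. Closed points of $V$ are precisely those $x$ with $\Phi_\ecal P(k(x)) \iso k(f(x))$ (no shift), while by part (1) the closed points of $\eta_1\inv(U)$ are those $x$ admitting $\Phi_\ecal P(k(x)) \iso k(y)[l]$ for some $y$ and $l$. The hard part is precisely this shift ambiguity: I must show that whenever the isomorphism holds with some shift, it in fact holds with $l=0$. The plan is to exploit integrality of $X_1$: given $x'$ with $\Phi_\ecal P(k(x')) \iso k(y')[l']$, Corollary \ref{improved h} applied to $[-l']\circ\Phi_\ecal P$ will produce an open neighborhood $V' \ni x'$ on which $\Phi_\ecal P(k(x)) \iso k(g(x))[l']$. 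Since $V$ and $V'$ are non-empty opens in the irreducible scheme $X_1$, they intersect, and on the overlap $k(f(x)) \iso k(g(x))[l']$ forces $l' = 0$. This identifies $\eta_1\inv(U) = V$ as open subschemes, and an identical argument gives $\eta_2\inv(U) = f(V)$; the restriction of $\phi_\ecal P$ from Construction \ref{canonical open} then delivers the claimed isomorphism $\eta_1\inv(U) \overset{\iso}{\to} \eta_2\inv(U)$ of ringed spaces, and the composition $\eta_2 \circ \phi_\ecal P \circ \eta_1\inv$ recovers the stated automorphism of $U$.
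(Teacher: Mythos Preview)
Your proof is correct and follows the same route as the paper: translate via $\eta_1$ to an intersection inside $\spec_\vartriangle \perf X_1$, invoke Lemma \ref{intersec ref} to pass to skyscraper sheaves up to shift, and then feed this into Corollary \ref{improved h} and Construction \ref{canonical open}. One small inaccuracy: not every point of $\spec_{\tens_{X_i,\eta_i}}\cal T$ comes from a \emph{closed} point of $X_i$, but your argument only needs a single closed point in $U$ (which exists since $\eta_1\inv(U)$ is a non-empty open in a $k$-variety), so this does no harm.

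Where your write-up actually goes a bit further than the paper is in part (ii). The paper simply asserts that ``closed points of $\eta_1\inv(U)$ coincide with closed points of the domain of definition of $\Phi_\ecal P$'' and then concludes, since both are open subschemes of the smooth projective variety $X_1$. You unpack precisely the content of that assertion: a closed point of $\eta_1\inv(U)$ a priori only gives $\Phi_\ecal P(k(x'))\iso k(y')[l']$ for some $l'$, and you force $l'=0$ by producing an open $V'$ via Corollary \ref{improved h} applied to $[-l']\circ\Phi_\ecal P$, intersecting it with the domain of definition $V$ (non-empty by irreducibility of $X_1$), and reading off $l'=0$ from $k(f(x))\iso k(g(x))[l']$ at a common closed point. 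This is the honest reason the two opens have the same closed points, and it is good that you made it explicit; the paper's final step (two opens in a variety with the same closed points must agree) is then immediate.
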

\begin{proof}
    The first part follows from Lemma \ref{intersec ref} and Corollary \ref{improved h}, noting that $\eta_1$ induces an isomorphism of
    \[
    \spec_{\tens_{\ecal O_{X_1}}^\bb L} \perf X_1 \cap \eta_1\inv \circ\eta_2 (\spec_{\tens_{\ecal O_{X_2}}^\bb L}\perf X_2) \iso U
    \]
    and that the shift functors induce the identity on tt-spectra. For the second part, note again by Corollary \ref{improved h}, $U$ is an open subscheme of $\spec_{\tens_{X_1,\eta_1}} \cal T$ and hence $\eta_1\inv(U)$ is an open subscheme of $\spec_{\tens_{\ecal O_{X_1}}^\bb L} \perf X_1$. Now, by Construction \ref{canonical open} and Lemma \ref{intersec ref}, we can see that closed points of $\eta_1\inv(U)$ coincide with closed points of the domain of definition of $\Phi$, which is also an open subscheme of $\spec_{\tens_{\ecal O_{X_1}}^\bb L} \perf X_1$. Therefore, they need to coincide, noting $\spec_{\tens_{\ecal O_{X_1}}^\bb L} \perf X_1$ is a variety. 
\end{proof}
Now, we can show one of the main results in this paper.
 \begin{theorem}\label{scheme}
     Let $\cal T$ be a triangulated category with $\FM \cal T \neq \emp$. Then, $\spc^{\fm}\cal T$ can be equipped with a locally ringed space structure so that it admits an open covering by locally ringed open subspaces of the form $\spec_{\tens_{X,\eta, \eta\inv}}\cal T$ for each $X \in \FM \cal T$ and $\eta:\perf X \simeq \cal T$ with quasi-inverse $\eta \inv$. Let $\spec^\fm \cal T$ denote the locally ringed space. In particular, $\spec^{\fm}\cal T$ is a smooth scheme locally of finite type over $k$. Moreover, the Krull dimension $\dim \spec^\fm \cal T$ equals the Krull dimension $\dim X$ for any $X \in \FM \cal T$. 
\end{theorem}
\begin{proof}
    Take $X_i \in \FM \cal T$ and $\eta_i:\perf X_i \simeq \cal T$ with quasi-inverses $\eta_i\inv$ for $i =1,2$. It suffices to think of the case when $\Phi:= \eta_2 \circ \eta_1\inv$ is birational up to shift so that
    \[
    \spc_\vartriangle \cal T \supset U := \spc_{\tens_{X_1,\eta_1,\eta_1\inv}} \cal T \cap \spc_{\tens_{X_2,\eta_2,\eta_2\inv}} \cal T \neq \emp
    \]
    since otherwise the corresponding tt-spectra are disjoint by Lemma \ref{main intersection} (i). We may furthermore assume $\Phi$ is birational by shifting since shifts induces the identity in each spectrum (by Lemma \ref{class alg result}) and commutes with equivalences. Then, we can glue $\spec_{\tens_{X_1,\eta_1,\eta_1\inv}} \cal T$ and $\spec_{\tens_{X_2,\eta_2,\eta_2\inv}} \cal T$ to a scheme along $U$ by the canonical automorphism $\eta_2 \circ \phi_\Phi \circ \eta_1\inv$ given in Lemma \ref{main intersection} (ii). Note those canonical automorphisms satisfy the cocycle conditions over any triple intersection since the birational maps associated to the corresponding birational equivalences satisfy the cocycle conditions by Lemma \ref{cocycle} (ii). Therefore, we see locally ringed spaces of the form $\spec_{\tens_{X,\eta,\eta\inv}}\cal T$ can be glued to give a desired locally ringed space structure on $\spec^\fm \cal T$.   

    Now, since $\spec^\fm \cal T$ admits an open cover by spaces of the form $\spec_{\tens_{X,\eta}} \iso X$ and each $X \in \FM \cal T$ has same dimension $n$ (e.g. \cite{Kawamata2002DEquivalenceAK}*{Theorem 1.4}), we see that
    \[
    \dim \spec^\fm \cal T = \sup_{X \in \FM \cal T} \dim X = n. \qedhere
    \]
\end{proof}
\begin{remark}
    Should a reader get confused with the proof or want more geometric interpretation, the following interpretation may help: We may think of $\spec^\fm \cal T$ as a scheme obtained by gluing Fourier-Mukai partners $X$ and $Y$ of $\cal T$ by using the birational maps associated to birational equivalences (and their inverses) between $\perf X$ and $\perf Y$. The underlying topological space "happens to be" homeomorphic to the Fourier-Mukai locus $\spc^\fm \cal T$ whose topology indeed agrees with the subspace topology in $\spc_\vartriangle \cal T$ by \cite{ito2024new}*{\href{https://arxiv.org/pdf/2405.16776}{Proposition 4.2.}}.   
\end{remark}
\begin{corollary}\label{fm ample}
    Let $\cal T$ be a triangulated category with $X \in \FM \cal T$ with ample (anti-)canonical bundle. Then as schemes, we have
    \[
    \fmspec \cal T \iso X.\qedhere
    \]
\end{corollary}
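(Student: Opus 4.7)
The plan is to reduce this to the two main ingredients already assembled: the Bondal--Orlov reconstruction theorem (invoked via Example \ref{ample}) and the scheme structure from Theorem \ref{scheme}. The heart of the claim is that under the ample (anti-)canonical bundle hypothesis, the tt-spectrum of $X$ inside $\spec_\vartriangle \cal T$ collapses to a single copy, and moreover $X$ is the only Fourier-Mukai partner, so the union defining $\fmspec \cal T$ reduces trivially.

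First, I would invoke Bondal--Orlov to conclude that $\FM \cal T = \{X\}$ (i.e., any smooth projective variety $Y$ with $\perf Y \simeq \cal T$ is isomorphic to $X$). This immediately gives the set-theoretic identification
\[
\fmspec \cal T \;=\; \bigcup_{Y \in \FM \cal T} \spec_{\tens_Y}\cal T \;=\; \spec_{\tens_X}\cal T
\]
as subspaces of $\spec_\vartriangle \cal T$.

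Next, I would apply Example \ref{ample}: since $\auteq \perf X = A(X) = \auteq(\perf X,\tens_{\ecal O_X}^\bb L) \ltimes \pic(\perf X,\tens_{\ecal O_X}^\bb L)$, for every triangulated equivalence $\eta:\perf X \simeq \cal T$ the group $\auteq \cal T$ is an $\eta$-stabilizer, hence
\[
\spec_{\tens_X}\cal T \;=\; \spec_{\tens_{X,\eta}}\cal T \;\cong\; X
\]
as ringed spaces. Thus $\spec_{\tens_X}\cal T$ is a single copy of $X$ rather than a genuine union.

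Finally, I would check that this identification is an isomorphism of schemes, not just of ringed spaces: by Theorem \ref{scheme}, $\fmspec \cal T$ is covered by opens of the form $\spec_{\tens_{X,\eta}}\cal T \cong X$, but by the preceding step all these opens coincide as subspaces of $\spec_\vartriangle \cal T$, so the cover degenerates to a single chart. The gluing data from Lemma \ref{main intersection} is therefore trivial and the scheme structure on $\fmspec \cal T$ constructed in Theorem \ref{scheme} is simply the scheme structure pulled back from $X$ along $\cal S_X$ (Theorem \ref{balmer}) composed with $\eta$. I do not anticipate a genuine obstacle here: all the substantive work was done in Bondal--Orlov and in Theorem \ref{scheme}, and the corollary is essentially an unpacking of these results; the only small care needed is to keep track of the scheme structure rather than just the underlying set or topological space.
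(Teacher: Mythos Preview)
Your proposal is correct and follows essentially the same approach as the paper: the paper's proof simply says ``This follows from the fact that $\FM\cal T$ is a singleton and Example \ref{ample},'' which is precisely your first two steps. Your additional paragraph verifying that the scheme structure from Theorem \ref{scheme} agrees with that of $X$ is more explicit than the paper (which leaves this implicit), but adds no new ingredient.
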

\begin{proof}
    This follows from the fact that $\FM\cal T $ is a singleton and Example \ref{ample}.  
\end{proof}
\begin{notation}\label{tt-spectra of FM}
    Let $\cal T$ be a triangulated category with $X \in \FM \cal T$. Let $G\subset \auteq \cal T$ be a subgroup and take a triangulated equivalence $\eta: \perf X \simeq \cal T$. Note that $\spc_{\tens,X} \cal T$, $\spc_{\tens_{X,\eta}} \cal T$, and $\spc^{G\cdot \eta}_{\tens,X}\cal T$ (cf. Construction \ref{specx}) can be naturally equipped with locally ringed space structures as canonical open subschemes of $\spec^\fm\cal T$. Let $\spec _{\tens,X}\cal T$, $\spec_{\tens_{X,\eta}} \cal T$ and $\spec_{\tens,X}^{G\cdot \eta}\cal T$ denote the corresponding schemes. Note that for any $\eta':\perf X \simeq \cal T$ naturally isomorphic to $\eta$, we have $$\spec_{\tens_{X,\eta}} \cal T = \spec_{\tens_{X,\eta'}} \cal T \subset \spec^\fm \cal T$$ and $$\spec_{\tens,X}^{G\cdot \eta}\cal T = \spec_{\tens,X}^{G\cdot \eta'}\cal T \subset \spec^\fm \cal T$$ (as open subschemes), so given a natural isomorphism class $\eta$ of equivalences $\perf X\simeq \cal T$, we may use the same notations $\spec_{\tens_{X,\eta}} \cal T$ and $\spec_{\tens,X}^{G\cdot \eta}\cal T$ (cf. Remark \ref{remark on choice}). Moreover, note that for any quasi-inverse $\eta\inv$ of $\eta$, there is a canonical isomorphism $\spec_{\tens_{X,\eta,\eta\inv}} \cal T \iso \spec_{\tens_{X,\eta}} \cal T$.
\end{notation}
Moreover, the scheme structures on tt-spectra of Fourier-Mukai partners are compatible with the actions of autoequivalences. More precisely, we have the following:
\begin{lemma}\label{compatible action}
    Let $\cal T$ be a triangulated category with $X\in \FM \cal T$. The action $\rho^X_\top: \auteq \cal T \to \aut\spc_{\tens,X} \cal T$ can be made into an action $\rho^X: \auteq \cal T \to \aut \spec_{\tens,X} \cal T$ in a natural way described in the following proof.
\end{lemma}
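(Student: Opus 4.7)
The plan is to define $\tilde\rho_X(\tau)$ for $\tau \in \auteq \cal T$ locally on each open piece $\spec_{\tens_{X,\eta}} \cal T$ of the gluing supplied by Corollary \ref{tt-spectra of FM}, and then check that these local definitions descend to a single scheme automorphism of $\spec_{\tens_X}\cal T$. On $\spec_{\tens_{X,\eta}}\cal T$ the definition is essentially forced: unwinding Construction \ref{specx} gives $M \tens_{X,\tau\circ\eta} N = \tau(\tau\inv M \tens_{X,\eta} \tau\inv N)$, so $\tau$ is genuinely a tt-equivalence $(\cal T,\tens_{X,\eta}) \simeq (\cal T,\tens_{X,\tau\circ \eta})$. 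Balmer's functoriality of tt-spectra (in the form used already in the paper, \cite{Balmer_2005}*{Proposition 3.6}) then provides a canonical scheme isomorphism $\spec_{\tens_{X,\eta}}\cal T \iso \spec_{\tens_{X,\tau\circ \eta}}\cal T$ whose underlying continuous map is precisely the restriction of $\rho_X(\tau)$.

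The heart of the argument is to verify that these local morphisms are compatible with the gluing that defines the scheme structure on $\spec_{\tens_X}\cal T$. Given two triangulated equivalences $\eta_1,\eta_2\colon \perf X \simeq \cal T$ with $U = \spec_{\tens_{X,\eta_1}}\cal T \cap \spec_{\tens_{X,\eta_2}}\cal T \neq \emp$, the gluing datum over $U$ is, by Theorem \ref{scheme} and Lemma \ref{main intersection}, the canonical automorphism $\eta_2 \circ \phi_\ecal P \circ \eta_1\inv$, where $\phi_\ecal P$ is the birational map of Construction \ref{canonical open} associated to $\Phi_\ecal P := \eta_2\inv \circ \eta_1 \in \auteq \perf X$. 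The key observation is that the corresponding gluing between $\spec_{\tens_{X,\tau\circ\eta_1}}\cal T$ and $\spec_{\tens_{X,\tau\circ\eta_2}}\cal T$ over $\tau(U)$ is governed by $(\tau\circ\eta_2)\inv \circ (\tau\circ\eta_1) = \eta_2\inv \circ \eta_1 = \Phi_\ecal P$, literally the same autoequivalence of $\perf X$. Hence the same birational map $\phi_\ecal P\colon X \ratmap X$ appears on both sides, and a direct diagram chase, using that Construction \ref{canonical open} is manifestly natural in the input Fourier-Mukai equivalence, shows that $\tau$ intertwines the two gluing identifications.

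Consequently the local scheme isomorphisms glue to a morphism of schemes $\tilde\rho_X(\tau)\colon \spec_{\tens_X}\cal T \to \spec_{\tens_X}\cal T$ with underlying continuous map $\rho_X(\tau)$; applying the same construction to $\tau\inv$ produces a two-sided inverse, so $\tilde\rho_X(\tau) \in \aut \spec_{\tens_X}\cal T$. Functoriality of Balmer's spectrum under triangulated autoequivalences guarantees $\tau \mapsto \tilde\rho_X(\tau)$ is a group homomorphism, yielding the desired scheme-theoretic enhancement of $\rho_X$. I do not anticipate any substantive obstacle beyond the identity $\Phi_\ecal P = (\tau\circ\eta_2)\inv\circ (\tau\circ\eta_1)$ and the naturality of Construction \ref{canonical open}; the main care required is bookkeeping, to make sure the various identifications $\eta_i\colon X \iso \spec_{\tens_{X,\eta_i}}\cal T$ and their translates under $\tau$ are tracked consistently throughout the cocycle check.
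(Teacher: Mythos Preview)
Your proposal is correct and follows essentially the same approach as the paper: define the action locally via the tt-equivalence $\tau\colon(\cal T,\tens_{X,\eta})\simeq(\cal T,\tens_{X,\tau\circ\eta})$, then verify compatibility with the gluing by observing that $(\tau\circ\eta_2)\inv\circ(\tau\circ\eta_1)=\eta_2\inv\circ\eta_1=\Phi_{\ecal P}$ so that the same birational map $\phi_{\ecal P}$ governs both gluings. The paper packages this as a single commutative square with horizontal arrows $\tau$ and vertical arrows $\eta_2\circ\phi_{\ecal P}\circ\eta_1\inv$ and $(\tau\circ\eta_2)\circ\phi_{\ecal P}\circ(\tau\circ\eta_1)\inv$, which is exactly your ``diagram chase''.
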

\begin{proof}
    Take an autoequivalence $\tau:\cal T\simeq \cal T$ with quasi-inverse $\tau\inv$. First, note that $\tau$ induces an isomorphism $$\tau:\spec_{\tens_{X,\eta,\eta\inv}} \cal T \iso \spec_{\tens_{X,\tau\circ \eta,\eta\inv\circ \tau \inv}} \cal T$$ of ringed spaces for any $\eta:\perf X \simeq \cal T$ with quasi-inverse $\eta\inv$. Moreover, for any $\eta_1,\eta_2:\perf X\simeq \cal T$ with quasi-inverses $\eta_1\inv$ and $\eta_2\inv$ and with $\Phi:=\eta_2\inv \circ \eta_1$ birational, we have the following commutative diagram
    \begin{center}
        \DisableQuotes
        \begin{tikzcd}
{\spec_{\tens_{X,\eta_1,\eta_1\inv}}\cal T \cap \spec_{\tens_{X,\eta_2,\eta_2\inv}} \cal T} \arrow[rr, "\tau"] \arrow[d, "\eta_2 \circ \phi_\Phi\circ \eta_1\inv"'] &  & {\spec_{\tens_{X,\tau\circ\eta_1,\eta_1\inv\circ \tau \inv}} \cal T \cap \spec_{\tens_{X,\tau\circ\eta_2,\eta_2\inv\circ \tau \inv}}\cal T } \arrow[d, "(\tau\circ \eta_2) \circ \phi_\Phi \circ \eta_1 \inv \circ \tau \inv"] \\
{\spec_{\tens_{X,\eta_1,\eta_1\inv}}\cal T  \cap \spec_{\tens_{X,\eta_2,\eta_2\inv}}\cal T } \arrow[rr, "\tau"']                                                       &  & {\spec_{\tens_{X,\tau\circ\eta_1,\eta_1\inv \circ \tau \inv}}\cal T  \cap \spec_{\tens_{X,\tau\circ\eta_2,\eta_2\inv\circ \tau \inv}}\cal T }                                                                                  
\end{tikzcd}
    \end{center}
    of isomorphisms, where the vertical arrows are exactly the gluing maps in the proof of Theorem \ref{scheme}. Hence, the actions of $\tau$ on each tt-spectra glue to an automorphism $\rho^X(\tau)\inv$ of $\spec_{\tens,X}\cal T$ as a scheme. Since the automorphism $\rho^X(\tau)\inv$ agrees with $\rho^X_\top(\tau)\inv$ on the underlying topological space, we see by Lemma \ref{class alg result} that the association defines a group homomorphism $\rho^X:\auteq \cal T \to \aut \spec_{\tens,X}\cal T$, noting $\spec_{\tens,X}\cal T$ is a reduced scheme locally of finite type.
\end{proof}
\subsection{Basic properties as schemes}
Now, let us observe some basic properties of Fourier-Mukai loci and tt-spectra of Fourier-Mukai partners as schemes and their relations with (birational) geometry. 
\begin{definition}
    Let $X$ be a smooth projective variety. Let $X_\tens$ denote the scheme $\spec_{\tens,X}\perf X$. For a property $\cal P$ of a scheme, we say $X$ satisfies \textbf{tt-$\cal P$} if $X_\tens$ satisfies $\cal P$.   
\end{definition}
\begin{example}
    Any smooth projective variety is tt-smooth and tt-locally of finite type by Corollary \ref{tt-spectra of FM}. So, we are mainly interested in if a smooth projective variety is tt-separated, tt-irreducible, or tt-quasi-compact. Since a universally closed morphism is quasi-compact and in this paper only examples of tt-quasi-compact varieties will be smooth projective varieties with ample (anti-)canonical bundles, we will not elaborate on tt-universally closedness, let alone tt-projectivity. 
\end{example}
\begin{notation}
Let $X$ be a smooth projective variety. Let $\operatorname{BAuteq} X \subset \auteq \perf X$ denote the subgroup of birational autoequivalences (cf. Corollary \ref{improved h}). Note by definition $\stab(\perf X,\id_{\perf X}) \subset \operatorname{BAuteq} X \ltimes \bb Z[1]$.     
\end{notation}
First, let us consider tt-separatedness. In general, we should not expect Fourier-Mukai loci (or tt-spectra of Fourier-Mukai partners) to be separated (i.e., a smooth projective variety is in general not tt-separated), which agrees with an intuition that gluing projective varieties along isomorphic open subsets produces a non-separated scheme. See also Example \ref{K3} (ii). 
\begin{lemma}\label{tt-s}
    Let $\cal T$ be a triangulated category with $\FM \cal T\neq \emp$. Then the following are equivalent:
    \begin{enumerate}
        \item $\spec^\fm \cal T$ is a separated scheme;
        \item For any $X_i \in \FM \cal T$ and $\eta_i: \perf X_i \simeq \cal T$ for $i =1,2$, if $\eta_2\inv\circ \eta_1$ is a birational triangulated equivalence up to to shifts, i.e., if \[\spec_{\tens_{X_1,\eta_1}} \cal T \cap \spec_{\tens_{X_2,\eta_2}} \cal T \neq \emp,\]
        then we have
        \[
        \spec_{\tens_{X_1,\eta_1}} \cal T = \spec_{\tens_{X_2,\eta_2}} \cal T. 
        \]
        In particular, we have $X_1 \iso X_2$ (by Lemma \ref{main intersection} (ii)). 
        \item $\spec^\fm \cal T$ is a disjoint union of copies of Fourier-Mukai partners of $\cal T$. In particular, the union of isomorphism classes of connected components of $\spec^\fm \cal T$ coincides with $\FM \cal T$.  
    \end{enumerate}
    In particular, separatedness of $\spec^\fm \cal T$ can be checked topologically. Moreover, if $\fmspec \cal T$ is separated, then each $X \in \FM \cal T$ is tt-separated.  
\end{lemma}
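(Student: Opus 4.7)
The plan is to establish the chain of implications $(i) \Rar (ii) \Rar (iii) \Rar (i)$, after which the last sentence will be immediate: if $\fmspec \cal T$ is separated, then so is any open subscheme of it, in particular $\spec_{\tens_X}\cal T \iso \spec_{\tens_X}\perf X = X_\tens$ for any $X \in \FM \cal T$. The implication $(iii) \Rar (i)$ is also immediate, since each $X \in \FM \cal T$ is by our conventions a variety and hence separated, and a disjoint union of separated schemes is separated.

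The main content is $(i) \Rar (ii)$. The key observation is that each chart $V_{X,\eta} := \spec_{\tens_{X,\eta}}\cal T$ of the atlas constructed in Theorem \ref{scheme} is isomorphic as a scheme to the smooth projective variety $X$ and hence is proper over $k$. If $\fmspec \cal T$ is separated, then by the cancellation property for properness the open immersion $V_{X,\eta} \inj \fmspec \cal T$ is itself proper (since its composite with $\fmspec \cal T \to \spec k$ is proper and the latter is separated), so each $V_{X,\eta}$ is also closed in $\fmspec \cal T$. Now given two charts $V_1 := V_{X_1,\eta_1}$ and $V_2 := V_{X_2,\eta_2}$ with $\eta_2\inv \circ \eta_1$ birational up to shift, Lemma \ref{main intersection} (i) gives $V_1 \cap V_2 \neq \emp$; this intersection is both open and closed in $V_1 \iso X_1$, and since $X_1$ is irreducible this forces $V_1 \cap V_2 = V_1$. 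By symmetry $V_1 \cap V_2 = V_2$, so $V_1 = V_2$ as subschemes of $\fmspec \cal T$, and in particular $X_1 \iso V_1 = V_2 \iso X_2$.

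For $(ii) \Rar (iii)$, condition (ii) upgrades the relation ``having non-empty intersection'' on the set of charts $\{V_{X,\eta}\}$ into an equivalence relation (transitivity is automatic because any two intersecting charts are already equal as subschemes). The equivalence classes are precisely the connected components of $\fmspec \cal T$, and each such class, being a single chart, is isomorphic to some $X \in \FM \cal T$. Hence $\fmspec \cal T$ is literally the coproduct of copies of Fourier-Mukai partners of $\cal T$, with possible repetitions of isomorphism classes.

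The only non-formal step is the properness argument in $(i) \Rar (ii)$: it essentially uses projectivity of Fourier-Mukai partners rather than the weaker conclusion of Corollary \ref{tt-spectra of FM} that the charts are merely smooth and locally of finite type. Everything else is essentially formal once one unpacks the explicit gluing construction in the proof of Theorem \ref{scheme}.
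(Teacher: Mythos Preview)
Your proof is correct and follows essentially the same approach as the paper: the key step $(i)\Rightarrow(ii)$ in both arguments hinges on the fact that each chart $\spec_{\tens_{X,\eta}}\cal T\iso X$ is proper over $k$, so inside a separated $\fmspec\cal T$ it must be closed as well as open, forcing two intersecting charts to coincide by connectedness. The paper phrases this via the contrapositive (a non-trivial overlap yields a non-separated open subscheme) and declares $(ii)\Leftrightarrow(iii)$ ``clear,'' while you argue directly via the cancellation property for properness and spell out $(ii)\Rightarrow(iii)$ a bit more, but the underlying ideas are identical.
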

\begin{proof}
    The last two claims are clearly equivalent and the last claim clearly implies the first, so it suffices to show the first implies the second. We show its contrapositive. Suppose there exist $X_i \in \FM \cal T$ and $\perf X_i \simeq \cal T$ for $i =1,2$ such that $U:= \spec_{\tens_{X_1,\eta_1}} \cal T \cap \spec_{\tens_{X_2,\eta_2}} \cal T \neq \emp$, but $ \spec_{\tens_{X_1,\eta_1}} \cal T \neq \spec_{\tens_{X_2,\eta_2}} \cal T$. Then, $X:= \spec_{\tens_{X_1,\eta_1}} \cal T \cup \spec_{\tens_{X_2,\eta_2}} \cal T \subset \spec^\fm \cal T$ is not a separated scheme since otherwise $\spec_{\tens_{X_1,\eta_1}} \cal T$ is closed in $X$ (as it is an image of a proper scheme in a separated scheme) and hence $U$ is a closed and open proper subscheme of $\spec_{\tens_{X_2,\eta_2}} \cal T $, which is absurd. Therefore, $\spec^\fm \cal T$ has a non-separated open subscheme and hence is not separated either.    
\end{proof}
By the same arguments, we get the following:
\begin{corollary}\label{tt-seprated}
    Let $X$ be a smooth projective variety. Then the following are equivalent:
    \begin{enumerate}
        \item $X$ is tt-separated;
        \item For any $\eta_i: \perf X  \simeq \cal T$ for $i =1,2$, if $\eta_2\inv\circ \eta_1$ is a birational triangulated equivalence up to shifts, i.e., if \[\spec_{\tens_{X,\eta_1}} \cal T \cap \spec_{\tens_{X,\eta_2}} \cal T \neq \emp,\]
        then we have
        \[
        \spec_{\tens_{X,\eta_1}} \cal T = \spec_{\tens_{X,\eta_2}} \cal T. 
        \]
        In particular,  $X_\tens$ is a disjoint union of copies of X.
    \end{enumerate}
    In particular, tt-separatedness can be checked topologically. See also Corollary \ref{new tt-separated} for more equivalent conditions. Moreover, (ii) implies separatedness of $\spec^\fm \cal T$ can be checked topologically. 
\end{corollary}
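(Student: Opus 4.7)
The plan is to mirror the argument of Lemma \ref{tt-s} almost verbatim, since the statement is essentially its restriction to a single Fourier-Mukai partner $X$ and to $\cal T = \perf X$ (with $X_\tens = \spec_{\tens_X}\perf X$ in place of $\fmspec \cal T$). The key input is that the open subspaces of $X_\tens$ of the form $\spec_{\tens_{X,\eta}}\perf X$ for various $\eta \in \auteq(\perf X)$ are all isomorphic to $X$ as smooth projective schemes, which is Corollary \ref{tt-spectra of FM}.

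First I would verify the easy direction: if (ii) holds, so that any two tt-spectra $\spec_{\tens_{X,\eta_1}}\perf X$ and $\spec_{\tens_{X,\eta_2}}\perf X$ are either equal or disjoint in $\spec_\vartriangle \perf X$, then $X_\tens$ decomposes as a disjoint union of copies of $X$. Each copy is a smooth projective variety, hence separated, and so $X_\tens$ is separated as a scheme. Next I would establish the contrapositive of (i) $\Rightarrow$ (ii). Suppose there exist $\eta_1, \eta_2 : \perf X \simeq \perf X$ with
\[
U := \spec_{\tens_{X,\eta_1}}\perf X \cap \spec_{\tens_{X,\eta_2}}\perf X \neq \emp
\quad \text{but} \quad
\spec_{\tens_{X,\eta_1}}\perf X \neq \spec_{\tens_{X,\eta_2}}\perf X.
\]
By Theorem \ref{scheme} and Corollary \ref{tt-spectra of FM}, the union $Y := \spec_{\tens_{X,\eta_1}}\perf X \cup \spec_{\tens_{X,\eta_2}}\perf X$ is an open subscheme of $X_\tens$ obtained by gluing two copies of $X$ along the open subscheme $U$ via the canonical birational automorphism of Lemma \ref{main intersection}(ii). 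If $Y$ were separated, then $\spec_{\tens_{X,\eta_1}}\perf X$, being the image of the proper $k$-scheme $X$ inside the separated $k$-scheme $Y$, would be closed in $Y$; intersecting with $\spec_{\tens_{X,\eta_2}}\perf X$, this would exhibit $U$ as a proper closed-and-open subscheme of $\spec_{\tens_{X,\eta_2}}\perf X \iso X$, contradicting the connectedness (indeed irreducibility) of the smooth projective variety $X$. Hence $Y$ is non-separated, and since $Y$ is an open subscheme of $X_\tens$, neither is $X_\tens$, i.e., $X$ is not tt-separated.

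Most of the work is already done by Lemma \ref{tt-s}; the only subtlety to spell out is that the argument localizes correctly to a single $X$, which is immediate because the gluing description of $X_\tens$ in Corollary \ref{tt-spectra of FM} uses exactly the same canonical birational maps from Construction \ref{canonical open} as in the proof of Theorem \ref{scheme}. I do not anticipate any real obstacle — the main point to get right is invoking properness of $X$ and irreducibility of $X$ (from the assumption that $X$ is a smooth projective variety in the sense of the paper's conventions) to rule out the closed-and-open proper subscheme of a copy of $X$.
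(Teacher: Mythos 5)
Your proposal is correct and matches the paper's own treatment: the paper derives this corollary by observing it follows "by the same arguments" as Lemma \ref{tt-s}, which is exactly the restriction-to-a-single-partner argument you carry out (disjoint union of copies of $X$ is separated for one direction, and the properness/clopen-subscheme contradiction for the contrapositive of the other). Spelling out the reduction to $\cal T = \perf X$ and the gluing via Corollary \ref{tt-spectra of FM} is fine and consistent with the paper.
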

\begin{remark}\label{tt-seprated is tte}
    Note that if $X$ and $Y$ are tt-separated, then $X_\tens = Y_\tens$ implies $X \iso Y$, which gives a positive answer to Question \ref{tte}.
\end{remark}
Moreover, the following corollary of Lemma \ref{main intersection} is useful for understanding Fourier-Mukai loci:
\begin{corollary}\label{disjoint}
    Let $\cal T$ be a triangulated category with $X,Y \in \FM\cal T$. If $X$ and $Y$ are not birational Fourier-Mukai partners (in particular, not birationally equivalent), then
    \[
    \spec_{\tens,X} \cal T \cap \spec_{\tens,Y} \cal T = \emp. 
    \]
    Moreover, if any birational Fourier-Mukai partner is isomorphic, in which case $\cal T$ is said to be of \textbf{disjoint Fourier-Mukai type}, then we have
    \[
    \spec^\fm \cal T = \bigsqcup_{X \in \FM \cal T} \spec_{\tens,X} \cal T.
    \] 
    In particular, $\spec^\fm \cal T$ is separated if and only if $\cal T$ is of disjoint Fourier-Mukai type and each $X \in \FM \cal T$ is tt-separated. 
\end{corollary}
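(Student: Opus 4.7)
The plan is to reduce both statements to Lemma \ref{main intersection}(i), using Construction \ref{specx} only to translate between the ``single-equivalence'' tt-spectra $\spec_{\tens_{X,\eta}}\cal T$ and the unions $\spec_{\tens_X}\cal T$ over all choices of $\eta$. First I would prove the disjointness claim by contraposition: assume the intersection $\spec_{\tens_X}\cal T \cap \spec_{\tens_Y}\cal T$ is non-empty. Unpacking Construction \ref{specx} yields triangulated equivalences $\eta_1 : \perf X \simeq \cal T$ and $\eta_2 : \perf Y \simeq \cal T$ such that
\[
\spec_{\tens_{X,\eta_1}}\cal T \cap \spec_{\tens_{Y,\eta_2}}\cal T \neq \emp.
\]
Applying Lemma \ref{main intersection}(i) to the Fourier-Mukai equivalence $\eta_2\inv \circ \eta_1 : \perf X \simeq \perf Y$ then forces it to be birational up to shift, which by Definition \ref{birational FM partners} exhibits $X$ and $Y$ as birational Fourier-Mukai partners, contradicting the hypothesis.

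For the disjoint-type decomposition, I would take any two non-isomorphic $X, Y \in \FM\cal T$: the disjoint Fourier-Mukai hypothesis guarantees they fail to be birational Fourier-Mukai partners, so the first part yields $\spec_{\tens_X}\cal T \cap \spec_{\tens_Y}\cal T = \emp$. Since $\spec^\fm\cal T = \bigcup_{X \in \FM\cal T} \spec_{\tens_X}\cal T$ by definition, this gives the asserted disjoint union.

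For the final ``in particular'' biconditional I would combine this decomposition with Lemma \ref{tt-s} and Corollary \ref{tt-seprated}. If $\spec^\fm\cal T$ is separated, then Lemma \ref{tt-s} forces any two tt-spectra $\spec_{\tens_{X_1,\eta_1}}\cal T$ and $\spec_{\tens_{X_2,\eta_2}}\cal T$ whose intersection is non-empty to coincide (and hence $X_1 \iso X_2$); specialising this to pairs $X_1, X_2 \in \FM\cal T$ gives disjoint Fourier-Mukai type, while specialising to a single $X = X_1 = X_2$ with varying $\eta_1, \eta_2$ and invoking Corollary \ref{tt-seprated} gives tt-separatedness of each $X \in \FM\cal T$. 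Conversely, under both assumptions the decomposition just proved presents $\spec^\fm\cal T$ as the coproduct $\bigsqcup_{X \in \FM\cal T} \spec_{\tens_X}\cal T$ of the separated schemes $\spec_{\tens_X}\cal T \iso X_\tens$, which is therefore separated.

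There is no substantive geometric obstacle here: the whole birational-geometric content is already packaged in Lemma \ref{main intersection}, and the remaining work is purely a matter of bookkeeping between a single choice of equivalence $\eta$ and the union over all such choices. The only mild subtlety is to remember that non-emptiness of $\spec_{\tens_X}\cal T \cap \spec_{\tens_Y}\cal T$ must be translated into the existence of some $(\eta_1,\eta_2)$ for which the corresponding ``single-equivalence'' tt-spectra already meet, so that Lemma \ref{main intersection}(i) is applicable.
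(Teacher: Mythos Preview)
Your proposal is correct and matches the paper's approach exactly: the paper states this result as an immediate corollary of Lemma~\ref{main intersection} (with the final clause relying on Lemma~\ref{tt-s} and Corollary~\ref{tt-seprated}), and your unpacking of that deduction---reducing to single-equivalence tt-spectra via Construction~\ref{specx}, then invoking Lemma~\ref{main intersection}(i), and finally assembling the separatedness biconditional from Lemma~\ref{tt-s} and Corollary~\ref{tt-seprated}---is precisely what is intended.
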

\begin{proof}
The first claim follows since if $\spec_{\tens,X} \cal T \cap \spec_{\tens,Y} \cal T \neq \emp$, then there exist $\eta_X: \perf X \simeq \cal T$ and $\eta_Y: \perf Y \simeq \cal T$ such that $\spc_{\tens,\eta_X} \cal T \cap \spc_{\tens,\eta_Y} \cal T \neq \emp$, so $\eta_Y\inv \circ \eta_X$ is birational by Lemma \ref{main intersection} (i), i.e., $X$ and $Y$ are birational Fourier-Mukai partners. The second claim follows by the first claim and the last claim follows by Lemma \ref{tt-s}.     
\end{proof}
Now, let us investigate tt-irreducibility. First, we note that irreducible components of a Fourier-Mukai locus or a tt-spectrum of a Fourier-Mukai partner coincide with connected components by the following easy lemma.
\begin{lemma}\label{scheme irreducible}
    Let $X$ be a scheme admitting an open cover $\{U_i\}_{i \in I}$ by irreducible open subschemes $U_i$. Then a connected component and an irreducible component coincide. In particular, such a component is given as a union of mutually intersecting irreducible open subschemes of the form $U_i$.
\end{lemma}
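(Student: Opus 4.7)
The plan is to first establish a topological lemma: if $U$ and $V$ are irreducible open subsets of a space with $U \cap V \neq \emptyset$, then $U \cup V$ is irreducible. The idea is that if $U \cup V = F_1 \cup F_2$ with $F_i$ closed, then irreducibility of $U$ forces $U \subset F_1$ (say). Since $U \cap V$ is nonempty and open in the irreducible space $V$, it is dense in $V$; combined with $U \cap V \subset U \subset F_1$ and $F_1$ closed in $U \cup V$, we conclude $V \subset F_1$ as well, so $U \cup V = F_1$. Iterating along chains yields the general statement: if $\{U_\alpha\}$ is a family of irreducible open subsets any two of which are joined by a finite chain $U_{\alpha_0}, \ldots, U_{\alpha_n}$ with $U_{\alpha_k} \cap U_{\alpha_{k+1}} \neq \emptyset$, then $\bigcup U_\alpha$ is irreducible.

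Next, define an equivalence relation $\sim$ on the index set $I$ by $i \sim j$ if and only if there is such a chain from $U_i$ to $U_j$ within $\{U_\ell\}_{\ell \in I}$. For each equivalence class $[i]$, put $Y_{[i]} := \bigcup_{j \sim i} U_j$, which is open in $X$ and irreducible by the previous paragraph. Moreover, for $[i] \neq [j]$ one has $Y_{[i]} \cap Y_{[j]} = \emptyset$: any point in the intersection would lie in some $U_{i'}$ with $i' \sim i$ and some $U_{j'}$ with $j' \sim j$, making $U_{i'} \cap U_{j'}$ a nonempty open set, hence $i' \sim j'$ and $[i] = [j]$, a contradiction. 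Therefore $X = \bigsqcup_{[i]} Y_{[i]}$ as a disjoint union of open subsets.

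Since this disjoint open decomposition forces each $Y_{[i]}$ to also be closed, each $Y_{[i]}$ is a clopen irreducible (hence connected) subset of $X$. Any connected subset of $X$ is then contained in a single $Y_{[i]}$ (as it cannot meet two disjoint clopen sets), and since $Y_{[i]}$ itself is connected, the $Y_{[i]}$ are precisely the connected components. For irreducible components, $Y_{[i]}$ is irreducible and closed, and any irreducible subset strictly containing it would meet some $Y_{[j]}$ with $j \not\sim i$, violating the disjointness; thus $Y_{[i]}$ is also an irreducible component, proving the two notions coincide.

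The main delicate point is the step asserting that the disjoint open decomposition makes each $Y_{[i]}$ closed (and hence a legitimate irreducible component rather than merely an irreducible subset): this is what bridges irreducibility and the usual closedness requirement in the definition of irreducible component. The second-most subtle step is the lemma on unions of irreducible opens along chains, where openness is essential — it ensures that pairwise intersections are dense in each member, which is what drives the propagation argument. Once these two ingredients are in place, the conclusion that each component is a union of mutually (chain-wise) intersecting $U_i$'s is immediate from the construction of $Y_{[i]}$.
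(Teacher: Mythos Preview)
Your proof is correct and follows essentially the same approach as the paper: partition the cover into chain-connected classes, observe that each class has irreducible (hence connected) open union, and conclude that these pieces are simultaneously the connected and irreducible components. The paper's proof is far terser and leaves implicit the topological lemma you spell out (that a chain-connected family of irreducible opens has irreducible union), but the strategy is identical.
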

\begin{proof}
    First, we can write 
    \[
    X = \bigsqcup_{j \in J} V_j,
    \]
    where each $V_j$ is a union of mutually intersection irreducible open subschemes of the form $U_i$.
    Note that each $V_j$ is irreducible and hence connected, which suffices for a proof. 
\end{proof}
In general, a smooth projective variety is not tt-irreducible, but we have an important symmetry for irreducible components, which gives a useful geometric intuition for $X_\tens$:
\begin{notation}
    Let $X$ be a smooth projective variety and let $\eta \in \auteq\perf X$. Let $X_\tens(\eta)$ denote the irreducible component of $X_\tens$ containing $\spec_{\tens_{X,\eta}} \perf X$, which is canonically an open subscheme of $\spec^\fm \cal T$ by Lemma \ref{scheme irreducible}. 
\end{notation}
\begin{lemma}\label{id part}
    Let $X$ be a smooth projective variety. Then, we have
    \[
    X_\tens(\id_{\perf X}) = \spec^{\operatorname{BAuteq} X \cdot \id_{\perf X}}_{\tens,X} \perf X. \qedhere
    \]
\end{lemma}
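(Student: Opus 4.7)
The plan is to unpack $X_\tens(\id_{\perf X})$ via Lemma \ref{scheme irreducible}, and then use Lemma \ref{main intersection}(i) to characterize exactly which $\eta \in \auteq\perf X$ yield open pieces $\spec_{\tens_{X,\eta}}\perf X$ lying in that connected component.

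First I would verify that $\operatorname{BAuteq} X \cdot \bb Z[1]$ is a subgroup of $\auteq\perf X$. Closure of $\operatorname{BAuteq} X$ under composition and inversion follows from Corollary \ref{improved h} (existence of closed points $x_0,y_0$ with $\Phi(k(x_0))\iso k(y_0)$ is preserved by taking inverses and, on a common open, by composing the associated open immersions). Since the shift commutes with every triangulated autoequivalence, the product $\operatorname{BAuteq} X \cdot \bb Z[1]$ is indeed a subgroup.

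By Corollary \ref{tt-spectra of FM}, $X_\tens$ admits an open cover by the irreducible schemes $\spec_{\tens_{X,\eta}}\perf X$ for $\eta \in \auteq\perf X$, so Lemma \ref{scheme irreducible} identifies $X_\tens(\id_{\perf X})$ with the union of those $\spec_{\tens_{X,\eta}}\perf X$ contained in it, and moreover this component is irreducible. Irreducibility then forces any two non-empty open subschemes in the component to meet, so $\spec_{\tens_{X,\eta}}\perf X \subset X_\tens(\id_{\perf X})$ if and only if $\spec_{\tens_{X,\eta}}\perf X \cap \spec_{\tens_{X,\id_{\perf X}}}\perf X \neq \emp$, which by Lemma \ref{main intersection}(i) is equivalent to $\eta$ being birational up to shift, i.e. $\eta = \tau \circ [m]$ for some $\tau \in \operatorname{BAuteq} X$ and $m \in \bb Z$.

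It remains to show that the shift factor $[m]$ does not change the subset of $\spec_\vartriangle\perf X$ being swept out, namely $\spec_{\tens_{X,\tau\circ [m]}}\perf X = \spec_{\tens_{X,\tau}}\perf X$. A direct unwinding, using that $\tau$ is triangulated and $A[a]\tens^\bb L B[b] \iso (A\tens^\bb L B)[a+b]$, gives $M \tens_{X,\tau\circ [m]} N \iso (M \tens_{X,\tau} N)[-m]$; since every thick subcategory is closed under shifts, the (prime) ideals for these two tt-structures coincide. Equivalently, $[m] = -\tens^\bb L \cal O_X[m]$ lies in $\pic(\perf X,\tens_{\ecal O_X}^\bb L)$ and so acts trivially on the underlying space of $\spec_{\tens_{\ecal O_X}^\bb L}\perf X$ by the final lemma of Section \ref{preliminaries}. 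Combined with the description above, this yields the desired equality. The only real obstacle is the shift bookkeeping; the geometric content is already packaged in Lemma \ref{main intersection}.
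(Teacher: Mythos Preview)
Your proof is correct and follows essentially the same route as the paper's own argument, namely combining Lemma \ref{main intersection}(i) with Lemma \ref{scheme irreducible} and the irreducibility of the component. You simply add details the paper leaves implicit: that $\operatorname{BAuteq} X$ is closed under composition and inverses, and that the ``up to shift'' in Lemma \ref{main intersection}(i) is harmless because shifts act trivially on the triangular spectrum.
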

\begin{proof}
    By Lemma \ref{main intersection}, we see that for $\eta \in \auteq \perf X$, $\spec_{\tens_{X,\eta}} \perf X \cap \spec_{\tens_{X,\id_{\perf X}}}\perf X \neq \emp$ if and only if $\eta \in \operatorname{BAuteq} X$, so the claim follows from Lemma \ref{scheme irreducible}. 
\end{proof}
\begin{theorem}\label{irreducible component}
    Let $X$ be a smooth projective variety. Then, any $\eta\in \auteq \perf X$ induces an isomorphism
    \[
    \eta: X_\tens(\id_{\perf X})\overset{\iso}{\to} X_\tens(\eta).
    \]
    In particular, each irreducible component is isomorphic to $X_\tens(\id_{\perf X})$. Moreover, we can take a family $\{\eta_i \in \auteq \perf X\}_{i \in I}$ of autoequivalences of $\perf X$ such that
    \[
    X_\tens = \bigsqcup_{i \in I} X_\tens(\eta_i). 
    \]
    See Construction \ref{elliptic symmetric} for an example of this expression in the case of an elliptic curve. 
\end{theorem}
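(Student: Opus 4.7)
The plan is to exploit the scheme-theoretic action established in Lemma \ref{compatible action}, together with the observation from Construction \ref{specx} that $\eta \in \auteq \perf X$ sends the open subscheme $\spec_{\tens_{X,\id_{\perf X}}}\perf X$ of $X_\tens$ to $\spec_{\tens_{X,\eta}}\perf X$. First, I would note that by Lemma \ref{compatible action}, every $\eta \in \auteq\perf X$ acts as a scheme automorphism of $X_\tens$. Since scheme automorphisms permute connected components, and since $\spec_{\tens_{X,\id_{\perf X}}}\perf X$ is contained in $X_\tens(\id_{\perf X})$ while its image $\eta(\spec_{\tens_{X,\id_{\perf X}}}\perf X) = \spec_{\tens_{X,\eta}}\perf X$ lies in $X_\tens(\eta)$, the restriction of $\eta$ to $X_\tens(\id_{\perf X})$ lands in $X_\tens(\eta)$. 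Applying the same argument to $\eta^{-1}$ gives the inverse map, yielding the desired isomorphism $\eta : X_\tens(\id_{\perf X}) \overset{\iso}{\to} X_\tens(\eta)$.

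Next, for the claim that every irreducible component is isomorphic to $X_\tens(\id_{\perf X})$, I would combine Corollary \ref{tt-spectra of FM} (so $X_\tens$ is covered by irreducible open subschemes of the form $\spec_{\tens_{X,\eta}}\perf X \iso X$) with Lemma \ref{scheme irreducible} (so connected components coincide with irreducible components, each expressible as a union of such $\spec_{\tens_{X,\eta}}\perf X$'s). In particular, every irreducible component $C$ contains some $\spec_{\tens_{X,\eta}}\perf X$, so $C = X_\tens(\eta)$, which by the previous paragraph is isomorphic to $X_\tens(\id_{\perf X})$.

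For the final decomposition, I would simply take $I$ to be an indexing set for the connected components of $X_\tens$ and, for each $i \in I$, pick any $\eta_i \in \auteq\perf X$ such that the $i$-th component contains $\spec_{\tens_{X,\eta_i}}\perf X$; existence of such $\eta_i$ again follows from the open covering provided by Corollary \ref{tt-spectra of FM}. Then by the previous step, the $i$-th component equals $X_\tens(\eta_i)$, and the disjoint decomposition
\[
X_\tens = \bigsqcup_{i \in I} X_\tens(\eta_i)
\]
is just the decomposition of $X_\tens$ into connected components. There is no serious obstacle here; the statement is essentially a formal consequence of the scheme-theoretic compatibility proved in Lemma \ref{compatible action} together with the local description of $X_\tens$ from Corollary \ref{tt-spectra of FM}. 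The only minor subtlety is noticing that the connected components are already irreducible because of the explicit open cover, so one need not worry about whether a component is a nontrivial union of irreducible pieces.
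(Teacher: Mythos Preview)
Your proposal is correct and follows essentially the same approach as the paper: invoke Lemma \ref{compatible action} to get that each $\eta \in \auteq\perf X$ acts as a scheme automorphism of $X_\tens$, then observe that such an automorphism must carry the connected component containing $\spec_{\tens_{X,\id_{\perf X}}}\perf X$ isomorphically onto the one containing $\spec_{\tens_{X,\eta}}\perf X$. Your write-up is in fact more detailed than the paper's one-sentence proof, and your explicit appeal to Corollary \ref{tt-spectra of FM} and Lemma \ref{scheme irreducible} to justify that every component has the form $X_\tens(\eta)$ and to obtain the decomposition is exactly the intended reasoning.
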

\begin{proof}
    By Lemma \ref{compatible action}, any $\eta \in \auteq \perf X$ induces an automorphism of $X_\tens$ (as a scheme), which in particular induces an isomorphism of connected components.  
\end{proof}
\begin{remark}\label{minimal generator candidate}
    Using the same notations as in Theorem \ref{irreducible component}, we can see that a subgroup of $\auteq \perf X$ generated by $\operatorname{BAtueq} \perf X$ and $\{\eta_i\}_{i \in I}$ is an $\id_{\perf X}$-generator. To obtain a minimal $\id_{\perf X}$-generator, we need to be clever about choices of generators (e.g. we may have some relations among $\eta_i$'s), but the author has no progress in this direction. See Remark \ref{minimal generator for elliptic curves?} for the case of an elliptic curve.  
\end{remark}
By Lemma \ref{id part} and Theorem \ref{irreducible component}, we can make the following quick observations on tt-separatedness. 
\begin{corollary}\label{new tt-separated}
Let $X$ be a smooth projective variety. Then, the following are equivalent:
\begin{enumerate}
    \item $X$ is tt-separated;
    \item $X_\tens(\id_{\perf X})$ is separated;
    \item $X_\tens(\id_{\perf X}) = \spec_{\tens_{X,\id_{\perf X}}}\perf X \iso X$;
    \item $\operatorname{BAuteq} X$ is an $\id_{\perf X}$-stabilizer of $X$.
    \item $\stab(\perf X, \id_{\perf X}) = \operatorname{BAuteq} X \times \bb Z [1]$. \qedhere
\end{enumerate}
\end{corollary}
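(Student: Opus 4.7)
The plan is to run the cycle (i) $\Leftrightarrow$ (ii) $\Leftrightarrow$ (iii) $\Leftrightarrow$ (iv), isolating all the real content in (ii) $\Rar$ (iii), which will be driven by the projectivity of $X$ together with the fact that each $\spec_{\tens_{X,\eta}}\perf X$ has been built into $X_\tens$ as an open subscheme.

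First I would invoke Theorem \ref{irreducible component}, which exhibits $X_\tens$ as a disjoint union $\bigsqcup_{i\in I} X_\tens(\eta_i)$ of connected components, each isomorphic via a suitable $\eta_i$ to $X_\tens(\id_{\perf X})$. Since separatedness of a disjoint union is equivalent to separatedness of each summand, and all summands here are isomorphic, (i) $\Leftrightarrow$ (ii) is immediate.

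For the key direction (ii) $\Rar$ (iii), I would use that the preceding lemma identifies $X_\tens(\id_{\perf X})$ with $\bigcup_{\eta \in \operatorname{BAuteq} X}\spec_{\tens_{X,\eta}}\perf X$, realised as an open subscheme of $X_\tens$. In particular $\spec_{\tens_{X,\id_{\perf X}}}\perf X \iso X$ sits inside $X_\tens(\id_{\perf X})$ as an open subscheme which is itself proper over $k$. Assuming $X_\tens(\id_{\perf X})$ is separated, the image of this proper subscheme is closed in the ambient space; thus $\spec_{\tens_{X,\id_{\perf X}}}\perf X$ is simultaneously a nonempty open and closed piece of the connected scheme $X_\tens(\id_{\perf X})$, forcing $X_\tens(\id_{\perf X}) = \spec_{\tens_{X,\id_{\perf X}}}\perf X \iso X$, which is (iii). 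Conversely, (iii) $\Rar$ (ii) is trivial since $X$ is projective and hence separated.

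Finally I would unwind (iii) $\Leftrightarrow$ (iv). Condition (iii) says exactly that every $\spec_{\tens_{X,\eta}}\perf X$ for $\eta \in \operatorname{BAuteq} X$ is contained in $\spec_{\tens_{X,\id_{\perf X}}}\perf X$; since each such tt-spectrum is an open subscheme of $X_\tens$ isomorphic to the proper connected scheme $X$, the same open-and-closed-in-connected reasoning yields equality $\spec_{\tens_{X,\eta}}\perf X = \spec_{\tens_{X,\id_{\perf X}}}\perf X$ for every $\eta \in \operatorname{BAuteq} X$, which is exactly the statement that $\operatorname{BAuteq} X$ is an $\id_{\perf X}$-stabilizer. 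Combining with the already-noted inclusion $\stab(\perf X,\id_{\perf X}) \subset \operatorname{BAuteq} X \ltimes \bb Z[1]$ and the fact that $\bb Z[1]$ acts trivially on the triangular spectrum, this upgrades to the displayed equality of stabilizer groups.

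The main obstacle is the step (ii) $\Rar$ (iii): one has to prevent the scenario where several distinct copies $\spec_{\tens_{X,\eta}}\perf X$ glue together on a large open set yet fail to coincide. Projectivity of $X$ is exactly what rules this out, by promoting the openness of an individual copy to openness-and-closedness in the component; without some properness hypothesis one really should not expect (iii) to follow.
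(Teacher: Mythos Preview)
Your argument is correct and follows essentially the same route the paper has in mind: the corollary is presented as a ``quick observation'' from Theorem~\ref{irreducible component} and the preceding lemma, and your proof simply spells out those details, reusing the proper-image-in-separated-is-closed trick from Lemma~\ref{tt-s} for the only nontrivial step (ii)~$\Rightarrow$~(iii). One small simplification: the equivalence (iii)~$\Leftrightarrow$~(iv) is purely definitional once you invoke the lemma identifying $X_\tens(\id_{\perf X}) = \spec^{\operatorname{BAuteq} X \cdot \id_{\perf X}}_{\tens_X}\perf X$, so the extra open-and-closed argument you run there is not needed (though harmless).
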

\begin{proof}
Conditions (i) and (ii) are equivalent by Theorem \ref{irreducible component}. Conditions (ii) and (iii) are equivalent by Corollary \ref{tt-seprated} and Lemma \ref{id part}. Conditions (iii) and (iv) are equivalent by Lemma \ref{id part}. Conditions (iv) and (v) are equivalent since we have $\stab(\perf X, \id_{\perf X}) \subset \operatorname{BAuteq} X \times \bb Z [1]$ by definition. 
\end{proof}
Hence, tt-separatedness of $X$ means birational maps associated to birational autoequivalences of $\perf X$ need to be an automorphism of $X$. Now, we can also make some observations for tt-irreducibility.  
\begin{lemma}\label{ttirr}
    Let $X$ be a smooth projective variety. Then the following are equivalent:
    \begin{enumerate}
        \item $X$ is tt-irreducible;
        \item $X$ is tt-connected;
        \item $\operatorname{BAuteq} X$ is an $\id_{\perf X}$-generator. 
    \end{enumerate}
    Note that $X$ is tt-irreducible and tt-separated if and only if $X_\tens  =\spec_{\tens_{X,\id_{\perf X}}}\perf X \iso X$. 
\end{lemma}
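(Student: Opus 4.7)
The plan is to reduce everything to the structural results already in hand: the decomposition of $X_\tens$ from Theorem \ref{irreducible component} and the identification $X_\tens(\id_{\perf X}) = \spec^{\operatorname{BAuteq} X \cdot \id_{\perf X}}_{\tens_X} \perf X$ from the unnumbered lemma just preceding it. Since $X_\tens$ admits an open cover by irreducible open subschemes of the form $\spec_{\tens_{X,\eta}}\perf X \iso X$, the equivalence (i) $\Leftrightarrow$ (ii) is immediate from Lemma \ref{scheme irreducible}, which guarantees that connected components and irreducible components of $X_\tens$ coincide.

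For (i) $\Leftrightarrow$ (iii), I would argue as follows. By Theorem \ref{irreducible component} we may write $X_\tens = \bigsqcup_{i \in I} X_\tens(\eta_i)$ as a disjoint union of irreducible components, one of which is $X_\tens(\id_{\perf X})$. By the preceding lemma this distinguished component equals $\spec^{\operatorname{BAuteq} X \cdot \id_{\perf X}}_{\tens_X} \perf X$. Now $X$ is tt-irreducible exactly when this decomposition has a single term, i.e.\ when $X_\tens = X_\tens(\id_{\perf X}) = \spec^{\operatorname{BAuteq} X \cdot \id_{\perf X}}_{\tens_X} \perf X$, which is precisely the condition (iv) of Definition \ref{generator} that $\operatorname{BAuteq} X$ be an $\id_{\perf X}$-generator.

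For the concluding assertion, suppose $X$ is both tt-irreducible and tt-separated. Then tt-separatedness together with Corollary \ref{new tt-separated} (iii) gives $X_\tens(\id_{\perf X}) = \spec_{\tens_{X,\id_{\perf X}}}\perf X \iso X$, while tt-irreducibility combined with what we just proved forces $X_\tens = X_\tens(\id_{\perf X})$. Conversely, if $X_\tens = \spec_{\tens_{X,\id_{\perf X}}}\perf X \iso X$, then $X_\tens$ is a variety in the sense of the paper and hence automatically irreducible and separated, giving both tt-irreducibility and tt-separatedness.

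I do not anticipate any real obstacle here: every equivalence is essentially a bookkeeping consequence of Theorem \ref{irreducible component} and Corollary \ref{new tt-separated}. The only mild subtlety is making sure that in the implication (i) $\Rightarrow$ (iii) one invokes the disjoint decomposition of $X_\tens$ into irreducible components rather than merely the existence of irreducible open subschemes, so that tt-irreducibility really collapses the index set to a singleton; this is exactly what Theorem \ref{irreducible component} supplies.
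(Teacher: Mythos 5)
Your proposal is correct and follows essentially the same route as the paper, whose (much terser) proof likewise reduces all three conditions to the single equality $X_\tens = X_\tens(\id_{\perf X})$ via Lemma \ref{scheme irreducible}, Theorem \ref{irreducible component}, and the identification $X_\tens(\id_{\perf X}) = \spec^{\operatorname{BAuteq} X \cdot \id_{\perf X}}_{\tens_X} \perf X$, with the final assertion then being immediate from Corollary \ref{new tt-separated}. One trivial slip: the $\id_{\perf X}$-generator condition you invoke is item (iii), not (iv), of Definition \ref{generator}.
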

\begin{proof}
    By Lemma \ref{scheme irreducible} and Theorem \ref{irreducible component}, all the conditions are equivalent to $X_\tens = X_\tens(\id_{\perf X})$.       
\end{proof}
\begin{remark}
    In \cite{Uehara2017ATF}, a smooth projective variety $X$ is said to be \textbf{of $K$-equivalent type} if 
    \[
    \operatorname{BAuteq} X \ltimes \bb Z [1] = \auteq \perf X 
    \]
    (cf. Remark \ref{Uehara K-equiv}). Note if $X$ is of $K$-equivalent type, then $X$ is tt-irreducible by Lemma \ref{ttirr} noting shifts are universal stabilizers. 
\end{remark}

Let us examine specific examples to better understand which varieties are or are not tt-separated (resp. tt-irreducible). We will explore these in greater detail later. Notably, it's intriguing that all potential combinations occur quite naturally. 
\begin{example}\label{ttp examples} Let $X$ be a smooth projective variety. 
    \ \begin{enumerate}
        \item If $X$ has the ample (anti-)canonical bundle, then it is tt-separated and tt-irreducible by Corollary \ref{fm ample}. 
        \item If $X$ is an abelian variety, then it is not tt-irreducible by Lemma \ref{abelian not tt-irreducible}. If $X$ is moreover simple, then it is tt-separated by Lemma \ref{simple}. The author expects a general abelian variety to be tt-separated (Conjecture \ref{abelian conjecture}).
        \item If $X$ is a toric variety, then it is tt-irreducible by Lemma \ref{toric} but in general not tt-separated by Remark \ref{toricrem}. 
        \item If $X$ is a $K3$ surface, then it is in general neither tt-separated nor tt-irreducible by Example \ref{K3} and Lemma \ref{K3 disjoint}. \qedhere
    \end{enumerate}
\end{example}

Finally, let us briefly consider quasi-compactness. 
\begin{definition}
    Let $\cal T$ be a triangulated category with $\FM \cal T\neq \emp$. We say $\cal T$ has a \textbf{finite Fourier-Mukai cover} if there is a finite subset $S \subset \FM \cal T$ such that $\spc^\fm \cal T = \bigcup _{X \in S}\spc_{\tens,X}\cal T$. 
\end{definition}
\begin{remark}
    Clearly, if $\FM \cal T$ is a finite set, then $\cal T$ has a finite Fourier-Mukai cover. Also, the assumption that $\FM \cal T$ is finite is not too special. Indeed, Kawamata conjectured that for any smooth projective variety $X$, $\FM \perf X$ is finite (\cite{Kawamata2002DEquivalenceAK}*{Conjecture 1.5}) and it is indeed shown that the conjecture holds for curves and surfaces (\cite{Kawamata2002DEquivalenceAK}, \cite{HuyBook}), abelian varieties and varieties with ample (anti-)canonical bundle (\cite{FAVERO20121955}), and toric varieties (\cite{Kawtoric}). On the other hand, it is shown that there is a counter-example to the conjecture by considering an infinite family of blow-ups of $\bb P^3$ at certain 8 points, which are all connected by Cremona transformations. (\cite{Lesi15}). In this paper, we will not pursue if there exists a triangulated category $\cal T$ with infinite $\FM \cal T$, but with a finite Fourier-Mukai cover, but it should be interesting to understand such a question.  
\end{remark}
\begin{lemma}\label{tt-qc}
    Let $\cal T$ be a triangulated category with $\FM \cal T \neq \emp$. Then, the following are equivalent:
    \begin{enumerate}
        \item $\spec^\fm \cal T$ is quasi-compact;
        \item $\cal T$ has a finite Fourier-Mukai cover $\spc^\fm \cal T = \bigcup _{X \in S}\spc_{\tens,X}\cal T$ such that $X$ is tt-quasi-compact for any $X\in S$. \qedhere
    \end{enumerate} 
\end{lemma}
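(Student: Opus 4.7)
The plan is to show the two implications separately. The direction $(2) \Rightarrow (1)$ is the straightforward one, while $(1) \Rightarrow (2)$ leverages the scheme structure of $\spec^\fm\cal T$ from Theorem~\ref{scheme}.

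For $(2) \Rightarrow (1)$, I would unfold the definition $\spec^\fm\cal T = \bigcup_{X \in \FM\cal T}\spec_{\tens_X}\cal T$. By Corollary~\ref{tt-spectra of FM}, each $\spec_{\tens_X}\cal T$ is canonically isomorphic (via any choice of equivalence $\perf X\simeq\cal T$) to $X_\tens$, hence quasi-compact by hypothesis. A finite union of quasi-compact open subsets is quasi-compact, giving (1).

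For $(1) \Rightarrow (2)$, the key observation is that by Theorem~\ref{scheme} the scheme $\spec^\fm\cal T$ is locally of finite type over $k$, so under quasi-compactness it is in fact of finite type over $k$, in particular noetherian. This immediately settles tt-quasi-compactness of every $X\in \FM\cal T$: the open subscheme $\spec_{\tens_X}\cal T\subset\spec^\fm\cal T$ of a noetherian scheme is itself noetherian and thus quasi-compact, and via the canonical isomorphism $\spec_{\tens_X}\cal T\iso X_\tens$ this gives that $X$ is tt-quasi-compact.

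For the finiteness of $\FM\cal T$ --- which I expect to be the main obstacle --- the approach is to extract a finite subcover $\{\spec_{\tens_{X_j}}\cal T\}_{j=1}^m$ from the open cover $\{\spec_{\tens_X}\cal T\}_{X\in\FM\cal T}$ and show every $Y\in\FM\cal T$ is isomorphic to some $X_j$. Given such $Y$, the non-empty open $\spec_{\tens_Y}\cal T$ must meet some $\spec_{\tens_{X_j}}\cal T$, making $Y$ a birational Fourier-Mukai partner of $X_j$ by Corollary~\ref{disjoint}. Upgrading birational FM partnership to an isomorphism $Y \iso X_j$ is the subtle step: the plan is to exploit noetherianity further, combining the finiteness of irreducible components (which coincide with connected components by Lemma~\ref{scheme irreducible}) with the birational constraints from Lemma~\ref{main intersection}, to argue that each connected component of $\spec^\fm\cal T$ admits only finitely many isomorphism classes of smooth projective varieties embedded as open (necessarily dense) subschemes, so that only finitely many iso classes of $Y$ can appear over all birational FM partner classes of the $X_j$.
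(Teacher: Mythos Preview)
The paper states this lemma without proof, so there is no argument to compare against directly. Your treatment of $(2)\Rightarrow(1)$ is correct, as is the deduction of tt-quasi-compactness of each $X$ from $(1)$ via noetherianity of $\spec^\fm\cal T$.

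The gap is in the finiteness of $\FM\cal T$. You correctly extract a finite subcover $\{\spec_{\tens_{X_j}}\cal T\}_{j=1}^m$ and observe that any $Y\in\FM\cal T$ must be a birational Fourier-Mukai partner of some $X_j$. But you do not actually prove the final step, and your stated plan---``argue that each connected component of $\spec^\fm\cal T$ admits only finitely many isomorphism classes of smooth projective varieties embedded as open (necessarily dense) subschemes''---is not carried out. A noetherian irreducible but possibly non-separated scheme $C$ can in principle contain many pairwise non-isomorphic proper varieties as dense opens; noetherianity alone does not bound their isomorphism types. Concretely: if you cover $C$ by finitely many copies $U_1,\dots,U_m$ of Fourier-Mukai partners and take another copy $V\iso Y$ inside $C$, you would be done if $V\subset U_i$ for some $i$ (properness of $V$ and separatedness of $U_i$ then force $V=U_i$, hence $Y\iso X_i$), but nothing you have written rules out the cover being genuinely overlapping on $V$. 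Invoking ``birational constraints from Lemma~\ref{main intersection}'' does not help here, since being a birational Fourier-Mukai partner of finitely many $X_j$ does not by itself bound the isomorphism class of $Y$ (indeed the paper flags exactly this kind of question as open, cf.\ Question~\ref{tte} and Observation~\ref{bg obs}). As written, the finiteness claim remains unproved.
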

\begin{proof}
    First, suppose $\spec^\fm \cal T$ is quasi-compact. Clearly, $\cal T$ has a finite Fourier-Mukai cover $\spc^\fm \cal T = \bigcup _{X \in S}\spc_{\tens,X}\cal T$. Since $\spec^\fm \cal T$ is of finite type, it is in particular noetherian and hence each $\spc_{\tens,X}\cal T$ is quasi-compact. The other direction is clear.   
\end{proof}
By Theorem \ref{irreducible component}, we can also see the following: 
\begin{lemma}
    Let $X$ be a smooth projective variety. Then, the following are equivalent:
    \begin{enumerate}
        \item $X$ is tt-quasi-compact;
        \item $X_\tens$ has only finitely many connected component and $X_\tens(\id_{\perf X})$ is quasi-compact. \qedhere
    \end{enumerate}
\end{lemma}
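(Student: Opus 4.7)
The plan is to leverage Theorem~\ref{irreducible component}, which decomposes $X_\tens$ as a disjoint union $\bigsqcup_{i\in I} X_\tens(\eta_i)$ of its connected components, each isomorphic to the distinguished component $X_\tens(\id_{\perf X})$ via the action of an appropriately chosen autoequivalence.

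For the direction (ii)$\Rightarrow$(i), I would argue as follows. Under the assumption, the index set $I$ is finite and $X_\tens(\id_{\perf X})$ is quasi-compact; by the isomorphisms $X_\tens(\id_{\perf X}) \iso X_\tens(\eta_i)$ furnished by Theorem~\ref{irreducible component}, every component is quasi-compact. A finite disjoint union of quasi-compact spaces is quasi-compact, so $X_\tens$ is quasi-compact.

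For (i)$\Rightarrow$(ii), suppose $X_\tens$ is quasi-compact. Since $\{X_\tens(\eta_i)\}_{i\in I}$ is an open cover of $X_\tens$ by pairwise disjoint opens (each connected component is open because $X_\tens$ is locally Noetherian, being locally of finite type by Theorem~\ref{scheme}, and each is a union of irreducible open subschemes of the form $\spec_{\tens_{X,\eta}}\perf X \iso X$), quasi-compactness forces $I$ to be finite (any proper subcover would miss an entire component). Quasi-compactness of $X_\tens$ then implies quasi-compactness of each clopen $X_\tens(\eta_i)$, in particular of $X_\tens(\id_{\perf X})$.

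No serious obstacle is anticipated; the main point to verify carefully is that each connected component $X_\tens(\eta_i)$ is open in $X_\tens$, which follows from Lemma~\ref{scheme irreducible} combined with the open cover by the smooth projective pieces $\spec_{\tens_{X,\eta}}\perf X$. Once this is in hand, both directions reduce to the standard fact that a coproduct of topological spaces is quasi-compact exactly when finitely many summands are nonempty and each summand is quasi-compact, which specializes cleanly here thanks to the uniformity of the components provided by Theorem~\ref{irreducible component}.
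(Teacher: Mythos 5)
Your proposal is correct and follows exactly the route the paper intends: the paper states this lemma as an immediate consequence of Theorem~\ref{irreducible component} (the decomposition of $X_\tens$ into pairwise isomorphic open connected components, openness coming from Lemma~\ref{scheme irreducible}), and your argument simply spells out the standard topological bookkeeping that a disjoint union of open pieces is quasi-compact iff there are finitely many pieces and each is quasi-compact.
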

\begin{obs} Let $\cal T$ be a triangulated category with $\FM \cal T \neq \emp$.  
Note that tt-quasi-compactness of a smooth projective variety $X$ can fail for two reasons:
\begin{enumerate}
    \item $X_\tens$ has infinitely many (quasi-compact) connected components. This is the case for abelian varieties (Example \ref{K3} (i)).   
    \item $X_\tens$ has (finitely many) non-quasi-compact connected components. This is generally the case for toric varieties (Lemma \ref{toric} and Remark \ref{toricrem}). 
\end{enumerate}
It is indeed hard to come up with tt-quasi-compact smooth projective varieties except for ones with ample (anti-)canonical bundle. From observations so far, it is reasonable to expect the following.
\end{obs}

\begin{conjecture}
    A smooth projective variety $X$ is tt-separated and tt-quasi-compact if and only if it has ample (anti-)canonical bundle. Note the first condition is equivalent to saying $X_{\tens}$ is a finite disjoint union of copies of $X$. 
\end{conjecture}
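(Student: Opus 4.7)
First, the parenthetical equivalence in the conjecture follows by combining Corollary \ref{new tt-separated} (tt-separatedness is equivalent to $X_\tens$ being a disjoint union of copies of $X$) with Lemma \ref{tt-qc} and Theorem \ref{irreducible component} (tt-quasi-compactness forces only finitely many connected components, each isomorphic to $X$). Thus the plan is to prove the reformulated statement: $X_\tens$ is a finite disjoint union of copies of $X$ if and only if $\omega_X^{\pm 1}$ is ample.

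The forward direction is immediate. If $\omega_X$ or $\omega_X^{-1}$ is ample, then by Bondal--Orlov one has $\auteq \perf X = A(X)$, and since tensoring by a line bundle and pulling back along an automorphism each have the identity map as associated birational map, we have $A(X) \subset \operatorname{BAuteq} X \ltimes \bb Z[1]$. Corollary \ref{fm ample} then gives $X_\tens \cong X$, which is trivially a finite (single-component) disjoint union of copies of $X$.

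For the converse, if $X_\tens$ is a finite disjoint union of copies of $X$, then Corollary \ref{new tt-separated} and Theorem \ref{irreducible component} together say that
\[
\stab(\perf X, \id_{\perf X}) = \operatorname{BAuteq} X \ltimes \bb Z[1] \quad\text{and}\quad [\auteq \perf X : \operatorname{BAuteq} X \ltimes \bb Z[1]] < \infty,
\]
so it suffices to show that if $\omega_X^{\pm 1}$ is not ample then the above index is infinite. The plan is a case analysis by Kodaira dimension $\kappa(X)$. When $\kappa(X) = 0$, known constructions---Mukai--Polishchuk autoequivalences on abelian varieties, spherical twists on $K3$ and strict higher-dimensional Calabi--Yau varieties, and flop-type autoequivalences on Calabi--Yau threefolds---should produce an infinite subgroup of $\auteq \perf X$ whose image in the quotient is infinite, using the fact that Serre functors are shifts and hence sit inside the stabilizer. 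For $X$ of general type with non-ample $\omega_X$ (resp. Fano-type with non-ample $\omega_X^{-1}$), one attempts to exploit the small birational contractions to the canonical (resp. anticanonical) model and line-bundle twists adapted to the exceptional locus to manufacture autoequivalences that are not birational up to shift. Intermediate Kodaira dimension should be reduced to the previous cases via the Iitaka fibration and a relative-derived-category argument.

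The main obstacle is the converse direction in full generality: no uniform mechanism is presently available for producing infinitely many tt-inequivalent autoequivalences outside the ample (anti-)canonical setting, so the proof must assemble several deep and largely independent structure theorems (Orlov's description for abelian varieties, Huybrechts--Macri--Stellari for $K3$ surfaces, and their analogues for Calabi--Yau threefolds). A reasonable intermediate target, decoupled from open conjectures on autoequivalence groups, is to establish the conjecture under the additional hypothesis $\kappa(X) \in \{0, \dim X\}$ or restricted to classes of varieties whose autoequivalence groups have been fully computed, and then to approach the general case by an Iitaka-fibration argument building on those base cases.
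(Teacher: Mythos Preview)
This statement is a \emph{Conjecture} in the paper, not a theorem; the paper offers no proof, so there is nothing to compare your attempt against. What the paper does establish is only the ``easy'' direction: if $\omega_X^{\pm 1}$ is ample then $X_\tens \cong X$ (Corollary~\ref{fm ample}), hence $X$ is tt-separated and tt-quasi-compact. You handle this direction correctly (modulo a harmless labeling slip: after reformulating as ``$X_\tens$ is a finite disjoint union of copies of $X$ iff $\omega_X^{\pm 1}$ is ample'', the direction you call ``forward'' is actually the right-to-left implication). Your justification of the parenthetical equivalence via Corollary~\ref{tt-seprated} and Theorem~\ref{irreducible component} is also in line with the paper's surrounding results.

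For the converse direction your write-up is not a proof but a research outline, and you are candid about this. The case analysis by Kodaira dimension is reasonable as a heuristic, but several of the steps you invoke are themselves open or only partially known (autoequivalence groups of higher-dimensional Calabi--Yau varieties, behavior in intermediate Kodaira dimension, existence of non-birational autoequivalences for varieties of general type with non-ample canonical bundle). In particular, producing an \emph{infinite-index} family of non-birational autoequivalences in each case is the heart of the matter and is not currently available in the literature in the generality you need. So your proposal should be read as motivation for the conjecture rather than a proof; the paper itself treats the statement the same way.
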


\section{Examples}
Now let us apply results so far to compute Fourier-Mukai loci for some specific cases. 
\subsection{Abelian varieties} 
First, let us see the case of abelian varieties. 
\begin{lemma}\label{abel FM disjoint}
    Let $\cal T$ be a triangulated category with an abelian variety $X \in \FM \cal T$. Then, $\cal T$ is of disjoint Fourier-Mukai type, i.e, \[
        \spec^\fm \cal T = \bigsqcup_{X \in \FM \cal T} \spec_{\tens,X} \cal T.\qedhere
        \]
\end{lemma}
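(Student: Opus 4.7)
The plan is to reduce this to Corollary \ref{disjoint}, which says that $\cal T$ is of disjoint Fourier-Mukai type if and only if every birational Fourier-Mukai partner of $\cal T$ is isomorphic (as varieties). So I only need to show the following: if $X$ is an abelian variety and $Y \in \FM \cal T$ is a birational Fourier-Mukai partner of $X$, then $Y \iso X$.

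The argument has two classical inputs. First, I would invoke the fact that a Fourier-Mukai partner of an abelian variety is itself an abelian variety; this is a well-known theorem (going back to work of Mukai, Polishchuk, Orlov) that can be cited directly. Second, by Definition \ref{birational FM partners}, the existence of a birational Fourier-Mukai equivalence between $X$ and $Y$ implies in particular that $X$ and $Y$ are birationally equivalent varieties. So I am reduced to showing that two birationally equivalent abelian varieties are isomorphic as varieties. This is a standard rigidity property of abelian varieties: any rational map from a smooth variety to an abelian variety extends to a morphism, because abelian varieties contain no rational curves; applied to both directions of a birational map this forces it to be an isomorphism of the underlying varieties (though not necessarily compatible with the group structures, which is irrelevant here).

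Combining, any $Y \in \FM \cal T$ birational FM-partner to $X$ is both an abelian variety and birational to $X$, hence $Y \iso X$ as a variety. Corollary \ref{disjoint} then yields
\[
\spec^\fm \cal T = \bigsqcup_{X \in \FM \cal T} \spec_{\tens_X} \cal T,
\]
as claimed. There is no real obstacle here; the substance is entirely in the two cited classical facts, and the role of the proof is simply to combine them through the abstract criterion of Corollary \ref{disjoint}.
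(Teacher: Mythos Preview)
Your proposal is correct and follows essentially the same approach as the paper: both invoke that Fourier-Mukai partners of an abelian variety are abelian varieties (the paper cites Huybrechts--Nieper-Wi{\ss}kirchen), observe that birationally equivalent abelian varieties are isomorphic, and then apply Corollary~\ref{disjoint}. Your version is slightly more explicit in justifying the rigidity step via extension of rational maps, but the substance is identical.
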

\begin{proof}
    By \cite{HuyNie11}*{\href{https://arxiv.org/pdf/0801.4747.pdf}{Theorem 0.4}}, any Fourier-Mukai partner of $X$ is also an abelian variety. Moreover, it is well-known that any birationally equivalent abelian varieties are isomorphic. 
\end{proof}
In particular, when $X\not \iso \hat X$, this gives a different proof and generalization of \cite{HO22}*{\href{https://arxiv.org/pdf/2112.13486v3.pdf}{Example 3.4}}, which claims the Fourier-Mukai transform $\perf X \to \perf \hat X$ associated to the Poincare bundle is not birational even up to shifts for any abelian variety $X$. Note \cite{HO22}*{\href{https://arxiv.org/pdf/2112.13486v3.pdf}{Example 3.4}} gives the following:
\begin{lemma}\label{abelian not tt-irreducible}
    An abelian variety that is isomorphic to its dual is not tt-irreducible. 
\end{lemma}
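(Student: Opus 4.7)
By Lemma \ref{ttirr}, showing $X$ is not tt-irreducible amounts to exhibiting a single autoequivalence $\eta \in \auteq \perf X$ that is not birational up to shift: for then Lemma \ref{main intersection} gives
\[
\spec_{\tens_{X,\eta}}\perf X \cap \spec_{\tens_{X,\id_{\perf X}}}\perf X = \emp,
\]
so $X_\tens$ has at least two connected components and hence is not irreducible. So the plan is to construct such an $\eta$ from the Poincaré bundle.

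The candidate comes from the Poincaré bundle $\cal P$ on $X \times \hat X$. By Mukai's theorem, the Fourier--Mukai transform $\Phi_\cal P: \perf X \simeq \perf \hat X$ is a triangulated equivalence, and a direct computation gives $\Phi_\cal P(k(x)) = \cal P|_{\{x\}\times \hat X}$, which is a line bundle on $\hat X$ whose support is all of $\hat X$ rather than a closed point. Corollary \ref{flat kernel} (or Lemma \ref{intersec ref}) then forces $\Phi_\cal P$ to be non-birational up to shift; this is exactly the content of \cite{HO22}*{Example 3.4}. In the principally polarized case $X \iso \hat X$, the transform $\Phi_\cal P$ is already an autoequivalence of $\perf X$, and this immediately gives the desired $\eta$.

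For a general abelian $X$, one fixes any triangulated equivalence $\Psi: \perf \hat X \simeq \perf X$ (which exists since $\hat X \in \FM \perf X$ by Mukai) and sets $\eta := \Psi \circ \Phi_\cal P \in \auteq \perf X$; one then checks that $\eta$ inherits the non-birationality of $\Phi_\cal P$ from the skyscraper computation above. I expect the main obstacle to be precisely this last verification in the non-principally-polarized case, since a priori post-composition with $\Psi$ could conceivably send a line bundle on $\hat X$ to a point sheaf on $X$. The cleanest way to rule this out is to observe that by Lemma \ref{main intersection}(ii) the birational subgroup $\operatorname{BAuteq} X \ltimes \bb Z[1]$ acts by gluing identifications along \emph{open} subschemes, whereas the kernel of $\eta$ is obtained by pushing forward $\cal P$ under $(\id_X \times \Psi)$-type operations at the level of Fourier--Mukai kernels, so the kernel of $\eta$ still supports a line-bundle-type fiber at each point; invoking Corollary \ref{flat kernel} for this composite kernel then closes the argument, producing a non-birational autoequivalence of $\perf X$ for any abelian $X$.
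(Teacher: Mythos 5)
Your first two paragraphs are fine and, in the case $X \iso \hat X$, your argument is essentially the paper's: the paper simply invokes Hirano--Ouchi's computation (\cite{HO22}, Example 3.4) that $\Phi_{\ecal P}$ sends every skyscraper $k(x)$ to the line bundle $\ecal P|_{\{x\}\times \hat X}$, which has full support, so that $\Phi_{\ecal P}$ is not birational even up to shift; composing with (the pullback along) an isomorphism $X \iso \hat X$ then yields a non-birational autoequivalence, and the reduction via Lemma \ref{ttirr} and Lemma \ref{main intersection} is exactly the paper's framework.

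The gap is in your last paragraph, i.e.\ precisely in the case $X \not\iso \hat X$. Non-birationality is \emph{not} inherited under post-composition with an arbitrary equivalence $\Psi:\perf \hat X \simeq \perf X$: taking $\Psi = \Phi_{\ecal P}^{-1}$ gives $\eta = \id_{\perf X}$, and taking $\Psi$ to be the Fourier--Mukai transform defined by the Poincar\'e bundle on $\hat X \times X$ gives, by Mukai's theorem, $\eta \iso (-1_X)^*[-\dim X]$, a standard and hence birational autoequivalence. Your proposed repair does not close this: the kernel of $\eta = \Psi \circ \Phi_{\ecal P}$ is the convolution of $\ecal P$ with the kernel of $\Psi$, its fiber over a closed point $x \in X$ is $\Psi(\ecal P|_{\{x\}\times\hat X})$, and nothing forces this to have full support (in the two examples above it is a shifted skyscraper); moreover Corollary \ref{flat kernel} only applies when the kernel is a locally free sheaf, which the convolved kernel is not. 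To prove the lemma for an arbitrary abelian variety you must exhibit a \emph{specific} autoequivalence whose images of all skyscrapers have full support, e.g.\ $\Phi_{\ecal P_{\hat X}} \circ (\ecal L \tens -) \circ \Phi_{\ecal P}$ for an ample line bundle $\ecal L$ on $\hat X$: its value on $k(x)$ is the Mukai transform of the ample line bundle $\ecal L \tens \ecal P|_{\{x\}\times\hat X}$, hence a (shifted) vector bundle of positive rank with full support, and Lemma \ref{intersec ref} applies; alternatively one can use an autoequivalence mapping under Orlov's homomorphism $\gamma$ to an elementary symplectic automorphism and invoke Corollary \ref{isogeny}. As written, your argument establishes the statement only when $X \iso \hat X$.
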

On the other hand, we can also see that an abelian variety is likely to be tt-separated. For this, let us recall some constructions and results from \cite{Orlov_2002}. 
\begin{construction}
Let $X$ be an abelian variety and take an autoequivalence $\Phi_\ecal K \in \auteq \perf X$ with kernel $\ecal K$. Then, we can obtain an automorphism 
\[
f_\ecal K: X \times \hat X \overset{\sim}{\to} X \times \hat X 
\]
of abelian varieties as follows. First, let us define an autoequivalence $F_\ecal K: \perf (X\times \hat X) \simeq \perf (X \times \hat X)$ by the following commutative diagram:
\[
\xymatrix{
\perf(X \times \hat X) \ar[r]^{F_\ecal K}\ar[d]_{\id\times \Phi_\ecal P}& \perf(X\times \hat X) \ar[d]^{\id\times \Phi_\ecal P} \\
\perf(X \times X)\ar[d]_{\mu_*} &  \perf(X \times X) \ar[d]^{\mu_*}\\
\perf(X \times X) \ar[r]_{\Phi_\ecal K \times \Phi_{\ecal K_R}}&  \perf(X \times X)
}
\]
where $\ecal P$ is the Poincar\'{e} bundle on $X \times \hat X$, the map $\mu:X \times X \to X\times X$ is defined by $(x,y) \mapsto (x+y,y)$, and $\ecal K_R:= \ecal K^\vee [\dim X]$. Now, in \cite{Orlov_2002}, Orlov showed that the equivalence $F_\ecal K$ maps any skyscraper sheaf to a skyscraper sheaf and moreover the corresponding automorphism 
\[
f_\ecal K: X \times \hat X \overset{\sim}{\to} X \times \hat X 
\]
(cf. \cite{HuyBook}*{Corollary 5.23}) is an automorphism of abelian varieties. Moreover, the construction
\[
\gamma:\auteq \perf X \to \aut (X\times \hat X);\quad \Phi_\ecal K \mapsto f_\ecal K
\]
is a group homomorphism. 
\end{construction}
To illustrate Orlov's result, let us introduce the following notions.
\begin{definition}
    Let $X$ be an abelian variety. Then, define the group of \textbf{symplectic automorphisms} of $X \times \hat X$ (with respect to the natural symplectic form) to be 
    \[
    \sp (X\times \hat X):= \l\{\begin{pmatrix}
        f_1 & f_2 \\ f_3 & f_4
    \end{pmatrix} \in \aut(X \times \hat X) \mid \begin{pmatrix}
        f_1 & f_2 \\ f_3 & f_4
    \end{pmatrix}\begin{pmatrix}
        \hat f_4 & - \hat f_2 \\ -\hat f_3 & \hat f_1
    \end{pmatrix} = \id_{X\times \hat X} \r\},
    \]
    where $\aut(X \times \hat X)$ denotes the group of automorphisms of abelian varieties and $\hat f_i$ denotes the transpose of $f_i$. Here, we are writing an automorphism $f:X\times \hat X \to X\times \hat X$ with matrix form, where $f_1:X \to X$, $f_2:\hat X \to X$, etc. We say a symplectic automorphism $f$ is \textbf{elementary} if $f_2$ is an isogeny.  
\end{definition}
\begin{theorem}[\cite{Orlov_2002}*{\href{https://arxiv.org/pdf/alg-geom/9712017.pdf}{Theorem 4.14}}]\label{Orlov abelian}
    For any abelian variety $X$,  we have a short exact sequence
        \[
        0 \to \bb Z[1] \times (X \times \hat X)_k \to \auteq \perf X \overset{\gamma}{\to} \sp(X \times \hat X) \to 1,
        \]
        where $(X \times \hat X)_k = X_k \times \pic^0(X)$ denotes the group of $k$-points of $X \times \hat X$.
\end{theorem}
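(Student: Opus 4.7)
The plan is to prove this in three stages: showing that $\gamma$ takes values in $\sp(X\times \hat X)$, identifying the kernel, and establishing surjectivity.

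For the first stage, the construction already produces a group homomorphism $\gamma: \auteq \perf X \to \aut(X\times \hat X)$, so only the symplectic relation needs to be checked. The cleanest approach is to exploit that every $\Phi_\ecal K$ admits a quasi-inverse of the form $\Phi_{\ecal K_R}$ up to shift, whence $\gamma(\Phi_{\ecal K_R}) = \gamma(\Phi_\ecal K)^{-1}$. Unwinding the defining square for $F_{\ecal K_R}$ in parallel with that of $F_\ecal K$, and using Grothendieck-Serre duality on the abelian variety $X$ to identify transposes of the induced isogenies between $X$ and $\hat X$, one reads off that $f_\ecal K^{-1}$ equals the adjugate $\begin{pmatrix} \hat f_4 & -\hat f_2 \\ -\hat f_3 & \hat f_1\end{pmatrix}$ appearing in the definition of $\sp(X\times \hat X)$.

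For the kernel, suppose $\gamma(\Phi_\ecal K) = \id$. Then $F_\ecal K$ fixes each skyscraper up to isomorphism, so by \cite{HuyBook}*{Corollary 5.23} applied to $\perf(X\times \hat X)$, one has $F_\ecal K \iso \cal L \tens -[n]$ for some line bundle $\cal L$ and some $n \in \bb Z$. Tracing back through the defining square, $\Phi_\ecal K$ is identified with a composition $T_a^* \circ (-\tens \cal M)[n]$ for some $a \in X_k$ and $\cal M \in \pic X$. Computing $\gamma$ factor-by-factor: $\gamma(T_a^*) = \id$, since a translation is not a homomorphism of abelian varieties and so contributes only to the translation part, not the linear part, of $f_\ecal K$, while $\gamma(-\tens \cal M)$ records the polarization homomorphism $\phi_\cal M : X \to \hat X$, $x \mapsto T_x^* \cal M \otimes \cal M^{-1}$, as an off-diagonal block. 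Triviality forces $\phi_\cal M = 0$, i.e., $\cal M \in \pic^0 X$, yielding the kernel $\bb Z[1] \times X_k \times \pic^0 X = \bb Z[1] \times (X\times \hat X)_k$.

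The main obstacle is surjectivity, which requires both building enough autoequivalences to realize a generating set and establishing a symplectic Bruhat decomposition. Three families clearly land in the image: the block-diagonal matrices $\begin{pmatrix} f & 0 \\ 0 & (\hat f)^{-1}\end{pmatrix}$ for $f \in \aut X$, realized by $f^*$; the unipotent matrices $\begin{pmatrix} \id & 0 \\ \phi_\cal L & \id \end{pmatrix}$ for $\cal L \in \pic X$, realized by $-\tens \cal L$; and the Weyl element $\begin{pmatrix} 0 & \id \\ -\id & 0 \end{pmatrix}$, realized by the Fourier-Mukai transform along $\ecal P$. The hard step is showing every $f = (f_{ij}) \in \sp(X\times \hat X)$ decomposes into these. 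The strategy is an induction on the dimension of $\ker f_2$: when $f_2$ is an isogeny, composition with a unipotent twist $\begin{pmatrix} \id & 0 \\ \phi & \id\end{pmatrix}$ and then the Weyl element reduces $f$ to block-diagonal form; when $f_2$ has positive-dimensional kernel, one first conjugates by an appropriate unipotent and block-diagonal to strictly decrease $\dim \ker f_2$, using the symplectic relations to ensure the reduction terminates. This mirrors the Witt-type decomposition of symplectic groups and is the technical heart of Orlov's original argument.
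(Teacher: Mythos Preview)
Your surjectivity argument has a genuine gap. The ``Weyl element'' $\begin{pmatrix} 0 & \id \\ -\id & 0 \end{pmatrix}$ is not an element of $\sp(X\times \hat X)$ in general: in the paper's conventions the $(1,2)$-entry is a homomorphism $\hat X \to X$, so writing ``$\id$'' there only makes sense when $X$ carries a principal polarization. Correspondingly, the Fourier--Mukai transform along the Poincar\'e bundle is an equivalence $\perf X \simeq \perf \hat X$, not an autoequivalence of $\perf X$, and hence does not lie in the source of $\gamma$. Without this element your three families cannot reach any $f$ with $f_2 \neq 0$; and even in the principally polarized case the block-diagonal family only supplies the (often finite) group $\aut X$ while the lower-unipotent family only supplies \emph{symmetric} homomorphisms $\phi_{\ecal L}$, so a naive Bruhat decomposition is not available.

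The paper follows Orlov's original route, which sidesteps this obstruction. Rather than lifting a small generating set, Orlov constructs, for every \emph{elementary} symplectic automorphism $f$ (one with $f_2:\hat X \to X$ an isogeny), a simple semi-homogeneous vector bundle $\ecal E$ on $X\times X$ such that $\Phi_{\ecal E}$ is an autoequivalence and $\gamma(\Phi_{\ecal E}) = f$. This is the substantive input you are missing: it replaces your single Weyl element by an entire family of lifts, one for each elementary $f$. Surjectivity is then reduced to the claim that every element of $\sp(X\times\hat X)$ factors as a product of elementary ones. The paper itself flags (following \cite{LOPEZMARTIN201792}) that this last factorization step in Orlov's argument may have a gap, so caution is warranted here in any case; but your proposed Bruhat-type reduction does not repair it, since your induction on $\dim\ker f_2$ again presupposes moves of Weyl type that are simply not present in $\auteq\perf X$ for a general abelian variety.
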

\begin{proof}[Idea of a proof]
    First of all, the map $ \bb Z[1] \times (X \times \hat X)_k \to \auteq \perf X$ is given by sending $(n,x,\ecal L)$ to $t_x^*\circ (-\tens_{\ecal O_X}^\bb L \ecal L)[n]$, where $t_x:X \to X$ denotes the translation $y\mapsto x+y$. The following examples show $\bb Z[1] \times (X \times \hat X)_k$ is contained in the kernel of $\gamma$. 
    
    Let us also mention Orlov's arguments to show $\gamma$ is surjective. First, Orlov shows that for any elementary symplectic automorphism $f \in \sp(X \times \hat X)$, there exists a (simple semi-homogeneous) vector bundle $\ecal E$ on $X\times X$ such that $\Phi_\ecal E$ is an autoequivalence and $f_\ecal E = f$ (\cite{Orlov_2002}*{Construction 4.10}). Then Orlov claims that any symplectic automorphism can be factored into a composition of elementary symplectic automorphisms.             
\end{proof}
Let us see some examples of this construction:
\begin{example}[\cite{Ploog_2005}*{\href{https://www.mathematik.hu-berlin.de/~ploog/PAPERS/Ploog_phd.pdf}{Example 4.5}}]\label{non-isogeny example} Let $X$ be an abelian variety. 
    \begin{enumerate}
        \item Let $\ecal L$ be a line bundle on $X$. Then $\gamma(-\tens_{\ecal O_X}^\bb L \ecal L) = \begin{pmatrix}\id_X & 0 \\ \phi_\ecal L & \id_{\hat X}\end{pmatrix}$, where $\phi_\ecal L:X \to \hat  X$ is defined by $x \mapsto  t^*_x \ecal L \tens \ecal L^\vee$ for a translation $t_x: y\mapsto y+x$. Recall $\phi_\ecal L = 0$ if and only if $\ecal L \in \hat X$ and $\phi_\ecal L$ is an isomorphism if and only if $(X,\ecal L)$ is {principally polarized}. 
        \item For a translation $t_x:y \mapsto y+x$ for some $x \in X$, we have $\gamma({t_x}_*) = \id_{X\times \hat X}$.
        \item We clearly have $\gamma([n]) = \id_{X \times \hat X}$. 
        \item For an automorphism $\phi:X \to X$ of abelian varieties, we have $\gamma(\phi_*) = \begin{pmatrix}
            \phi\inv & 0 \\ 0 & \hat \phi
            \end{pmatrix}$. 
    \end{enumerate}
    Note none of the images of $\gamma$ above is elementary.    
\end{example}
\begin{corollary}\label{isogeny}
    Let $\Phi \in \auteq \perf X$. If $\gamma(\Phi)$ is an elementary symplectic automorphism, then 
    \[
    \spec_{\tens_{X,\id_{\perf X}}} \perf X \cap \spec_{\tens_{X,\Phi}} \perf X = \emp. 
    \]
    In particular, such an autoequivalence $\Phi$ is not birational (even up to shifts). 
\end{corollary}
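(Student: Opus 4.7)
The plan is to prove the contrapositive: if $\Phi$ is birational up to shifts (equivalently, if the intersection is nonempty by Lemma \ref{main intersection}(i)), then $\gamma(\Phi)$ has vanishing upper-right block $f_2 = 0$, which, as $\hat X$ has positive dimension, rules out $f_2$ being an isogeny. As a preliminary observation, I would first verify that $H := \{f \in \sp(X\times \hat X) \mid f_2 = 0\}$ is a subgroup, by a direct block-matrix computation showing closure under composition (inversion follows via the defining symplectic relation). By Example \ref{non-isogeny example}, all shifts, translations, automorphisms of the abelian variety $X$, and line bundle twists land in $H$ under $\gamma$. Since $\gamma$ is a homomorphism, the task reduces to decomposing any birational $\Phi$ (up to shifts) as a composition of these standard autoequivalences.

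By Corollary \ref{improved h}, birationality up to shifts gives $\Phi(k(x_0)) \iso k(y_0)[l]$ for closed points $x_0,y_0\in X$ and some $l \in \bb Z$. Composing with a shift $[-l]$ and translations $t_{x_0,*}$ and $t_{-y_0,*}$ reduces to $\Phi(k(0)) \iso k(0)$, without altering whether $\gamma(\Phi) \in H$. Corollary \ref{improved h} then produces an open neighborhood $U$ of $0$ and an open immersion $f:U \hookrightarrow X$ with $f(0) = 0$ and $\Phi(k(x)) \iso k(f(x))$ for closed $x \in U$. Weil's theorem (rational maps from smooth varieties to abelian varieties are everywhere defined) and the rigidity lemma for abelian varieties together force $f$ to extend to a homomorphism $\bar f:X \to X$ of abelian varieties; since $\bar f$ is birational and a birational homomorphism between abelian varieties must be an isomorphism, $\bar f \in \aut X$. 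Replacing $\Phi$ by $(\bar f_*)^{-1}\circ \Phi$, I may assume $\Phi(k(x)) \iso k(x)$ for all $x \in U$.

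The remaining step is to identify such a $\Phi$ as a line bundle twist. Writing $\Phi = \Phi_\ecal K$ via Orlov's representability theorem, the hypothesis forces $\ecal K|_{U \times X}$ to be isomorphic to $\Delta_*\ecal M$ for some $\ecal M \in \pic U$. Globalizing this description requires controlling the support of $\ecal K$ over $(X\setminus U) \times X$: a support-theoretic argument using perfectness of $\ecal K$, smoothness of $X \times X$, and the equivalence property of $\Phi_\ecal K$ should yield $\supp \ecal K = \Delta$, after which $\ecal M$ extends uniquely to some $\ecal L \in \pic X$ with $\ecal K \iso \Delta_* \ecal L$, giving $\Phi \iso -\tens_{\ecal O_X}^\bb L \ecal L$. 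Once this is in place, $\gamma(\Phi) \in H$ by Example \ref{non-isogeny example}(i). This global extension of the kernel description is the main obstacle; an alternative, more computational route would be to trace skyscraper sheaves $k(x,\hat x) \in \perf(X \times \hat X)$ through the diagram defining $F_\ecal K$ and verify $f_2 = 0$ geometrically under birationality, but that approach involves substantially more bookkeeping with the Poincar\'e bundle and the addition map $\mu$.
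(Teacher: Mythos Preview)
Your contrapositive strategy is sound through the reduction step: the subgroup check for $H=\{f\in\sp(X\times\hat X)\mid f_2=0\}$ is correct, and the use of Weil's extension theorem plus rigidity to turn the local open immersion into an automorphism of abelian varieties is fine. The difficulty is exactly where you flag it. After reducing to $\Phi(k(x))\iso k(x)$ for all $x$ in an open $U$, you propose to show that $\Phi$ is a line bundle twist. That is essentially the content of Conjecture~\ref{abelian conjecture} (tt-separatedness of abelian varieties): it asks that every birational autoequivalence be an $\id$-stabilizer, and your stronger claim would decompose it into standard pieces. The paper only establishes this for \emph{simple} abelian varieties, via the cited result of Mart\'in--Prieto, and leaves the general case open. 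So the ``global extension of the kernel description'' you identify as the main obstacle is not a routine support argument; it is an open problem in the paper's own framework. Your alternative route of tracing skyscrapers through the diagram defining $F_{\ecal K}$ might in principle avoid this, but you do not carry it out.

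The paper's proof bypasses all of this by arguing directly rather than contrapositively. From Orlov's proof of surjectivity in Theorem~\ref{Orlov abelian}, every \emph{elementary} $f\in\sp(X\times\hat X)$ is realized as $\gamma(\Phi_{\ecal E})$ for a (simple semi-homogeneous) \emph{vector bundle} $\ecal E$ on $X\times X$. Hence if $\gamma(\Phi)$ is elementary, the exact sequence forces $\Phi=\Phi_{\ecal E}\circ t_x^*\circ(-\tens_{\ecal O_X}^{\bb L}\ecal L)[n]$ for some $(n,x,\ecal L)\in\bb Z[1]\times(X\times\hat X)_k$. The last three factors send skyscrapers to skyscrapers up to shift, and Corollary~\ref{flat kernel} (locally free kernel implies empty intersection of tt-spectra) finishes immediately. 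This is a two-line argument once Orlov's construction is invoked, and it does not touch the tt-separatedness question at all.
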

\begin{proof}
    Take an autoequivalence $\Phi \in \auteq \perf X$ with $\gamma(\Phi)$ elementary. By Theorem \ref{Orlov abelian}, there is a vector bundle $\ecal E$ on $X \times X$ such that $\Phi = \Phi_{\ecal E} \circ t_x^*\circ (-\tens_{\ecal O_X}^\bb L \ecal L)[n]$ for some $(n,x,\ecal L) \in \bb Z[1] \times (X \times \hat X)_k$. Thus, we are done by Corollary \ref{flat kernel} since the first three compositions send a skyscraper sheaf to a skyscraper sheaf (up to shifts). 
\end{proof}
If we moreover assume $X$ is a simple abelian variety, then we can say more. 
\begin{lemma}\label{simple}
    Let $X$ be a simple abelian variety. Then $X$ is tt-separated. 
\end{lemma}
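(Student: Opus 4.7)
The plan is to apply Corollary \ref{tt-seprated}, which reduces tt-separatedness of $X$ to showing that every $\Phi \in \auteq \perf X$ that is birational up to shifts satisfies $\spec_{\tens_{X,\Phi}}\perf X = \spec_{\tens_{X,\id_{\perf X}}}\perf X$ inside $\spec_\vartriangle \perf X$. By Lemma \ref{intersec ref}, this further reduces to showing that such a $\Phi$ sends every skyscraper sheaf $k(x)$ to some shifted skyscraper $k(y)[l]$.

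First, I would apply Orlov's classification (Theorem \ref{Orlov abelian}) and write $\gamma(\Phi) = \begin{pmatrix} f_1 & f_2 \\ f_3 & f_4 \end{pmatrix} \in \sp(X \times \hat X)$. Since $\Phi$ is birational up to shifts, Corollary \ref{isogeny} forces $\gamma(\Phi)$ to not be elementary; that is, $f_2:\hat X \to X$ is not an isogeny. Here the simplicity of $X$ enters: $\hat X$ is also simple, and any nonzero homomorphism between the two simple abelian varieties $\hat X$ and $X$ is an isogeny, so $f_2 = 0$. The remaining symplectic identities then give $f_1 \in \aut X$, $f_4 = (\hat f_1)^{-1}$, and a symmetry constraint $\hat f_1 \circ f_3 = \hat f_3 \circ f_1$ on $f_3$, equivalent to $\hat f_1 \circ f_3 : X \to \hat X$ being a symmetric homomorphism. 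By the Appell--Humbert theorem (equivalently Mumford's theory of $\phi_\ecal L$), every symmetric homomorphism $X \to \hat X$ is of the form $\phi_\ecal L$ for some line bundle $\ecal L$ on $X$.

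Next, I would use that $\gamma$ is a group homomorphism together with Example \ref{non-isogeny example}(i),(iv). A direct computation shows that, for $\ecal L$ chosen so that $\phi_\ecal L = \hat f_1 \circ f_3$, one has $\gamma\bigl((f_1^{-1})_* \circ (-\tens \ecal L)\bigr) = \gamma(\Phi)$, whence
\[
\Psi \; := \; \Phi \circ (-\tens \ecal L)^{-1} \circ ((f_1^{-1})_*)^{-1} \; \in \; \ker \gamma \; = \; \bb Z[1] \times (X \times \hat X)_k.
\]
By Theorem \ref{Orlov abelian}, $\Psi$ is thus a composition of a shift, a translation, and a tensoring by a line bundle in $\pic^0 X$. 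Each of the factors appearing in $\Phi = \Psi \circ (f_1^{-1})_* \circ (-\tens \ecal L)$---namely shifts, translations, tensorings by arbitrary line bundles, and pushforwards along automorphisms of $X$---visibly sends each skyscraper sheaf to a shifted skyscraper, and so therefore does $\Phi$.

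The main technical point likely to require care is verifying that the symplectic symmetry condition on $f_3$ corresponds exactly to the image of $\pic X \to \hom(X, \hat X)$, $\ecal L \mapsto \phi_\ecal L$, and carefully tracking Orlov's matrix conventions to confirm the explicit formula for $\phi_\ecal L$ above. Both are classical, but must be verified rather than asserted; once done, the remaining manipulations are direct consequences of Orlov's short exact sequence in Theorem \ref{Orlov abelian}.
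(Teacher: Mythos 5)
Your proposal is correct and follows the same overall skeleton as the paper (reduce via the tt-separatedness criterion to showing every birational-up-to-shift autoequivalence is an $\id_{\perf X}$-stabilizer, and use Corollary \ref{isogeny} to discard the elementary case), but it diverges at the key classification step. The paper simply cites Mart\'in--Prieto (\cite{LOPEZMARTIN201792}, Proposition 2.13) for the statement that a non-elementary $\gamma(\Phi)$ on a simple abelian variety forces $\Phi$ to be standard up to an element of $\bb Z[1]\times(X\times\hat X)_k$; you instead re-derive this from Orlov's sequence: simplicity of $X$ (hence of $\hat X$) gives $f_2=0$, the symplectic identities then give $f_1\in\aut X$, $f_4=(\hat f_1)^{-1}$ and the symmetry of $\hat f_1\circ f_3$, the classical surjectivity of $\ecal L\mapsto \phi_{\ecal L}$ onto symmetric homomorphisms $X\to\hat X$ (valid over an algebraically closed field of characteristic $0$) produces $\ecal L$, and Example \ref{non-isogeny example} plus $\ker\gamma=\bb Z[1]\times(X\times\hat X)_k$ finishes the factorization $\Phi=\Psi\circ(f_1^{-1})_*\circ(-\tens\ecal L)$. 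Your version is self-contained and makes transparent exactly where simplicity is used (only to force $f_2=0$), at the cost of the convention-checking you correctly flag; the paper's version is shorter by delegating that bookkeeping to the reference. One cosmetic refinement: having written $\Phi$ as a standard autoequivalence, it is cleanest to conclude that it is an $\id_{\perf X}$-stabilizer directly from Example \ref{standard autoeq} (standard autoequivalences lie in $\auteq(\perf X,\tens_{\ecal O_X}^\bb L)\ltimes\pic(\perf X,\tens_{\ecal O_X}^\bb L)$), since the tt-spectra also contain non-closed points and Lemma \ref{intersec ref} as stated only controls skyscrapers at closed points; this does not affect the validity of your argument, only the phrasing of its last step.
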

\begin{proof}
By Corollary \ref{new tt-separated}, it suffices to show $X_\tens(\id_{\perf X}) = \spec_{\tens_{X,\id_{\perf X}}} \perf X$, i.e., for any birational autoequivalence $\Phi \in \auteq \perf X$,
\[
\spec_{\tens_{X,\id_{\perf X}}} \perf X = \spec_{\tens_{X,\Phi}} \perf X. 
\]
Now, by Corollary \ref{isogeny}, if $\gamma(\Phi)$ is elementary, then $\Phi$ is not birational, so we can exclude. On the other hand, in the proof of \cite{LOPEZMARTIN201792}*{\href{https://arxiv.org/pdf/1702.00232.pdf}{Theorem 2.14}}, Mart\'in-Prieto showed that if $\gamma(\Phi)$ is not elementary, then we have  either $\Phi = \phi_*$ for some automorphism $\phi:X \to X$ or $\gamma(\Phi) = \begin{pmatrix}
    f_1 & 0 \\ f_3 & f_4
\end{pmatrix}$ with $f_3:X \to \hat X$ being an isogeny. In the former case, $\Phi = \phi_*$ is birational, but it satisfies $\spec_{\tens_{X,\id_{\perf X}}} \perf X = \spec_{\tens_{X,\Phi}} \perf X$ by Example \ref{standard autoeq}, so there is nothing to show. In the latter case, it suffices to see $\Phi$ is not birational (even up to shift). First, set 
\[
\hat \Phi := {\Phi_{\tilde {\ecal P}}} \circ  \Phi \circ {\Phi_{\tilde{\ecal P}}}\inv\in \auteq \perf \hat X,
\]
where $\tilde{\ecal P}$ denotes the Poincar\'e bundle on $\hat{\hat X} \times \hat X$ so that we have $\Phi_{\tilde {\ecal P}}: \perf \hat{\hat X} \simeq \perf \hat X$ and we fixed an identification $\hat{\hat X} \iso X$, which fixes an identification $\perf \hat{\hat X} \simeq \perf X$. By the same argument as in the proof of \cite{LOPEZMARTIN201792}*{\href{https://arxiv.org/pdf/1702.00232.pdf}{Theorem 2.14}}, we see that $$\gamma(\hat \Phi) = \begin{pmatrix}
    f_4 & - f_3 \\ 0 & f_1
\end{pmatrix} \in \sp(\hat X\times X) = \sp(\hat X \times \hat {\hat X}).$$ Thus, $\hat \Phi$ is elementary, so the corresponding Fourier-Mukai kernel $\ecal K$ is a vector bundle on $\hat X \times \hat X$. Now, noting $\Phi_{\tilde{\ecal P}}$ sends a skyscraper sheaf on $X = \hat{\hat{X}}$ to the corresponding line bundle of degree $0$ on $\hat X$, it suffices to see that $\hat \Phi$ does not send a line bundle of degree $0$ to a line bundle of degree $0$ (even up to shift). Therefore, the following lemma is sufficient.

\begin{enmlem}
    Let $X$ and $Y$ be smooth projective varieties with $\dim X \geq 1$ and let $\ecal E$ be a locally free $\ecal O_{X\times Y}$-module of finite rank. Then, for the canonical projection $p_Y:X\times Y \to Y$, $\bb R{p_Y}_* \ecal E$ is not isomorphic to any locally free $\ecal O_Y$-module of finite rank even up to shift. In particular, if we have a Fourier-Mukai equivalence $\Phi_\ecal E:\perf X \simeq \perf Y$ with $\ecal E$ locally free, then $\Phi_\ecal E$ cannot send a locally free $\ecal O_X$-module of finite rank to a locally free $\ecal O_Y$-module of finite rank (even up to shift). 
\end{enmlem}
\begin{inproof}
    The latter claim clearly follows from the first claim. For the first claim, assume $\bb R{p_Y}_*\ecal E[i]$ is isomorphic to a locally free $\ecal O_Y$-module $\ecal F$ for some $i \in \bb Z$. By comparing cohomology sheaves, we see $i = 0$. Now, take affine open subsets $U \subset X$ and $V\subset Y$ so that $\ecal E|_{U\times V}$ and $\ecal F|_V$ are free of rank $m$ and $n$, respectively. Consider the following pullback diagram:
    \begin{center}
        \DisableQuotes
        \begin{tikzcd}
U \times V \arrow[r, hook] \arrow[d, "p_V"'] & X \times Y \arrow[d, "p_Y"] \\
V \arrow[r, hook]                            & Y                          
\end{tikzcd}
    \end{center}
Then, by flat base change, we have 
\[
(\bb R {p_V}_*\ecal O_{U\times V})^{\oplus m} \iso \bb R {p_V}_*\ecal E|_{U\times V}\iso \ecal F|_V \iso {\ecal O_V}^{\oplus n}, 
\]
i.e., $(\bb R{p_V}_*\ecal O_{U\times V})^{\oplus m} \iso {\ecal O_V}^{\oplus n}$. Now, since ${p_V}$ is an affine morphism, we have
\[
({p_V}_* \ecal O_{U\times V})^{\oplus m} \iso \ecal O_V^{\oplus n}
\]
which is absurd since $\dim X \geq 1$ by supposition and hence $U$ is not finite over $\spec k$. 
\end{inproof}
\end{proof}
From Example \ref{non-isogeny example} and (the proof of) the case of simple abelian varieties, the author expects the following:
\begin{conjecture}\label{abelian conjecture}
    Any abelian variety is tt-separated. 
\end{conjecture}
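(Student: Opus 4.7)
The proof would follow the strategy of Lemma~\ref{simple}, refining its classification step. By Corollary~\ref{new tt-separated}, it suffices to show that every birational autoequivalence $\Phi \in \auteq \perf X$ is an $\id_{\perf X}$-stabilizer, and by Corollary~\ref{isogeny} one may assume $\gamma(\Phi) \in \sp(X \times \hat X)$ is non-elementary. Since every element of the standard subgroup $A(X) = \aut X \ltimes \pic X \times \bb Z[1] \subset \auteq \perf X$ is an $\id$-stabilizer by Example~\ref{standard autoeq}, the goal reduces to showing that every birational $\Phi$ is standard, i.e.\ lies in $A(X)$ up to natural isomorphism.

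\textbf{Main steps.} Any rational map from a smooth projective variety to an abelian variety is a morphism (a classical theorem of Weil), so the associated birational map $\phi_\Phi : X \ratmap X$ extends to an automorphism $\tilde\phi$ of the underlying scheme. Replacing $\Phi$ by $\tilde\phi^{-1}_* \circ \Phi$, one may assume $\phi_\Phi = \id_X$, and composing with a shift gives $\Phi(k(x)) \iso k(x)$ for every closed point $x$ in the domain of definition $U \subset X$ of $\Phi$. The heart of the proof is then to show that the Fourier--Mukai kernel $\ecal K \in \perf(X \times X)$ of such a $\Phi$ must be isomorphic to $\Delta_* \ecal L$ for some $\ecal L \in \pic X$, which would place $\Phi$ in $\pic(\perf X, \tens_{\ecal O_X}^\bb L)$ and hence among the $\id$-stabilizers. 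On $U \times X$, the hypothesis already forces $\ecal K|_{U \times X}$ to be the structure sheaf of the diagonal $\Delta \cap (U \times U)$ twisted by some line bundle on $U$; the remaining task is to extend this description globally. The group structure of $X$ provides a natural tool: for any $a \in X$, the conjugate $t_a^* \circ \Phi \circ t_{-a}^*$ is again a birational autoequivalence with trivial associated birational map and domain of definition $U - a$, so finitely many translates of $U$ cover $X$, and the local descriptions of the kernel should glue—using Theorem~\ref{Orlov abelian} and the description of the kernel of $\gamma$—to the desired global form $\Delta_* \ecal L$.

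\textbf{Main obstacle.} The globalization step will be the crux. In the simple case of Lemma~\ref{simple}, Mart\'in-Prieto's classification pins down every autoequivalence with non-elementary $\gamma$ as the pushforward of a scheme automorphism (modulo the kernel of $\gamma$), avoiding this analysis entirely. For non-simple $X$, say $X = A \times B$, there exist non-elementary autoequivalences such as $\Phi_A \times \id_B$ (with $\Phi_A$ the Poincar\'e-bundle Fourier--Mukai transform on $A$) that are neither standard nor birational, so the birationality hypothesis must intervene in an essential way. A complete proof will likely require a detailed analysis of Orlov's construction of $\auteq \perf X$ from simple semi-homogeneous vector bundles \cite{Orlov_2002}, combined with the strong support constraint that $\ecal K$ is set-supported on the diagonal over $U \times X$, to rule out every birational $\Phi$ lying outside $A(X)$.
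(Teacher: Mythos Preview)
This statement is a \emph{conjecture} in the paper, not a theorem: the paper provides no proof. The author explicitly presents it as an expectation based on Example~\ref{non-isogeny example} and the simple case (Lemma~\ref{simple}), and immediately follows it with a remark noting that even the surjectivity argument in Theorem~\ref{Orlov abelian} may have a gap which ``we may need to think more carefully about'' for the conjecture. So there is no paper proof to compare your proposal against.

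Your proposal is itself not a proof but a strategy, and you correctly identify the crux. A few comments on the strategy itself. The reduction via Weil's extension theorem is sound: any birational self-map of an abelian variety extends to an automorphism, so after composing with a standard autoequivalence you may assume $\phi_\Phi = \id_X$ on its domain of definition $U$. The real issue is exactly where you locate it: over $U$ the kernel is $(\Delta_U)_*\ecal L_U$ for some line bundle, but pushing this to a global statement $\ecal K \iso \Delta_*\ecal L$ is not automatic. Your translate-and-glue idea is appealing, but note that conjugating by $t_a$ changes the kernel by a translate on $X\times X$, and there is no a priori reason the local line bundles $\ecal L_{U-a}$ agree on overlaps, nor that the complex $\ecal K$ has no ``hidden'' components supported on the complement of $U$ in the first factor. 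In the non-simple case your example $\Phi_A \times \id_B$ shows precisely that non-elementary $\gamma(\Phi)$ does not force $\Phi$ to be standard, so the birationality hypothesis must be used more sharply than just to produce a nonempty $U$. In short, your outline is a plausible line of attack and is consistent with the paper's heuristic evidence, but the globalization step remains a genuine open problem, which is why the paper records the statement as a conjecture rather than a theorem.
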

\begin{remark}
   The conjecture is solved affirmatively in \cite{ito2024new}. 
\end{remark}
\subsection{Spherical twists and surfaces}
Next, let us make some observations on spherical twists and the case of surfaces. 
First, let us recall the following notions.
\begin{definition}
    Let $X$ be a smooth projective variety. An object $\ecal F \in \perf X$ is said to be \textbf{spherical} if \begin{enumerate}
        \item $\ecal F \iso \ecal F \tens \omega_X$;
        \item $\hom(\ecal F,\ecal F[i]) = \begin{cases}
            k & \text{if $i = 0,\dim X$} \\ 0 & \text{otherwise}.
        \end{cases}$
    \end{enumerate}
    Note if $\ecal F$ is spherical, then $\ecal F^\vee$, $\ecal F[i]$ for any $i \in \bb Z$ and $\ecal F \tens \ecal L$ for any line bundle $\ecal L$ are spherical. 
\end{definition}
\begin{construction}
Let $X$ be a smooth projective variety and take an object $\ecal E \in \perf X$. Define $\ecal P_\ecal E \in \perf X\times X$ to be
\[
\ecal P_\ecal E := \operatorname{cone} (p_1^*\ecal E^\vee \tens p_2^* \ecal E \to \ecal O_\Delta),
\]
where $p_i:X\times X \to X$ denotes the $i$-th projection, $\ecal O_\Delta$ denotes the structure sheaf on the (image of) diagonal $\Delta\subset X\times X$, and the morphism is given by the composition
\[
p_1^*\ecal E^\vee \tens p_2^* \ecal E \to \Delta_*\Delta^* (p_1^*\ecal E^\vee \tens p_2^* \ecal E) = \Delta_*(\ecal E^\vee \tens \ecal E) \to \Delta_*\ecal O_X = \ecal O_{\Delta}.  
\]
When $\ecal E$ is spherical, the associated Fourier-Mukai transform 
\[
T_\ecal E:= \Phi_{\ecal P_\ecal E}:\perf X \to \perf X
\]
is called the \textbf{spherical twist} associated to $\ecal E$. It is a standard result (e.g. \cite{HuyBook}*{Proposition 8.6}) that any spherical twist is an autoequivalence. 
\end{construction}

Recall the following result:
\begin{lemma}\cite{Uehara2017ATF}*{\href{https://arxiv.org/pdf/1704.00292.pdf}{Example 4.2}}\label{sph}
    Let $X$ be a smooth projective variety and let $\ecal E \in \perf X$ be a spherical object. If $x \in \supp \ecal E$, then $\supp T_\ecal E (k(x)) = \supp \ecal E$. If $x \in  X \setminus \supp \ecal E$, then $T_\ecal E(k(x)) = k(x)$. 
\end{lemma}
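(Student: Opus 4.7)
The plan is to derive and exploit the canonical distinguished triangle
$$\bb R\hom_{\ecal O_X}(\ecal E,k(x)) \tens_k \ecal E \longrightarrow k(x) \longrightarrow T_\ecal E(k(x)) \longrightarrow [1],$$
obtained by applying $\Phi_{(-)}$ to the defining triangle $p_1^*\ecal E^\vee \tens p_2^*\ecal E \to \ecal O_\Delta \to \ecal P_\ecal E$ of the kernel and evaluating at $k(x)$. The left vertex simplifies via the projection formula to $Rp_{2*}(p_1^*(\ecal E^\vee \tens k(x))) \tens \ecal E \iso \bb R\hom(\ecal E,k(x)) \tens \ecal E$, and the middle vertex is $k(x)$ because $\Phi_{\ecal O_\Delta} = \id$.

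If $x \notin \supp \ecal E$, then $\ecal E_x = 0$, so $\bb R\hom(\ecal E,k(x))$ — which can be computed over the local ring $\ecal O_{X,x}$ — vanishes. The triangle then collapses to $T_\ecal E(k(x)) \iso k(x)$, giving the second assertion. Suppose instead $x \in \supp \ecal E$. I would check support stalkwise. For $y \notin \supp \ecal E$, both outer vertices vanish at $y$, so $\supp T_\ecal E(k(x)) \subseteq \supp \ecal E$. For $y \in \supp \ecal E$ with $y \neq x$, the middle term $k(x)_y$ vanishes, so $T_\ecal E(k(x))_y \iso (\bb R\hom(\ecal E,k(x)) \tens \ecal E)_y[1]$; this is nonzero because $\ecal E_y \neq 0$ and $\hom(\ecal E,k(x)) \neq 0$ (the projection onto the residue-field quotient of $\ecal E_x$ contributes a nonzero class). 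Hence $\supp T_\ecal E(k(x)) \supseteq \supp \ecal E \setminus \{x\}$.

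The only remaining — and to my mind most delicate — step is to verify that $x$ itself lies in $\supp T_\ecal E(k(x))$, since the stalkwise calculation above only sees $\supp \ecal E \setminus \{x\}$. Since $\supp T_\ecal E(k(x))$ is closed in $X$, it contains the closure of $\supp \ecal E \setminus \{x\}$. If $x$ is not isolated in $\supp \ecal E$, this closure is all of $\supp \ecal E$ and we are done. If $x$ is isolated, then $\{x\}$ is a connected component of $\supp \ecal E$; but the sphericality condition $\hom(\ecal E,\ecal E) = k$ forces $\ecal E$ to be indecomposable (since any disconnection of the support would yield a splitting $\ecal E \iso \ecal E_1 \oplus \ecal E_2$ and hence $\dim \hom(\ecal E,\ecal E) \geq 2$), so $\supp \ecal E$ is connected and thus equal to $\{x\}$. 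Combined with $T_\ecal E(k(x)) \neq 0$, which holds because $T_\ecal E$ is an autoequivalence, this yields $\supp T_\ecal E(k(x)) = \{x\} = \supp \ecal E$, completing the verification.
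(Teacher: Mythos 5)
Your argument is correct. Note that the paper offers no proof of this lemma at all — it is quoted with a citation to Uehara's Example 4.2 — so there is nothing internal to compare against; your route via the evaluation triangle $\bb R\hom(\ecal E,k(x))\tens_k\ecal E \to k(x)\to T_{\ecal E}(k(x))$ is the standard one and is essentially the computation behind the cited example. The case analysis you add at the point $x$ itself (closure of $\supp\ecal E\setminus\{x\}$ when $x$ is non-isolated, and connectedness of $\supp\ecal E$ via $\hom(\ecal E,\ecal E)=k$ plus $T_{\ecal E}(k(x))\neq 0$ when $\supp\ecal E=\{x\}$, as happens for $\ecal E=k(x)$ on an elliptic curve) is exactly the care the statement needs, and it is sound. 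Two small points to tighten, neither a gap: in the stalk computation at $y\in\supp\ecal E\setminus\{x\}$ what you actually need is that the whole complex $\bb R\hom(\ecal E,k(x))$ is nonzero, not that it is nonzero in degree $0$ — your parenthetical about "the residue-field quotient of $\ecal E_x$" does not quite make sense for a complex and the degree-$0$ Hom can vanish (e.g.\ for a shifted sheaf); the correct justification is derived Nakayama (or minimal free resolutions) over $\ecal O_{X,x}$, after which K\"unneth over $k$ gives $(\bb R\hom(\ecal E,k(x))\tens_k\ecal E)_y\neq 0$. Second, the step "disconnected support yields a direct sum decomposition" is true but deserves a one-line justification or reference (decomposing $\ecal E$ along the two disjoint closed pieces of its support produces orthogonal idempotents in $\End(\ecal E)$, contradicting $\hom(\ecal E,\ecal E)=k$).
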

Now, we get the following useful corollary:
\begin{corollary} \label{birational spherical}
    Let $X$ be a smooth projective variety of dimension $\geq1$ and let $\ecal E \in \perf X$ be a spherical object. Then, $T_\ecal E$ is birational if and only if $\supp \ecal E \neq X$. If $\supp \ecal E$ is moreover not a singleton, then the associated birational map $X \ratmap X$ is given by the identity with maximal domain of definition (of $T_\ecal E$) given by $X \setminus \supp \ecal E$; in particular, $\spec_{\tens_{\ecal O_X}^\bb L}\perf X \cap T_{\ecal E}(\spec_{\tens_{\ecal O_X}^\bb L}\perf X) \iso X \setminus \supp \ecal E$. 
\end{corollary}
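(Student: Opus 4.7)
The plan is to deduce everything from the preceding support lemma on $T_\ecal E$, combined with the birationality criterion of Corollary \ref{improved h} and the tt-spectrum description in Lemma \ref{main intersection}.

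First, I would prove the biconditional. By Corollary \ref{improved h} (applied to a suitable shift), $T_\ecal E$ is birational up to shift if and only if there exist closed points $x_0, y_0 \in X$ and an integer $l$ with $T_\ecal E(k(x_0)) \iso k(y_0)[l]$. If $\supp \ecal E \neq X$, pick any closed point $x_0 \notin \supp \ecal E$; the preceding lemma gives $T_\ecal E(k(x_0)) = k(x_0)$, so $T_\ecal E$ is already birational. Conversely, if $\supp \ecal E = X$, every closed point $x \in X$ lies in $\supp \ecal E$, so $\supp T_\ecal E(k(x)) = X$; since $\dim X \geq 1$ and $k$ is algebraically closed, $X$ has more than one closed point, so $T_\ecal E(k(x))$ cannot agree with any shift of a skyscraper sheaf.

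Second, assuming $\supp \ecal E$ is a proper closed subset of $X$ that is not a singleton, I would identify the domain of definition of $T_\ecal E$ using Construction \ref{canonical open}: a closed point $x$ lies in the domain precisely when $T_\ecal E(k(x)) \iso k(y)$ for some $y$, and then the associated birational map sends $x$ to $y$. The preceding lemma splits $X$ into two cases. If $x \notin \supp \ecal E$, then $T_\ecal E(k(x)) = k(x)$, so $x$ lies in the domain and is fixed by the birational map. If $x \in \supp \ecal E$, then $\supp T_\ecal E(k(x)) = \supp \ecal E$ has more than one point by hypothesis, so $T_\ecal E(k(x))$ is not isomorphic to any skyscraper sheaf, and $x$ is not in the domain. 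This identifies the domain of definition with $X \setminus \supp \ecal E$ and shows the birational map acts as the identity on it.

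For the final claim on the intersection of tt-spectra, I would invoke Lemma \ref{main intersection}(ii) with $\cal T = \perf X$, $X_1 = X_2 = X$, $\eta_1 = T_\ecal E$, and $\eta_2 = \id_{\perf X}$, so that $\eta_2\inv \circ \eta_1 = T_\ecal E$. The lemma then canonically identifies $U := T_\ecal E(\spec_{\tens_{\ecal O_X}^\bb L}\perf X) \cap \spec_{\tens_{\ecal O_X}^\bb L}\perf X$, pulled back along $\eta_1$ to a subscheme $\eta_1\inv(U) \subset \spec_{\tens_{\ecal O_X}^\bb L}\perf X \iso X$, with the domain of definition of $T_\ecal E$, which is $X \setminus \supp \ecal E$ by the previous step. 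The only real subtlety I anticipate is a bookkeeping one: one must carefully track the direction of the equivalence $\eta_2\inv \circ \eta_1$ in Lemma \ref{main intersection} when relating intersections of tt-spectra to domains of definition. Apart from that, the argument is a direct combination of the support lemma with the standard birationality criterion.
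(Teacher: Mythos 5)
Your proposal is correct and follows essentially the same route as the paper's (much terser) proof: combine the support lemma for $T_{\ecal E}$ with Corollary \ref{improved h} for the birationality criterion, use Construction \ref{canonical open} to see the second hypothesis forces the domain of definition to be exactly $X \setminus \supp \ecal E$, and conclude the tt-spectrum statement via Lemma \ref{main intersection}(ii). Your version simply fills in the details the paper leaves implicit, so there is nothing to correct.
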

\begin{proof}
    The first part follows from Lemma \ref{sph}. In particular, in such a case, the domain of definition of $T_\ecal E$ contains $X\setminus \supp \ecal E$. If $\supp \ecal E$ is moreover not a singleton, then for any $x \in \supp \ecal E$, $\supp T_\ecal E (k(x))$ is not a singleton (i.e., $x$ is not in the domain of definition of $T_\ecal E$ by Lemma \ref{intersec ref}), which shows the second part. 
\end{proof}
In particular, we can see the following by Corollary \ref{tt-seprated}:
\begin{corollary}\label{spherical not tt}
    Let $X$ be a smooth projective variety (of dimension $\geq 1$). If there exists a spherical object $\ecal E \in \perf X$ with $\supp \ecal E$ neither being $X$ nor being a singleton, then $X$ is not tt-separated. 
\end{corollary}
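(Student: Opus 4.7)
The plan is to apply Corollary \ref{tt-seprated}, which characterizes tt-separatedness by the requirement that whenever two tt-spectra $\spec_{\tens_{X,\eta_1}}\perf X$ and $\spec_{\tens_{X,\eta_2}}\perf X$ meet inside $\spec_\vartriangle \perf X$, they must actually coincide. To show $X$ fails this, I need to exhibit two autoequivalences whose associated tt-spectra overlap but are not equal, and the spherical twist $T_\ecal E$ is the natural candidate.

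Concretely, I would take $\eta_1 = \id_{\perf X}$ and $\eta_2 = T_\ecal E$. Since $\supp \ecal E \neq X$, Corollary \ref{birational spherical} says $T_\ecal E$ is birational up to shift, and since $\supp \ecal E$ is not a singleton, the same corollary identifies the overlap explicitly:
\[
\spec_{\tens_{X,\id_{\perf X}}}\perf X \cap \spec_{\tens_{X,T_\ecal E}}\perf X \iso X \setminus \supp \ecal E.
\]
Thus the intersection is nonempty (because $\supp \ecal E \subsetneq X$), so the hypothesis of condition (ii) in Corollary \ref{tt-seprated} applies.

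The next step is to argue that the intersection is nevertheless a \emph{proper} open subscheme of $\spec_{\tens_{X,\id_{\perf X}}}\perf X \iso X$, which will force the two tt-spectra to be distinct. This reduces to showing $\supp \ecal E \neq \emp$, which is immediate from the spherical axioms: $\hom(\ecal E, \ecal E) = k$ implies $\ecal E \not\iso 0$, hence $\supp \ecal E$ is nonempty. Consequently $X \setminus \supp \ecal E \subsetneq X$, so $\spec_{\tens_{X,\id_{\perf X}}}\perf X \neq \spec_{\tens_{X,T_\ecal E}}\perf X$ as subspaces of $\spec_\vartriangle \perf X$.

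Combining these observations, condition (ii) of Corollary \ref{tt-seprated} fails, so $X$ is not tt-separated. I do not expect any real obstacle: the proof is essentially an assembly of the preceding two corollaries, with the only subtle point being the non-vanishing of $\supp \ecal E$, which is forced by the definition of a spherical object. The assumption $\dim X \geq 1$ plays no role beyond ensuring the setting is nontrivial, since for $\dim X = 0$ no spherical object can have support that is neither $X$ nor a singleton.
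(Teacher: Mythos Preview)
Your proof is correct and follows exactly the paper's approach: the paper simply says this is a consequence of Corollary \ref{tt-seprated} (implicitly combined with the immediately preceding Corollary \ref{birational spherical}), and you have spelled out precisely that combination, including the minor but necessary observation that $\supp \ecal E \neq \emp$ since a spherical object is nonzero.
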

\begin{remark}
    This is another evidence for Conjecture \ref{abelian conjecture} since an abelian variety of dimension at least $2$ has no spherical object (e.g. \cite{HuyBook}*{Example 8.10(iv)}). 
\end{remark}
 \begin{example}\label{elliptic twist} 
    One of the easiest examples of spherical twists arises for elliptic curves. Let $X$ be an elliptic curve. Recall as a variant of \cite{Orlov_2002}, we have the following short exact sequence:
    \[
    1 \to \aut X \ltimes \pic^0 X \times \bb Z[2] \to \auteq \perf X \overset{\theta}{\to} \sl(2,\bb Z) \to 1, 
    \]
    where $\theta$ is given by the action on the image of the charge map $Z:K(X) \surj \bb Z^2,\ecal F \mapsto (\rank \ecal F,\chi (\ecal F))$. Since the author could not find a proof of this fact, let us include it for completeness. The surjectivity of $\theta$ follows by noting that the images of the spherical twists $T_{\ecal O_{X}}$ and $T_{k(x)}$ for a closed point $x \in X$ generate $\sl(2,\bb Z)$ (cf. \cite{SeiTho01}). On the other hand, to show $\aut X \ltimes \pic^0 X \times \bb Z[2] = \ker \theta$,  it suffices to show any $\Phi \in \ker \theta$ is a standard autoequivalence since then by the Riemann-Roch theorem, the line bundle part needs to have degree $0$ and the shift part needs to have even degree. Indeed, since any object in $\perf X$ is a direct sum of shifts of coherent sheaves, any $\Phi \in \ker \theta$ sends any indecomposable skyscraper sheaf to an indecomposable skyscraper sheaf (up to shift) as desired. 

    Now, noting that the images of $\ecal O_X$ and $k(x)$ under $\theta$ generate $\sl(2,\bb Z)$, we see that any autoequivalence can be written as compositions of spherical twists and standard equivalences. Moreover, it is easy to see that $T_{k(x)} \simeq \ecal O_X(x) \tens - $ (e.g. \cite{HuyBook}*{Example 8.10 (i)}) and hence is indeed a standard equivalence. This illustrates that for the latter claim in Corollary \ref{birational spherical} we really need to exclude the case when the support of a spherical object is a singleton since in this example the birational map associated to $T_{k(x)}$ is the identity on the whole $X$, not just on $X \setminus \{x\}$. Note, on the other hand, that $T_{\ecal O_X}$ is not birational by Corollary \ref{birational spherical}. 

    Now, let us make some observations on $X_\tens$. Recall we have the following relation (called a \textbf{Braid group relation}) (\cite{SeiTho01}):
    \[
    T_{\ecal O_X} T_{k(x)} T_{\ecal O_X} = T_{k(x)} T_{\ecal O_X} T_{k(x)}.
    \]
   Noting that $T_{k(x)}$ is a standard autoequivalence, this relation says
   \[
   \spec_{\tens_{X,T_{\ecal O_X} T_{k(x)} T_{\ecal O_X}}}\perf X = \spec_{\tens_{X, T_{k(x)} T_{\ecal O_X}}}\perf X \subset X_\tens.
   \]
   In other words, $T_{\ecal O_X}$ is a $T_{k(x)} T_{\ecal O_X}$-stabilizer, but not an $\id_{\perf X}$-stabilizer. Also, see Construction \ref{elliptic symmetric} for a thorough description of $X_\tens$. 
\end{example}

Spherical twists give the following key example for Fourier-Mukai loci corresponding to surfaces. 
\begin{example}\label{-2 curve}
    (\cite{HuyBook}*{Example 8.10(iii)}) Let $X$ be a smooth projective surface and $C \subset X$ a \textbf{$(-2)$-curve}, i.e., a smooth (irreducible) rational curve with $C^2 = -2$. Then $\ecal O_C$ is a spherical object. More generally, the pull-back $\ecal O_C(k)$ of $\ecal O_{\bb P^1}(k)$ under an isomorphism $C \iso \bb P^1$ is spherical. Note any smooth irreducible rational curve in a $K3$ surface is a $(-2)$-curve.  
\end{example}
By Corollary \ref{birational spherical}, we get the following:
\begin{corollary}\label{-2 twist}
    Let $X$ be a smooth projective surface. Suppose $X$ admits a $(-2)$-curve $C$. Then,
    \[
    \spec_{\tens_{X,\id_{\perf X}}} \perf X \cap \spec_{\tens_{X,T_{\ecal O_C}}} \perf X \iso X\setminus C. 
    \]
    In particular, $X$ is not tt-separated. 
\end{corollary}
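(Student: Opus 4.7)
The plan is to deduce the corollary directly from the spherical-twist machinery already set up, namely Example \ref{-2 curve}, Corollary \ref{birational spherical}, and Corollary \ref{spherical not tt}. The point is that the hypothesis of a $(-2)$-curve produces a spherical object whose support is simultaneously a proper subset of $X$ and not a single point, which is exactly the regime where Corollary \ref{birational spherical} computes the intersection of the two tt-spectra explicitly.

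First, I would invoke Example \ref{-2 curve} to note that $\ecal O_C \in \perf X$ is a spherical object, so the spherical twist $T_{\ecal O_C} \in \auteq \perf X$ is well-defined. Next, I would verify the hypotheses of Corollary \ref{birational spherical}: the support $\supp \ecal O_C = C$ is a proper closed subset of $X$ because $\dim C = 1 < 2 = \dim X$, and $C$ is not a singleton because it is a curve. With these two conditions in place, Corollary \ref{birational spherical} gives that $T_{\ecal O_C}$ is birational up to shift and that the associated birational map is the identity on $X \setminus C$, whence
\[
\spec_{\tens_{X,\id_{\perf X}}} \perf X \cap \spec_{\tens_{X,T_{\ecal O_C}}} \perf X \iso X \setminus C.
\]

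For the tt-separatedness claim, since $C$ is a nonempty proper closed subset of $X$ (a $(-2)$-curve is in particular a curve, hence nonempty), the above intersection is a proper open subset of $\spec_{\tens_{X,\id_{\perf X}}} \perf X \iso X$, and so the two tt-spectra $\spec_{\tens_{X,\id_{\perf X}}} \perf X$ and $\spec_{\tens_{X,T_{\ecal O_C}}} \perf X$ have nonempty intersection but are not equal. By Corollary \ref{tt-seprated} (equivalently Corollary \ref{new tt-separated}), this is precisely the obstruction to tt-separatedness, so $X$ is not tt-separated. Alternatively, one may just cite Corollary \ref{spherical not tt} directly, since $\ecal O_C$ is a spherical object whose support is neither all of $X$ nor a singleton.

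There is essentially no obstacle here: the work has been done in Corollary \ref{birational spherical} and Corollary \ref{spherical not tt}, and the only thing to check is that $\ecal O_C$ satisfies the two support hypotheses, both of which are immediate from $C$ being a curve in a surface. The corollary therefore reads as a clean specialization of the general spherical-twist picture to the concrete geometric source of spherical objects on surfaces.
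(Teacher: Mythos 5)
Your proposal is correct and matches the paper's argument: the corollary is stated as an immediate consequence of Corollary \ref{birational spherical} applied to the spherical object $\ecal O_C$ from Example \ref{-2 curve}, whose support $C$ is neither all of $X$ nor a singleton, with the failure of tt-separatedness then following exactly as you say (via Corollary \ref{tt-seprated}/\ref{spherical not tt}). There is nothing to add.
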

\begin{remark}
    Note that by blowing up a surface twice and looking at the strict transform of the exceptional curve of the first blow-up, we can always get a surface with a $(-2)$-curve. Therefore, tt-separatedness is not birational invariance. 
\end{remark}
In the rest of this subsection, let us consider specific cases of $K3$ surfaces although we will indeed see the following claim holds for any surface in the next subsection (Corollary \ref{surface FM disjoint}). 
\begin{lemma}\label{K3 disjoint}
     Let $\cal T$ be a triangulated category with a $K3$ surface $X \in \FM \cal T$. Then, $\cal T$ is of disjoint Fourier-Mukai type, i.e, \[
        \spec^\fm \cal T = \bigsqcup_{X \in \FM \cal T} \spec_{\tens,X} \cal T.\qedhere
        \]
\end{lemma}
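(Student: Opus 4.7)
The plan is to reduce to Corollary \ref{disjoint} by verifying that $\cal T$ is of disjoint Fourier-Mukai type, i.e., that any two birational Fourier-Mukai partners of $\cal T$ are isomorphic. Since $X$ is a $K3$ surface, the strategy has two ingredients drawn from classical results on $K3$ surfaces.

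First, I would show that every $Y \in \FM \cal T$ is itself a $K3$ surface. This is the well-known theorem (due to Mukai and refined by Bridgeland--Maciocia) that the class of $K3$ surfaces is closed under derived equivalence: indeed, the Serre functor of $\perf X$ is the shift $[2]$ (since $\omega_X \iso \ecal O_X$ and $\dim X = 2$), and this triangulated invariant, together with the Hochschild cohomology, forces any smooth projective Fourier-Mukai partner $Y$ to be a smooth projective surface with $\omega_Y \iso \ecal O_Y$ and the same Hodge numbers as $X$, hence a $K3$ surface.

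Second, I would invoke the fact that birationally equivalent $K3$ surfaces are isomorphic. This follows from minimal model theory for surfaces: any smooth projective surface with numerically trivial canonical class is a minimal model, and two birational smooth projective minimal surfaces are isomorphic (in fact, a birational map between $K3$ surfaces extends to an isomorphism, since it cannot contract any curves without producing singularities or altering the canonical class). Thus, if $X, Y \in \FM \cal T$ are birational Fourier-Mukai partners --- in particular birationally equivalent smooth projective varieties by Definition \ref{birational FM partners} --- then already $X \iso Y$.

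Combining these two steps: if $X_1, X_2 \in \FM\cal T$ satisfy $\spec_{\tens_{X_1}}\cal T \cap \spec_{\tens_{X_2}}\cal T \neq \emp$, then by Lemma \ref{main intersection}(i) they are birational Fourier-Mukai partners; by step one both are $K3$ surfaces, and by step two they must be isomorphic. Hence $\cal T$ is of disjoint Fourier-Mukai type, and the stated decomposition is immediate from Corollary \ref{disjoint}. I expect no genuine obstacle here --- the content of the lemma is simply the assembly of two classical facts about $K3$ surfaces into the language of Fourier-Mukai loci; the only subtlety is ensuring that ``birational Fourier-Mukai partners'' (which is defined via the existence of a kernel restricting to the graph of an open immersion) is at least as strong as ``birationally equivalent,'' which is built into Definition \ref{birational FM partners}.
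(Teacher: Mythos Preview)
Your proposal is correct and follows essentially the same approach as the paper: the paper's proof simply cites that any Fourier-Mukai partner of a $K3$ surface is again a $K3$ surface (e.g.\ \cite{HuyBook}*{Corollary 10.2}) and that birationally equivalent $K3$ surfaces are isomorphic since they are minimal models, which is exactly your two-step argument feeding into Corollary~\ref{disjoint}.
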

\begin{proof}
    It is well-known that a Fourier-Mukai partner is also a $K3$ surface (e.g. \cite{HuyBook}*{Corollary 10.2}). Moreover, note any birationally equivalent $K3$ surfaces are isomorphic since they are minimal models.
\end{proof}
In particular, this gives a different proof and generalization of \cite{HO22}*{\href{https://arxiv.org/pdf/2112.13486v3.pdf}{Example 3.5}} when a $K3$ surface $X$ is not isomorphic to the moduli space $M_H(v)$ of $H$-semistable sheaves for an ample divisor $H$ with a Mukai vector $v = (r,c,d) \in \h^*(X,\bb Z)$ such that $c \in \operatorname{NS} (X)$, $r\geq 0$, and $\gcd(r,cH,d) = 1$ (cf. \cite{HuyBook}*{Proposition 10.20}). 
\begin{example}\label{K3}
    In contrast with the case of (simple) abelian varieties, a $K3$ surface $X$ is not tt-separated in general. For example, if we have the Picard number $\rho(X) \geq 12$, then $X$ admits a $(-2)$-curve (\cite{huybrechts_2016}*{\href{https://www.math.uni-bonn.de/people/huybrech/K3Global.pdf}{Corollary 14.3.8}}).  Moreover, if $X$ contains a $(-2)$-curve and $\aut X$ is infinite, then there are infinitely many $(-2)$-curves in $X$ (\cite{huybrechts_2016}*{\href{https://www.math.uni-bonn.de/people/huybrech/K3Global.pdf}{Corollary 8.4.7}}) and hence $X$ is far from being separated. On the other hand, there are $K3$ surfaces with no $(-2)$-curves (while having the infinite automorphism group) (cf. \cite{huybrechts_2016}*{\href{https://www.math.uni-bonn.de/people/huybrech/K3Global.pdf}{15.2.5}}).

    Note that when $\rho(X) \geq 12$, we have $\FM \perf X =\{X\}$ (\cite{huybrechts_2016}*{\href{https://www.math.uni-bonn.de/people/huybrech/K3Global.pdf}{Corollary 16. 3.8}}), so although such a $K3$ surface has a $(-2)$-curve, the whole Fourier-Mukai locus can be simpler than other $K3$ surfaces with multiple Fourier-Mukai partners. 
\end{example}
\subsection{Toric varieties}
Let us briefly look at cases of toric varieties. First, let us recall the following result:
\begin{theorem}[{\cite{Kawtoric}*{\href{https://projecteuclid.org/journals/michigan-mathematical-journal/volume-62/issue-2/Derived-categories-of-toric-varieties-II/10.1307/mmj/1370870376.full}{Theorem 5}}}] \label{kawamata toric} Let $X$ be a smooth projective toric variety and $Y$ a smooth projective variety. Suppose there is a triangulated equivalence $\Phi:\perf X \simeq \perf Y$. Then, $Y$ is also a toric variety and $\Phi$ is birational (after shifting) with the associated birational map being a toric map. Moreover, there exist only finitely many such birational maps when $X$ is fixed and $Y$ is varied.    
\end{theorem}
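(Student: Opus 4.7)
The plan is to transfer the torus action on $X$ to $\perf Y$ via $\Phi$, force $Y$ to inherit a toric structure, and then deduce that $\Phi$ is birational with a toric associated map. Let $T$ denote the dense algebraic torus of $X$, with its action on $X$ by automorphisms. For each $t \in T$, pushforward $t_* \in \auteq \perf X$ is a standard autoequivalence, and conjugation by $\Phi$ yields $\Phi_t := \Phi \circ t_* \circ \Phi^{-1} \in \auteq \perf Y$. The assignment $t \mapsto \Phi_t$ is a group homomorphism, and because $T$ is a connected algebraic group acting on $\perf X$ by autoequivalences, the resulting family $\{\Phi_t\}_{t \in T}$ varies algebraically.

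The crucial step is showing that each $\Phi_t$ is a standard autoequivalence of $\perf Y$, i.e., lies in $A(Y) = \aut Y \ltimes \pic Y \times \bb Z[1]$ of Example \ref{standard autoeq}. Since $t_*$ sends skyscrapers to skyscrapers, Corollary \ref{improved h} applied to $\Phi_t : \perf Y \to \perf Y$ (relative to $\id_{\perf Y}$) furnishes an open immersion defined on an open subset of $Y$. Combined with a rigidity argument that an algebraic family of autoequivalences of $\perf Y$ passing through the identity at $t = e$ cannot escape the standard subgroup, we extract a group homomorphism $T \to \aut Y \ltimes \pic Y$ after quotienting by shifts, and hence a morphism of algebraic groups $T \to \aut Y$ after further quotienting by $\pic Y$.

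The induced $T$-action on $Y$ then has an open dense orbit: by Lemma \ref{main intersection}(ii) the birational map $\phi: X \ratmap Y$ associated to $\Phi$ identifies open subsets of $X$ and $Y$ that are $T$-invariant on both sides, and the open dense orbit $T \subset X$ is carried onto a dense $T$-orbit of dimension $\dim Y$ in $Y$. Hence $Y$ is a smooth projective toric variety with torus $T$, and $\phi$ is $T$-equivariant on the dense tori, i.e., a toric birational map. For finiteness, recall that the set of smooth projective toric $Y$ with $\perf Y \simeq \perf X$ is finite (as is known for toric Fourier-Mukai partners), and for each such $Y$ a toric birational map $X \ratmap Y$ is determined by the induced isomorphism of dense tori together with the discrete choice of how the fans compare, yielding finitely many such maps in total.

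The main obstacle is the rigidity step: ensuring the family $\{\Phi_t\}_{t \in T}$ lies inside the standard subgroup of $\auteq \perf Y$. In general, $\auteq \perf Y$ may contain exotic elements such as spherical twists, and ruling these out for $\Phi_t$ requires input beyond the pointwise analysis of Corollary \ref{improved h}---either a deformation-theoretic argument showing that isolated exotic autoequivalences cannot sit in a connected algebraic family passing through the identity, or direct structural control over the Fourier-Mukai kernel of $\Phi$ coming from the toric geometry of $X$ (for instance, via a full exceptional collection of $\perf X$ built from equivariant line bundles, whose images under $\Phi$ can be analyzed). This is the technical heart of Kawamata's proof.
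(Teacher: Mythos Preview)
The paper does not supply its own proof of this theorem; it is quoted from Kawamata's article and used as a black box. So the comparison is against Kawamata's original argument, not anything in the present paper.

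Your proposal has a genuine circularity at its core. You invoke Corollary~\ref{improved h} for $\Phi_t = \Phi\circ t_*\circ\Phi^{-1}$ on the grounds that $t_*$ sends skyscrapers to skyscrapers. But to apply that corollary you need $\Phi_t(k(y_0)) \cong k(y_0')$ for some closed points $y_0,y_0'\in Y$, which unwinds to $\Phi\bigl(t_*(\Phi^{-1}(k(y_0)))\bigr)\cong k(y_0')$. Unless you already know $\Phi^{-1}(k(y_0))$ is a skyscraper on $X$---that is, unless $\Phi$ is already birational---there is no reason $t_*$ applied to $\Phi^{-1}(k(y_0))$ lands anywhere useful. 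The same circularity reappears when you write ``by Lemma~\ref{main intersection}(ii) the birational map $\phi:X\ratmap Y$ associated to $\Phi$ identifies open subsets'': that lemma presupposes $\Phi$ is birational, which is part of what the theorem asserts. So the torus-transfer strategy, as written, never gets off the ground; the rigidity obstacle you flag at the end is real, but it is downstream of this more basic gap.

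Kawamata's actual proof does not attempt to transport the torus action. Instead it exploits the fact that on a smooth projective toric variety the anticanonical divisor is effective and big (being the sum of the torus-invariant prime divisors), together with the compatibility of any Fourier--Mukai equivalence with the Serre functor. Bigness of $-K_X$ forces the support of $\Phi(k(x))$ to be zero-dimensional for generic $x$, which gives birationality directly; the toric structure on $Y$ and the finiteness statement then come from running the toric minimal model program and comparing the (finitely many) resulting contractions. This is closer in spirit to Lemma~4.4 of \cite{Uehara2017ATF} quoted later in the paper than to any group-action transfer.
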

As a direct corollary, we have the following:
\begin{lemma}\label{toric}
Let $\cal T$ be a triangulated category with a toric variety $X \in \FM \cal T$. Then any $Y \in \cal T$ is toric and any equivalence $\perf X \simeq \perf Y$ is birational up to shifts. In particular, $X$ is tt-irreducible and $\fmspec \perf X$ is irreducible.
\end{lemma}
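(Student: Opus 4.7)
The plan is to reduce the whole statement to Theorem \ref{kawamata toric} combined with the structural results \ref{main intersection}, \ref{scheme irreducible}, and \ref{ttirr}. First, given any $Y \in \FM \cal T$ (the statement's $Y \in \cal T$ being a typo), a triangulated equivalence $\perf Y \simeq \cal T \simeq \perf X$ exists, so Theorem \ref{kawamata toric} immediately yields that $Y$ is toric and that any equivalence $\perf X \simeq \perf Y$ is birational, which implies it is birational up to shifts.

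For tt-irreducibility of $X$, I would invoke Lemma \ref{ttirr}: it suffices to show $\operatorname{BAuteq} X$ is an $\id_{\perf X}$-generator, i.e., that for every $\eta \in \auteq \perf X$ the copy $\spec_{\tens_{X,\eta}} \perf X$ lies in the irreducible component $X_\tens(\id_{\perf X})$. Applying Theorem \ref{kawamata toric} to $\eta \colon \perf X \simeq \perf X$ shows $\eta$ is birational up to shifts. Since shifts act trivially on the triangular spectrum, one may assume $\eta \in \operatorname{BAuteq} X$, and by definition $\spec_{\tens_{X,\eta}} \perf X \subset \spec^{\operatorname{BAuteq} X \cdot \id_{\perf X}}_{\tens_X} \perf X = X_\tens(\id_{\perf X})$.

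For irreducibility of $\fmspec \perf X$, I would use Lemma \ref{scheme irreducible}: since $\fmspec \perf X$ is covered by the irreducible open subschemes $\spec_{\tens_{Y,\eta}} \perf X$ (each isomorphic to a smooth projective variety), irreducibility is equivalent to connectedness. To establish connectedness, fix the basepoint copy $\spec_{\tens_{X,\id_{\perf X}}} \perf X$ and show every other copy intersects it. For an arbitrary $Y \in \FM \cal T$ and equivalence $\eta \colon \perf Y \simeq \perf X$, Theorem \ref{kawamata toric} applied to $\eta$ shows that it is birational (hence birational up to shifts), so Lemma \ref{main intersection}(i) gives
\[
\spec_{\tens_{Y,\eta}} \perf X \cap \spec_{\tens_{X,\id_{\perf X}}} \perf X \neq \emp.
\]
Thus every irreducible open in the cover meets the basepoint copy, proving connectedness and hence irreducibility.

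There is no serious obstacle here: the lemma is essentially a packaging of Kawamata's theorem into the language of Fourier-Mukai loci. The only point requiring a moment's care is matching the hypothesis of Lemma \ref{main intersection}(i), which asks for \emph{birational up to shifts}, with Kawamata's conclusion of \emph{birational}; but the former is weaker, so the implication is automatic.
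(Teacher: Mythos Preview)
Your proof is correct and follows the same approach as the paper, which presents the lemma simply as a direct corollary of Theorem \ref{kawamata toric}; you have merely made explicit the routine deductions via Lemmas \ref{main intersection}, \ref{scheme irreducible}, and \ref{ttirr} that the paper leaves implicit.
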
 

\begin{remark}\label{toricrem}
    Note Theorem \ref{kawamata toric} is not enough to show that $X$ is tt-quasi-compact since in general there may be birational autoequivalences such that the associated birational maps agree, but the embedding into the triangular spectrum is different over the complement of the domain of definition. Also, note $X$ is not tt-separated in general by Corollary \ref{-2 twist} since a toric surface may contain a $(-2)$-curve.
\end{remark}
Note that a toric variety has a big anti-canonical bundle (for example \cite{BPtoric}*{\href{https://arxiv.org/pdf/1010.1717.pdf}{Lemma 11}}). Indeed, any smooth projective variety with big (anti-)canonical bundle is tt-irreducible by the following result:
\begin{lemma}[Kawamata] \cite{Uehara2017ATF}*{\href{https://arxiv.org/pdf/1704.00292.pdf}{Proposition 4.4}}
    Let $X$ be a smooth projective variety with big (anti-) canonical bundle. Then, $X$ is of $K$-equivalent type, i.e.,  \[
    \operatorname{BAuteq} X \times \bb Z[1] = \auteq \perf X.
    \]
    In particular, $X$ is tt-irreducible. Moreover, $\fmspec \perf X$ is irreducible. 
\end{lemma}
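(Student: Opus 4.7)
The proof breaks into three steps; the first carries essentially all of the content, and the other two are short translations via results already established in the paper. The plan is to first establish the identity $\auteq \perf X = \operatorname{BAuteq} X \times \bb Z[1]$ by invoking Kawamata's theorem on derived versus $K$-equivalence, and then cash it out geometrically inside $\spec_\vartriangle \perf X$.

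Step 1 (main step): the inclusion $\operatorname{BAuteq} X \times \bb Z[1] \subset \auteq \perf X$ is tautological, so I would focus on the reverse inclusion. Take $\Phi \in \auteq \perf X$ with Fourier-Mukai kernel $\ecal P$. Since $\omega_X$ or $\omega_X^{-1}$ is big, Kawamata's theorem (the $D$-equivalence $=$ $K$-equivalence result applied in the autoequivalence setting) produces a smooth projective resolution $Z \to X \times X$ of the support of $\ecal P$ through which $\ecal P$ is realized, and the two projections $p, q \colon Z \to X$ satisfy $p^*\omega_X \simeq q^*\omega_X$, inducing a birational map $f \colon X \dashrightarrow X$. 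On the open locus $U \subset X$ where $f$ is an isomorphism, $\ecal P|_{U \times X}$ is (up to a shift by some fixed $n$, which is forced to be a single integer because $X$ is connected and $\Phi$ preserves degrees on the open part) the graph of the open immersion $f|_U$. Thus $\Phi \simeq \beta \circ [n]$ for some $\beta \in \operatorname{BAuteq} X$, giving the desired inclusion.

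Step 2: tt-irreducibility is immediate from Lemma \ref{ttirr}. Indeed, by Step 1 any $\eta \in \auteq \perf X$ is of the form $\beta \circ [n]$ with $\beta \in \operatorname{BAuteq} X$, and since shifts are universal stabilizers (they act trivially on $\spec_\vartriangle \perf X$), we have $\spec_{\tens_{X,\eta}} \perf X = \spec_{\tens_{X,\beta}} \perf X$. Hence $\operatorname{BAuteq} X$ is an $\id_{\perf X}$-generator, which is precisely condition (iii) of Lemma \ref{ttirr}.

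Step 3: to upgrade tt-irreducibility to irreducibility of the full Fourier-Mukai locus, I would show that for every $Y \in \FM \perf X$ and every equivalence $\eta \colon \perf Y \simeq \perf X$, the equivalence is birational up to shift. This again uses Kawamata's theorem in the form applied to $\eta$ itself: bigness of $\omega_X^{\pm 1}$ forces $Y$ to be $K$-equivalent to $X$, and the same resolution argument as in Step 1 shows the kernel is the graph of an open immersion over some open subset. By Lemma \ref{main intersection}(i), then $\spec_{\tens_Y}\perf X \cap \spec_{\tens_X}\perf X \neq \emp$. Combined with Step 2 and Lemma \ref{scheme irreducible}, this places every copy of every Fourier-Mukai partner inside the single irreducible component $X_\tens(\id_{\perf X})$, so $\fmspec \perf X = X_\tens(\id_{\perf X})$ is irreducible.

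The main obstacle is Step 1: the whole statement rests on Kawamata's theorem that big (anti-)canonical bundle promotes a derived equivalence into a $K$-equivalence, together with the accompanying control of the Fourier-Mukai kernel on the common open locus needed to invoke Definition \ref{birational FM partners} and Corollary \ref{improved h}. Steps 2 and 3 are then purely formal manipulations of the gluing scheme structure on $\fmspec \perf X$ built in Theorem \ref{scheme}.
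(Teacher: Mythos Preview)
Your proposal is correct and aligns with the paper's approach. Note that the paper does not actually write out a proof for this lemma: the identity $\operatorname{BAuteq} X \times \bb Z[1] = \auteq \perf X$ is simply cited from \cite{Uehara2017ATF}*{Proposition 4.4} (attributed to Kawamata), and the ``In particular'' and ``Moreover'' clauses are meant to follow immediately from the paper's own framework. Your Step 1 is a faithful sketch of what the cited external proof does (Kawamata's $D$-equivalence $\Rightarrow$ $K$-equivalence under bigness, plus the kernel analysis over the isomorphism locus), and your Steps 2--3 are exactly the deductions the paper intends: Step 2 is the content of the remark preceding Lemma \ref{ttirr} (``if $X$ is of $K$-equivalent type, then $X$ is tt-irreducible by Lemma \ref{ttirr} noting shifts are universal stabilizers''), and Step 3 parallels the argument given for the toric case in Lemma \ref{toric}, using that Uehara's Proposition 4.4 in fact applies to any equivalence $\perf X \simeq \perf Y$, not just autoequivalences, so every Fourier-Mukai partner meets $X_\tens(\id_{\perf X})$ via Lemma \ref{main intersection}(i).
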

\begin{question}
    If $X$ is tt-irreducible, what can we say about positivity of its (anti-)canonical bundle? 
\end{question}
\subsection{Flops and K-equivalences}\label{flops and K-equiv}
Now, let us move on to a more general approach to study Fourier-Mukai loci using birational geometry. First of all, let us note the following easy observation:
\begin{lemma}\label{birat non-iso}
    Let $\cal T$ be a triangulated category with $X,Y \in \FM \cal T$. Suppose $X$ and $Y$ are birational Fourier-Mukai partners that are not isomorphic to each other. Then, for a birational triangulated equivalence $\xi \inv \circ \eta: \perf X \simeq \perf Y$ for $\eta:\perf X \simeq \cal T$ and $\xi:\perf Y\simeq \cal T$, we have $$\spec_{\tens_{X,\eta}} \cal T \cap \spec_{\tens_{Y,\xi}} \cal T \neq \emp,$$ which is a strict open subscheme of both $X$ and $Y$. In particular, $\cal T$ is not of disjoint Fourier-Mukai type and hence $\spec^\fm \cal T$ is not separated.
\end{lemma}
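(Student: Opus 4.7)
The plan is to apply Lemma \ref{main intersection} twice: first to get non-emptiness of the intersection, and then to pin down its size. By hypothesis $\Phi := \xi^{-1} \circ \eta : \perf X \simeq \perf Y$ is a birational triangulated equivalence (not merely up to shift), so part (i) of Lemma \ref{main intersection} immediately gives
\[
U := \spec_{\tens_{X,\eta}} \cal T \cap \spec_{\tens_{Y,\xi}} \cal T \neq \emp.
\]

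Next I would verify strictness. By Lemma \ref{main intersection} (ii), $\eta^{-1}(U)$ coincides with the domain of definition of $\Phi$ inside $X$, and $\xi^{-1}(U)$ coincides with the domain of definition of $\Phi^{-1}$ inside $Y$. Suppose for contradiction that $U = \spec_{\tens_{X,\eta}} \cal T$; then $\eta^{-1}(U) = X$, so by Construction \ref{canonical open} the birational map $\phi_{\cal P}$ associated to $\Phi$ is defined on all of $X$, giving an open immersion $X \inj Y$. Since $X$ is projective, its image is closed; since $Y$ is irreducible (being a variety) and of the same dimension as $X$ by Theorem \ref{scheme}, this open-and-closed immersion must be an isomorphism, contradicting $X \not\iso Y$. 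The symmetric argument applied to $\Phi^{-1}$ rules out $U = \spec_{\tens_{Y,\xi}} \cal T$, so $U$ is a strict open subscheme of both copies.

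Finally, the existence of non-isomorphic birational Fourier-Mukai partners $X,Y \in \FM \cal T$ means $\cal T$ is not of disjoint Fourier-Mukai type by the definition given in Corollary \ref{disjoint}. For non-separatedness of $\spec^\fm \cal T$, I would invoke Lemma \ref{tt-s} (the implication (i)$\Rar$(iii)): if $\spec^\fm \cal T$ were separated, then the copies $\spec_{\tens_{X,\eta}} \cal T$ and $\spec_{\tens_{Y,\xi}} \cal T$ would either coincide or be disjoint; the former forces $X \iso Y$ (by Lemma \ref{tt-s}) and the latter contradicts $U \neq \emp$, so neither is possible.

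The argument is essentially bookkeeping on top of Lemma \ref{main intersection} and Construction \ref{canonical open}; the only nontrivial step is the strictness assertion, and there the potential pitfall is forgetting that an open immersion from a projective variety into a connected variety of the same dimension is automatically an isomorphism. Once that observation is in hand, the rest is immediate from the cited results.
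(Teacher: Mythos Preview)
Your proof is correct and follows the same overall strategy as the paper: non-emptiness from Lemma \ref{main intersection}(i), strictness from the observation that a full containment would force $X\iso Y$, and non-separatedness from Lemma \ref{tt-s}.

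The only difference is in the strictness step. You route through Construction \ref{canonical open} to produce an open immersion $X\inj Y$. The paper argues more directly: by Theorem \ref{scheme}, both $\spec_{\tens_{X,\eta}}\cal T$ and $\spec_{\tens_{Y,\xi}}\cal T$ are already open subschemes of $\spec^\fm\cal T$, so if $U=\spec_{\tens_{X,\eta}}\cal T$ then $X$ sits as an open subscheme of $Y$; projectivity of $X$ and irreducibility of $Y$ then force $X\iso Y$. This avoids unpacking the birational map entirely, but your argument via the domain of definition is equally valid and makes the geometry more explicit.
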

\begin{proof}
    The intersection is not empty since $\xi \inv \circ \eta$ is birational. Also, the intersection cannot be the whole space since $X$ and $Y$ are not irreducible and not isomorphic to each other. 
\end{proof}
To see some examples of birational Fourier-Mukai partners that are not isomorphic to each other, let us introduce the following standard notions in birational geometry (e.g. \cite{Kawamata2002DEquivalenceAK}, \cite{KorMor98}, and \cite{toda_2006}).
\begin{definition}
    Let $X$ and $Y$ be projective varieties with only canonical singularities. 
    \begin{enumerate}
        \item A birational map $\alpha:X\ratmap Y$ is said to be \textbf{crepant}  if there exist a smooth projective variety $Z$ and birational morphisms $f:Z \to X$ and $g:Z \to Y$ such that  $\alpha\circ f = g$ and $f^*K_X \sim_\bb Q g^*K_Y$.
        \item A birational map $\alpha:X\ratmap Y$ is said to be a \textbf{flop} if there exist a normal projective variety $W$ and crepant birational morphisms $\phi:X \to W$ and $\psi:Y \to W$ such that
        \begin{itemize}
            \item $\phi = \psi \circ \alpha$;
            \item $\phi$ and $\psi$ are isomorphisms in codimension $1$;
            \item the relative Picard numbers of $\phi$ and $\psi$ are $1$;
            \item for any $\phi$-ample divisor $H$ on $X$, $-H'$ is $\psi$-ample, where $H'$ is the strict transform of $H$ on $Y$. \qedhere
        \end{itemize}
    \end{enumerate}
\end{definition}
Here are some examples of flops and applications to our situations.
\begin{example}\label{flop} \ 
    \begin{enumerate}
        \item (See \cite{HuyBook}*{Section 11.3} for details.) Let $X_1$ be a smooth projective variety of dimension $2k+1$ and suppose $X_1$ contains a closed subvariety $Y_1$ isomorphic to $\bb P^k$ whose normal bundle is isomorphic to $\ecal O_{\bb P^k}(-1)^{\oplus k+1}$. Now, writing the blow-up along $Y_1$ by $p_1:\tilde X \to X_1$, we see that its exceptional divisor $E$ is isomorphic to $\bb P^k \times \bb P^k$ and moreover that we can blow-down to the other projection, which yields a birational morphism $p_2:\tilde X \to X_2$. Then, the birational map $p_2 \circ p_1\inv:X_1 \ratmap X_2$ is called the \textbf{standard flop}. In short, the {standard flop} is constructed so that it fits into the following diagram:
        \begin{center}\DisableQuotes
            \begin{tikzcd}
                                 &     & E \iso \bb P^k \times \bb P^k \arrow[d, hook] \arrow[lldd] \arrow[rrdd] &     &                                 \\
                                 &     & \tilde X \arrow[ld, "p_1"] \arrow[rd, "p_2"']                           &     &                                 \\
\bb P^k \iso Y_1 \arrow[r, hook] & X_1 &                                                                         & X_2 & Y_2 \iso\bb P^k \arrow[l, hook]
\end{tikzcd}
        \end{center}
        where the normal bundle of $Y_i \iso \bb P^k$ in $X_i$ is isomorphic to $\ecal O_{\bb P^k}(-1)^{\oplus k+1}$ (so that $\omega_{X_i}|_{\bb P^k} \iso \ecal O_{\bb P^k}$) and $p_i$ is the blow-up along $Y_i$ with exceptional divisor $E$. In particular, it can be shown that the birational map $p_2 \circ p_1\inv:X_1 \ratmap X_2$ is indeed a flop by considering contractions $X_i \to W$ of $\bb P^k$ for each $i$. Here, note that $X_2$ is not necessarily projective in general (cf. Remark \ref{complicated birational}), so let us suppose $X_2$ is projective in this paper. By \cite{bondal1995semiorthogonal}*{\href{https://arxiv.org/abs/alg-geom/9506012}{Theorem 3.6}}, the functor
        \[
        \Phi = {p_2}_*\circ {p_1}^* = \Phi_{\ecal O_Z}:\perf X_1 \to \perf X_2
        \]
        is an equivalence, where $Z = X_1 \times_W X_2 \subset X_1\times X_2$. Since $p_1$ and $p_2$ are isomorphisms away from exceptional loci, $\Phi$ is moreover birational and indeed the associated birational map $X_1 \ratmap X_2$ is the isomorphism $X_1\setminus Y_1 \iso X_2\setminus Y_2$, i.e., the standard flop itself (cf. \cite{HO22}*{\href{https://arxiv.org/pdf/2112.13486v3.pdf}{Lemma 3.7}}). Note in general $X_1$ and $X_2$ are not isomorphic, in which case Lemma \ref{birat non-iso} reproduces \cite{HO22}*{\href{https://arxiv.org/pdf/2112.13486v3.pdf}{Example 3.9}} and simplifies a proof of \cite{HO22}*{\href{https://arxiv.org/pdf/2112.13486v3.pdf}{Remark 3.10}} although the proofs are essentially the same. 
        \item (See \cite{HuyBook}*{Section 11.4} for details.) Let $X_1$ be a smooth projective variety of dimension $2k$ and suppose $X_1$ contains a closed subvariety $Y_1$ isomorphic to $\bb P^k$ whose normal bundle is isomorphic to the cotangent bundle $\Omega_{Y_1}$. As in the construction of the standard flop, we can construct a birational map  $X_1 \ratmap X_2$, called the \textbf{Mukai flop}, that fits into the following diagram:
        \begin{center}\DisableQuotes
            \begin{tikzcd}
                                 &     & E \iso \bb P(\Omega_{Y_1}) \arrow[d, hook] \arrow[lldd] \arrow[rrdd] \arrow[r, hook] & Y_1\times Y_2 &                                 \\
                                 &     & \tilde X \arrow[ld, "p_1"] \arrow[rd, "p_2"']                                        &               &                                 \\
\bb P^k \iso Y_1 \arrow[r, hook] & X_1 &                                                                                      & X_2           & Y_2 \iso(\bb P^k)^* \arrow[l, hook]
\end{tikzcd}
        \end{center}
        where the normal bundle of $Y_i$ in $X_i$ is isomorphic to $\Omega_{Y_i}$, $(\bb P^k)^*$ denotes the dual projective space of $\bb P^k$, $p_1$ is the blow-up of $X_1$ along $Y_1$, and $p_2$ is a birational morphism contracting $E$ to the other direction, noting $\ecal O_E(E) \iso \ecal O_E(-1,-1)$ and the Fujiki-Nakano criterion (cf. \cite{HuyBook}*{Remark 11.10}). Again, $X_2$ is not necessarily projective, so let us suppose it is projective in this paper. However, in contrast to the standard flop, the functor 
        \[
        \Phi = {p_2}_*\circ {p_1}^*:\perf X_1 \to \perf X_2
        \]
        is \textbf{not} even fully faithful for $n>1$ by \cite{nami}. In the paper, Namikawa nevertheless proves the following: First, consider the contractions $X_i \to W$ of $Y_i$ and write $Z' = X_1\times_W X_2$ with projections $q_i :\hat X \to X_i$. Then, the functor
        \[
        \Psi = {q_2}_*\circ {q_1}^* = \Phi_{\ecal O_{Z'}}:\perf X_1 \to \perf X_2
        \]
        is an equivalence. In particular, since $q_1$ and $q_2$ are isomorphisms away from exceptional loci, $\Psi$ is birational and indeed the associated birational map $X_1 \ratmap X_2$ is the isomorphism $X_1\setminus Y_1 \iso X_2\setminus Y_2$, i.e., the Mukai flop itself.  
        \item Let us focus on the case of threefolds. First, recall the following important results: 
        \begin{enmthm}[Koll\'ar]\cite{kollar1989flops}*{\href{https://www.cambridge.org/core/services/aop-cambridge-core/content/view/S0027763000001240}{Theorem 4.9.}}\label{kollar flop}
            Let $X_1$ and $X_2$ be projective threefolds with $\bb Q$-factorial terminal singularities. Suppose $K_{X_1}$ and $K_{X_2}$ are both nef. Then any birational map $X_1 \ratmap X_2$ can be written as a finite composite of flops. Moreover, each flop does not change analytic singularity and only involves at worst terminal singularities for flopping contractions (cf. \cite{kollar1989flops}*{\href{https://www.cambridge.org/core/services/aop-cambridge-core/content/view/S0027763000001240}{Theorem 2.4}}). 
        \end{enmthm}
        \begin{enmthm}[Bridgeland]\cite{Bridgeland_flop}*{Theorem 1}\label{birdgeland flop}
            If $X$ is a projective threefold with terminal singularities and 
            \begin{center}
                \DisableQuotes
                \begin{tikzcd}
Y_1 \arrow[rd, "f_1"'] &   & Y_2 \arrow[ld, "f_2"] \\
                       & X &                      
\end{tikzcd}
            \end{center}
            are crepant birational morphisms with smooth projective varieties $Y_1,Y_2$, then there is an equivalence $\perf Y_1 \simeq \perf Y_2$. Moreover, such an equivalence can be taken to be birational.  
        \end{enmthm}
        In particular, if $X_1$ and $X_2$ are birationally equivalent smooth projective threefolds with nef canonical bundles (e.g. smooth projective Calabi-Yau threefolds), then there is a birational triangulated equivalence $\perf X_1 \simeq \perf X_2$. In other words, each irreducible component of $\spec^\fm \perf X_1$ containing a copy of $X_1$ contains all the copies of smooth projective threefolds with nef canonical bundles that are birationally equivalent to $X_1$. In particular, a smooth projective threefold with nef canonical bundle is not tt-separated in general.\qedhere
    \end{enumerate}
\end{example} 
\begin{remark}\label{complicated birational} \ 
\begin{enumerate}
    \item Given the first two examples, Orlov-Bondal conjectured that if smooth varieties $X$ and $Y$ are related by flops, then they are Fourier-Mukai partners (\cite{bondal1995semiorthogonal}). Note this is a special case of Conjecture \ref{dk hypothe}. On the other hand, a choice of a Fourier-Mukai kernel is a delicate problem. In the first two examples, given a flop $X_1 \rightarrow W \leftarrow X_2$, we can always take a kernel to be $\ecal O_Z$ for $Z = X_1\times_W X_2 \subset X_1 \times X_2$, but Namikawa showed that this is not the case for a stratified Mukai flop in \cite{namikawa2003mukai} although it is shown that there is a (birational) equivalence of derived categories corresponding to such a flop in \cite{cautis2013derived}.  
    \item Theorem \ref{kollar flop} is generalized to minimal models of general dimension by Kawamata in \cite{kawamata2008flops} and Theorem \ref{birdgeland flop} is generalized to more general singularities in \cite{Chen2002FlopsAE} and \cite{VdB_flop}. In particular, Van den Bergh uses a different method in his proof, which contributed to the development of noncommutative crepant resolutions. 
    \item In the last example, we have seen that any birationally equivalent Calabi-Yau threefolds are Fourier-Mukai partners, but the converse is not true. In \cite{ottem2018counterexample}*{\href{https://www.duo.uio.no/bitstream/handle/10852/58499/main.pdf?sequence=2&isAllowed=y}{Theorem 4.1}}, it is shown that there are Calabi-Yau threefolds that are Fourier-Mukai partners, but not birationally equivalent. In particular, $X$ is not tt-irreducible in general. More generally, Uehara also produces threefolds of Kodaira dimension $1$ that are Fourier-Mukai partners, but not birationally equivalent (\cite{Ueh3dcounter}).    \qedhere 
\end{enumerate}
\end{remark}
Now, by the symmetry of flops, note that Example \ref{flop} (and conjecturally any flop not changing class of singularity) produces autoequivalences by applying triangulated equivalences corresponding to flops back and forth. Those autoequivalences, called \textbf{flop-flop autoequivalences}, can indeed be non-standard autoequivalences and thus their actions on tt-spectra would be interesting.  
\begin{example} We will use the same notations as in Example \ref{flop} in each respective part.
\begin{enumerate}
    \item Consider the following diagram of the standard flop: 
    \begin{center}\DisableQuotes
            \begin{tikzcd}
                                 &     & E \iso \bb P^k \times \bb P^k \arrow[d, hook] \arrow[lldd] \arrow[rrdd] &     &                                 \\
                                 &     & \tilde X \arrow[ld, "p_1"] \arrow[rd, "p_2"']                           &     &                                 \\
\bb P^k \iso Y_1 \arrow[r, hook] & X_1 &                                                                         & X_2 & Y_2 \iso\bb P^k \arrow[l, hook]
\end{tikzcd}
        \end{center}
        Then, we get an autoequivalence
        \[
        \Phi = {p_1}_*{p_2}^*\circ {p_2}_*{p_1}^*:\perf X_1 \overset{\sim}{\to} \perf X_2 \overset{\sim}{\to} \perf X_1. 
        \]
        Note that $\ecal O_{\bb P^k}(i)$ is a spherical object in $\perf X_1$ for any $k \in \bb Z$ (e.g. \cite{HuyBook}*{Example 8.10 (v)}). By \cite{addington2019mukai}*{Theorem A}, we have 
        \[
        \Phi = T\inv_{\ecal O_{\bb P^k}(-1)} \circ T\inv_{\ecal O_{\bb P^k}(-2)} \circ \cdots \circ T\inv_{\ecal O_{\bb P^k}(-k)}.  
        \]
        More generally, any flop-flop autoequivalence
        \[
        {p_1}_*(\ecal O_{\tilde X}(mE)\tens {p_2}^*(-))\circ {p_2}_*(\ecal O_{\tilde X}(nE)\tens {p_1}^*(-))
        \]
        can be written as compositions of (inverses of) spherical twists around $\ecal O_{\bb P^k}(l)$ for some $l$. In particular, any flop-flop autoequivalence associated to a standard flop glue copies of $X_1$ along an open neighborhood of $X_1\setminus Y_1$. Here, note that a priori the open neighborhood can be strictly larger than $X_1\setminus Y_1$ since Corollary \ref{birational spherical} can only tell behaviors on $X_1\setminus Y_1$ and it is a priori possible that a skyscraper sheaf $k(x)$ at some $x \in Y_1$ gets mapped to a skyscraper after several compositions of spherical twists. In particular, if we suppose $X$ is a threefold so that $k = 1$ and $Y_i$ are $(-1,-1)$-curves (in which case the standard flop is called the \textbf{Atiyah flop}), then we have $\Phi = T\inv _{\ecal O_{\bb P^1}(-1)}$, i.e., $\Phi$ is the inverse of the spherical twist and we can apply Corollary \ref{birational spherical}. 
        \item In the same paper, the authors show similar results for Mukai flops by using $\bb P$-twists instead of spherical twists (\cite{addington2019mukai}*{Theorem B}). In particular, let us consider the case when $k = 1$, i.e., when $X_1$ is a surface. Then, the Mukai flop indeed does not do anything, but the corresponding flop-flop autoequivalence ${q_1}_*{q_2}^*\circ {q_2}_*{q_1}^*$ is the inverse of the spherical twist $T_{\ecal O_{\bb P^1}(-1)}$ so we can apply Corollary \ref{birational spherical}. In higher dimensions, it will be interesting to investigate $\bb P$-twists more closely. 
        \item In addition to the Atiyah flops, which flop $(-1,-1)$-curves in threefolds, we can also think of flopping other types of rational curves in threefolds, namely, $(-2,0)$-curves and $(-3,1)$-curves, which are the only possibilities (cf. \cite{pinkham1983factorization}*{Proposition 2}). Note, in those cases, the corresponding sheaves are not spherical, so the straightforward generalization cannot be expected. On the other hand, Toda showed that the corresponding flop-flop autoequivalences can be described as generalized spherical twists \cite{Toda2007}. Moreover, the case of $(-3,1)$-curves is considered by Donovan-Wemyss \cite{DW16}, where they define noncommutative generalization of twist functors. It turns out that cases of $(-1,-1)$-curves and $(-2,0)$-curves end up staying in commutative world while we strictly need noncommutative methods for the case of $(-3,1)$-curves. It will also be interesting to investigate their actions on tt-spectra in the future.   \qedhere
\end{enumerate}
\end{example}
Now, we will see that birational Fourier-Mukai partners are closely related to a well-studied topic in birational geometry, namely $K$-equivalences. First, let us recall the definition.  
\begin{definition}
    Let $X$ and $Y$ be smooth projective varieties. We say $X$ and $Y$ are \textbf{$K$-equivalent} if there is a crepant birational map $X \ratmap Y$. 
\end{definition}
\begin{remark}
    Any birationally equivalent minimal models are $K$-equivalent by \cite{wang1998topology}*{\href{https://arxiv.org/pdf/math/9804050.pdf}{Corollary 1.10}}. 
\end{remark}
One of the main conjectures about $K$-equivalences is the following:
\begin{conjecture}[DK hypothesis]\label{dk hypothe}\cite{Kawamata2002DEquivalenceAK}
    Let $X$ and $Y$ be smooth projective varieties. If $X$ and $Y$ are K-equivalent, then they are Fourier-Mukai partners. Conversely, if $X$ and $Y$ are birationally equivalent Fourier-Mukai partners and have non-negative Kodaira dimension, then they are K-equivalent.
\end{conjecture}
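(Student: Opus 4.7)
The plan is to attack the two directions of the conjecture separately, with the understanding that this is a deep open problem and even a conditional strategy is non-trivial. For the forward direction ($K$-equivalent $\Rightarrow$ Fourier-Mukai partners), the natural approach is to decompose a $K$-equivalence $\alpha: X \ratmap Y$ into a sequence of simpler birational transformations and construct a derived equivalence step by step. Specifically, I would first appeal to a factorization of $\alpha$ into a finite composition of flops: for threefolds this is Theorem \ref{kollar flop} (Koll\'ar), and in higher dimensions one would invoke Kawamata's generalizations in \cite{kawamata2008flops}. Given such a factorization, I would then apply Theorem \ref{birdgeland flop} (Bridgeland for threefolds) or its extensions by Chen and Van den Bergh to produce a birational triangulated equivalence at each step. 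Composing these equivalences yields $\perf X \simeq \perf Y$, which exhibits $Y$ as a Fourier-Mukai partner of $X$. The open subscheme conclusion is then immediate from Theorem \ref{scheme} combined with Lemma \ref{main intersection}, since each equivalence so constructed is birational.

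For the converse direction (birational Fourier-Mukai partners of non-negative Kodaira dimension $\Rightarrow$ $K$-equivalent), my plan is to compare canonical data via the Serre functor. The Serre functor of $\perf X$ is intrinsic to the triangulated structure, so any equivalence $\perf X \simeq \perf Y$ identifies canonical bundles up to shift; in particular, the canonical rings $R(X, K_X)$ and $R(Y, K_Y)$ are isomorphic as graded rings. Given a birational map $\alpha: X \ratmap Y$, I would resolve its indeterminacy by a smooth projective $Z$ with morphisms $f: Z \to X$ and $g: Z \to Y$ and then aim to show $f^* K_X \sim g^* K_Y$. Non-negativity of the Kodaira dimension ensures enough pluricanonical sections to detect this linear equivalence: using the matching of canonical rings together with birationality (in the style of Kawamata's arguments for minimal models), one concludes that $\alpha$ is crepant.

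The main obstacle is overwhelmingly the forward direction. The factorization of a $K$-equivalence into flops in arbitrary dimension is itself an open problem, and even granting such a factorization, producing a derived equivalence from an arbitrary flop is subtle. Namikawa's example of stratified Mukai flops (Remark \ref{complicated birational}) shows that the naive kernel $\ecal O_{X_1 \times_W X_2}$ need not even yield a fully faithful functor, and one must appeal to deeper methods including noncommutative crepant resolutions \`a la Van den Bergh and contraction algebras \`a la Donovan-Wemyss. A full proof of the conjecture thus seems to demand substantial new input both from the birational side (to secure the flop factorization in general dimension) and from the categorical side (to construct kernels for exotic flops), and any serious attempt would have to grapple with the construction of Fourier-Mukai kernels for families of singularities that are not yet well understood.
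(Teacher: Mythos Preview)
This statement is a \emph{conjecture}, and the paper does not prove it; there is no proof in the paper to compare against. What the paper does offer is Remark \ref{hk remark}, which sketches exactly the same conditional strategy for the forward direction that you propose: factor a $K$-equivalence into flops (Kawamata's conjecture, known for minimal models and toric varieties) and then invoke the Bondal--Orlov conjecture that flops induce derived equivalences (known in the cases covered by Theorem \ref{birdgeland flop} and its extensions). Your forward-direction outline and your discussion of obstacles are in line with the paper's own commentary.

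Your converse-direction sketch, however, has a genuine gap worth naming. You are tacitly assuming that the given birational map $\alpha: X \ratmap Y$ interacts with the Fourier--Mukai equivalence in a way that lets you transport canonical data along $\alpha$; but the hypothesis is only that $X$ and $Y$ are \emph{birationally equivalent} Fourier--Mukai partners, not \emph{birational} Fourier--Mukai partners in the sense of Definition \ref{birational FM partners}. The paper already records (Theorem \ref{bfm is ke}, due to Toda) that birational Fourier--Mukai partners are automatically $K$-equivalent, with no Kodaira-dimension hypothesis needed. The open content of the converse direction is precisely the case where the derived equivalence $\perf X \simeq \perf Y$ is \emph{not} birational, yet some unrelated birational map $X \ratmap Y$ exists; see Observation \ref{bg obs}, where the arrow BEFM $\Rightarrow$ KEFM is labelled as the DK hypothesis and the counterexample (ii) of Uehara shows the Kodaira-dimension assumption is necessary. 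In that situation the Serre functor matches $-\otimes\omega_X[\dim X]$ with $-\otimes\omega_Y[\dim Y]$ only \emph{through the equivalence}, not through $\alpha$, so your appeal to ``matching of canonical rings together with birationality'' does not connect the two pieces of data and does not yield $f^*K_X \sim g^*K_Y$ on a common resolution of $\alpha$.
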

\begin{remark}\label{hk remark}
    One way to approach  the first part of this conjecture is the following. First, it is conjectured by Kawamata (\cite{kawamata2017birational}*{Conjecture 3.6}) that any $K$-equivalence factors into compositions of flops, which is shown for $K$-equivalences between threefolds (\cite{Kawamata2002DEquivalenceAK}), minimal models (\cite{kawamata2008flops}) and toric varieties (\cite{Kawtoric}), respectively. Now, combined with the conjecture that varieties related by a flop are Fourier-Mukai partners by Bondal-Orlov \cite{bondal1995semiorthogonal}, we obtain the first part of the conjecture. Moreover, Example \ref{flop} suggests that such varieties are birational Fourier-Mukai partners. 
\end{remark}

Now $K$-equivalence comes into our contexts due to the following result:
\begin{theorem}\label{bfm is ke}\cite{toda_2006}*{\href{https://www.cambridge.org/core/services/aop-cambridge-core/content/view/31DA2A4009AE544E3E9A0B88A001E937/S0010437X06001977a.pdf/fourier-mukai-transforms-and-canonical-divisors.pdf}{Lemma 7.3.}}
    Let $X$ and $Y$ be smooth projective varieties. If $X$ and $Y$ are birational Fourier-Mukai partners, then they are $K$-equivalent under the birational map associated to a birational triangulated equivalence. 
\end{theorem}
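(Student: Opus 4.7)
The approach is to leverage the uniqueness of Serre functors against the Fourier--Mukai kernel formalism to identify the canonical divisors on a smooth birational model that dominates both $X$ and $Y$. Concretely, given the birational Fourier--Mukai equivalence $\Phi_{\ecal P}:\perf X \simeq \perf Y$ with kernel $\ecal P \in \perf(X\times Y)$, the first step is to recall that any triangulated equivalence commutes with Serre functors. Since the Serre functors on each side are $-\otimes \omega_X[\dim X]$ and $-\otimes \omega_Y[\dim Y]$ (Example \ref{serre functor examples}(i)), we deduce $\dim X = \dim Y =: n$ and, translating the commutation into a statement about kernels, obtain an isomorphism
\[
\ecal P \otimes p_X^*\omega_X \;\iso\; \ecal P \otimes p_Y^*\omega_Y
\]
in $\perf(X\times Y)$, where $p_X,p_Y$ denote the projections.

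Next, I would exploit Construction \ref{canonical open} to get the canonically associated birational map $\phi_{\ecal P}:X \ratmap Y$ with domain of definition $U \subset X$, over which $\ecal P|_{U\times Y}$ is the structure sheaf of the graph $\Gamma_f$ of an open immersion $f:U \inj Y$. Let $\Gamma \subset X\times Y$ be the closure of $\Gamma_f$ and take a resolution of singularities $\pi:Z \to \Gamma$, producing smooth projective $Z$ together with proper birational morphisms $p := p_X \circ \pi$ and $q := p_Y \circ \pi$ satisfying $q = \phi_{\ecal P}\circ p$ as rational maps. The key point is that $p^{-1}(U) \to U$ and $q|_{p^{-1}(U)} : p^{-1}(U) \to f(U)$ are isomorphisms, so $Z$ provides a common smooth resolution of $\phi_{\ecal P}$.

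The third step is the local identification of canonical bundles. Restrict the kernel identity $\ecal P\otimes p_X^*\omega_X \iso \ecal P\otimes p_Y^*\omega_Y$ to $U \times Y$ where $\ecal P$ is the structure sheaf $\ecal O_{\Gamma_f}$ of the graph of $f$. Using that $\ecal O_{\Gamma_f}$ is invertible on its support and restricts to $\ecal O_U$ under the isomorphism $\Gamma_f \iso U$, the identity reduces to the isomorphism $\omega_X|_U \iso f^*\omega_Y$. Pulling back along $p$ gives $p^*\omega_X \iso q^*\omega_Y$ on the dense open $p^{-1}(U) \subset Z$, which is exactly the crepancy condition on the locus where the birational map is regular.

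The main obstacle, and what I expect to be the hard part, is upgrading this generic isomorphism to a global linear equivalence $p^*K_X \sim q^*K_Y$ on all of $Z$. Line bundles agreeing on a dense open can differ globally by a bundle supported on the exceptional locus of $\pi$, so we cannot immediately conclude from step three alone. To resolve this, I would use the global kernel identity rather than its generic restriction: pulling back the isomorphism $\ecal P\otimes p_X^*\omega_X \iso \ecal P\otimes p_Y^*\omega_Y$ via $(p,q):Z \to X\times Y$ and exploiting that $Z$ resolves the component of $\supp \ecal P$ along which $\ecal P$ has generic rank one, one can cancel the contribution of $\pi^*\ecal P$ on both sides up to line bundles pulled back from $X$ and $Y$ respectively, yielding an honest isomorphism $p^*\omega_X \iso q^*\omega_Y$ on $Z$. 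Equivalently, since both $p^*K_X - q^*K_Y$ and its negative are then represented by effective divisors supported on the exceptional locus of $(p,q)$, they must vanish, giving the desired $K$-equivalence via $Z$ with $\phi_{\ecal P}$ as the underlying crepant birational map.
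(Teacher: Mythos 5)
The paper does not prove this statement at all---it is quoted directly from Toda's \emph{Fourier--Mukai transforms and canonical divisors} (Lemma 7.3)---so there is no internal proof to compare against; judged on its own terms, your argument is essentially the standard proof of that cited lemma and is sound in outline. The kernel identity $\ecal P\otimes p_X^*\omega_X\iso\ecal P\otimes p_Y^*\omega_Y$ (from commutation with Serre functors plus uniqueness of Fourier--Mukai kernels), the restriction to $U\times Y$ giving $\omega_X|_U\iso f^*\omega_Y$, and the passage to a resolution $Z$ of the graph closure are all correct. The one place you should tighten is the ``cancellation'' step, which as written is only gestured at: the clean way is to note that tensoring by a line bundle is exact, so the kernel identity induces, for the cohomological degree $i_0$ in which $\ecal P$ has generic rank one along the graph closure $\ovl{\Gamma}$, an isomorphism $F\otimes p_X^*\omega_X\iso F\otimes p_Y^*\omega_Y$ with $F=\cal H^{i_0}(\ecal P)$; pulling back along $(p,q):Z\to X\times Y$ and taking determinants, $\det\bigl(G\otimes L\bigr)\iso\det(G)\otimes L^{\otimes\operatorname{rank}G}$ with $\operatorname{rank}(p,q)^*F=1$ gives $p^*\omega_X\iso q^*\omega_Y$ directly (twists of $\ecal P|_{U\times Y}$ by a line bundle on the graph, or shifts, do not disturb this). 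Your closing sentence about $p^*K_X-q^*K_Y$ and its negative being effective and supported on the exceptional locus is redundant once the isomorphism is in hand, and on its own it would not be justified by what precedes it, so I would simply drop it; with the determinant step made explicit, the proof is complete and agrees with the argument behind the reference the paper cites.
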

\begin{remark}\label{Uehara K-equiv}
    For this reason, a birational triangulated equivalence up to shifts is called a triangulated equivalence of $K$-equivalent type by Uehara in \cite{Uehara2017ATF}. 
\end{remark}
We can rephrase this result as follows:
\begin{corollary}
    Let $\cal T$ be a triangulated category and suppose $X, Y \in \FM \cal T$. If there is a prime thick subcategory of $\cal T$ that is an ideal with respect to both tt-structures transported from $(\perf X,\tens_{\ecal O_X}^\bb L)$ and $(\perf Y,\tens_{\ecal O_Y}^\bb L)$, then $X$ and $Y$ are $K$-equivalent. 
\end{corollary}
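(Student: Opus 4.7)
The plan is to translate the hypothesis into a statement about intersections of tt-spectra inside the triangular spectrum, apply the key intersection lemma to deduce that $X$ and $Y$ are birational Fourier-Mukai partners, and then invoke Toda's theorem to conclude $K$-equivalence.

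More concretely, fix triangulated equivalences $\eta_1 : \perf X \simeq \cal T$ and $\eta_2 : \perf Y \simeq \cal T$ witnessing $X, Y \in \FM \cal T$; these give rise to the transported tt-structures $\tens_{X,\eta_1}$ and $\tens_{Y,\eta_2}$ on $\cal T$. A prime thick subcategory $\cal P$ of $\cal T$ that is simultaneously an ideal for both tt-structures automatically lies in both $\spec_{\tens_{X,\eta_1}} \cal T$ and $\spec_{\tens_{Y,\eta_2}} \cal T$ (a prime thick subcategory which is an ideal is a prime ideal, since $F \tens G \in \cal P$ forces $\bra{F \tens G} \subset \cal P$ and the standard argument from Balmer shows one factor must already lie in $\cal P$). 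Thus the intersection
\[
\spec_{\tens_{X,\eta_1}} \cal T \cap \spec_{\tens_{Y,\eta_2}} \cal T \subset \spec_\vartriangle \cal T
\]
is non-empty.

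Next, apply Lemma \ref{main intersection}(i) to the Fourier-Mukai equivalence $\Phi_\ecal P = \eta_2\inv \circ \eta_1 : \perf X \simeq \perf Y$. The non-emptiness of the intersection above forces $\Phi_\ecal P$ to be birational up to shift. Since shifts act trivially on the triangular spectrum and do not affect the birational-partner status, we may replace $\Phi_\ecal P$ by a shift to obtain an honest birational Fourier-Mukai equivalence between $X$ and $Y$ in the sense of Definition \ref{birational FM partners}. Hence $X$ and $Y$ are birational Fourier-Mukai partners.

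Finally, Theorem \ref{bfm is ke} (Toda) states that birational Fourier-Mukai partners are $K$-equivalent, with the $K$-equivalence realised by the birational map associated to the birational triangulated equivalence $\Phi_\ecal P$ as constructed in Construction \ref{canonical open}. This completes the proof. The only subtlety is the first step, namely verifying that a prime thick subcategory which happens to be an ideal for a given tt-structure is in fact a prime ideal in the sense of Balmer; this is essentially the content of the comparison between triangular and tt-spectra recorded in Lemma \ref{noetherian hypothesis}, so no new obstacle appears there.
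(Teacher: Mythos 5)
Your overall route is exactly the one the paper intends: translate the hypothesis into non-emptiness of $\spec_{\tens_{X,\eta_1}}\cal T \cap \spec_{\tens_{Y,\eta_2}}\cal T$ inside $\spec_\vartriangle \cal T$, apply Lemma \ref{main intersection}(i) to conclude that $X$ and $Y$ are birational Fourier-Mukai partners, and finish with Theorem \ref{bfm is ke}. The last two steps are fine as you state them.

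The gap is in your first step, and it is a real one as written. You need to upgrade ``prime thick subcategory which is an ideal for $\tens_{X,\eta_1}$'' to ``prime ideal for $\tens_{X,\eta_1}$'' (i.e.\ a point of $\spec_{\tens_{X,\eta_1}}\cal T$), but neither of your two justifications does this. Lemma \ref{noetherian hypothesis} is the \emph{converse} implication: it says a Balmer prime ideal is a Matsui prime thick subcategory (when the tt-spectrum is noetherian); it says nothing about an ideal that happens to be Matsui-prime being Balmer-prime. And the inline argument ``$F\tens G \in \cal P$ forces $\bra{F\tens G}\subset \cal P$ and the standard argument from Balmer shows one factor lies in $\cal P$'' is circular: primality of the ideal is exactly what is at stake, and it fails for general thick ideals, so some use of the Matsui-primality of $\cal P$ is unavoidable. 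The statement you need is nevertheless true for the transported geometric structures, and can be proved as follows. Since every object of $(\perf X,\tens_{\ecal O_X}^{\bb L})$ is dualizable, every thick ideal of the transported structure is radical. Suppose $F\tens_{X,\eta_1} G\in \cal P$ with $F,G\notin\cal P$, and let $\cal Q_0$ be the smallest thick subcategory strictly containing $\cal P$ (this exists because $\cal P$ is a prime thick subcategory, and $\cal P$ is proper). Then $\cal Q_0\subset \thick(\cal P\cup\{F\})$ and $\cal Q_0\subset\thick(\cal P\cup\{G\})$, so any $H\in\cal Q_0\setminus\cal P$ lies in both thick ideals generated by $\cal P,F$ and by $\cal P,G$; since the product of these two ideals is contained in the ideal generated by $\cal P$ and $F\tens_{X,\eta_1}G$, which is $\cal P$ itself, we get $H\tens_{X,\eta_1}H\in\cal P$, hence $H\in\cal P$ by radicality, a contradiction. (Alternatively, one can use the classification of thick ideals of $\perf X$ by specialization-closed supports to show that a Matsui-prime ideal must be of the form $\cal S_X(x)$ as in Theorem \ref{balmer}.) With this step repaired, your proof coincides with the paper's.
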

Let us make several observations on the relation of Fourier-Mukai loci with the DK hypothesis. 
\begin{obs}
    We have the following natural question arising from Theorem \ref{bfm is ke}:
    \begin{indq}\label{Mukai flop}
    If $X$ and $Y$ are $K$-equivalent Fourier-Mukai partners, then are they birational Fourier-Mukai partners? 
\end{indq}
Note arguments in Remark \ref{hk remark} suggest the question has a positive answer.  Indeed, in case Question \ref{Mukai flop} has a positive answer, one direction of the DK hypothesis is equivalent to non-emptiness of intersections of tt-spectra of birationally equivalent Fourier-Mukai partners by Lemma \ref{main intersection} and Theorem \ref{bfm is ke}. In other words, we propose the following version of the DK hypothesis:
\begin{indc}[tt-DK hypothesis]\label{deep dk}
        Let $X$ be a smooth projective variety. Then, every connected component of $\fmspec \perf X$ containing a copy of $X$ contains all smooth projective varieties that are $K$-equivalent to $X$ as open subschemes.   
\end{indc}
Note in particular that this version of the DK hypothesis has a topological necessary condition to be true. 
\end{obs}

For surfaces, $K$-equivalence is indeed a quite strong condition. 
\begin{lemma}\cite{HuyBook}*{Corollary 12.8.}
    If $X$ and Y are $K$-equivalent smooth projective surfaces, then they are isomorphic. In particular, any birational Fourier-Mukai partners are isomorphic. 
\end{lemma}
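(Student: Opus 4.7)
The plan is to reduce to the classical fact that a birational morphism of smooth projective surfaces decomposes into blow-ups at points, and then exploit the discrepancy equation. First, take a common smooth resolution of the given crepant birational map, namely birational morphisms $f:Z\to X$ and $g:Z\to Y$ from a smooth projective surface $Z$ with $f^*K_X\sim g^*K_Y$. Writing
\[
K_Z=f^*K_X+D_f=g^*K_Y+D_g,
\]
where $D_f$ and $D_g$ are the effective discrepancy divisors (supported on the exceptional loci $\cal E_f$ and $\cal E_g$, with strictly positive coefficients on every component since each point blow-up of a smooth surface contributes discrepancy $\geq 1$), the crepancy condition yields $D_f-D_g\sim 0$.

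The key step is to upgrade this linear equivalence to the equality $D_f=D_g$. Applying $f_*$ to $D_f-D_g\sim 0$ and using $f_*D_f=0$ gives $f_*D_g\sim 0$ on $X$; since $D_g$ is effective and any effective numerically trivial divisor on a smooth projective surface must vanish (intersect with an ample class), $D_g$ has no non-$f$-exceptional components, i.e.\ it is supported on $\cal E_f$. Symmetrically $D_f$ is supported on $\cal E_g$. Thus $D_f-D_g$ lies in the $\bb Z$-span of $\cal E_f$, and since the intersection form on the exceptional curves of a birational morphism of smooth projective surfaces is negative definite (Zariski/Hodge index), a numerically trivial element of this lattice must be zero. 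Hence $D_f=D_g$, which forces $\cal E_f=\cal E_g$ with matching multiplicities.

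With $f$ and $g$ contracting precisely the same irreducible curves on $Z$, I would conclude $X\iso Y$ by induction on the number of components of $\cal E_f$: pick a $(-1)$-curve $E\subset \cal E_f$ and contract it via Castelnuovo's criterion to obtain $\pi:Z\to Z'$; by the universal property of this contraction, both $f$ and $g$ factor through $\pi$ as $f'\circ\pi$ and $g'\circ\pi$, and on $Z'$ the analogous discrepancy identity persists. Iteration reduces to the trivial case $\cal E_f=\emp$, in which $f$ and $g$ are birational morphisms of smooth projective surfaces with empty exceptional locus, hence isomorphisms, giving $X\iso Z\iso Y$. The second sentence of the statement then follows immediately from Theorem \ref{bfm is ke}, which upgrades the hypothesis "birational Fourier-Mukai partners" to "$K$-equivalent". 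The main obstacle is ensuring that $D_f-D_g$ actually lies in the lattice on which the intersection form is negative definite; this is precisely why the projection-formula argument establishing the mutual exceptionality of $D_f$ and $D_g$ is needed before invoking negativity, and without this reduction the conclusion could fail for a cycle involving curves exceptional for only one of $f,g$.
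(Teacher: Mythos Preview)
Your argument is correct. The paper does not supply its own proof of this lemma; it simply cites \cite{HuyBook}*{Corollary 12.8}. What you have written is precisely the standard argument one finds there (and in most treatments of $K$-equivalence for surfaces): write the discrepancy divisors, push forward to see that each $D_f,D_g$ is exceptional for both morphisms, invoke negative definiteness of the intersection form on an exceptional configuration to conclude $D_f=D_g$, and then peel off $(-1)$-curves via Castelnuovo. Your final sentence correctly identifies the one subtlety, namely that negativity only applies once both divisors are known to live in the same exceptional lattice.

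One small remark on the induction step: when you contract a $(-1)$-curve $E\subset\cal E_f=\cal E_g$ and assert that ``the analogous discrepancy identity persists'' on $Z'$, it is worth noting that this follows from $\pi^*D_{f'}=D_f-(\text{coeff. of }E)\cdot E$ being determined by $D_f$, so $D_f=D_g$ forces $D_{f'}=D_{g'}$ by injectivity of $\pi^*$. Alternatively, and slightly cleaner, you can bypass the discrepancy bookkeeping at this stage entirely: once $\cal E_f=\cal E_g$ as sets of curves, each Castelnuovo contraction is available to both $f$ and $g$ by the universal property, and the induction runs on $|\cal E_f|$ alone. Either way the conclusion is the same.
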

As a corollary, we obtain generalization of Lemma \ref{K3 disjoint}:
\begin{corollary}\label{surface FM disjoint}
    Let $\cal T$ be a triangulated category with a surface $X \in \FM \cal T$. Then $\cal T$ is of disjoint Fourier-Mukai type and hence we have 
    \[
    \spec^\fm \cal T = \bigsqcup_{X \in \FM \cal T } \spec_{\tens,X} \cal T. \qedhere
    \]
\end{corollary}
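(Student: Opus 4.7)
The plan is to reduce to the definition of disjoint Fourier-Mukai type and then invoke Corollary \ref{disjoint}. Recall that $\cal T$ is of disjoint Fourier-Mukai type precisely when every pair of birational Fourier-Mukai partners in $\FM \cal T$ is already isomorphic. Thus, once I verify this condition, the desired disjoint-union expression follows immediately from Corollary \ref{disjoint}.

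To verify the condition, I would take $Y \in \FM \cal T$ and assume $Y$ is a birational Fourier-Mukai partner of $X$. First, I need $Y$ to also be a surface, and this is automatic: Fourier-Mukai partners share the same dimension (as used in the proof of Theorem \ref{scheme}), so $\dim Y = \dim X = 2$, and by definition $Y$ is smooth and projective.

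Next, I apply Theorem \ref{bfm is ke} (Toda), which tells me that the birational map $\phi: X \ratmap Y$ associated to the chosen birational triangulated equivalence $\perf X \simeq \perf Y$ is in fact a $K$-equivalence. Hence $X$ and $Y$ are $K$-equivalent smooth projective surfaces. Then the immediately preceding lemma (Huybrechts, \cite{HuyBook}*{Corollary 12.8}) forces $X \iso Y$. This shows that any two birational Fourier-Mukai partners in $\FM \cal T$ are isomorphic, so $\cal T$ is of disjoint Fourier-Mukai type, and Corollary \ref{disjoint} concludes the proof.

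There is no real obstacle here since every needed input has already been established in the excerpt. The substantive mathematical content is packaged into Theorem \ref{bfm is ke} (birational Fourier-Mukai partners are $K$-equivalent) and the Huybrechts lemma ($K$-equivalent surfaces are isomorphic). The corollary is simply the geometric consequence of combining these two facts through the framework of Corollary \ref{disjoint}.
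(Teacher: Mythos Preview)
Your proof is correct and follows essentially the same approach as the paper: the paper's corollary is an immediate consequence of the preceding lemma (whose ``In particular'' clause already packages Theorem \ref{bfm is ke} together with \cite{HuyBook}*{Corollary 12.8}), which is exactly the chain of implications you spell out, and the disjoint-union formula then follows from Corollary \ref{disjoint}. One very minor point: the condition ``disjoint Fourier-Mukai type'' requires that \emph{any} pair of birational Fourier-Mukai partners in $\FM\cal T$ be isomorphic, not just those paired with the fixed $X$; but since every element of $\FM\cal T$ is a smooth projective surface (same dimension as $X$), your argument applies verbatim to any such pair.
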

Finally, let us summarize notions and implications we have seen so far.
\begin{obs}\label{bg obs}
     Let $X$ and $Y$ be smooth projective varieties. Write relations of $X$ and $Y$ as follows:
    \begin{enumerate}
        \item[ISO:] $X$ and $Y$ are isomorphic;
        \item[TTE:] There exists a triangulated category $\cal T$ such that $X,Y \in \FM \cal T$ and $\spec_{\tens,X} \cal T = \spec_{\tens,Y}\cal T \subset \spec_\vartriangle \cal T$;
        \item[BFM:] $X$ and $Y$ are birational Fourier-Mukai partners, or equivalently there exists a triangulated category $\cal T$ such that $X,Y \in \FM \cal T$ and $\spec_{\tens,X} \cal T \cap \spec_{\tens,Y}\cal T \neq \emp$;
        \item[KEFM:] $X$ and $Y$ are $K$-equivalent Fourier-Mukai partners;
        \item[BEFM:] $X$ and $Y$ are birationally equivalent Fourier-Mukai partners;
        \item[FM:] $X$ and $Y$ are Fourier-Mukai partners;
        \item[KE:] $X$ and $Y$ are $K$-equivalent;
        \item[BE:] $X$ and $Y$ are birationally equivalent.
    \end{enumerate}
    Now, we clearly have the following implications (black arrows): 
    \begin{center}
        \DisableQuotes
        \begin{tikzcd}
    &      &     &      & \text{BEFM} \arrow[r, Rightarrow] \arrow[rdd, Rightarrow] \arrow[dd, blue, "\textcolor{black}{\text{DK hyp.}}" description, Rightarrow] \arrow[ld, red, "\textcolor{black}{\text{(ii)}}" description, Rightarrow, bend right, shift right]           & \text{FM} \arrow[llld,green, "\textcolor{black}{X,Y:\text{with big (anti-)canonical bundle}}" description, Rightarrow, bend right, shift right=5] \arrow[llllld, green, "\textcolor{black}{X,Y:\text{{with ample (anti-)canonical bundle}}}" description, Rightarrow, bend right, shift right=7] \\
\text{ISO} \arrow[r, Rightarrow] & \text{TTE} \arrow[r, Rightarrow] \arrow[l, blue, "\textcolor{black}{\text{Q.\ref{tte}}}"', Rightarrow, bend right, shift right=2] & \text{BFM} \arrow[r, Rightarrow] \arrow[ll,red, "\textcolor{black}{\text{(i)}}" description, Rightarrow, bend left, shift left] & \text{KEFM} \arrow[ru, Rightarrow] \arrow[rd, Rightarrow] \arrow[l, blue, "\textcolor{black}{\text{Q.\ref{Mukai flop}}}"', Rightarrow, bend right, shift right=2] &  &   \\
&                                                                                                                 &                                                                                                          &                                                                                                                                                 & \text{KE} \arrow[r, Rightarrow] \arrow[lu, blue, "\textcolor{black}{\text{DK hyp.}}" description, Rightarrow, bend left, shift left] \arrow[llllu,green, "\textcolor{black}{X,Y:\text{surfaces}}" description, Rightarrow, bend left, shift left=2] & \text{BE} \arrow[lllu, green, "\textcolor{black}{X,Y:\text{CY threefolds}}"description, very near end, Rightarrow, bend left=49] \arrow[lllllu, green, "\textcolor{black}{X,Y:\text{minimal surfaces (e.g. $K3$ surfaces), abelian varieties}}" description, Rightarrow, bend left=49, shift left=2] 
\end{tikzcd}
    \end{center}
    where blue arrows are implications under question/conjecture, green ones are ones that hold under specified conditions, and red ones are ones with counter-examples listed below:
    \begin{enumerate}
        \item This is equivalent to saying that there is a triangulated category $\cal T$ that is not of disjoint Fourier-Mukai type, which we have already seen (e.g. standard/Mukai flops between non-isomorphic varieties or any birationally equivalent non-isomorphic Calabi-Yau threefolds). This shows at least one of BFM $\Rightarrow$ TTE and TTE $\Rightarrow$ ISO is false, where the former is more likely to be false since it is sufficient to find $K$-equivalent Fourier-Mukai partners that are not birationally equivalent around some points.
        \item In \cite{Ue04minimalelliptic}, Uehara constructed a family of rational elliptic surfaces that are Fourier-Mukai partners (and birationally equivalent), which are not $K$-equivalent. Note they have Kodaira dimension equal to $-\infty$ and hence they do not contradict with the DK hypothesis. \qedhere
    \end{enumerate}

\end{obs}

\section{Comparison with Serre invariant loci}\label{Comparison with Serre invariant loci}
What we have seen so far has been geometric in the sense that using a tt-structure that is actually coming from a tt-category $(\perf X, \tens_{\ecal O_X})$ is equivalent to specifying $X$ by Theorem \ref{balmer}. Indeed, as we have seen, many results in the last section were obtained by applications of birational geometry. In this section and the next section, we will try to give more categorical descriptions of Fourier-Mukai loci, which could potentially lead to reverse applications to birational geometry. In this section, we will focus on Serre invariant loci of triangular spectra introduced in \cite{HO22}. 
\begin{definition} Let $\cal T$ be a triangulated category with a Serre functor $\bb S$. A prime thick subcategory $\cal P$ of $\cal T$ is said to be \textbf{Serre invariant} if $\bb S(\cal P) = \cal P$. Let 
        \[\spc^\ser  \cal T \subset \spc_\vartriangle \cal T\] denote the subspace of Serre-invariant prime thick subcategories. Since the thick subcategories are replete, the Serre invariant locus does not depend on a choice of Serre functors. 
\end{definition}
\begin{notation}
    \textbf{In the rest of this paper}, we put the subspace topology on $\spc^\fm \cal T$ in $\spc_\vartriangle \cal T$, which is a priori coarser than the original topology, but it turns out these two topologies in fact agree by \cite{ito2024new}.
\end{notation}
We have the following quick observation.
\begin{corollary}\label{FM preservation}
    For any triangulated category $\cal T$, we have $\spc^\fm \cal T \subset \spc^\ser  \cal T \subset \spc_\vartriangle \cal T$. Moreover, any triangulated equivalence $\tau:\cal T \simeq \cal T'$ induces a homeomorphism $\spc_\vartriangle \cal T\iso \spc_\vartriangle \cal T'$ that restricts to homeomorphisms on the corresponding loci.   
\end{corollary}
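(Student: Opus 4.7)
The plan is to establish the two assertions essentially by unwinding definitions and invoking the key fact that Serre functors are unique up to natural isomorphism and therefore commute with triangulated equivalences.

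For the first inclusion $\sspec \cal T \subset \spec_\vartriangle \cal T$, there is nothing to prove: Serre invariance is merely a property imposed on points of the triangular spectrum. The substantive content lies in $\fmspec \cal T \subset \sspec \cal T$. My strategy is to take a typical point $\cal P \in \spec_{\tens_{X,\eta}} \cal T$ arising from $X \in \FM \cal T$ and a triangulated equivalence $\eta \colon \perf X \simeq \cal T$. By Theorem \ref{balmer} such a point has the form $\cal P = \eta(\cal S_X(x))$ for some $x \in X$. Writing $\bb S$ for the Serre functor of $\cal T$ and $\bb S_X = -\tens_{\ecal O_X}^{\bb L} \omega_X[\dim X]$ for the Serre functor of $\perf X$, uniqueness of Serre functors gives $\bb S \circ \eta \iso \eta \circ \bb S_X$, so $\bb S(\cal P) = \eta(\bb S_X(\cal S_X(x)))$. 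Because tensoring with the line bundle $\omega_X$ (and shifting) does not alter the support of a perfect complex, $\bb S_X$ preserves the ideal $\cal S_X(x)$, whence $\bb S(\cal P) = \cal P$. This argument is essentially the one already recorded in the proof of Lemma \ref{universal recovery}; taking the union over all $X$ and $\eta$ yields $\fmspec \cal T \subset \sspec \cal T$.

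For the second assertion, given $\tau \colon \cal T \simeq \cal T'$, I will first note that $\tau$ sends thick subcategories to thick subcategories and preserves the property of having an immediate successor in the subposet of strictly larger thick subcategories, so it induces a bijection $\spec_\vartriangle \cal T \to \spec_\vartriangle \cal T'$. Since the topology on $\Th \cal T$ is defined using only the subcategory structure, this bijection is a homeomorphism (as already cited in the preliminaries from \cite{matsui2023triangular}). To check that the restriction to $\fmspec$ is a homeomorphism, I will observe that each $X \in \FM \cal T$ with equivalence $\eta \colon \perf X \simeq \cal T$ gives $X \in \FM \cal T'$ via $\tau \circ \eta$, and by construction $\tau$ maps $\spec_{\tens_{X,\eta}} \cal T$ bijectively onto $\spec_{\tens_{X,\tau \circ \eta}} \cal T'$; taking unions over all such data on both sides identifies the Fourier-Mukai loci. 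For Serre invariance, the same commutation $\bb S' \circ \tau \iso \tau \circ \bb S$ shows that $\cal P$ is Serre-invariant if and only if $\tau(\cal P)$ is, so $\tau$ restricts to a homeomorphism $\sspec \cal T \iso \sspec \cal T'$.

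None of the steps is technically deep; the only care needed is in the commutation of Serre functors with $\eta$ and $\tau$, which is standard but worth stating cleanly, and in verifying that the set-theoretic bijections genuinely send the three distinguished subsets into each other in both directions. I do not anticipate any real obstacle.
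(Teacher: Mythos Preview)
Your proposal is correct and follows essentially the same approach as the paper: the paper's proof simply cites Lemma~\ref{universal recovery} for the inclusion $\fmspec\cal T\subset\sspec\cal T$ and invokes the commutation of Serre functors with triangulated equivalences together with the construction of Fourier--Mukai loci for the homeomorphism statement, which is exactly what you have spelled out in detail.
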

\begin{proof}
The inclusion $\spc^\fm \cal T \subset \spc^\ser  \cal T$ follows by Lemma \ref{universal recovery}. The latter claims follow from commutativity of a Serre functor with triangulated equivalences and the construction of Fourier-Mukai loci, respectively. 
\end{proof}
The following lemma is a useful criteria for checking if a thick subcategory is Serre invariant. This should be known to experts, but let us include a proof for completeness. 
\begin{lemma}\label{admissible serre invariant}
    Let $\cal T$ be a triangulated category with a Serre functor $\bb S$. Moreover, suppose $\cal T$ is \textbf{connected}, i.e., cannot be written as a direct sum of nontrivial triangulated subcategories. Then, any nontrivial admissible subcategory $\cal I$ of $\cal T$ is not Serre invariant.    
\end{lemma}
\begin{proof}
    By the definition of a Serre functor, if an admissible subcategory $\cal I$ is Serre-invariant, then the right orthogonal and the left orthogonal agree, which implies $\cal T = \cal I \oplus \cal I^\perp$ since for any $X \in \cal T$, there is a split triangle $Y \to X \to Z \overset{0}{\to}Y[1]$ with $Y\in \cal I$ and $Z \in \cal I^\perp = {}^\perp\cal I$. Thus, since $\cal T$ is connected, such an $\cal I$ should be trivial. 
\end{proof}
In particular,  combined with Corollary \ref{FM preservation}, this provides another proof for \cite{HO22}*{\href{https://arxiv.org/pdf/2112.13486.pdf}{Lemma 5.7}}. 
\begin{example}\label{serre invariant example}  \ 
\begin{enumerate}
    \item  \cite{HO22}*{\href{https://arxiv.org/pdf/2112.13486.pdf}{Remark 5.2}}If $\cal T = \perf X$ for a smooth projective variety $X$ with trivial canonical bundle, then $\bb S = -\tens_{\ecal O_X}[\dim X]$ and hence \[\spc^\ser  \cal T = \spc_\vartriangle \cal T.\] 
    \item If $\cal T = \perf k  Q $ for a quiver $ Q $ of type $A_2$, then by Example \ref{a2}, we can see 
    \[
    \spc^\ser  \cal T = \emp.  
    \]
    As pointed out right after \cite{GrSt_2023}*{\href{https://arxiv.org/pdf/2205.13356v2.pdf}{Example 7.1.7}} by Gratz-Stevenson, the same results hold for any quiver of type $\sf A_n$ and $\sf D_n$. More generally, since any thick subcategory of $\perf k\sf Q$ for any Dynkin quiver $\sf Q$ is admissible (for example, by \cite{ingalls_thomas_2009}), the same results hold by Lemma \ref{admissible serre invariant}. \qedhere
\end{enumerate}
\end{example}
One important observation is the following although we are not using the result in this paper: 
\begin{theorem}[Hirano-Ouchi] \cite{HO22}*{\href{https://arxiv.org/pdf/2112.13486.pdf}{Proposition 5.6}}\label{FM ample}
    Let $\cal T$ be a triangulated category with a smooth projective variety with ample (anti-)canonical bundle $X \in \FM \cal T$. Then, the inclusion \[X \iso \spc_{\tens_{X,\eta}} \cal T =  \spc^\fm \cal T \inj \spc^\ser  \cal T.\qedhere\] 
\end{theorem}
\begin{remark}[\cite{HO22}*{\href{https://arxiv.org/pdf/2112.13486.pdf}{Remark 5.5}}]
    Use the same notation as in Theorem \ref{FM ample}. Now, since either if $\dim X \geq 2$ and $k$ is uncountable or if $\dim X \geq 3$ and $|k|>2$, then we know that $X$ is completely determined by its underlying topological space by \cite{kollar2023determines}*{Theorem 4.1.14 (i), (ii)}, Theorem \ref{FM ample} reproves the reconstruction theorem by Bondal-Orlov in those cases. 
\end{remark}
Now the following is the main result in this section:
\begin{theorem}\label{curves}
    Let $\cal T$ be a triangulated category with a smooth projective curve $X \in \FM\cal T $. Then, \[\spc^\fm \cal T = \spc^\ser  \cal T.\qedhere\] 
\end{theorem}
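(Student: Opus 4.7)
The inclusion $\spec^\fm \cal T \subset \sspec \cal T$ is already given by Corollary \ref{FM preservation}, so the plan is to establish the reverse inclusion. Since any Fourier-Mukai partner of a smooth projective curve is again a smooth projective curve of the same genus (as Fourier-Mukai partners share Kodaira dimension and Hodge numbers), the argument naturally splits into three cases according to the genus $g$ of $X$.

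When $g = 0$ or $g \geq 2$, the curve $X$ has ample anti-canonical or canonical bundle respectively, so Lemma \ref{FM ample} applies and gives $\spec^\fm \cal T = \sspec \cal T \iso X$ directly. The genuine content therefore lies entirely in the case $g = 1$, i.e.\ when $X = E$ is an elliptic curve. Here $\FM \cal T = \{E\}$ and, since $\omega_E \iso \ecal O_E$, the Serre functor $\bb S$ is simply the shift $[1]$, which acts trivially on prime thick subcategories. Consequently every prime thick subcategory is Serre invariant, so $\sspec \cal T = \spec_\vartriangle \cal T$, and it suffices to show that $\spec^\fm \cal T$ exhausts the whole triangular spectrum.

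For this, invoke the explicit description from Example \ref{basic example}(ii):
\[
\spec_\vartriangle \cal T \iso E \sqcup \bigsqcup_{(r,d) \in I} E_{r,d},
\qquad I = \{(r,d) \in \bb Z^2 \mid r > 0,\ \gcd(r,d) = 1\},
\]
where the distinguished copy $E$ corresponds to $\spec_{\tens_{E,\eta}}\cal T$ for a chosen equivalence $\eta:\perf E \simeq \cal T$. The plan is to use the surjection $\theta:\auteq \perf E \surj \sl(2,\bb Z)$ recalled in Example \ref{elliptic twist}, which factors the action of autoequivalences on the charges $(r,d) = (\rank,\chi)$. The key observation is that $\sl(2,\bb Z)$ acts transitively on the set of primitive vectors in $\bb Z^2$ (up to sign), and the copy $E$ in the decomposition above corresponds to the class of $(0,1)$ (skyscrapers), while each $E_{r,d}$ corresponds to the primitive class $(r,d)$. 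Thus for any $(r,d) \in I$ we can choose $\tau \in \auteq \perf E$ (a suitable composition of the spherical twists $T_{\ecal O_E}$, $T_{k(x)}$ and standard equivalences) so that $\theta(\tau)$ sends $(0,1)$ to $(r,d)$; by the compatibility of the $\auteq$-action with the stratification, this forces $\spec_{\tens_{E,\tau \circ \eta}} \cal T = E_{r,d}$. Taking the union over all such $\tau$ inside $\spec_{\tens_E}\cal T$ then gives $\spec^\fm \cal T \supset E \sqcup \bigsqcup_{(r,d) \in I} E_{r,d} = \spec_\vartriangle \cal T$.

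The main obstacle is the bookkeeping in the last paragraph: one must verify that the $\sl(2,\bb Z)$-action really does permute the components $E_{r,d}$ according to their indices, i.e.\ that the transported tt-structure $\tens_{E,\tau\circ\eta}$ has tt-spectrum sitting in the component corresponding to $\theta(\tau)\cdot (0,1)$. This should follow from the classification of indecomposable objects on $E$ (Atiyah) together with the fact that the support of an object in $\spec_\vartriangle$ is detected by which indecomposables it annihilates; writing this out cleanly is where the real work sits, but no deeper input beyond Example \ref{basic example}(ii), Example \ref{elliptic twist}, and transitivity of the $\sl(2,\bb Z)$-action on primitive vectors should be needed.
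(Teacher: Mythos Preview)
Your outline is correct and lands on the same reduction as the paper: Lemma \ref{FM ample} dispatches $g\neq 1$, and for $g=1$ one must show every component $E_{r,d}$ in the decomposition of $\spec_\vartriangle\perf E$ arises as some $\spec_{\tens_{E,\tau\circ\eta}}\cal T$.

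The difference is in how that last step is executed. The paper does not run the $\sl(2,\bb Z)$-transitivity argument at all: instead it invokes Construction \ref{elliptic moduli}, where each component is \emph{by definition} the tt-spectrum associated to the Fourier--Mukai equivalence $\eta_{r,d}:\perf M(r,d)\simeq\perf E$ given by the universal family on the moduli space of semistable sheaves of slope $d/r$. Lemma \ref{elliptic key} (quoted from \cite{HO22}) then says these $M_{r,d}$ together with the base copy exhaust $\spec_\vartriangle\perf E$, and the proof is immediate. Your route via autoequivalences and $\theta:\auteq\perf E\surj\sl(2,\bb Z)$ is equally valid---and indeed the paper revisits exactly this viewpoint in Construction \ref{elliptic symmetric} \emph{after} the proof---but the ``bookkeeping'' you flag (matching the $\sl(2,\bb Z)$-action on charges to the labelling of the components $E_{r,d}$) is not free: the components in Example \ref{basic example}(ii) are ultimately described via the same moduli/stability picture, so verifying the compatibility amounts to re-deriving the content of Lemma \ref{elliptic key}. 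The paper's approach is shorter because it cites that lemma as a black box; yours is more self-contained in spirit but would need Atiyah's classification unpacked to be complete.
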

For a proof let us introduce some notions and results discussed in \cite{HO22}.
\begin{construction}\label{elliptic moduli}
    Let $X$ be an elliptic curve. Let $I := \{(r,d) \mid r>0,\gcd(r,d) = 1\}$ and $M(r,d)$ denote the fine moduli space of $\mu$-semistable sheaves having Chern character $(r,d)$ with universal family $\ecal U_{r,d} \in \coh(X\times M(r,d))$. Let
    \[
    \eta_{r,d}: \perf M(r,d) \simeq \perf X
    \]
    denote the Fourier-Mukai equivalence given by $\ecal U_{r,d}$. Then, we have the corresponding open subscheme 
    \[
    M_{r,d}:= \spec_{\tens_{M(r,d),\eta_{r,d}}} \perf X \subset \spec^\fm \perf X. 
    \]
    By abuse of notation, let $M_{r,d}$ also denote $\spc_{\tens_{M(r,d),\eta_{r,d}}}\perf X$. Note if $\ecal U'_{r,d}$ is another universal family, we have an invertible sheaf $\ecal L$ on $M(r,d)$ such that $\ecal U'_{r,d} \iso \ecal U_{r,d}\tens p^*\ecal L$, where $p:X\times M(r,d) \to M(r,d)$ is a canonical projection. Thus, writing the corresponding Fourier-Mukai equivalence by $\eta_{r,d}'$, we see that $\eta_{r,d}'=\eta_{r,d}\circ(-\tens_{M(r,d)} \ecal L)$ and thus $\eta_{r,d}$ and $\eta_{r,d}'$ define the same subspace of $\spc_\vartriangle \perf X$, so in particular $M_{r,d}$ does not depend on a choice of universal families.
\end{construction}
\begin{lemma}\label{elliptic key} \cite{HO22}*{\href{https://arxiv.org/pdf/2112.13486.pdf}{Lemma 4.13}}
    Let $X$ be an elliptic curve. Then, $\FM(\perf X)$ is a singleton and 
    \[
    \spc_\vartriangle \perf X = \spc^\ser  \perf X = \spc_{\tens_{\ecal O_X}^\bb L} \perf X \sqcup  \bigsqcup_{(r,d) \in I} M_{r,d}.  
    \]
    In particular, we have: 
    \begin{enumerate}
        \item $ M_{r,d} \cap \spc_{\tens_{\ecal O_X}^\bb L} \perf X = \emp$ for any $(r,d) \in I$;
        \item $M_{r,d} \cap M_{r',d'} = \emp$ for any $(r,d)\neq (r',d') \in I$. \qedhere
    \end{enumerate}
\end{lemma}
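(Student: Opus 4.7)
The plan is to assemble the lemma from three ingredients: the classification of Fourier-Mukai partners of elliptic curves, the triviality of the Serre functor on $\perf X$ up to shift, and Matsui's explicit description of $\spec_\vartriangle \perf X$ already recalled in Example \ref{basic example}(ii).

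First I would verify $\FM(\perf X) = \{X\}$. Any Fourier-Mukai partner of a smooth projective curve has the same dimension and Kodaira dimension as $X$ and is therefore again an elliptic curve. By Atiyah's classification of stable vector bundles on elliptic curves, for each $(r,d) \in I$ the fine moduli space $M(r,d)$ is itself isomorphic to $X$; since any Fourier-Mukai partner is realised up to isomorphism as some such moduli space, $\FM(\perf X)$ collapses to the singleton $\{X\}$. Next, $\sspec \perf X = \spec_\vartriangle \perf X$ is immediate from Example \ref{serre invariant example}(i): $\omega_X \iso \ecal O_X$ forces the Serre functor to equal $[1]$, which preserves every thick subcategory and in particular every prime one.

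Third, the topological decomposition together with the disjointness statements (i) and (ii) are contained in Example \ref{basic example}(ii) (Matsui):
\[
\spec_\vartriangle \perf X \iso \spec_{\tens_{\ecal O_X}^\bb L}\perf X \sqcup \bigsqcup_{(r,d)\in I} E_{r,d},
\]
where each $E_{r,d}$ is a copy of $X$ and the union is already disjoint. Hence both disjointness conditions hold once we show $E_{r,d} = M_{r,d}$ as subsets of $\spec_\vartriangle \perf X$.

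The final and main step, which I expect to be the main obstacle, is this identification. Unwinding Matsui's construction, $E_{r,d}$ consists of prime thick subcategories $\cal P_m := \{\ecal F \in \perf X : F_m \notin \thick(\ecal F)\}$ indexed by $m \in M(r,d)$, where $F_m := \ecal U_{r,d}|_{X \times \{m\}}$ is the $\mu$-stable sheaf parametrised by $m$. On the other hand, by Balmer's reconstruction (Theorem \ref{balmer}) the closed point $m \in M(r,d)$ corresponds to the prime ideal $\cal S_{M(r,d)}(m) = \{\ecal G \in \perf M(r,d) : m \notin \supp \ecal G\}$; its image under the equivalence $\eta_{r,d}$ is the set of $\ecal F \in \perf X$ whose $\ecal U_{r,d}$-Fourier-Mukai transform avoids $m$, which coincides with $\cal P_m$. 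This yields $M_{r,d} = E_{r,d}$ and the lemma follows.
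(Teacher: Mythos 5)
The paper gives no argument of its own here—the statement is quoted directly from \cite{HO22}*{Lemma 4.13}—so your proposal must stand on its own, and its decisive step does not. Your first three ingredients are fine: $\FM(\perf X)=\{X\}$, the identification $\sspec\perf X=\spec_\vartriangle\perf X$ via $\bb S=[1]$, and Matsui's decomposition from Example \ref{basic example}(ii). The problem is the identification $E_{r,d}=M_{r,d}$, which you yourself single out as the main step. The class $\cal P_m=\{\ecal F\in\perf X : F_m\notin\thick(\ecal F)\}$ is not the image of the Balmer point $\cal S_{M(r,d)}(m)$ under $\eta_{r,d}$, and it is not even a thick subcategory: a stable bundle $F_m$ of rank $2$ and degree $1$ sits in an extension $0\to L\to F_m\to M\to 0$ with $L,M$ line bundles of degrees $0$ and $1$, so $F_m\in\thick(L\oplus M)$ while $F_m\notin\thick(L)$ and $F_m\notin\thick(M)$; hence $\cal P_m$ is not closed under direct sums. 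The correct unwinding is this: $m\notin\supp G$ iff $\hom(G,k(m)[i])=0$ for all $i$, so $\eta_{r,d}(\cal S_{M(r,d)}(m))=\{\ecal F\in\perf X : \hom(\ecal F,F_m[i])=0 \text{ for all } i\}=\bra{\,F_{m'} : m'\neq m\,}$, which is strictly smaller than $\cal P_m$: every skyscraper $k(x)$ lies in $\cal P_m$ (since $F_m$ is a bundle with full support), yet $\ext^1(k(x),F_m)\iso\hom(F_m,k(x))^*\neq 0$, so $k(x)\notin\eta_{r,d}(\cal S_{M(r,d)}(m))$. So the final assertion ``which coincides with $\cal P_m$'' is false, and with it the identification collapses.

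There is also a structural gap in how you invoke Example \ref{basic example}(ii): that statement is an abstract isomorphism of ringed spaces, and knowing that its components are literally the subsets $\spec_{\tens_{\ecal O_X}^\bb L}\perf X$ and $M_{r,d}$ (hence that these specific subsets are pairwise disjoint) is essentially the content of the cited lemma of Hirano--Ouchi, not something one can read off the displayed decomposition. If you want the disjointness statements (i) and (ii) independently, argue directly with the tools already in the paper: the universal family $\ecal U_{r,d}$ is locally free, so $\eta_{r,d}(k(m))\iso F_m$ is never a shifted skyscraper, and Lemma \ref{intersec ref} (cf.\ Corollary \ref{flat kernel}) gives (i); for $(r,d)\neq(r',d')$ the composite $\eta_{r,d}\inv\circ\eta_{r',d'}$ sends $k(m')$ to $\eta_{r,d}\inv(F_{m'})$, and $F_{m'}\iso F_m[l]$ is impossible for distinct charges, so the same lemma gives (ii). What cannot be obtained this cheaply is the equality of $\spec_\vartriangle\perf X$ with the displayed union, i.e.\ that there are no further prime thick subcategories; that requires the classification of (prime) thick subcategories of $\perf X$ for an elliptic curve, which is exactly what \cite{HO22} and Matsui supply and what your argument silently leans on.
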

\begin{remark}\label{Matsui reconstruction of elliptic curve}
    Let us emphasize Matsui's reconstruction result \cite{matsui2023triangular}*{Corollary 4.10}. Namely, by Lemma \ref{noetherian hypothesis}, the equalities above can be made into isomorphisms of ringed spaces:
    \[
    \spec_\vartriangle \perf X \iso \spec_{\tens_{\ecal O_X}^\bb L} \perf X \sqcup  \bigsqcup_{(r,d) \in I} M_{r,d},
    \]
    and hence by looking at connected components, we have reconstructed an elliptic curve from its derived category. 
\end{remark}
Now, we can put everything together to show Theorem \ref{curves}:
\begin{proof}[Proof of Theorem \ref{curves}]
    By Corollary \ref{FM ample} we only need to consider the case when $X$ is an elliptic curve. By Corollary \ref{FM preservation} we may assume $\cal T = \perf X$ and by Construction \ref{elliptic moduli} we have
    \[
    \spc_{\tens_{\ecal O_X}^\bb L} \perf X \cup  \bigcup_{(r,d) \in I} M_{r,d} \subset \spc^\fm \perf X.
    \]
    Thus we conclude by Lemma \ref{elliptic key}.
\end{proof}
\begin{construction}\label{elliptic symmetric}
    Let $\cal T$ be a triangulated category with an elliptic curve $X \in \FM\cal T $. Theorem \ref{curves} is mostly a formal consequence of Lemma \ref{elliptic key}, but it suggests a more symmetric/canonical description of $\spc^\ser  \cal T$:
    \[
\spc^\ser  \cal T = \spc_{\tens,X} \cal T = \bigcup_{\tau \in \auteq\cal T } \spc_{\tens_{X,\tau \circ \eta}} \cal T.
\]
In particular, it clarifies the components $\spc_{\tens_{\ecal O_X}^\bb L} \perf X$ and $M_{r,d}$ are essentially symmetric, i.e., each component simply corresponds to a tt-spectrum of a tt-structure given by a different choice of triangulated equivalences $\perf X \simeq \cal T$, which is more apparent by the following observation. First, recall that the determinant map gives an isomorphism $\det: M(r,d) \to \pic^d(X)$. 
Hence, fixing a line bundle on $X$ of degree $1$ (or equivalently fixing a base point of $X$), we obtain a canonical isomorphism $z_{r,d}: X \iso M(r,d)$ for any $(r,d) \in I$ and let $\zeta_{r,d}: \perf X \to \perf M(r,d)$ denote the corresponding tt-equivalence. Therefore, writing $\tau_{r,d}:= \eta_{r,d} \circ \zeta_{r,d} \in \auteq \perf X$, we have 
    \[
    M_{r,d} = \spec_{\tens_{M(r,d),\eta_{r,d}}} \perf X = \spec_{\tens_{X,\tau_{r,d}}} \perf X  
    \]
    and thus we can write 
    \[
    X_\tens = X_\tens(\id_{\perf X}) \sqcup \bigsqcup_{(r,d) \in I}X_\tens(\tau_{r,d}). 
    \]
    Note this expression of $X_\tens$ shows that the expression in Lemma \ref{elliptic key} corresponds to a specific choice of a family of autoequivalences of $\perf X$ appearing in Theorem \ref{irreducible component}. In particular, this interpretation by Theorem \ref{irreducible component} simplifies the reconstruction in Remark \ref{Matsui reconstruction of elliptic curve} in that we only need Lemma \ref{elliptic key} part (i) for showing connected component consists of a single copy of an elliptic curve. 
\end{construction}
\begin{remark}\label{minimal generator for elliptic curves?} Use the same notations as in Construction \ref{elliptic symmetric} and let $G_{X} \subset \auteq\perf X$ denote the subgroup generated by $\tau_{r,d}$ (which depends on a choice of a base point of $X$). Then for any triangulated equivalence $\eta:\perf X \simeq \cal T$, we have 
    \[
    \spc^\ser  \cal T = \bigcup_{\tau \in G_{X,\eta}} \spc_{\tens_{X,\tau \circ \eta}} \cal T.
    \]
    In particular, $G_{X,\eta}$ is an $\eta$-generator. Let us observe $G_{X,\eta}$ more closely (cf. Remark \ref{minimal generator candidate}). For example, to the author, it is not so straightforward to see if $G_{X,\eta}$ is a minimal $\eta$-generator. We can also ask if different choices of base points of $X$ will indeed yield different subgroups and in that case if we can recover for example a group structure of the elliptic curve $X$ from the set of those subgroups. Note that since $\tau_{r,d}$ are not $\id_{\perf X}$-stabilizers, we see that they are represented by an element of $\sl(2,\bb Z)$, recalling the short exact sequence
    \[
    1 \to  \aut X \ltimes \pic^0 (X) \times \bb Z[2] \to \auteq \perf X \to \sl(2,\bb Z) \to 1 
    \]
    (cf. Example \ref{elliptic twist}). 
\end{remark}
Since the cases of elliptic curves and non-elliptic curves are two extremes, we propose the following conjecture:
\begin{conjecture}\label{main conjecture}    
Let $\cal T$ be a triangulated category with $X \in \FM\cal T$. Then we have
    \[
     \spc^\ser  \cal T = \spc^\fm \cal T. 
    \]
    In particular, $\dim \spc^\ser  \cal T = \dim \spc^\fm \cal T = \dim X$.
\end{conjecture}
\begin{remark}
    Note the conjecture also holds for $\cal T = \perf R$ when $R$ is the path algebra of a Dynkin quiver by Example \ref{serre invariant example} (ii). 
\end{remark}
So far, we have been assuming projectivity of varieties, but it is also natural to only assume properness. So, let us leave several comments in this direction. 
\begin{obs}
    Let $\cal T$ be a triangulated category. It is also natural to consider a \textbf{proper Fourier-Mukai locus} that should sit between the Fourier-Mukai locus and the Serre invariant locus. Naively, we can define it to be  
    \[
    \spc^\sf{pFM} \cal T := \bigcup_{X \in \pfm \cal T } \spc_{\tens,X} \cal T \subset \spc_\vartriangle \cal T, 
    \]
    where $\pfm \cal T $ denotes the set of smooth \textbf{proper} varieties that are Fourier-Mukai partners of $\cal T$, i.e., there exists a triangulated equivalence $\perf X \simeq \cal T$. Clearly, we have the inclusions:
    \[
    \spc^\fm \cal T \subset \spc^\sf{pFM} \cal T \subset \spc^\ser  \cal T.  
     \]
    Since a smooth proper variety of dimension $\leq 2$ is projective, we see that if we have a curve or surface $X \in \FM \cal T$, then $\spc^\fm \cal T = \spc^\sf{pFM} \cal T$. On the other hand, $\spc^\sf{pFM} \cal T$ can be a priori strictly larger than $\spc^\fm \cal T$ since flops of $3$-dimensional varieties may not preserve projectivity while inducing equivalences of derived categories. 
\end{obs}
Thus, we can establish an a priori weaker conjecture:
\begin{conjecture}
    Let $\cal T$ be a triangulated category with $X \in \FM \cal T$ (or $X \in \pfm \cal T$). Then, we have
    \[
    \spc^\sf{pFM} \cal T = \spc^\ser  \cal T. 
    \]
     In particular, $\dim \spc^\ser  \cal T = \dim \spc^\sf{pFM} \cal T = \dim X$, where the last equality is also a part of the conjecture.
\end{conjecture}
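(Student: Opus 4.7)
The plan is to treat the two inclusions separately. The inclusion $\pfmspec \cal T \subset \sspec \cal T$ holds by a direct extension of Lemma \ref{universal recovery} to smooth proper Fourier-Mukai partners: for any smooth proper $Y$ with $\perf Y \simeq \cal T$, the Serre functor of $\perf Y$ is $- \tens_{\ecal O_Y}^\bb L \omega_Y[\dim Y]$, and this preserves every prime of the form $\cal S_Y(y)$ since tensoring with an invertible object acts trivially on thick ideals. So the content is the reverse inclusion: given any Serre-invariant prime thick subcategory $\cal P \subset \cal T$, produce a smooth proper variety $Y$, a triangulated equivalence $\eta: \perf Y \simeq \cal T$, and a closed point $y \in Y$ with $\cal P = \eta(\cal S_Y(y))$.

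My approach would first attempt to characterize ``geometric points'' abstractly inside $\cal T$. Inspired by Bondal-Orlov type point-object criteria, I would call an object $F \in \cal T$ \emph{point-like of dimension} $n$ if $\hom(F, F) = k$, $\bb S(F) \iso F[n]$, and $\hom(F, F[i]) = 0$ for $i < 0$, so that a candidate prime $\cal P_F$ built from $F$ is automatically Serre-invariant. One would then argue that every Serre-invariant prime $\cal P$ must arise from some point-like $F$, and assemble these point-like objects into Inaba's moduli space of simple objects (as used in Construction \ref{canonical open}). The universal family should then provide both the equivalence $\eta: \perf Y \simeq \cal T$ and the identification $\cal P = \eta(\cal S_Y(y))$.

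The main obstacle will be two-fold: first, establishing the intrinsic characterization of point-like objects strongly enough to rule out ``spurious'' Serre-invariant primes not coming from geometry; second, proving that the resulting moduli space is a smooth \emph{proper} variety rather than merely a locally separated algebraic space. The analogous problem in the projective case has known solutions only in restrictive settings (ample canonical bundle, elliptic curves, quivers of type $A_n$ or $D_n$), so one expects the current conjecture to be genuinely more accessible than Conjecture \ref{main conjecture} precisely because properness is strictly weaker than projectivity and is well-suited to moduli-theoretic output. A natural intermediate step would be to first establish the a priori weaker Conjecture \ref{weaker}, i.e. $\pfmspec \cal T = \spec^\sg \cal T$, and then work to exclude Serre-geometric tt-structures that are not pseudo-geometric (cf. the discussion near Remark \ref{proper dg}).

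Once the equality of underlying point sets is proved, the dimension claim follows readily: Theorem \ref{scheme} (applied with $\FM$ replaced by $\pfm$, using the same gluing argument since birational Fourier-Mukai partners of smooth proper varieties are again related by canonical birational maps) gives $\dim \pfmspec \cal T = \dim Y$ for any $Y \in \pfm \cal T$, which is nonempty since $\FM \cal T \subset \pfm \cal T$ and $\FM \cal T \neq \emp$ by hypothesis. As all Fourier-Mukai partners share a common dimension by Kawamata, this common value equals $\dim X$, yielding $\dim \sspec \cal T = \dim \pfmspec \cal T = \dim X$.
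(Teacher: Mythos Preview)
The statement under consideration is labelled a \emph{Conjecture} in the paper, and the paper offers no proof of it; it is presented as an open problem intermediate between Conjecture~\ref{main conjecture} and Conjecture~\ref{weaker}. There is therefore no proof in the paper against which to compare your proposal.

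Your proposal is not a proof but a research outline, and you yourself flag its principal gaps. The easy inclusion $\pfmspec \cal T \subset \sspec \cal T$ is correctly handled. For the hard direction, the core unproven step is the passage from an arbitrary Serre-invariant prime thick subcategory $\cal P$ to a point-like object $F$ generating (or determining) it. A prime thick subcategory is a large subcategory, not an object, and there is no mechanism in your outline that forces $\cal P$ to be of the form $\cal S_Y(y)$ for some point-like $F = k(y)$; the Bondal--Orlov point-object criterion characterizes certain objects, not thick subcategories, and the cases where the bridge is known (ample (anti-)canonical bundle, curves) rely on complete classifications of indecomposable objects or of autoequivalences that are unavailable in general. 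Even granting such an $F$, the further step of assembling all point-like objects into a smooth \emph{proper} variety via Inaba's moduli space is precisely the kind of representability and properness problem that remains open; Inaba's space is in general only an algebraic space, and no argument is offered for properness or for the universal family inducing an equivalence. In short, the two ``main obstacles'' you name are the entire content of the conjecture, and the proposal does not advance past them.
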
    
\begin{remark}\label{proper dg}
    From a dg-categorical point of view, the weaker conjecture is indeed more natural since properness and smoothness are categorical properties of a dg-enhancement of a derived category while projectivity is not (e.g. cf. \cite{ORLOV_nc_scheme}). 
\end{remark}
Finally, note there is also a topological restriction for Conjecture \ref{main conjecture} to be true. 
\begin{definition}
    A topological space $X$ is said to be:
    \begin{enumerate}
        \item \textbf{spectral} if it is homeomorphic to the underlying topological space of the affine spectrum of some ring;
        \item \textbf{locally spectral} if it has an open cover by spectral spaces;
        \item \textbf{sober} if any irreducible closed subset has a unique generic point.\qedhere
    \end{enumerate}
\end{definition}
\begin{lemma}[Hochster]\cite{hochster1969prime}*{\href{https://www.ams.org/journals/tran/1969-142-00/S0002-9947-1969-0251026-X/S0002-9947-1969-0251026-X.pdf}{Theorem 9}}
    A topological space is locally spectral if and only if it is homeomorphic to the underlying topological space of a scheme. In particular, a locally spectral space is sober. 
\end{lemma}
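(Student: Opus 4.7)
The plan is to prove the two implications of the equivalence separately and then deduce sobriety as a direct consequence.

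For the forward direction, if $X$ is (the underlying topological space of) a scheme, then by definition $X$ admits an open cover by affine opens $\{\spec A_i\}$, and each $\spec A_i$ is tautologically spectral (it is homeomorphic to the affine spectrum of a ring, namely itself). Hence $X$ is locally spectral.

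For the backward direction, the key input is Hochster's intrinsic characterization of spectral spaces: a topological space $Y$ is spectral if and only if it is $T_0$, quasi-compact, sober, and admits a basis of quasi-compact open sets closed under finite intersection, with the property that every irreducible closed subset has a unique generic point. Hochster constructs a ring realizing such a $Y$ via a delicate combinatorial recipe applied to the lattice of quasi-compact open sets. Granting this, if $X$ is locally spectral with an open cover $\{U_i\}$ by spectral subspaces, I would choose homeomorphisms $U_i \iso \spec R_i$ and glue the affine schemes $\spec R_i$ along the double intersections $U_i \cap U_j$; each such intersection is open in both $\spec R_i$ and $\spec R_j$ and hence is a union of principal opens, whose scheme structures can be matched compatibly. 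The cocycle condition on triple overlaps is automatic because all transition data come from the ambient space $X$, so the gluing produces a scheme whose underlying topological space is $X$.

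The main obstacle is Hochster's characterization theorem itself, specifically the construction of a ring from the given topological data; everything else is formal. The "in particular" assertion then follows because sobriety is a local property: for any irreducible closed subset $Z \subset X$, an intersection $Z \cap U_i$ with a spectral (and hence sober) subspace is non-empty, open in $Z$, and irreducible, so it has a unique generic point $\xi \in U_i$; one then checks that the closure of $\xi$ in $X$ is $Z$ and that uniqueness of $\xi$ in $U_i$ forces uniqueness in $X$. Hence any locally spectral space is sober, completing the statement.
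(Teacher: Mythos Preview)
The paper does not prove this lemma; it simply cites Hochster's original article, so there is no in-paper argument to compare against. Your easy direction (scheme $\Rightarrow$ locally spectral) and your sobriety argument are fine.

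The genuine gap is in your gluing step for the hard direction. Having chosen rings $R_i$ with $\spec R_i \iso U_i$ \emph{independently}, the overlap $U_i\cap U_j$ inherits two structure sheaves: one as an open subscheme of $\spec R_i$, another as an open subscheme of $\spec R_j$. To glue schemes you need an isomorphism of \emph{locally ringed spaces} on the overlap, not merely a homeomorphism, and there is no reason for these two sheaves to be isomorphic. A trivial counterexample: take $X=\{\ast\}$ covered twice by itself, choose $R_1=\bb Q$ and $R_2=\bb F_2$; then $\spec R_1$ and $\spec R_2$ are both one point, but there is no ring isomorphism over the overlap, so your recipe cannot produce a glued scheme. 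Your sentence ``whose scheme structures can be matched compatibly'' is exactly where the argument breaks, and it is not a detail one can patch by refining the cover alone.

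Hochster's actual proof is not the naive gluing you sketch; the ring he builds for a spectral space comes with enough functoriality (spectral maps lift to ring maps in a controlled way) that the transition data on overlaps can be \emph{constructed}, not merely hoped for. If you want to repair your argument, you would need to invoke or reproduce that functoriality, which is substantially more than ``granting Hochster's characterization'' for a single space.
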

Therefore, recalling Theorem \ref{irreducible component}, the following should also hold if Conjecture \ref{main conjecture} holds:
\begin{conjecture}
    Let $\cal T$ be a triangulated category with $X \in \FM \cal T$. Then, $\spc^\ser  \cal T$ is locally spectral (in particular sober) and locally noetherian of dimension $\dim X$. Moreover, the connected components and irreducible components coincide and all of them are homeomorphic to each other.  
\end{conjecture}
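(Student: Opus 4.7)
The plan is to deduce the statement from Conjecture \ref{main conjecture}, Theorem \ref{scheme}, Lemma \ref{scheme irreducible}, and (a suitable extension of) Theorem \ref{irreducible component}. Granting Conjecture \ref{main conjecture}, we identify $\sspec \cal T = \fmspec \cal T$ as subspaces of $\spec_\vartriangle \cal T$, and Theorem \ref{scheme} then endows the right-hand side with the structure of a smooth scheme locally of finite type over $k$ of Krull dimension $\dim X$. Since the underlying topological space of any scheme is locally spectral (each affine open being spectral by Hochster) and therefore sober, and since local finite-typeness over the noetherian field $k$ implies locally noetherianness, the first half of the conjecture follows at once, together with the dimension statement.

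For the identification of connected and irreducible components, Theorem \ref{scheme} exhibits an open cover of $\fmspec \cal T$ by the irreducible open subschemes $\spec_{\tens_{X, \eta}} \cal T \iso X$ indexed by $X \in \FM \cal T$ and triangulated equivalences $\eta : \perf X \simeq \cal T$. Lemma \ref{scheme irreducible} then immediately forces connected components to coincide with irreducible components, each such component being a union of a maximal family of mutually intersecting $\spec_{\tens_{X, \eta}}$'s. For the mutual isomorphism of components, the next step is to extend the $\auteq \cal T$-equivariance of Lemma \ref{compatible action} from each $\spec_{\tens_X} \cal T$ to the whole $\fmspec \cal T$ by the same gluing argument as in Theorem \ref{scheme}, so that any $\tau \in \auteq \cal T$ acts as a scheme automorphism permuting the connected components. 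Within a fixed $\spec_{\tens_X} \cal T$, Theorem \ref{irreducible component} already provides the mutual isomorphism; to pass between components containing copies of different FM partners $X, Y \in \FM \cal T$, one transports via $\tau := \eta_Y \circ \phi \circ \eta_X^{-1}$ for a triangulated equivalence $\phi : \perf X \simeq \perf Y$, which sends the component $C \ni \spec_{\tens_{X, \eta_X}} \cal T$ isomorphically onto the component containing $\spec_{\tens_{X, \eta_Y \circ \phi}} \cal T$.

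The main obstacle is precisely this last step. By Lemma \ref{main intersection}(i), the image $\tau(C)$ meets the component containing $\spec_{\tens_{Y, \eta_Y}} \cal T$ only when $\phi$ can be chosen birational up to shifts, i.e.\ only when $X$ and $Y$ are birational Fourier-Mukai partners in the sense of Definition \ref{birational FM partners}. Since a priori there exist FM partners that are not birational FM partners (cf.\ Observation \ref{bg obs}), genuinely establishing mutual isomorphism across such components requires either a stronger structural theorem on $\auteq \cal T$-orbits or an additional hypothesis on $\cal T$; this is essentially the content of Conjecture \ref{deep dk} and is tightly bound to the DK hypothesis. Any honest proof of the final assertion must confront exactly this point.
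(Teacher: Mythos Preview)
The statement you are addressing is a \emph{conjecture} in the paper, not a theorem; the paper offers no proof beyond the single motivating sentence ``Therefore, also noting Theorem \ref{irreducible component}, if Conjecture \ref{main conjecture} holds, the following should also hold.'' So there is no proof in the paper to compare against, and your write-up is in fact more careful than the paper's own heuristic.

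Your derivation of the first half is exactly what the paper intends: granting Conjecture \ref{main conjecture}, Theorem \ref{scheme} makes $\sspec\cal T=\fmspec\cal T$ a smooth scheme locally of finite type over $k$ of dimension $\dim X$, hence locally spectral (Hochster), sober, and locally noetherian; and Lemma \ref{scheme irreducible} gives the coincidence of connected and irreducible components. This matches the paper's reasoning precisely.

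Your diagnosis of the final clause is the interesting part, and it is correct. Theorem \ref{irreducible component} (and its underlying Lemma \ref{compatible action}) only gives that $\auteq\cal T$ acts by scheme automorphisms on each $\spec_{\tens_X}\cal T$ and permutes its components transitively. The extension of Lemma \ref{compatible action} to all of $\fmspec\cal T$ goes through verbatim, but since every $\tau\in\auteq\cal T$ preserves each $\spec_{\tens_X}\cal T$ set-wise, the action cannot carry a component lying in $\spec_{\tens_X}\cal T$ onto one lying in $\spec_{\tens_Y}\cal T$ when $\spec_{\tens_X}\cal T\cap\spec_{\tens_Y}\cal T=\emp$, i.e.\ when $X$ and $Y$ are not birational Fourier--Mukai partners (Lemma \ref{main intersection}(i), Corollary \ref{disjoint}). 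The paper's gesture toward Theorem \ref{irreducible component} does not cover this case, and you are right that bridging it is essentially the content of the tt-DK hypothesis (Conjecture \ref{deep dk}). In short: the last clause of the conjecture does not follow formally from Conjecture \ref{main conjecture} plus the results proved in the paper, and your proposal correctly isolates why.
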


\section{Geometric tt-structures and more loci}\label{Geometric tt-structures and more loci} 
In this section, we study Fourier-Mukai loci by giving some attempts to categorically characterize tt-structures that are equivalent to $(\perf X,\tens_{\ecal O_X}^\bb L)$ for some smooth projective variety $X$. In other words, noting Balmer's reconstruction says we can classify/reconstruct Fourier-Mukai partners of $\cal T$ by classifying/specifying "geometric" tt-structures on $\cal T$, our goals of this section are to enlarge a notion of "geometric" tt-structures so that it has a more categorical description and to compare corresponding loci with Fourier-Mukai loci. This approach also gives a way to remedy an uninteresting fact that the Fourier-Mukai locus of a triangulated category $\cal T$ with $\FM \cal T = \emp$ is tautologically empty by enlarging Fourier-Mukai loci so that it could be nonempty even when $\FM \cal T = \emp$. For those purposes, let us first define "geometric" tt-structures geometrically. 
\begin{definition} Let $(\cal T,\tens)$ be a tt-category. A tt-structure $\tens$ is said to be \textbf{geometric} if there exist $X \in \FM\cal T $ and a tt-equivalence $(\eta,\epsilon_\eta): (\perf X,\tens_{\ecal O_X}^\bb L) \simeq (\cal T,\tens)$ such that $\tens \simeq \tens_{X,\eta}$ and a prime thick subcategory of $\cal T$ is said to be \textbf{geometric} if it is a prime ideal for a geometric tt-structure $\tens$. We say tt-structures $\tens$ and $\tens'$ are \textbf{geometrically equivalent} if $\spc_{\tens}\cal T = \spc_{\tens'} \cal T \subset \spc_\vartriangle \cal T$ and let $\gtt \cal T $ denote the set of geometric equivalence classes of geometric tt-structures.  
\end{definition}   

\begin{corollary}
    For a triangulated category $\cal T$, $\spc^\fm \cal T$ coincides with the subspace of geometric prime thick subcategories of $\cal T$. Equivalently, we have 
    \[
    \spc^\fm \cal T = \bigcup_{\tens\in \gtt\cal T } \spc_\tens \cal T. \qedhere
    \]
\end{corollary}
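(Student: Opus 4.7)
The plan is to unpack the definitions and show that both unions in question parameterize exactly the same family of subspaces of $\spec_\vartriangle \cal T$. The key observation is that the indexing of the Fourier-Mukai locus by Fourier-Mukai partners $X\in\FM\cal T$, when further decomposed by the $\auteq\cal T$-action, gives precisely the indexing by geometric tt-structures up to geometric equivalence.

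First, I would recall the definition
\[
\spec_{\tens_X}\cal T = \bigcup_{\tau\in\auteq\cal T}\spec_{\tens_{X,\tau\circ\eta}}\cal T
\]
from Construction \ref{specx}, fixing any triangulated equivalence $\eta:\perf X\simeq \cal T$. The key observation is that the assignment $\tau\mapsto \tau\circ\eta$ gives a bijection between $\auteq\cal T$ and the set of triangulated equivalences $\perf X\simeq \cal T$, so the above may equivalently be written
\[
\spec_{\tens_X}\cal T = \bigcup_{\eta':\perf X\simeq \cal T}\spec_{\tens_{X,\eta'}}\cal T.
\]
Taking the union over $X\in\FM\cal T$ then yields
\[
\spec^\fm\cal T = \bigcup_{X\in\FM\cal T}\bigcup_{\eta:\perf X\simeq \cal T}\spec_{\tens_{X,\eta}}\cal T,
\]
and each summand on the right is exactly a tt-spectrum attached to a geometric tt-structure in the sense of the definition.

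Next, I would observe that by definition every geometric tt-structure $\tens$ on $\cal T$ has the form $\tens=\tens_{X,\eta}$ for some $X\in\FM\cal T$ and some $\eta:\perf X\simeq \cal T$, and two such tt-structures give the same subspace $\spec_\tens \cal T\subset \spec_\vartriangle \cal T$ iff they are geometrically equivalent. Hence collapsing by geometric equivalence produces exactly the indexing set $\gtt\cal T$, and the iterated union above coincides with $\bigcup_{\tens\in\gtt\cal T}\spec_\tens\cal T$. This gives the displayed equality, and the set-level statement that a point of $\spec_\vartriangle\cal T$ lies in $\spec^\fm\cal T$ iff it is a geometric prime thick subcategory follows immediately since being a geometric prime thick subcategory is by definition being a prime ideal for some geometric tt-structure.

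I do not expect any real obstacle here: the statement is essentially a bookkeeping identity assembled from the definition of $\spec^\fm\cal T$ (Definition at the start of Section \ref{Fourier-Mukai loci and birational equivalence}), the definition of $\spec_{\tens_X}\cal T$ (Construction \ref{specx}), and the definition of $\gtt\cal T$. The only small care needed is to verify that the indexing on the right-hand side is genuinely the same after passing to geometric equivalence classes, which is the content of the observation that any $\eta':\perf X\simeq \cal T$ is of the form $\tau\circ \eta$ for a unique $\tau\in\auteq\cal T$.
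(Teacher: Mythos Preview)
Your proposal is correct and matches the paper's approach: the paper states this corollary without proof, treating it as immediate from the definitions of $\spec^\fm\cal T$, $\spec_{\tens_X}\cal T$, and $\gtt\cal T$. Your careful unpacking of the indexing (in particular the bijection $\tau\mapsto\tau\circ\eta$ between $\auteq\cal T$ and equivalences $\perf X\simeq\cal T$) is exactly the bookkeeping the paper leaves implicit.
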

\begin{example} Let $\cal T$ be a triangulated category with $X \in \fm \cal T$.
        If $X$ is a smooth projective variety with ample (anti-)canonical bundle, we see that $\gtt \cal T = \{\tens_{X,\eta}\}$ for any $\eta :\perf X \simeq \cal T$. If $X$ is an elliptic curve, then $\gtt \cal T = \{\tens_{X,\eta}\}\sqcup \bigsqcup_{(r,d) \in I} \{\tens_{X,\tau_{r,d} \circ \eta}\}$ for any $\eta :\perf X \simeq \cal T$ (cf. Construction \ref{elliptic symmetric}). Note if $G\subset \auteq \cal T$ is an $\eta$-generator, then $\gtt \cal T = \bigcup_{\tau \in G}\{\tens_{X,\tau\circ \eta}\}$.  
\end{example}
I suggest three more less geometric notions of "geometric" tt-structures although there can be much more ways to generalize:
\begin{definition}
    Let $(\cal T,\tens)$ be a tt-category with $\spc_\tens \cal T\subset \spc_\vartriangle \cal T$ (cf. Lemma \ref{noetherian hypothesis}). 
    \begin{enumerate}
        \item A tt-structure $\tens$ is said to be \textbf{pseudo-geometric} if $X = \spec_\tens \cal T$ is a{ not necessarily irreducible} smooth projective variety (i.e., a smooth projective reduced scheme over $k$) and a prime thick subcategory of $\cal T$ is said to be \textbf{pseudo-geometric} if it is a prime ideal for an pseudo-geometric tt-structure $\tens$. Let $\pgtt \cal T $ denote the set of geometric equivalence classes of almost geometric tt-structures on $\cal T$. Define the \textbf{pseudo-geometric locus} of $\spc_\vartriangle \cal T$ to be
        \[
        \spc^\pg \cal T := \bigcup_{\tens \in \pgtt \cal T } \spc_\tens \cal T\subset \spc_\vartriangle \cal T.  
        \]
        \item A tt-structure $\tens$ is said to be \textbf{almost geometric} if $X = \spec_\tens \cal T$ is a smooth projective variety and a prime thick subcategory of $\cal T$ is said to be \textbf{almost geometric} if it is a prime ideal for an almost geometric tt-structure $\tens$. Let $\agtt \cal T $ denote the set of geometric equivalence classes of almost geometric tt-structures on $\cal T$. Define the \textbf{almost geometric locus} of $\spc_\vartriangle \cal T$ to be
        \[
        \spc^\ag \cal T := \bigcup_{\tens \in \agtt \cal T } \spc_\tens \cal T\subset \spc_\vartriangle \cal T.  
        \]
        \item A tt-structure $\tens$ is said to be \textbf{Serre-geometric} if a Serre functor can be written as $-\tens F$ for some $F \in \pic(\cal T,\tens)$ and if $\spc_{\tens} \cal T\subset\spc_\vartriangle \cal T$ (cf. Lemma \ref{noetherian hypothesis}). A prime thick subcategory of $\cal T$ is said to be \textbf{Serre-geometric} if it is a prime ideal for a Serre-geometric tt-structure $\tens$. Let $\sgtt \cal T $ denote the set of geometric equivalence classes of Serre-geometric tt-structures on $\cal T$. Define the \textbf{Serre-geometric locus} of $\spc_\vartriangle \cal T$ to be
        \[
        \spc^\sg \cal T := \bigcup_{\tens \in \sgtt \cal T } \spc_\tens \cal T\subset \spc_\vartriangle \cal T.  \qedhere
        \] 
    \end{enumerate}
\end{definition}
\begin{remark}
     Note if a Serre functor can be written as $- \tens K$, then $K$ is necessarily $\tens$-invertible (Lemma \ref{pic}).  
\end{remark}

First let us think about pseudo-geometric and almost geometric tt-structures. By definition, we have the following inclusions. 
\begin{corollary}
    Let $\cal T$ be a triangulated category. Then, we have 
    \[
    \spc^\fm \cal  T \subset \spc^\ag \cal T \subset \spc^\pg \cal T \subset \spc_\vartriangle \cal T. 
    \]
    In particular, if $\spc^\fm \cal T = \spc_\vartriangle \cal T$ (e.g. if an elliptic curve $X \in \FM \cal T$), then all of them agree. 
\end{corollary}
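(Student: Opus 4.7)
The plan is to verify each of the three inclusions essentially by unpacking definitions, and then to observe that the last assertion follows immediately from the chain.

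For the first inclusion $\spec^\fm \cal T \subset \spec^\ag \cal T$, I would argue that every geometric tt-structure is almost geometric. By definition, a geometric tt-structure on $\cal T$ has the form $\tens_{X,\eta}$ for some $X \in \FM \cal T$ and some triangulated equivalence $\eta : \perf X \simeq \cal T$. Since $X$ is a smooth projective variety in our running convention (integral and separated of finite type over $k$), and the equivalence transports the tt-structure, Theorem \ref{balmer} together with Construction \ref{specx} gives $\spec_{\tens_{X,\eta}} \cal T \iso X$ as ringed spaces, exhibiting $\tens_{X,\eta}$ as almost geometric.

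For the second inclusion $\spec^\ag \cal T \subset \spec^\pg \cal T$, I would simply note that an irreducible smooth projective variety is in particular a (possibly reducible) smooth projective reduced scheme, so every almost geometric tt-structure automatically satisfies the weaker condition defining pseudo-geometricity. The third inclusion $\spec^\pg \cal T \subset \spec_\vartriangle \cal T$ is where the subtle point lives: one must know that for a pseudo-geometric $\tens$, the subspace $\spec_\tens \cal T$ indeed sits inside $\spec_\vartriangle \cal T$. This is precisely Lemma \ref{noetherian hypothesis}, whose hypothesis is that $\spec_\tens \cal T$ is noetherian; and a (not necessarily irreducible) smooth projective variety over $k$ has noetherian underlying topological space, so the lemma applies and produces the required immersion.

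The only real obstacle I foresee is the third inclusion, because it is the one place where a nontrivial categorical input is invoked rather than a tautology. Once all three inclusions are in hand, the final claim is immediate: if $\spec^\fm \cal T = \spec_\vartriangle \cal T$, then the chain
\[
\spec^\fm \cal T \subset \spec^\ag \cal T \subset \spec^\pg \cal T \subset \spec_\vartriangle \cal T = \spec^\fm \cal T
\]
forces all four subspaces of $\spec_\vartriangle \cal T$ to coincide. The elliptic curve example is then justified by Lemma \ref{elliptic key}, which identifies $\spec^\fm \cal T$ with $\spec_\vartriangle \cal T$ in that case.
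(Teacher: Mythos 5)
Your proposal is correct and matches the paper's argument, which simply records these inclusions as holding by definition (geometric tt-structures are almost geometric by Theorem \ref{balmer}, almost geometric ones are pseudo-geometric, and the spectra lie in $\spec_\vartriangle \cal T$ via Lemma \ref{noetherian hypothesis} since projective schemes are noetherian), with the final equality following from the chain together with Lemma \ref{elliptic key}. Your extra care about the third inclusion is a nice explicit touch, but it is the same justification the paper relies on implicitly.
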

When $\FM \cal T$ may be empty, some of the inclusions may be strict.   
\begin{example}\label{p1} Let $\cal T$ be a triangulated category. 
        \begin{enumerate}
            \item We see the inclusion $\spc^\fm \cal T\subset \spc^\ag \cal T$ can be strict. Define a tt-structure on the category $\vect_k$ of \textbf{finite dimensional} $k$-vector spaces as follows. First, we define a triangulated category structure on $\vect_k$ by setting the shift functor to be the identity functor and by setting a triangle $X \to Y \to Z \to X$ to be distinguished if it is an exact sequence (or equivalently if it is a finite direct sum of (shifts of) the triangle $k\overset{\id}{\to} k \to 0 \to k$). Then, $(\vect_k,\tens_k)$ is a tt-category with the usual tensor product $\tens_k$. Since the only additive proper subcategory of $\vect_k$ is $0$, the underlying topological space of $\spec_{\tens_k} \vect_k$ is a singleton and since $\End_{\vect_k}(k)= Z(\vect_k)_\sf{lrad} =k$, it follows that
            \[
            \spec_\vartriangle \vect_k = \spec_{\tens_k} \vect_k = \spec k
            \]
            as ringed spaces. 
            \item  We see that there are almost geometric structures that are not geometric. We follow the proof of \cite{sosnatensor}*{\href{https://www.math.uni-hamburg.de/home/sosna/diplom-online.pdf}{Proposition 4.0.9}}, which is suggested here \cite{ATessinj}. Let $X$ be a smooth projective variety (or more generally a connected noetherian scheme). Define a tt-structure $\boxtimes$ on $\perf (X\sqcup X)  = \perf X \oplus \perf X$ by setting $$(A,B) \boxtimes (C,D) := (A \tens_{\ecal O_X}^\bb L C, A \tens_{\ecal O_X}^\bb L D \oplus C \tens_{\ecal O_X}^\bb L B),$$
            where the unit is given by $(\bb 1_{\perf X},0)$ with $\bb 1_{\perf X} = \ecal O_X$. Also note that the projection 
            \[
            \Phi: \perf X \oplus \perf X \to \perf X;\quad (A,B) \mapsto A
            \]
            is a tt-functor and hence we have a morphism 
            \[
            \phi:\spec_{\tens_{\ecal O_X}^\bb L} \perf X \to \spec_\boxtimes (\perf X \oplus \perf X); \quad \cal P \mapsto \cal P \oplus  \perf X.  
            \]
            of ringed spaces, which was shown to be a homeomorphism with inverse $\psi$ induced by the inclusion tt-functor
            \[
            \Psi: \perf X \to \perf X \oplus \perf X  ;\quad A \mapsto (A,0). 
            \]
            Now, by construction (cf. Construction \ref{str sheaf of tt}), for any open subset $U \subset \spec_{\tens_{\ecal O_X}^\bb L} \perf X$, $\Phi$ induces an equivalence $$(\perf X\oplus \perf X)(U\oplus \perf X) \simeq \perf X(U)$$ sending $\bb 1_{U\oplus \perf X}=(\bb 1_U,0)$ to $\bb 1_U$ and hence by definition of structure sheaves on tt-spectra, $\Phi$ induces an isomorphism
            $$X \iso \spec_{\tens_{\ecal O_X}^\bb L} \perf X \iso \spec_\boxtimes \perf (X\sqcup X)$$ 
            as ringed spaces and hence $\boxtimes$ is an almost geometric tt-structure on $\perf X \oplus \perf X$. On the other hand, it is not geometric since $\perf X \not \simeq \perf (X \sqcup X)$. Now, by construction we see that
            \[
            \spec_\boxtimes \perf (X\sqcup X) \subset \spec_{\tens_{\ecal O_{X\sqcup X}}^\bb L}  \perf (X\sqcup X)
            \]
            is the inclusion of $X$ into the first component of $X \sqcup X$. Hence, although this example shows $\gtt \perf (X \sqcup X) \subsetneq \agtt \perf(X \sqcup X)$, it does \textbf{not} imply 
            \[
            \spc^\fm \perf(X \sqcup X) \subsetneq \spc^{\ag} \perf(X \sqcup X). 
            \]  
            \item We see that the inclusion $\spc^\fm \cal  T \subset \spc^\pg \cal T$ can be strict even when $\FM \cal T \neq \emp$. Let $X = \bb P^1$. First, it is well-known that $\perf X = \perf k \sf{Kr}$, where $k\sf{Kr}$ is the path algebra of the Kronecker quiver $\sf{Kr}$. Thus, we can consider the tt-category $(\perf X, \tens_{\sf{rep}}^\bb L)$. By Theorem \ref{quiver tt} we have $\spec_{\tens_\sf{rep}^\bb L} \perf X = \spec k \sqcup \spec k$, where those two points correspond to prime thick subcategories $\bra{\ecal O_X}$ and $\bra{\ecal O_X(1)}$ and hence $\tens_\sf{rep}^\bb L$ is a pseudo-geometric tt-structure and $\spc^\ser  \perf X \cap \spc_{\tens_\sf{rep}^\bb L} \perf X = \emp$ by Example \ref{basic example}. Thus, we have 
            \[
            \spc^\ser  \perf X = \spc^\fm \perf X \subsetneq \spc^{\pg} \perf X. 
            \]  
            By transporting the tt-structure $\tens_\sf{rep}^\bb L$ under autoequivalences coming from $\pic(X) = \bb Z$, we see that $$\spc^\pg \perf X = \spc_\vartriangle \perf X.$$ In particular, $\spc^\pg \perf X\setminus \spc^\fm \perf X = \bb Z$. By similar procedures, we expect any variety $X$ with full exceptional collections has
            \[
            \spc^\fm \perf X \subsetneq \spc^{\pg} \perf X 
            \]
            (cf. \cite{SirLiu13}*{\href{https://www.pure.ed.ac.uk/ws/portalfiles/portal/16965040/Recovering_Quivers_from_Derived_Quiver_Representations.pdf}{(2.1.6)}}). However, we do not know if $\spc^\pg \perf X = \spc_\vartriangle \perf X$ in general. 
            \item Let $X$ be a smooth projective variety with ample (anti-)canonical bundle and suppose $X \in \FM \cal T$. In this case, we can see almost geometric tt-structures behave similarly as geometric tt-structures in the following sense. Suppose $\tens$ is an almost geometric structure on $\cal T$ with $\spec_\tens \cal T \iso X$ with unit $\eta(\ecal O_X)$ for some $\eta:\perf X \simeq \cal T$. Then, by \cite{castro2023spaces}*{\href{https://hal.science/tel-04018741/document}{Theorem 3.0.23}}, we have that $\eta$ is monoidal on objects, i.e., for any $\ecal F,\ecal G \in \perf X$, we have
            \[
            \eta(\ecal F\tens_{\ecal O_X}^\bb L \ecal G) = \eta(\ecal F) \tens \eta(\ecal G). 
            \]
            In particular, $\eta$ induces a bijection $\spc_{\tens}\cal T \iso \spc_{\tens_{\ecal O_X}^\bb L} \perf X$ of the underlying topological spaces. It is worth mentioning that in \cite{castro2023spaces}, Castro extensively considers deformation of tt-structures (up to tt-equivalences) using Davydov-Yetter cohomology, but note that in general geometric equivalence does not imply tt-equivalence and vice versa. \qedhere
        \end{enumerate}
\end{example}
\begin{remark}
    Let $\cal T$ be a triangulated category with $X \in \FM \cal T$. As far as the author knows, there is no explicit example of a non-geometric almost-geometric structure $\tens$ on $\cal T$ with $\spec_\tens \cal T \iso X$  although we should expect such an example since the definition of the structure sheaf on a tt-spectrum is too biased towards algebraic geometry so that tt-spectra of non-geometric tt-structures may be schemes as we have observed. The author would like to thank Paul Balmer for sharing some thoughts on this topic, in particular, Example \ref{p1} (i).  
\end{remark}
Finally, let us provide a motivation to define a Serre-geometric tt-structure:
\begin{lemma}
    Let $\cal T$ be a triangulated category with a Serre functor. We have the following inclusions:
    \[
    \spc^\fm \cal T \subset \spc^\sg \cal T \subset \spc^\ser  \cal T.
    \]
    In particular, if $\spc^\fm \cal T = \spc^\ser  \cal T$, then all of them agree. 
\end{lemma}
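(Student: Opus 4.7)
The plan is to verify each of the two inclusions separately, after which the ``in particular'' clause follows immediately from the squeeze in the chain of containments.

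For the inclusion $\spec^\fm \cal T \subset \spec^\sg \cal T$, I would start by taking an arbitrary geometric tt-structure $\tens_{X,\eta}$ on $\cal T$ coming from a triangulated equivalence $\eta : \perf X \simeq \cal T$ with $X \in \FM \cal T$. The Serre functor $\bb S$ on $\cal T$ is, up to natural isomorphism, transported via $\eta$ from the Serre functor $-\tens_{\ecal O_X}^\bb L \omega_X[\dim X]$ on $\perf X$ (Example \ref{serre functor examples}(i)), so $\bb S \simeq -\tens_{X,\eta} \eta(\omega_X[\dim X])$. Setting $K := \eta(\omega_X[\dim X])$, the element $\eta(\omega_X)$ is $\tens_{X,\eta}$-invertible since $\omega_X$ is a line bundle (cf. Lemma \ref{pic}), and hence so is $K$. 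Moreover, $\spec_{\tens_{X,\eta}}\cal T \iso X$ is a noetherian topological space, so the Serre-geometricity hypotheses are met. Thus $\tens_{X,\eta} \in \sgtt \cal T$, and the inclusion follows after taking unions.

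For the inclusion $\spec^\sg \cal T \subset \sspec \cal T$, I would take a Serre-geometric tt-structure $\tens$ on $\cal T$, so that $\bb S \simeq - \tens K$ for some $\tens$-invertible $K \in \cal T$ and $\spec_\tens \cal T$ is noetherian. By Lemma \ref{noetherian hypothesis} the noetherian hypothesis ensures every prime ideal of $(\cal T,\tens)$ is already a prime thick subcategory of $\cal T$, so it suffices to show every prime ideal $\cal P$ satisfies $\bb S(\cal P) = \cal P$. For $F \in \cal P$, the ideal property gives $\bb S(F) = F \tens K \in \cal P$, so $\bb S(\cal P) \subset \cal P$. Conversely, since $K$ has a $\tens$-inverse $K^{-1}$, we obtain $\bb S^{-1}(F) \simeq F \tens K^{-1} \in \cal P$ for any $F \in \cal P$, and applying $\bb S$ yields $F \in \bb S(\cal P)$; this gives the reverse containment and hence $\bb S$-invariance. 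Both halves of the proof are essentially formal once the bookkeeping about ideals and invertibility is set up, so the main (mild) obstacle is being careful that repleteness of thick subcategories and uniqueness of Serre functors up to natural isomorphism make the argument independent of representative choices. The final ``in particular'' statement is then immediate: if the outer terms of $\spec^\fm \cal T \subset \spec^\sg \cal T \subset \sspec \cal T$ coincide, the middle term must coincide with both.
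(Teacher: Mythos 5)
Your proposal is correct and follows the same route as the paper, which proves the first inclusion by transporting the Serre functor $-\tens_{\ecal O_X}^{\bb L}\omega_X[\dim X]$ of $\perf X$ through $\eta$ (Example \ref{serre functor examples}) and the second directly from the definition of ideals (with invertibility of $K$ giving the reverse containment $\cal P \subset \bb S(\cal P)$). You have simply written out in full the details that the paper's two-line proof leaves implicit, including the correct use of Lemma \ref{noetherian hypothesis} to place prime ideals inside $\spec_\vartriangle \cal T$.
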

\begin{proof}
    By Example \ref{serre functor examples}, we get the first inclusion and by the definition of ideals, we get the second inclusion. 
\end{proof}
\begin{remark}
    Let $\cal T$ be a triangulated category with a Serre functor. Conjecture \ref{main conjecture} implies that all of the inclusions $\spc^\fm \cal T \subset \spc^\sg \cal T \subset \spc^\ser  \cal T$ are the equalities if $\FM \cal T$ is not empty. Here, note both $\spc^\fm \cal T = \spc^\sg \cal T$ and $\spc^\sg \cal T = \spc^\ser  \cal T$ are a priori non-trivial and those considerations may produce counter-examples to some conjectures/questions in this paper. 
\end{remark}
I will wrap up this paper with the following conjecture, which is weaker than the previous conjectures yet more plausible from a smooth and proper dg-categorical point of view:
\begin{conjecture}\label{weaker}
    Let $\cal T$ be a triangulated category with $\FM \cal T \neq \emp$. Then we have
    \[
    \spc^\sf{pFM} \cal T = \spc^\sg \cal T. \qedhere
    \]
\end{conjecture}
\bibliography{bib}

@article{bondal_orlov_2001,
  title={Reconstruction of a variety from the derived category and groups of autoequivalences},
  author={Bondal, Alexei and Orlov, Dmitri},
  journal={Compositio Mathematica},
  volume={125},
  number={3},
  pages={327--344},
  year={2001},
  publisher={London Mathematical Society}
}

@article{gallauer2019tt,
  title={tt-geometry of Tate motives over algebraically closed fields},
  author={Gallauer, Martin},
  journal={Compositio Mathematica},
  volume={155},
  number={10},
  pages={1888--1923},
  year={2019},
  publisher={London Mathematical Society}
}

@article{Orlov_2002,
doi = {10.1070/IM2002v066n03ABEH000389},
url = {https://dx.doi.org/10.1070/IM2002v066n03ABEH000389},
year = {2002},
month = {jun},
publisher = {},
volume = {66},
number = {3},
pages = {569},
author = {Dmitri Orlov},
title = {Derived categories of coherent sheaves on Abelian varieties and equivalences between them},
journal = {Izvestiya: Mathematics},
abstract = {We study derived categories of coherent sheaves on Abelian varieties. We give a criterion for the equivalence of the derived categories on two Abelian varieties and describe the autoequivalence group for the derived category of coherent sheaves of an Abelian variety.}
}

@article{namikawa2003mukai,
  title={Mukai flops and derived categories II},
  author={Namikawa, Yoshinori},
  journal={arXiv preprint math/0305086},
  year={2003}
}

@article{balmer2020guide,
  title={A guide to tensor-triangular classification},
  author={Balmer, Paul},
  journal={Handbook of homotopy theory},
  pages={145--162},
  year={2020},
  publisher={CRC Press/Chapman Hall CRC Press Boca Raton, FL}
}

@article{inaba2002toward,
  title={Toward a definition of moduli of complexes of coherent sheaves on a projective scheme},
  author={Inaba, Michi-aki},
  journal={Journal of Mathematics of Kyoto University},
  volume={42},
  number={2},
  pages={317--329},
  year={2002},
  publisher={Duke University Press}
}

@book{kollar2023determines,
  title={What Determines an Algebraic Variety?:(AMS-216)},
  author={Koll{\'a}r, J{\'a}nos and Lieblich, Max and Olsson, Martin and Sawin, Will},
  volume={216},
  year={2023},
  publisher={Princeton University Press}
}

@article{ito2024new,
  title={A new proof of the Bondal-Orlov reconstruction using Matsui spectra},
  author={Ito, Daigo and Matsui, Hiroki},
  journal={arXiv preprint arXiv:2405.16776},
  year={2024}
}

@article{cautis2013derived,
  title={Derived equivalences for cotangent bundles of Grassmannians via categorical sl2 actions},
  author={Cautis, Sabin and Kamnitzer, Joel and Licata, Anthony},
  journal={Journal f{\"u}r die reine und angewandte Mathematik (Crelles Journal)},
  volume={2013},
  number={675},
  pages={53--99},
  year={2013},
  publisher={De Gruyter}
}

@article{wang1998topology,
  title={On the topology of birational minimal models},
  author={Wang, Chin-Lung},
  journal={Journal of Differential Geometry},
  volume={50},
  number={1},
  pages={129--146},
  year={1998},
  publisher={Lehigh University}
}

@article{Ueh3dcounter,
  title={A counterexample of the birational Torelli problem via Fourier–Mukai transforms},
  author={Hokuto Uehara},
  journal={Journal of Algebraic Geometry},
  volume={21},
  pages={77--96},
  year={2012}
}

@article{kawamata2017birational,
  title={Birational geometry and derived categories},
  author={Kawamata, Yujiro},
  journal={arXiv preprint arXiv:1710.07370},
  year={2017}
}

@article{Ue04minimalelliptic,
author = {Uehara, Hokuto},
year = {2004},
pages = {},
title = {An example of Fourier--Mukai partners of minimal elliptic surfaces},
volume = {11},
journal = {Mathematical Research Letters},
doi = {10.4310/MRL.2004.v11.n3.a9}
}

@article{Balmer_2002,
	abstract = {To any triangulated category with tensor product {\$}(K, {$\backslash$}otimes){\$}, we associate a topological space {\$}{\{}{$\backslash$}rm Spc{\}}(K, {$\backslash$}otimes){\$}, by means of thick subcategories of K, {\`a}la Hopkins-Neeman-Thomason. Moreover, to each open subset U of this space {\$}Spc(K, {$\backslash$}otimes){\$}, we associate a triangulated category {\$}{\{}{$\backslash$}cal K{\}}(U){\$}{$]$}, producing what could be thought of as a presheaf of triangulated categories. Applying this to the derived category {\$}(K, {$\backslash$}otimes):=({\{}{$\backslash$}rm D{\}}\^{}{\{}{$\backslash$}rm perf{\}} (X), {$\backslash$}otimes\^{}L){\$}of perfect complexes on a noetherian scheme X, the topological space {\$}{\{}{$\backslash$}rm Spc{\}}(K, {$\backslash$}otimes){\$}turns out to be the underlying topological space of X; moreover, for each open {\$}U {$\backslash$}subset X{\$}, the category {\$}{\{}{$\backslash$}cal K{\}}(U){\$}is naturally equivalent to {\$}{\{}{$\backslash$}rm D{\}}\^{}{\{}{$\backslash$}rm perf{\}} (U){\$}. As an application, we give a method to reconstruct any reduced noetherian scheme X from its derived category of perfect complexes {\$}{\{}{$\backslash$}rm D{\}}\^{}{\{}{$\backslash$}rm perf{\}} (X){\$}, considering the latter as a tensor triangulated category with {\$}{$\backslash$}otimes\^{}L{\$}.},
	author = {Balmer, Paul},
	date = {2002/11/01},
	date-added = {2023-07-22 12:15:05 +0900},
	date-modified = {2023-07-22 12:15:05 +0900},
	doi = {10.1007/s00208-002-0353-1},
	id = {Balmer2002},
	isbn = {1432-1807},
	journal = {Mathematische Annalen},
	number = {3},
	pages = {557--580},
	title = {Presheaves of triangulated categories and reconstruction of schemes},
	url = {https://doi.org/10.1007/s00208-002-0353-1},
	volume = {324},
	year = {2002},
	bdsk-url-1 = {https://doi.org/10.1007/s00208-002-0353-1}}

@article{Balmer_2005,
author = {Balmer, Paul},
year = {2005},
pages = {149--168},
title = {The spectrum of prime ideals in tensor triangulated categories},
volume = {588},
journal = {Journal fur die Reine und Angewandte Mathematik},
doi = {10.1515/crll.2005.2005.588.149}
}

@article{ingalls_thomas_2009, title={Noncrossing partitions and representations of quivers}, volume={145}, DOI={10.1112/S0010437X09004023}, number={6}, journal={Compositio Mathematica}, publisher={London Mathematical Society}, author={Ingalls, Colin and Thomas, Hugh}, year={2009}, pages={1533–1562}}

@article{ORLOV_nc_scheme,
title = {Smooth and proper noncommutative schemes and gluing of DG categories},
journal = {Advances in Mathematics},
volume = {302},
pages = {59-105},
year = {2016},
issn = {0001-8708},
doi = {https://doi.org/10.1016/j.aim.2016.07.014},
url = {https://www.sciencedirect.com/science/article/pii/S0001870816300457},
author = {Dmitri Orlov},
keywords = {Coherent sheaves, Perfect complexes, Differential graded categories, Triangulated categories, Noncommutative schemes, Noncommutative geometry},
abstract = {In this paper we discuss different properties of noncommutative schemes over a field. We define a noncommutative scheme as a differential graded category of a special type. We study regularity, smoothness and properness for noncommutative schemes. Admissible subcategories of categories of perfect complexes on smooth projective schemes provide natural examples of smooth and proper noncommutative schemes that are called geometric noncommutative schemes. In this paper we show that the world of all geometric noncommutative schemes is closed under an operation of a gluing of differential graded categories via bimodules. As a consequence of the main theorem we obtain that for any finite-dimensional algebra with separable semisimple part the category of perfect complexes over it is equivalent to a full subcategory of the category of perfect complexes on a smooth projective scheme. Moreover, if the algebra has finite global dimension, then the full subcategory is admissible. We also provide a construction of a smooth projective scheme that admits a full exceptional collection and contains as a subcollection an exceptional collection given in advance. As another application of the main theorem we obtain, in characteristic 0, an existence of a full embedding for the category of perfect complexes on any proper scheme to the category of perfect complexes on a smooth projective scheme.}
}

@article{BaFa_2006,
      title={Gluing techniques in triangular geometry},
  author={Balmer, Paul and Favi, Giordano},
  journal={Quarterly Journal of Mathematics},
  volume={58},
  number={4},
  pages={415--441},
  year={2007},
  publisher={OUP}
}

@article{Balmer_2010,
author = {Balmer, Paul},
year = {2010},
pages = {85--112},
title = {Tensor triangular geometry},
volume = {II},
journal = {International Congress of Mathematicians, Hyderabad},
doi = {https://doi.org/10.1142/9789814324359_0041}
}

@article{Matsui_2021,
author = {Matsui, Hiroki},
year = {2021},
pages = {433--457},
title = {Prime thick subcategories and spectra of derived and singularity categories of noetherian schemes},
volume = {313},
number = {2},
journal = {Pacific Journal of Mathematics},
doi = {10.2140/pjm.2021.313.433}
}

@book{rouquier_2010, place={Cambridge}, series={London Mathematical Society Lecture Note Series}, title={Derived categories and algebraic geometry}, DOI={10.1017/CBO9781139107075.008}, booktitle={Triangulated Categories}, publisher={Cambridge University Press}, author={Rouquier, Raphaël}, editor={Holm, Thorsten and Jørgensen, Peter and Rouquier, RaphaëlEditors}, year={2010}, pages={351–370}, collection={London Mathematical Society Lecture Note Series}}

@article{matsui2023triangular,
  title={Triangular spectra and their applications to derived categories of noetherian schemes},
  author={Matsui, Hiroki},
  journal={arXiv preprint arXiv:2301.03168},
  year={2023}
}

@misc{Voet_2020,
      title={Birational Functors in the Derived Category}, 
      author={Voet, Alexander},
      year={2020},
      publisher = {University of Washington ProQuest Dissertations Publishing}
}

@phdthesis{sosnatensor,
  title={Tensor triangulated categories in algebraic geometry},
  author={Sosna, Pawel},
  year={2008},
  school={diploma thesis}
}

@phdthesis{sandersthesis,
  title={Higher Comparison Maps for the Spectrum of a Tensor Triangulated Category},
  author={Sanders, Beren James},
  year={2014},
  school={UCLA}
}

@phdthesis{castro2023spaces,
  title={Spaces of tensor products on the derived category of a variety},
  author={Castro, Angel Israel Toledo},
  year={2023},
  school={Universit{\'e} Cote d'Azur},
  note={available at \url{https://hal.science/tel-04018741/document}}
}

@MISC{ATessinj,
    TITLE = {Essential injectivity of Balmer spectra},
    AUTHOR = {AT0},
    HOWPUBLISHED = {MathOverflow},
    year = {2023},
    NOTE = {avaliable at \url{https://mathoverflow.net/q/447011} (version: 2023-05-18)},
    EPRINT = {https://mathoverflow.net/q/447011},
    URL = {https://mathoverflow.net/q/447011}
}

@misc{Ploog_2005,
      title={Groups of Autoequivalences of Derived
Categories of Smooth Projective Varieties}, 
      author={David Ploog},
      year={2005},
      publisher = {PhD thesis}
}

@article{SeiTho01,
author = {Paul Seidel and Richard Thomas},
title = {{Braid group actions on derived categories of coherent sheaves}},
volume = {108},
journal = {Duke Mathematical Journal},
number = {1},
publisher = {Duke University Press},
pages = {37--108},
year = {2001},
doi = {10.1215/S0012-7094-01-10812-0},
URL = {https://doi.org/10.1215/S0012-7094-01-10812-0}
}

@article{LOPEZMARTIN201792,
title = {Derived equivalences of Abelian varieties and symplectic isomorphisms},
journal = {Journal of Geometry and Physics},
volume = {122},
pages = {92-102},
year = {2017},
note = {VBAC2015: Fourier-Mukai, 34 Years On},
issn = {0393-0440},
doi = {https://doi.org/10.1016/j.geomphys.2017.01.010},
url = {https://www.sciencedirect.com/science/article/pii/S0393044017300141},
author = {Ana Cristina {López Martín} and Carlos {Tejero Prieto}},
keywords = {Integral functors, Fourier–Mukai transforms, Abelian variety, Symplectic isomorphism, Semihomogeneous sheaf},
abstract = {We study derived equivalences of Abelian varieties in terms of their associated symplectic data. For simple Abelian varieties over an algebraically closed field of characteristic zero we prove that the natural correspondence introduced by Orlov, which maps equivalences to symplectic isomorphisms, is surjective.}
}

@article{Uehara2017ATF,
  title={A trichotomy for the autoequivalence groups on smooth projective surfaces},
  author={Hokuto Uehara},
  journal={Transactions of the American Mathematical Society},
  year={2017},
  volume={371(5)}
}

@article{Cal17,
author = {Calabrese, John},
title = {A note on derived equivalences and birational geometry},
journal = {Bulletin of the London Mathematical Society},
volume = {49},
number = {3},
pages = {499-504},
keywords = {13D09 (primary)},
doi = {https://doi.org/10.1112/blms.12039},
url = {https://londmathsoc.onlinelibrary.wiley.com/doi/abs/10.1112/blms.12039},
eprint = {https://londmathsoc.onlinelibrary.wiley.com/doi/pdf/10.1112/blms.12039},
abstract = {Abstract We give a straightforward proof of the following fact. Let X and Y be two smooth (or Gorenstein) and projective varieties and let Φ:D(X)↪D(Y) be a fully faithful functor between their derived categories. If Φ sends a skyscraper to a skyscraper, then X and Y are birational. We also show some related results.},
year = {2017}
}

@book{HuyBook,
    author = {Daniel Huybrechts},
    title = "{Fourier-Mukai Transforms in Algebraic Geometry}",
    publisher = {Oxford University Press},
    year = {2006},
    month = {04},
    abstract = "{This book provides a systematic exposition of the theory of Fourier-Mukai transforms from an algebro-geometric point of view. Assuming a basic knowledge of algebraic geometry, the key aspect of this book is the derived category of coherent sheaves on a smooth projective variety. The derived category is a subtle invariant of the isomorphism type of a variety, and its group of autoequivalences often shows a rich structure. As it turns out — and this feature is pursued throughout the book — the behaviour of the derived category is determined by the geometric properties of the canonical bundle of the variety. Including notions from other areas, e.g., singular cohomology, Hodge theory, abelian varieties, K3 surfaces; full proofs and exercises are provided. The final chapter summarizes recent research directions, such as connections to orbifolds and the representation theory of finite groups via the McKay correspondence, stability conditions on triangulated categories, and the notion of the derived category of sheaves twisted by a gerbe.}",
    isbn = {9780199296866},
    doi = {10.1093/acprof:oso/9780199296866.001.0001},
    url = {https://doi.org/10.1093/acprof:oso/9780199296866.001.0001},
}

@book{huybrechts_2016, 
place={Cambridge},
series={Cambridge Studies in Advanced Mathematics}, 
title={Lectures on K3 Surfaces}, 
DOI={10.1017/CBO9781316594193}, 
publisher={Cambridge University Press}, 
author={Huybrechts, Daniel}, 
year={2016}, 
collection={Cambridge Studies in Advanced Mathematics}}

@article{HuyNie11,
	abstract = {After a finite {\'e}tale cover, any Ricci-flat K{\"a}hler manifold decomposes into a product of complex tori, irreducible holomorphic symplectic manifolds, and Calabi--Yau manifolds. We present results indicating that this decomposition is an invariant of the derived category. The main idea to distinguish the derived category of an irreducible holomorphic symplectic manifold from that of a Calabi--Yau manifold is that point sheaves do not deform in certain (non-commutative) deformations of the former, whereas they do for the latter. On the way, we prove a conjecture of C{\u a}ld{\u a}raru on the module structure of the Hochschild--Kostant--Rosenberg isomorphism for manifolds with trivial canonical bundle as a direct consequence of recent work by Calaque, van den Bergh, and Ramadoss.},
	author = {Huybrechts, Daniel and Nieper-Wisskirchen, Marc},
	date = {2011/04/01},
	date-added = {2023-07-24 20:28:08 +0900},
	date-modified = {2023-07-24 20:28:08 +0900},
	doi = {10.1007/s00209-009-0655-z},
	id = {Huybrechts2011},
	isbn = {1432-1823},
	journal = {Mathematische Zeitschrift},
	number = {3},
	pages = {939--963},
	title = {Remarks on derived equivalences of Ricci-flat manifolds},
	url = {https://doi.org/10.1007/s00209-009-0655-z},
	volume = {267},
	year = {2011},
	bdsk-url-1 = {https://doi.org/10.1007/s00209-009-0655-z}}

@article{Kawamata2002DEquivalenceAK,
  title={D-Equivalence and K-Equivalence},
  author={Yujiro Kawamata},
  journal={Journal of Differential Geometry},
  year={2002},
  volume={61},
  pages={147-171}
}

@article{hochster1969prime,
  title={Prime ideal structure in commutative rings},
  author={Hochster, Melvin},
  journal={Transactions of the American Mathematical Society},
  volume={142},
  pages={43--60},
  year={1969}
}

@article{GrSt_2023,
title = {Approximating triangulated categories by spaces},
journal = {Advances in Mathematics},
volume = {425},
pages = {109073},
year = {2023},
issn = {0001-8708},
doi = {https://doi.org/10.1016/j.aim.2023.109073},
url = {https://www.sciencedirect.com/science/article/pii/S0001870823002165},
author = {Sira Gratz and Greg Stevenson},
keywords = {Triangulated category, tt-Geometry, Thick subcategories},
abstract = {We initiate a systematic study of lattices of thick subcategories for arbitrary essentially small triangulated categories. To this end we give several examples illustrating the various properties these lattices may, or may not, have and show that as soon as a lattice of thick subcategories is distributive it is automatically a spatial frame. We then construct two non-commutative spectra, one functorial and one with restricted functoriality, that give universal approximations of these lattices by spaces.}
}

@article{HO22,
author = {Hirano, Yuki and Ouchi, Genki},
year = {2022},
pages = {69--88},
title = {Prime thick subcategories on elliptic curves},
volume = {318},
number = {1},
journal = {Pacific Journal of Mathematics},
doi = {10.2140/pjm.2022.318.69}
}

@article{Toda2007,
abstract = {This note gives a generalization of spherical twists, and describe the autoequivalences associated to certain non-spherical objects. Typically these are obtained by deforming the structure sheaves of $(0, -2)$-curves on threefolds, or deforming $\mathbb \{P\}$-objects introduced by D.Huybrechts and R.Thomas.},
author = {Toda, Yukinobu},
journal = {Bulletin de la Société Mathématique de France},
keywords = {derived categories; mirror symmetries},
language = {eng},
number = {1},
pages = {119-134},
publisher = {Société mathématique de France},
title = {On a certain generalization of spherical twists},
url = {http://eudml.org/doc/272512},
volume = {135},
year = {2007},
}

@article{pinkham1983factorization,
  title={Factorization of birational maps in dimension 3},
  author={Pinkham, Henry C},
  journal={Singularities},
  pages={343--371},
  year={1983},
  publisher={American Mathematical Society}
}

@article{DW16,
author = {Will Donovan and Michael Wemyss},
title = {{Noncommutative deformations and flops}},
volume = {165},
journal = {Duke Mathematical Journal},
number = {8},
publisher = {Duke University Press},
pages = {1397 -- 1474},
keywords = {autoequivalence, birational geometry, complex $3$-fold, deformation theory, derived category, homological algebra, noncommutative deformation},
year = {2016},
doi = {10.1215/00127094-3449887},
URL = {https://doi.org/10.1215/00127094-3449887}
}

@article{Elagin_2022,
title = {Calculating dimension of triangulated categories: Path algebras, their tensor powers and orbifold projective lines},
journal = {Journal of Algebra},
volume = {592},
pages = {357-401},
year = {2022},
issn = {0021-8693},
doi = {https://doi.org/10.1016/j.jalgebra.2021.10.035},
url = {https://www.sciencedirect.com/science/article/pii/S0021869321005342},
author = {Alexey Elagin},
keywords = {Triangulated category, Dimension, Path algebra with relations, Orbifold projective line},
abstract = {This is a companion paper of [7], where different notions of dimension for triangulated categories are discussed. Here we compute dimensions for some examples of triangulated categories and thus illustrate and motivate material from [7]. Our examples include path algebras of finite ordered quivers, orbifold projective lines, some tensor powers of path algebras in Dynkin quivers of type A and categories, generated by an exceptional pair.}
}

@article{SirLiu13,
title = "Recovering Quivers from Derived Quiver Representations",
abstract = "We compute Balmer's prime spectrum for the derived category of quiver representations for a finite ordered quiver with the vertex-wise tensor product and show that it does not recover the quiver. We then associate an algebra to every k-linear triangulated tensor category and show that the path algebra can be recovered in this way.",
author = "Susan Sierra and Yu-Han Liu",
year = "2013",
doi = "10.1080/00927872.2012.668993",
language = "English",
volume = "41",
pages = "3013--3031",
journal = "Communications in Algebra",
issn = "0092-7872",
publisher = "Taylor and Francis Ltd.",
number = "8",
}

@article{addington2019mukai,
  title={Mukai flops and P-twists},
  author={Addington, Nicolas and Donovan, Will and Meachan, Ciaran},
  journal={Journal f{\"u}r die reine und angewandte Mathematik (Crelles Journal)},
  volume={2019},
  number={748},
  pages={227--240},
  year={2019},
  publisher={De Gruyter}
}

@article{Kawtoric,
author = {Yujiro Kawamata},
title = {{Derived categories of toric varieties II}},
volume = {62},
journal = {Michigan Mathematical Journal},
number = {2},
publisher = {University of Michigan, Department of Mathematics},
pages = {353 -- 363},
year = {2013},
doi = {10.1307/mmj/1370870376},
URL = {https://doi.org/10.1307/mmj/1370870376}
}

@article{BPtoric,
 ISSN = {00029939, 10886826},
 URL = {http://www.jstor.org/stable/23808304},
 abstract = {We show that the autoequivalence group of the derived category of any smooth projective toric surface is generated by the standard equivalences and spherical twists obtained from −2-curves. In many cases we give all relations between these generators. We also prove a close link between spherical objects and certain pairs of exceptional objects.},
 author = {Nathan Broomhead and David Ploog},
 journal = {Proceedings of the American Mathematical Society},
 number = {4},
 pages = {1133--1146},
 publisher = {American Mathematical Society},
 title = {AUTOEQUIVALENCES OF TORIC SURFACES},
 urldate = {2023-09-04},
 volume = {142},
 year = {2014}
}

@article{kawamata2008flops,
  title={Flops connect minimal models},
  author={Kawamata, Yujiro},
  journal={Publications of the research institute for mathematical sciences},
  volume={44},
  number={2},
  pages={419--423},
  year={2008}
}

@article{FAVERO20121955,
title = {Reconstruction and finiteness results for Fourier–Mukai partners},
journal = {Advances in Mathematics},
volume = {230},
number = {4},
pages = {1955-1971},
year = {2012},
issn = {0001-8708},
doi = {https://doi.org/10.1016/j.aim.2012.03.025},
url = {https://www.sciencedirect.com/science/article/pii/S0001870812001211},
author = {David Favero},
keywords = {Fourier–Mukai transform, Derived category, Coherent sheaves},
abstract = {We develop some methods for studying the Fourier–Mukai partners of an algebraic variety. As applications we prove that abelian varieties have finitely many Fourier–Mukai partners and that they are uniquely determined by their derived category of coherent D-modules. We also generalize a famous theorem due to Bondal and Orlov.}
}

@article{Lesi15,
    author = {Lesieutre, John},
    title = "{Derived-Equivalent Rational Threefolds}",
    journal = {International Mathematics Research Notices},
    volume = {2015},
    number = {15},
    pages = {6011-6020},
    year = {2014},
    abstract = "{We describe an infinite set of smooth projective threefolds that have equivalent derived categories but are not isomorphic, contrary to a conjecture of Kawamata. These arise as blow-ups of \\$\\mathbb \\{P\\}^3\\$ at various configurations of 8 points, which are related by Cremona transformations.}",
    issn = {1073-7928},
    doi = {10.1093/imrn/rnu117},
    url = {https://doi.org/10.1093/imrn/rnu117},
    eprint = {https://academic.oup.com/imrn/article-pdf/2015/15/6011/2033448/rnu117.pdf},
}

@article{toda_2006, title={Fourier–Mukai transforms and canonical divisors}, volume={142}, DOI={10.1112/S0010437X06001977}, number={4}, journal={Compositio Mathematica}, publisher={London Mathematical Society}, author={Toda, Yukinobu}, year={2006}, pages={962–982}}

@article{bondal1995semiorthogonal,
      title={Semiorthogonal decomposition for algebraic varieties},
  author={Bondal, Alexei and Orlov, Dmitri},
  journal={arXiv preprint alg-geom/9506012},
  year={1995}
}

@article{Nami,
url = {https://doi.org/10.1515/crll.2003.061},
title = {Mukai flops and derived categories},
author = {Yoshinori Namikawa},
pages = {65--76},
volume = {2003},
number = {560},
journal = {},
doi = {doi:10.1515/crll.2003.061},
year = {2003},
lastchecked = {2023-09-06}
}

@article{Bridgeland_flop,
author="Tom Bridgeland",
title="Flops and derived categories",
journal="Inventiones Mathematicae",
ISSN="0020-9910",
publisher="Springer Science and Business Media LLC",
year="2002",
volume="147",
number="3",
pages="613-632",
DOI="10.1007/s002220100185",
URL="https://cir.nii.ac.jp/crid/1360292619451327616"
}

@article{Chen2002FlopsAE,
  title={Flops and Equivalences of derived Categories for Threefolds with only Gorenstein Singularities},
  author={Jiun C. Chen},
  journal={arXiv: Algebraic Geometry},
  year={2002},
  url={https://api.semanticscholar.org/CorpusID:119705723}
}

@article{VdB_flop,
author = {Van den Bergh, Michel},
year = {2004},
pages = {},
title = {Three-dimensional flops and noncommutative rings},
volume = {122},
journal = {Duke Mathematical Journal},
doi = {10.1215/S0012-7094-04-12231-6}
}

@article{kollar1989flops,
  title={Flops},
  author={Koll{\'a}r, J{\'a}nos},
  journal={Nagoya Mathematical Journal},
  volume={113},
  pages={15--36},
  year={1989},
  publisher={Cambridge University Press}
}

@article{ottem2018counterexample,
  title={A counterexample to the birational Torelli problem for Calabi--Yau threefolds},
  author={Ottem, John Christian and Rennemo, J{\o}rgen Vold},
  journal={Journal of the London Mathematical Society},
  volume={97},
  number={3},
  pages={427--440},
  year={2018},
  publisher={Wiley Online Library}
}

@book{Har77,
	author="Hartshorne, Robin",
	title="Algebraic {Geometry}",
	series="Graduate Texts in Mathematics",
	volume="52",
	publisher="Springer",
	address = "New York",
	year="1977",
}

@book{KorMor98,
	author="Koll\'ar, J\'anos and Mori, Shigefumi",
	title="Birational {Geometry} of {Algebraic} {Varieties}",
	series="Cambridge Tracts in Mathematics",
	volume="134",
	publisher="Cambridge University Press",
	address = "Cambridge",
	year="1998",
}

\end{document}